\title[The cohomological equation]{The cohomological equation for partially hyperbolic diffeomorphisms 
}
\author{Amie Wilkinson}
\address{Department of Mathematics, Northwestern University, 2033 Sheridan Rd., Evanston, IL 60208.  USA}
\date{\today}
\theoremstyle{plain}
\newtheorem{main}{Theorem}
\newtheorem{theorem}{Theorem}[section]
\newtheorem{proposition}[theorem]{Proposition}
\newtheorem{lemma}[theorem]{Lemma}
\newtheorem{corollary}[theorem]{Corollary}
\newtheorem{conjecture}[theorem]{Conjecture}
\newtheorem{dfinition}[theorem]{Definition}
\def\eps{\varepsilon}
\def\Diff{\hbox{Diff} }
\def\mod{\hbox{mod} }
\def\title{\em}
\def\hH{\widehat H}
\def\bar{\overline}
\def\Bbb{\bf}
\def\cW{\mathcal{W}}
\def\cK{\mathcal{K}}
\def\cB{\mathcal{B}}
\def\cF{\mathcal{F}}
\def\cJ{\mathcal{J}}
\def\hW{{\widehat{\mathcal{W}}}}
\def\U{\mathcal{U}}
\def\cG{\mathcal{G}}
\def\cS{\mathcal{S}}
\def\G{\mathcal{G}}
\def\cC{\mathcal{C}}
\def\cO{\mathcal{O}}
\def\cT{\mathcal{T}}
\def\F{\mathcal{F}}
\def\cH{\mathcal{H}}
\def\cP{\mathcal{P}}
\def\hcH{\widehat\cH}
\def\hh{\hat h}
\def\hE{\widehat E}
\def\proj{\hbox{proj}}
\def\transverse{\,\raise2pt\hbox to1em{\hfil$\top$\hfil}\hskip -1em \hbox
to1em{\hfil$\cap$\hfil}\,} 
\newcommand\RR{{\mathbb R}}
\newcommand\ZZ{{\mathbb Z}}
\newlength{\figboxwidth} \setlength{\figboxwidth}{5.8in}
\def\W{\mathcal{W}}
\begin{document}
\input{epsf.tex}

\maketitle

\tableofcontents

\newpage

\section*{Introduction}

Let $f:M\to M$ be a dynamical system and let $\phi\colon M\to \RR$ be a function.
Considerable energy has been devoted to describing the set of solutions to the {\em cohomological equation:}
\begin{eqnarray}\label{e=livsic}
\phi = \Phi\circ f - \Phi,
\end{eqnarray}
under varying hypotheses on the dynamics of $f$ and the regularity of $\phi$.
When a solution $\Phi\colon M\to \RR$ to this equation
exists, then $\phi$ is a called {\em coboundary}, for in the appropriate
cohomology theory we have $\phi=d\Phi$.
For historical reasons, a solution $\Phi$ to (\ref{e=livsic}) is called a {\em transfer function}.
The study of the cohomological equation has seen application in 
a variety of problems, among them: smoothness of invariant measures and conjugacies;
mixing properties of suspended flows; rigidity of group actions;
and geometric rigidity questions such as the isospectral problem.
This paper studies solutions to the cohomological equation when $f$ is a partially hyperbolic
diffeomorphism and $\phi$ is $C^r$, for some real number $r>0$.  

A a {\em partially hyperbolic diffeomorphism} $f\colon M\to M$ of a compact
manifold $M$ is one for which there exists a nontrivial, $Tf$-invariant splitting of
the tangent bundle $TM=E^s\oplus E^c\oplus E^u$ 
and a Riemannian metric on $M$ such that vectors in $E^s$ are uniformly contracted
by $Tf$ in this metric, vectors in $E^u$ are uniformly expanded, and
the expansion and contraction rates of vectors in $E^c$ is dominated by
the corresponding rates in $E^u$ and $E^s$, respectively.   An {\em Anosov diffeomorphism}
is one for which the bundle $E^c$ is trivial.

In the case where $f$ is an Anosov diffeomorphism, there is a wealth of
classical results on this subject, going back to the seminal work of
Livsi\u c, which we summarize here
in Theorem~\ref{t=livsicold}.  Here and
in the rest of the paper, the notation $C^{k,\alpha}$, for $k\in \ZZ_+$, $\alpha\in(0,1]$, means 
$C^k$, with $\alpha$-H{\"o}lder continuous $k$th derivative (where $C^{0,\alpha}$, $\alpha\in(0,1]$ simply means
$\alpha$-H{\"o}lder continuous). For $\alpha\in (0,1)$, $C^\alpha$ means  $\alpha$-H{\"o}lder continuous. More
generally, if $r>0$ is not an integer, then we will also write $C^r$
for $C^{\lfloor r \rfloor, r- \lfloor r \rfloor}$.

\begin{theorem}\label{t=livsicold}\cite{L1,L2,LS,GK1,GK2,LMM,J,dlL1}
Let $f\colon M\to M$ be an Anosov diffeomorphism and let $\phi:M\to \RR$ be H{\"o}lder continuous.

\smallskip

{\bf I. Existence of solutions.} If $f$ is $C^1$ and transitive, 
then (\ref{e=livsic}) has a continuous solution $\Phi$ if and only if
$\sum_{x\in\cO} \phi(x) = 0$,
for every $f$-periodic orbit $\cO$.  

\smallskip

{\bf II.  H{\"o}lder regularity of solutions.} If $f$ is $C^1$, then every continuous solution to (\ref{e=livsic}) is H{\"o}lder continuous.

\smallskip

{\bf III. Measurable rigidity.} Let $f$ be $C^2$ and volume-preserving.  
If there exists a measurable solution $\Phi$ to (\ref{e=livsic}),
then there is a continuous solution $\Psi$, with $\Psi = \Phi$ a.e.

More generally, if $f$ is $C^r$ and topologically transitive, for $r>1$, and  $\mu$ is a 
Gibbs state for $f$ with H{\"o}lder potential, then the same result holds:
if there exists a measurable function $\Phi$ such that (\ref{e=livsic})
holds $\mu$-a.e., then there is a continuous solution $\Psi$, with $\Psi = \Phi$,
$\mu$-a.e.

\smallskip

{\bf IV. Higher regularity of solutions.} Suppose that $r > 1$ is not an integer, and suppose that $f$ and $\phi$ are $C^{r}$. Then every continuous solution to (\ref{e=livsic}) is $C^{r}$.

If $f$ and $\phi$ are $C^1$, then every continuous solution to (\ref{e=livsic}) is $C^1$.

If $f$ and $\phi$ are real analytic, then every continuous solution to (\ref{e=livsic}) is real analytic.
\end{theorem}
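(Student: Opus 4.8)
The plan is to establish the four parts in order, since Parts II--IV build on (a refinement of) the solution produced in Part~I. For Part~I, necessity is a telescoping identity: if $\phi=\Phi\circ f-\Phi$ and $\cO=\{x,fx,\dots,f^{n-1}x\}$ is an $f$-periodic orbit, then $\sum_{y\in\cO}\phi(y)=\sum_{k=0}^{n-1}\big(\Phi(f^{k+1}x)-\Phi(f^kx)\big)=0$. For sufficiency I would run Livsi\u c's argument: fix a point $p$ with dense forward orbit (which exists by transitivity) and define $\Phi$ on the orbit of $p$ by $\Phi(f^np)=\sum_{k=0}^{n-1}\phi(f^kp)$, extended compatibly to negative iterates, so that the cocycle identity $\Phi(f^np)-\Phi(f^mp)=\sum_{k=m}^{n-1}\phi(f^kp)$ holds by construction; it then suffices to prove $\Phi$ is uniformly continuous on the orbit of $p$, whereupon it extends to a continuous function on $M$ which, by passing to the limit along the orbit, solves (\ref{e=livsic}). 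Uniform continuity is where hyperbolicity enters: if $f^np$ is close to $f^mp$ with $n>m$, the Anosov closing lemma produces a periodic point $q$ of period $n-m$ shadowing the segment $p,fp,\dots,f^{n-m-1}p$ exponentially from both ends, and combining the hypothesis $\sum_{y\in\cO(q)}\phi(y)=0$ with the H{\"o}lder continuity of $\phi$ and the exponential shadowing bounds $|\Phi(f^np)-\Phi(f^mp)|$ by a constant times a positive power of $d(f^np,f^mp)$.

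For Part~II I would note first that the $\Phi$ constructed above is H{\"o}lder (those are exactly the estimates just described), and that transitivity forces the continuous solution to be unique up to an additive constant, so every continuous solution is H{\"o}lder. More directly, given any continuous solution $\Phi$, one proves H{\"o}lder regularity along stable and unstable leaves separately: if $y\in W^s(x)$ then $\Phi(y)-\Phi(x)=\sum_{k\ge0}\big(\phi(f^ky)-\phi(f^kx)\big)$, and since $d(f^kx,f^ky)\le C\lambda^kd(x,y)$ with $\lambda<1$, H{\"o}lder continuity of $\phi$ makes the series converge with a H{\"o}lder bound in $d(x,y)$; the symmetric argument with $f^{-1}$ handles $W^u$. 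Journ\'e's regularity lemma --- a function H{\"o}lder along the leaves of two transverse, uniformly H{\"o}lder foliations with uniformly smooth leaves is globally H{\"o}lder --- then finishes. For Part~III the inputs are ergodicity of $\mu$ and absolute continuity of $W^s$ and $W^u$: a measurable a.e.\ solution $\Phi$ satisfies the telescoped identity along stable leaves a.e., exhibiting it as a.e.\ equal to a function continuous (indeed H{\"o}lder) along $W^s$-leaves, and symmetrically along $W^u$-leaves; a Fubini argument using absolute continuity of both foliations shows these two modifications coincide and assemble to a function $\Psi$ continuous along stable and unstable plaques, hence continuous on $M$ (local product structure, or a Journ\'e-type argument), with $\Psi=\Phi$ $\mu$-a.e., and $\Psi$ then solves (\ref{e=livsic}) everywhere.

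For Part~IV, assume $f,\phi\in C^r$ and let $\Phi$ be a continuous solution, H{\"o}lder by Part~II. The strategy is to upgrade the leafwise regularity to $C^r$ and then glue. On a stable leaf, $\Phi|_{W^s}=\Phi(x)+\sum_{k\ge0}\big(\phi\circ f^k-\phi\circ f^k(x)\big)$; each term is $C^r$ along $W^s$, and because $f$ restricted to stable leaves is a uniform contraction, the chain rule (Fa\`a di Bruno) yields geometric decay $\|D^j(f^k|_{W^s})\|\le C\theta^k$ for every $j\le\lceil r\rceil$ and hence $\|\phi\circ f^k|_{W^s}\|_{C^r}\le C'\theta^k$ with $\theta<1$, so the series converges in $C^r$ uniformly over leaves; the unstable case is symmetric with $f^{-1}$. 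Journ\'e's regularity lemma in its $C^r$ form (for $r$ non-integer) then glues uniform leafwise $C^r$ regularity along the two transverse foliations into global $C^r$ regularity. The $C^1$ statement requires the refined version of the gluing lemma --- one cannot invoke the non-integer case at the endpoint --- and the analytic statement follows by running the same series argument in a complex neighborhood of each leaf and applying the analytic gluing lemma.

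The hard part is Part~IV, on two counts: arranging the composition estimates so that the hyperbolic contraction genuinely dominates the growth of the higher-order derivatives of $f^k$, and, more seriously, the passage from leafwise to global regularity, since $W^s$ and $W^u$ are in general only H{\"o}lder as foliations even though their individual leaves are smooth; the integer-regularity endpoint, in particular the $C^1$ case, is the most delicate of all.
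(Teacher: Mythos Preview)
The paper does not prove Theorem~\ref{t=livsicold}; it is stated as a summary of classical results, with proofs attributed to the cited references \cite{L1,L2,LS,GK1,GK2,LMM,J,dlL1}. Your sketch is essentially the standard route taken in that literature: Liv\v sic's closing-lemma argument for Part~I, the leafwise telescoping series for Parts~II and~IV, absolute continuity and ergodicity for Part~III, and Journ\'e's lemma for the gluing in Part~IV. So there is nothing in the paper to compare against; your outline matches the received proofs.

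Two small corrections to your sketch. In Part~II, invoking Journ\'e for pure H\"older regularity is both overkill and slightly misplaced: Journ\'e's lemma is stated for $C^{k,\alpha}$ with $k\ge 1$, and the $C^{0,\alpha}$ case is handled directly by local product structure (any two nearby points are joined by a short stable arc followed by a short unstable arc, and leafwise H\"older bounds concatenate). In Part~IV you are right that the integer endpoints are the delicate part; historically the full regularity result, including $C^1$, was first obtained by de~la~Llave--Marco--Moriy\'on \cite{LMM} via elliptic-operator and distributional methods rather than leafwise gluing, and the analytic case in \cite{dlL1} proceeds by complexification rather than an ``analytic gluing lemma,'' which is not a standard tool --- you would need to say more there if you actually wanted to carry it out.
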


There are several serious obstacles to overcome in generalizing these results
to partially hyperbolic systems.  For one, while a transitive Anosov
diffeomorphism has a dense set of periodic orbits, a transitive partially hyperbolic
diffeomorphism might have {\em no} periodic orbits (for an example, one can
take the time-$t$ map of a transitive Anosov flow, for an appropriate choice
of $t$).  Hence the hypothesis
appearing in part I can be empty: the vanishing of $\sum_{x\in\cO} \phi(x)$
for every periodic orbit of $f$ cannot be a complete invariant for solving
(\ref{e=livsic}).

This first obstacle was addressed by Katok and Kononenko \cite{KK},
who defined a new obstruction to solving equation (\ref{e=livsic}) when $f$ 
is partially hyperbolic. To define this obstruction, we
first define a relevant collection of paths in $M$, called
$su$-paths, determined by a partially hyperbolic structure.

The stable and unstable bundles $E^s$ and $E^u$ of
a partially hyperbolic diffeomorphism are tangent to foliations, 
which we denote by $\W^s$ and $\W^u$ respectively \cite{BP1}. 
The leaves of $\W^s$ and $\W^u$ are contractible, since
they are increasing unions of submanifolds diffeomorphic
to Euclidean space.
An {\em $su$-path} in $M$ is a concatenation of finitely many subpaths, 
each of which lies entirely in a single leaf of $\W^s$ or a single leaf of $\W^u$.
An {\em $su$-loop} is an $su$-path beginning and ending at the same point.

We say that a partially hyperbolic diffeomorphism $f:M\to M$ is {\em accessible} if
any point in $M$ can be reached from any other along an $su$-path.
The {\em accessibility class of $x\in M$} is the set of all $y\in M$ that can be reached 
from $x$ along an $su$-path.  Accessibility means that there is one accessibility class,
which contains all points. Accessibility is a key hypothesis in most of the results that follow.  We
remark that Anosov diffeomorphisms are easily seen to be accessible, by the transversality
of $E^u$ and $E^s$ and the connectedness of $M$.

Any finite tuple of points $(x_0,x_1,\ldots,x_k)$
in $M$ with the property that $x_i$ and $x_{i+1}$ lie in the same leaf of either $\cW^s$ or $\cW^u$,
for $i=0,\ldots,k-1$,
determines an $su$-path from $x_0$ to $x_k$; if in addition $x_k=x_0$, then the sequence determines an $su$-loop.
Following \cite{ASV}, we call such a tuple  $(x_0,x_1,\ldots,x_k)$ an
{\em accessible sequence} and if $x_0=x_k$, an {\em accessible cycle} (the
term {\em periodic cycle} is used in \cite{KK}).

For $f$ a partially hyperbolic diffeomorphism,
there is a naturally-defined {{\em periodic cycles functional}} 
$$PCF\colon \{\hbox{accessible sequences}\}\times C^\alpha(M)  \to \RR.$$
which was introduced in \cite{KK} as an obstruction to solving (\ref{e=livsic}).
For $x\in M$ and $x'\in \cW^u(x)$, we define:
  $$PCF_{(x,x')} \phi = \sum_{i=1}^\infty \phi(f^{-i}(x)) - \phi(f^{-i}(x')),$$
and for $x'\in \cW^s(x)$, we define:
  $$PCF_{(x,x')} \phi = \sum_{i=0}^\infty \phi(f^{i}(x')) - \phi(f^{i}(x)).$$
The convergence of these series follows from the H{\"o}lder continuity of 
$\phi$ and the expansion/contraction properties of the bundles $E^u$ and $E^s$.
This definition then extends to accessible sequences by setting 
$PCF_{(x_0,\ldots,x_k)}\phi =\sum_{i=0}^{k-1} PCF_{(x_i,x_{i+1})}(\phi)$.

Assuming a  hypothesis on $f$ called {\em local accessibility}\footnote{A partially hyperbolic diffeomorphism $f\colon M\to M$ is {\em locally accessible} if for every
compact subset $M_1\subset M$ there exists $k\geq 1$ such that for any $\eps>0$,
there exists $\delta>0$ that for every $x,x'\in M$ with $x\in M_1$ and 
$d(x,x') < \delta$, there is an accessible sequence $(x=x_0,\ldots, x_k=x')$ from
$x$ to $x'$ satisfying
$$d(x_i,x) \leq \eps,\quad\hbox{and}\quad d_{\cW^\ast}(x_{i+1},x_i)<2\eps,\quad\hbox{for}\quad i=0,\ldots,k-1$$
where $d_{\cW^\ast}$ denotes the distance along the $\cW^s$ or $\cW^u$ leaf common to the two points.}, \cite{KK} 
proved that the closely related {\em relative cohomological equation:}
\begin{eqnarray}\label{e=livsic2}
\phi = \Phi\circ f - \Phi + c,
\end{eqnarray}
has a solution $\Phi\colon M\to \RR$ and $c\in \RR$, with $\Phi$ continuous,
if and only if $PCF_\gamma(\phi)=0$, for every accessible cycle $\gamma$.

The local accessibility hypothesis in \cite{KK} has been verified only for very special classes of
partially hyperbolic systems, and it is not known whether there exist $C^1$-open sets of
locally accessible diffeomorphisms, or more generally, whether accessibility implies local accessibility
(although this seems unlikely).
Assuming the strong hypothesis that $E^u$ and $E^s$ are $C^\infty$ bundles,
\cite{KK} also showed that a continuous transfer function for a $C^\infty$ coboundary is always $C^\infty$.

In the first part of our main result, Theorem~\ref{t=main}, part I below, we show that
the local accessibility hypothesis in \cite{KK} can be replaced simply by accessibility.  
Accessibility is known to hold for a $C^1$ open and dense 
subset of all partially hyperbolic systems \cite{DW}, is $C^r$ open and 
dense among partially hyperbolic  systems with $1$-dimensional center \cite{HHU, BHHTU},
and is conjectured to hold for a $C^r$ open and dense subset of all partially hyperbolic
diffeomorphisms, for all $r\geq 1$ \cite{PS}.
Thus, part I of Theorem~\ref{t=main} gives a robust counterpart of part I of Theorem~\ref{t=livsicold}
for partially hyperbolic diffeomorphisms. 

Another of the aforementioned major obstacles to generalizing Theorem~\ref{t=livsicold}
to the partially hyperbolic setting is that the regularity results in part IV
fail to hold for general partially hyperbolic systems.  Veech \cite{veech} and Dolgopyat \cite{D}
both exhibited examples of partially hyperbolic diffeomorphisms (volume-preserving and ergodic)
where there is a sharp drop in regularity from $\phi$ to a solution $\Phi$.  These examples
are not accessible.  Here we show in Theorem~\ref{t=main}, part IV, 
that assuming accessibility and a $C^1$-open 
property called {\em strong $r$-bunching} (which incidentally is satisfied by the 
nonaccessible examples in \cite{veech,D}), there is no significant loss of regularity between $\phi$ and $\Phi$.  

Part III of Theorem~\ref{t=livsicold} is the most resistant to generalization, primarily because
a general notion of Gibbs state for a partially hyperbolic diffeomorphism remains
poorly understood.  
In the conservative setting, the most general
result to date concerning ergodicity of for partially hyperbolic diffeomorphisms is
due to Burns and Wilkinson \cite{BWannals}, who show that every $C^2$, volume-preserving
partially hyperbolic diffeomorphism that is center-bunched and accessible is ergodic.
Center bunching is a $C^1$-open property that roughly requires that the action
of $Tf$ on $E^c$ be close to conformal, relative to the expansion and contraction
rates in $E^s$ and $E^u$ (see Section~\ref{s=defs}).
Adopting the same hypotheses as  in \cite{BWannals}, we recover here the analogue
of Theorem~\ref{t=livsicold} part III for volume-preserving partially hyperbolic diffeomorphisms.  

We now state our main result.

\begin{main}\label{t=main} Let $f\colon M\to M$ be partially hyperbolic and accessible, and let $\phi:M\to \RR$ be H{\"o}lder continuous.

\smallskip

{\bf I. Existence of solutions.} If $f$ is $C^1$, 
then (\ref{e=livsic2}) has a continuous solution $\Phi$ for some $c\in\RR$ if and only if
$PCF_{\cC} (\phi) = 0$,
for every accessible cycle $\cC$.

\smallskip

{\bf II.  H{\"o}lder regularity of solutions.} If $f$ is $C^1$, then every continuous solution to (\ref{e=livsic2}) is H{\"o}lder continuous.

\smallskip

{\bf III. Measurable rigidity.} Let $f$ be $C^2$,  center bunched, and volume-preserving.  
If there exists a measurable solution $\Phi$ to (\ref{e=livsic2}),
then there is a continuous solution $\Psi$, with $\Psi = \Phi$ a.e.

\smallskip

{\bf IV.  Higher regularity of solutions.} Let $k\geq 2$ be an integer. Suppose that
$f$ and $\phi$ are both $C^{k}$ and that $f$ is strongly $r$-bunched, for some $r < k-1$ 
or $r=1$.
If $\Phi$ is a continuous solution to (\ref{e=livsic2}), then $\Phi$ is $C^{r}$.
\end{main}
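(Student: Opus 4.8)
By Part~II we may assume $\Phi$ is already H\"older. The plan is to realize $\Phi$ as an invariant section of a lifted partially hyperbolic system and to prove that such a section, once it is saturated by strong stable and strong unstable leaves, must be $C^{r}$. Consider the skew product $\hat f\colon M\times\RR\to M\times\RR$, $\hat f(x,t)=(f(x),\,t+\phi(x)-c)$ (pass to $M\times S^{1}$ if a compact phase space is preferred; $\Phi$ is bounded). Since $\hat f$ acts as a pure translation in the fiber, it is partially hyperbolic, with stable and unstable bundles $\hat E^{s},\hat E^{u}$ projecting isomorphically onto $E^{s},E^{u}$ and contracted/expanded at the same rates, and with center bundle $\hat E^{c}=E^{c}\oplus T\RR$; in particular $f$ strongly $r$-bunched implies $\hat f$ strongly $r$-bunched. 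The relative cohomological equation says exactly that the graph $X=\{(x,\Phi(x)):x\in M\}$ is $\hat f$-invariant. Moreover the strong stable leaf of $\hat f$ through $(x,\Phi(x))$ projects diffeomorphically onto $\cW^{s}(x)$, and its fiber coordinate over $x'\in\cW^{s}(x)$ equals $\Phi(x)$ plus the convergent telescoping sum which, by the cohomological equation and continuity of $\Phi$, equals $\Phi(x')-\Phi(x)$; hence this leaf lies in $X$, and symmetrically for strong unstable leaves. Thus $X$ is saturated both by strong stable and by strong unstable leaves of $\hat f$ (indeed $X$ is an accessibility class of $\hat f$, these classes being the graphs of $\Phi+\mathrm{const}$), and Part~IV reduces to the statement: \emph{a continuous, $su$-saturated, $\hat f$-invariant section of a $C^{k}$ strongly $r$-bunched partially hyperbolic skew product over an accessible base is $C^{r}$.}

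The first step is \emph{leafwise regularity}: $\Phi$ is uniformly $C^{r}$ along the leaves of $\cW^{s}$ and of $\cW^{u}$. For $x'\in\cW^{u}(x)$ one has $\Phi(x')-\Phi(x)=\sum_{i\ge 1}\bigl(\phi(f^{-i}x')-\phi(f^{-i}x)\bigr)=-PCF_{(x,x')}(\phi)$; differentiating this series in $x'$ along the leaf up to order $\lceil r\rceil$ and estimating the $i$-th term in terms of $\|\phi\|_{C^{r}}$ and the norms of the iterated derivatives of $f^{-1}$ along $E^{u}$, the (strong) bunching inequalities supply the summability and the uniform H\"older control of the top derivative that is needed, with the hypothesis $r<k-1$ providing the derivative of slack that is standard in such Livsi\u c-type estimates (for $r=1$ this is soft). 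The argument on $\cW^{s}$-leaves is symmetric. Equivalently, $X$ is a $C^{r}$ lamination along $\hat\cW^{s}$ and along $\hat\cW^{u}$.

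The heart of the proof is to upgrade this to $C^{r}$ regularity of $\Phi$ on all of $M$. In the Anosov case $E^{c}$ is trivial, $T\cW^{s}\oplus T\cW^{u}=TM$, and Journ\'e's lemma concludes at once; the genuine difficulty in the partially hyperbolic case is that these two foliations no longer span, so one must also control $\Phi$ in center directions, and this is where accessibility becomes unavoidable. Here I would invoke the machinery of fake invariant foliations for $\hat f$ (in the style of Burns--Wilkinson): strong $r$-bunching is precisely calibrated so that the stable and unstable holonomies of $\hat f$ between local center transversals are uniformly $C^{r}$, not merely H\"older. Restricting these to the $su$-saturated set $X$, the accessibility ($su$-) holonomies of $\hat f$ acting on $X$ are $C^{r}$. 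By a Baire/compactness argument, accessibility of $f$ upgrades to the uniform statement that nearby points of $M$ are joined by $su$-paths with a bounded number of legs of bounded length depending continuously on the endpoints. Transporting $\Phi$ along such paths expresses its variation in the remaining (center) directions as a composition of boundedly many $C^{r}$ holonomies; feeding this together with the leafwise $C^{r}$ regularity of the previous step into a suitably iterated form of Journ\'e's lemma yields $\Phi\in C^{r}$ near every point, hence on $M$.

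I expect the third paragraph to be the main obstacle. Obtaining $C^{r}$ (rather than merely H\"older) regularity of the $su$-holonomies of $\hat f$ under strong $r$-bunching — choosing the right fake foliations and tracking the bunching exponents through the graph-transform estimates — and then assembling the leafwise regularity along $\cW^{s}$, $\cW^{u}$ and along the accessibility classes into a single $C^{r}$ statement via a generalized Journ\'e argument, is where essentially all the work lies. By contrast, the reduction to the skew product and the leafwise Livsi\u c regularity are routine, and the borderline case $r=1$ should reduce to the classical $C^{1}$ section theorem of Hirsch--Pugh--Shub and require no fake-foliation machinery.
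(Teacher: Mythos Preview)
Your outline is essentially the paper's own strategy: reduce to the skew product, show the graph is bisaturated, observe that strong $r$-bunching of $f$ makes the extension $r$-bunched, and then prove a general theorem that bisaturated sections of $r$-bunched extensions are $C^r$ (the paper's Theorem~C). The leafwise regularity along $\cW^{s}$ and $\cW^{u}$, the uniform accessibility lemma, the use of fake foliations, and the final appeal to an iterated Journ\'e argument are all exactly as in the paper.

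There is, however, a real gap in your third paragraph, and it is precisely the point where most of the paper's work is concentrated. You write that ``the accessibility ($su$-) holonomies of $\hat f$ acting on $X$ are $C^{r}$.'' This is not true as stated, and the tension here is the crux of the argument. What strong $r$-bunching buys is that the \emph{fake} stable/unstable holonomies between \emph{fake} center plaques are $C^{r}$; but the fake holonomies do \emph{not} preserve the graph $X$. Conversely, the holonomies that do preserve $X$ (by bisaturation) are the \emph{true} $\cW^{s}_{F},\cW^{u}_{F}$-holonomies between center transversals, and these are in general only H\"older, not $C^{r}$. The paper resolves this by introducing, for each accessible sequence $\cS$ from $x$ to $x'$, a ``shadowing'' accessible sequence $\cS_{y}$ for each $y\in\hW^{c}(x)$, and proving (Lemma~11.7) that the $C^{r}$ fake holonomy $\hat{\mathcal H}_{\cS}$ and the graph-preserving map $\mathcal H_{\cS}(y):=\hat{\mathcal H}_{\cS_{y}}(y)$ have the same $\ell$-jet at $x$ up to an $o(d(x,y)^{r-\ell})$ error. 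This is where the $r$-bunching inequalities are really used, via delicate estimates (Lemmas~11.3 and 11.4) on how the jets of fake foliations at nearby basepoints differ.

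A second point you underestimate: the fake center plaques are not a foliation, and along them $\Phi$ does not have honest $C^{r}$ regularity but only ``central $\ell$-jets'' (approximate Taylor expansions at the basepoint). The paper therefore cannot apply Journ\'e's theorem off the shelf; it proves a strengthened version (Theorem~9.4) that accepts transverse \emph{plaque families} and \emph{approximate} $(\ell,\alpha,C)$-expansions with an extra $|z|^{\ell+\alpha}$ error term. The inductive scheme that promotes central $\ell$-jets to central $(\ell+1)$-jets also requires an auxiliary device, $E^{c}$-curves (Lemma~11.9), to run Rademacher-type arguments in the center direction. Your last paragraph correctly predicts that this is where all the work lies, but the specific mechanism --- fake vs.\ true holonomy matched to order $r$, plus an approximate-jet Journ\'e --- is not something one can expect to extract from the standard Burns--Wilkinson toolkit without substantial new input.
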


The center bunching and strong $r$-bunching hypotheses in parts III 
and IV are $C^1$-open conditions and are defined in Section~\ref{s=defs}.
Theorem~\ref{t=main} part IV generalizes all known $C^\infty$ Liv\v sic regularity results for accessible partially hyperbolic diffeomorphisms.  In particular, it applies to all time-$t$ maps of Anosov flows and 
compact group extensions of Anosov diffeomorphisms. Accessibility is a $C^1$ open and
$C^\infty$ dense condition in these classes \cite{BWskew, BPW}.  In dimension $3$, for example, 
the time-1 map of any mixing Anosov flow is stably accessible \cite{BPW}, unless the flow is a
constant-time suspension of an Anosov diffeomorphism.

We also recover the results of \cite{D} 
in the context of compact group extensions of volume-preserving Anosov diffeomorphisms.  
Finally, Theorem~\ref{t=main} also applies to all accessible, partially hyperbolic affine transformations 
of homogeneous manifolds.  A direct corollary that encompasses these cases is:

\begin{corollary}\label{c=main} Let $f$ be $C^\infty$, partially hyperbolic and accessible.  Assume that $Tf\vert_{E^c}$ is isometric in some continuous Riemannian metric. Let $\phi  \colon M\to \RR$ be $C^{\infty}$.
Suppose there exists a continuous function $\Phi \colon M\to \RR$
such that
$$\phi = \Phi\circ f - \Phi.$$
Then $\Phi$ is $C^{\infty}$.  
If, in addition, $f$ preserves volume, then any measurable solution $\Phi$ extends to 
a $C^\infty$ solution.

For any such $f$, and any integer $k\geq 2$, there is a $C^1$ open neighborhood $\U$ of $f$ in $\Diff^{k}(M)$ such that, for any accessible $g\in \U$, and any $C^{k}$ function $\phi\colon M\to \RR$, if
$$\phi = \Phi\circ g - \Phi,$$
has a continuous solution $\Phi$, then $\Phi$ is $C^1$ and also $C^{r}$, for all $r<k-1$. If $g$ also preserves volume, then any measurable solution extends to 
a $C^r$ solution.
\end{corollary}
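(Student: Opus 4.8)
The plan is to deduce the corollary directly from parts III and IV of Theorem~\ref{t=main}. The only substantive point is the first step: the hypothesis on $Tf\vert_{E^c}$ forces $f$ to be center bunched and strongly $r$-bunched for \emph{every} $r\ge 1$. Indeed, in the given continuous metric $Tf^n$ acts by isometries on $E^c$ for every $n$, so the center contributions to the (metric-independent) bunching conditions are trivial, and each strong $r$-bunching inequality --- in particular the center bunching inequality --- reduces to the genuine hyperbolicity of $E^s\oplus E^u$.

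For the $C^\infty$ statement, suppose $\phi\in C^\infty$ and $\Phi$ is a continuous solution of $\phi=\Phi\circ f-\Phi$, i.e.\ of (\ref{e=livsic2}) with $c=0$. For each integer $m\ge 1$ set $k=m+2$; then $f,\phi\in C^k$ and $f$ is strongly $m$-bunched with $m<k-1$, so Theorem~\ref{t=main}, part~IV gives $\Phi\in C^m$. Since $m$ is arbitrary, $\Phi\in C^\infty$. If moreover $f$ preserves volume and $\Phi$ is only assumed measurable, then since $f$ is $C^2$, center bunched, and volume-preserving, Theorem~\ref{t=main}, part~III produces a continuous $\Psi$ with $\Psi=\Phi$ almost everywhere; the almost-everywhere identity $\phi=\Psi\circ f-\Psi$ then holds everywhere by continuity and full support of volume (so the additive constant is $0$), and the preceding argument upgrades $\Psi$ to $C^\infty$.

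For the perturbative assertion, fix an integer $k\ge 2$, choose an adapted Riemannian metric, and let $\nu,\hat\nu<1$ bound $\|Tf\vert_{E^s}\|$ and $\|Tf^{-1}\vert_{E^u}\|$ from above and $\gamma,\hat\gamma$ bound the action of $Tf$ on $E^c$; by the first paragraph we may take $\gamma\equiv\hat\gamma\equiv 1$ and $\max\{\nu,\hat\nu\}\le 1-\delta$ for some $\delta>0$. These functions, together with the splitting, depend continuously on the diffeomorphism in the $C^1$ topology, and partial hyperbolicity is $C^1$-open, so there is a $C^1$-neighborhood $\U$ of $f$ in $\Diff^k(M)$ consisting of partially hyperbolic diffeomorphisms on which the center functions lie in $(1-\eps,1+\eps)$ and $\max\{\nu,\hat\nu\}<1-\tfrac\delta2$, with $\eps=\eps(\delta,k)>0$ small enough that $1-\tfrac\delta2<(1-\eps)^{k}$. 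For any $g\in\U$ every strong $r$-bunching inequality with $r\le k-1$ then holds, its right-hand side being at least $(1-\eps)^{r+1}\ge(1-\eps)^{k}$; so $g$ is center bunched and strongly $r$-bunched for all $r\le k-1$, in particular for $r=1$ and for every $r<k-1$. Hence, if such a $g$ is accessible, $\phi\in C^k$, and $\phi=\Phi\circ g-\Phi$ has a continuous solution $\Phi$, Theorem~\ref{t=main}, part~IV with $r=1$ gives $\Phi\in C^1$ and with each $r<k-1$ gives $\Phi\in C^r$; if $g$ is moreover volume-preserving, Theorem~\ref{t=main}, part~III first replaces a measurable solution by a continuous one (again with vanishing constant), to which the preceding applies.

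The hard part is thus the first step together with the uniformity just used: from the precise definition of strong $r$-bunching in Section~\ref{s=defs} one must extract both that an isometric center annihilates all the bunching obstructions, and that --- for a fixed $k$ --- the finitely many inequalities that bind (those with $r\le k-1$) are robust \emph{as a family} under $C^1$-perturbation, not just one value of $r$ at a time.
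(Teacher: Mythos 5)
Your proof is correct and is exactly the intended deduction: the paper states Corollary~\ref{c=main} as a ``direct corollary'' of Theorem~\ref{t=main} with no separate proof, relying on precisely the observation you make (and that the paper records in Section~\ref{s=defs}) that an isometric central action gives strong $r$-bunching for every $r>0$ with $\gamma=\hat\gamma$ near $1$, after which parts III and IV apply directly, and the perturbative clause follows from the $C^1$-openness of the finitely-pinned bunching inequalities exactly as you argue.
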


The vanishing of the periodic cycles obstruction in Theorem~\ref{t=main}, part I turns out to
be a practical method in many contexts for determining whether (\ref{e=livsic2}) has a solution.
On the one hand, this method has already been used  by Damjanovi\'c and Katok
to establish rigidity of certain partially hyperbolic abelian group actions \cite{DK2}; in this
(locally accessible, algebraic) context, checking that the $PCF$ obstruction vanishes reduces
to questions in classical algebraic $K$-theory (see also \cite{DK1, NitK}).  On the other
hand, for a given accessible partially hyperbolic system, the $PCF$ obstruction provides an infinite codimension
obstruction to solving (\ref{e=livsic2}), and so
the generic cocycle $\phi$ has no solutions
to (\ref{e=livsic2}).  This latter fact follows from recent work of Avila, Santamaria and Viana 
on the related question of vanishing of Lyapunov exponents for linear cocycles over partially hyperbolic
systems (see \cite{ASV}, section 9).

As part of proof of Theorem~\ref{t=main}, part II, we also prove that stable and unstable foliations of any $C^1$ partially hyperbolic diffeomorphism are transversely H{\"o}lder continuous (Corollary~\ref{c=Holder}). 
This extends to the $C^1$ setting the well-known fact that the stable and 
unstable foliations for a $C^{1+\theta}$ partially hyperbolic diffeomorphism
are transversely H{\"o}lder continuous \cite{PSW}.  As far as we know, no previous regularity results were known for $C^1$ systems,
including Anosov diffeomorphisms.
  
In a forthcoming work \cite{AVW} we will use some of the results here to prove 
rigidity theorems for partially hyperbolic diffeomorphisms and group actions.

We now summarize in more detail the previous results in this area:
\begin{itemize}
\item Veech \cite{veech} studied the case when $f$ is a partially hyperbolic toral automorphism
and established existence and regularity results for solutions to (\ref{e=livsic}).
In these examples, there is a definite
loss of regularity between coboundary and transfer function. The examples studied
by Veech differ from those treated here in that they do not have the property of 
accessibility (although they have the weaker property of essential accessibility).

\item Dolgopyat \cite{D} studied equations (\ref{e=livsic}) and (\ref{e=livsic2}) for 
a special class of partially hyperbolic diffeomorphisms -- the compact group extensions of Anosov diffeomorphisms --
in the case where the base map preserves a Gibbs state $\mu$ with H{\"o}lder potential.
Assuming rapid mixing of the group extension with respect to $\mu$, \cite{D} showed that
if the coboundary $\phi$ is $C^\infty$, then any transfer function $\Phi\in L^2(\mu\times {\hbox{Haar}})$
is also $C^\infty$.
Dolgopyat also gave an  example of a partially hyperbolic diffeomorphism with a $C^\infty$ coboundary 
whose transfer map is continuous, but not $C^1$. This example, like Veech's, is essentially accessible,
but not accessible.  We note that when the Gibbs measure $\mu$ is volume, then the rapid mixing assumption in 
\cite{D} is equivalent to accessibility.

\item De la Llave \cite{dlL2}, extended the work of \cite{KK} to give 
some regularity results for the transfer function under strong (nongeneric) local accessibility/regularity hypotheses on bundles.  De la Llave's approach focuses on bootstrapping the regularity of the transfer function from
$L^p$ to continuity and higher smoothness classes
using the transverse regularity of the stable and unstable foliations
in $M$.  For this reason, he makes strong regularity hypotheses on this transverse regularity.
\end{itemize}

While there are superficial similaries between these previous results and Theorem~\ref{t=main},
the approach here, especially in parts II and IV, is fundamentally new and does not
rely on these results.  In particular,
to establish regularity of a transfer function, we take advantage of a form of self-similarity of its graph
in the central directions of $M$.  This self-similarity, known as {\em $C^r$ homogeneity}
is discussed in more detail in the following section.

\section{Techniques in the proof of Theorem~\ref{t=main}}\label{s=techniques}

The proof of parts I and III of Theorem~\ref{t=main} use recent work of Avila, Santamaria and Viana on 
sections of bundles with various saturation properties.  
In \cite{ASV}, they apply these results to show that under suitable conditions, matrix cocycles over partially hyperbolic systems have a nonvanishing Lyapunov exponent.  
Parts I and III of Theorem~\ref{t=main} are translations of some of the main results in \cite{ASV} to the abelian cocycle setting. 

The regularity results in Theorem~\ref{t=main} -- parts II and IV --
comprise the bulk of this paper.

To investigate the regularity of a solution $\Phi$, we examine the graph of $\Phi$ in $M\times \RR$. 
If $\phi$ is H{\"o}lder continuous, then the 
stable and unstable foliations $\W^s$ and $\W^u$ for $f$ lift to two ``stable and unstable'' foliations 
$\W^s_\phi$ and $\W^u_\phi$ of $M\times \RR$, whose leaves
are graphs of H{\"o}lder continuous functions into $\RR$.  These lifted foliations are
invariant under the skew product $(x,t)\mapsto (f(x), t+ \phi(x))$.  
The fact that $\Phi$ satisfies the equation $\phi = \Phi\circ f - \Phi + c$,
for some $c\in\RR$, implies
that the graph of $\Phi$ is saturated by leaves of the lifted foliations.
The leafwise and transverse regularity of these foliations determine
the regularity of $\Phi$. In the most general setting of Theorem~\ref{t=main}, part II,
these foliations are both leafwise and transversely H{\"o}lder continuous,
and this implies the  H{\"o}lder regularity of $\Phi$ when $f$ is accessible.

The proof of higher regularity in part IV
has two main components.  We first describe a simplified version of the proof under an
additional assumption on $f$ called dynamical coherence. 

\begin{dfinition}
A partially hyperbolic diffeomorphism $f$ is {\em dynamically coherent} 
if the distributions $E^c\oplus E^u$, and $E^c\oplus E^s$
are integrable, and everywhere tangent to
foliations $\W^{cu}$ and $\W^{cs}$.
\end{dfinition}
 
If $f$ is dynamically coherent, then
there is also a central foliation $\cW^c$, tangent to $E^c$,
whose leaves are obtained by intersecting the leaves of $\W^{cu}$
and $\W^{cs}$.  The normally hyperbolic theory \cite{HPS} implies
that the leaves of $\W^{cu}$ are
then  bifoliated by the leaves
of $\W^c$ and $\W^u$, and the leaves of $\W^{cs}$ are bifoliated by the leaves
of $\W^c$ and $\W^s$.

Suppose that $f$ is dynamically coherent and that $f$ and $\phi$ satisfy the hypotheses
of part IV of Theorem~\ref{t=main}, for some $k\geq 2$ and $r<k-1$ or $r=1$.  Under these
assumptions, here are the two components of the proof.  The first part of the proof
is to show that $\Phi$ is uniformly $C^r$ 
along individual leaves of $\W^s$, $\W^u$ and $\W^c$.
The second part is to employ a result of Journ\'e to show that smoothness of $\Phi$
along leaves of these three foliations implies smoothness of $\Phi$.

To show that $\Phi$ is smooth along the leaves of $\W^s$ and $\W^u$,
we examine again the lifted foliations for the associated skew product.  The assumption
that $\phi$ is $C^{k}$ implies that the leaves of these lifted foliations are 
$C^r$ (in fact, they are $C^{k}$). This part of the proof does not require dynamical coherence
or accessibility.

To show that $\Phi$ is smooth along leaves of the central foliation, one can use accessibility
and strong $r$-bunching to show that the graph of $\Phi$ over any central leaf $\W^c(x)$ of $f$ is
$C^r$ homogeneous. More precisely, setting $N'= \W^c(x)\times \RR$ and
$N = \{(y,\Phi(y)) :  y\in \W^c(x)\}\subset N'$, we show that
the manifold $N$ is {\em $C^r$ homogeneous in $N'$}:
for any two points $p, q\in N$, there is a $C^r$ local diffeomorphism of $N'$ 
sending $p$ to $q$ and preserving $N$. $C^1$-homogeneous subsets of a manifold
have a remarkable property:

\begin{theorem}\label{t=rss}\cite{rss}
Any locally compact subset $N$ of a  $C^1$ manifold  $N'$ that is $C^1$ homogeneous in $N'$
is a $C^1$ submanifold of $N'$
\end{theorem}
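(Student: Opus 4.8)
Since being a $C^1$ submanifold is a local property, I would begin by passing to a chart, so that $N'$ is an open subset of $\R^n$; shrinking it and using local compactness, I may assume $N$ is closed in an open ball, that $0\in N$, and (the case $N$ discrete near $0$ being trivial) that $0$ is not isolated in $N$. For $p\in N$ let $T_pN$ be the tangent cone at $p$: the set consisting of $0$ together with all $v\in\R^n$ for which $|x_i-p|^{-1}(x_i-p)\to|v|^{-1}v$ for some sequence $x_i\to p$ in $N\setminus\{p\}$. Since $N$ is closed, $T_pN$ is a closed cone, and directly from the definition $N\cap B(p,r)$ lies in a cone about $T_pN$ whose aperture tends to $0$ as $r\to0$. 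The first role of $C^1$-homogeneity is this: if $h$ is a $C^1$ local diffeomorphism of $N'$ with $h(p)=q$ carrying a neighborhood of $p$ in $N$ onto one of $q$ in $N$, then $Dh_p$ is a linear isomorphism of $\R^n$ with $Dh_p(T_pN)=T_qN$; hence all tangent cones of $N$ are linearly isomorphic to a single cone. With some care — using local compactness and the freedom in the choice of the maps $h$ — one also upgrades the aperture bound to a uniform one near $0$: there is a modulus $\omega$ with $\omega(r)/r\to0$ and $N\cap B(p,r)\subseteq\{x:\operatorname{dist}(x-p,T_pN)\le\omega(r)|x-p|\}$ for all $p$ in a neighborhood of $0$.

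The crux is to show that this common cone $C$ is a linear subspace; let $d$ be its (constant over $p$) topological dimension. My plan here is an iterated blow-up. A diagonal argument identifies the tangent cone of $C$ at a nonzero point $v$ as a limit of tangent cones $T_{p_j}N$ with $p_j\to0$ and $|p_j|^{-1}p_j\to|v|^{-1}v$; the uniform control on the differentials $Dh_{p_j}$ prevents these limits from degenerating, so that tangent cone is again linearly isomorphic to $C$ (and the same applies to iterated tangent cones of $C$). On the other hand, a short geometric-measure-theory argument shows that, unless $C$ is already a linear subspace, one can choose $v\in C\setminus\{0\}$ so that the maximal linear subspace through $0$ of the tangent cone of $C$ at $v$ has strictly larger dimension than that of $C$ itself. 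Since all these cones are linearly isomorphic to $C$, that dimension cannot strictly increase, so $C$ must be a $d$-plane. I expect the no-degeneration step to be the main obstacle: it is exactly here that $C^1$-homogeneity, rather than mere topological homogeneity, is indispensable, since it is what supplies the linear maps $Dh_p$ and a handle on their size.

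Granting that $V_p:=T_pN$ is a $d$-dimensional linear subspace for each $p\in N$, the conclusion follows by soft arguments. The map $p\mapsto V_p$ is continuous on $N$: if $p_i\to p$ in $N$ and $V_{p_i}\to W$ in the Grassmannian, then every limit of secant directions at the $p_i$ is a limit of secant directions at $p$, so $W\subseteq V_p$, hence $W=V_p$ by equality of dimension. Combining this with the uniform aperture estimate, one checks that for $x,y\in N$ close to $0$ the secant direction $|x-y|^{-1}(x-y)$ lies within $o(1)$ of $V_0$; in particular orthogonal projection $\pi\colon N\cap B(0,\rho)\to V_0$ is injective for $\rho$ small, and a degree and invariance-of-domain argument, using local compactness and the $d$-dimensionality of the tangent cone, shows its image contains a neighborhood of $0$ in $V_0$. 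Hence near $0$ the set $N$ is the graph over an open subset of $V_0$ of a function $g$; the aperture estimate forces $g$ to be differentiable with derivative at each point determined by the corresponding subspace $V_q$, and continuity of $q\mapsto V_q$ then makes $g$ of class $C^1$. Therefore $N$ is a $C^1$ submanifold of $N'$, of dimension $d$, as claimed.
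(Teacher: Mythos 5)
The paper does not prove Theorem~\ref{t=rss}: it is quoted verbatim from \cite{rss} (Repov\v s, Skopenkov, \v S\v cepin), and the text even remarks that this result is \emph{not} used in proving the main theorem. So there is no ``paper's own proof'' to compare your argument against. What the paper does prove is the complementary Theorem~\ref{t.Crsubmanif} (a $C^k$-homogeneous \emph{$C^1$ submanifold} of a $C^k$ manifold is $C^k$), and its proof runs on entirely different rails: lifting the homogeneity maps to jet bundles (Lemma~\ref{l=smoothgraph}), a Baire-category decomposition of the ball into sets $\cG(A,\rho)$ on which the lifted maps are uniformly bilipschitz, a telescoping estimate showing the prolongation $j^\ell\Phi$ is Lipschitz, Rademacher's theorem, and then homogeneity once more to propagate a point of differentiability. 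That proof starts from a $C^1$ manifold; it does not touch the tangent-cone question at all.

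Taken on its own terms, your sketch is in the spirit of the tangent-cone approach, but the two places you flag as requiring ``some care'' are precisely where the entire difficulty sits, and neither is discharged. First, $C^1$-homogeneity only gives, for each ordered pair $(p,q)\in N\times N$, \emph{some} $C^1$ local diffeomorphism $h_{p,q}$ carrying $(N,p)$ to $(N,q)$; the definition imposes no bound on $\|Dh_{p,q}\|$, $\|Dh_{p,q}^{-1}\|$, or the modulus of continuity of $Dh_{p,q}$, nor any continuity in $(p,q)$. Consequently the ``uniform aperture'' estimate and the ``no-degeneration'' claim in the blow-up (that $Dh_{p_j}$ have a nondegenerate subsequential limit) do not follow from what you have in hand. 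The standard way to create the missing uniformity is exactly the Baire argument the paper uses for Theorem~\ref{t.Crsubmanif}: fix a countable dense set of candidate linear maps and radii, show the sublevel sets are closed and cover $N$, and restrict to one with interior. If that is what ``freedom in the choice of $h$'' was gesturing at, it needs to be carried out; as written there is a real gap.

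Second, the dimension-jumping step for the cone $C$ is stated incorrectly. It is not true for general closed cones that one can find $v\in C\setminus\{0\}$ whose tangent cone has a strictly larger maximal linear subspace: for $C=\{z=0\}\cup\{(0,0,t):t\geq 0\}\subset\RR^3$ the maximal linear subspace at $0$ has dimension $2$, while at every nonzero $v\in C$ the tangent cone's maximal linear subspace has dimension at most $2$ and sometimes only $1$. What is actually needed (and what your setup supplies after the no-degeneration step) is: a closed cone $C$, all of whose tangent cones at nonzero points are \emph{linearly isomorphic} to $C$, must be a linear subspace. That statement is plausible and, I believe, is what \cite{rss} actually establishes, but it is not a ``short geometric-measure-theory argument''---it is the heart of the theorem, and merely observing that linear isomorphism preserves the dimension of the maximal linear subspace is not enough (in the counterexample above, the tangent cone at interior points of the plane is a $2$-plane, which is not linearly isomorphic to $C$; that failure of isomorphism is what the proof must exploit, and your sketch does not).
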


If $r=1$, we can apply this result to obtain 
that the graph of $\Phi$ is $C^1$ over any center manifold.  Hence $\Phi$ is $C^1$
over center, stable, and unstable leaves, which implies that $\Phi$ is $C^1$.
This completes the proof in the case $r=1$ (assuming dynamical coherence).

In fact we do not use the results in \cite{rss} in the proof of Theorem~\ref{t=main} 
but employ a different technique
to establish smoothness, which also works for $r>1$ and in the non-dynamically
coherent case. Our methods also show:

\begin{main}\label{t.Crsubmanif} For any integer $k\geq 2$, any $C^k$ homogeneous, $C^1$
submanifold of a $C^k$ manifold is a $C^k$ submanifold.
\end{main}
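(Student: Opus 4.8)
The plan is to argue by induction on the regularity, bootstrapping from the $C^1$ structure that is already assumed. So suppose $N \subset N'$ is a $C^1$ submanifold of a $C^k$ manifold $N'$, and that $N$ is $C^k$ homogeneous in $N'$: for any $p,q \in N$ there is a $C^k$ local diffeomorphism of $N'$ fixing $N$ and sending $p$ to $q$. We want to upgrade the regularity of $N$ one derivative at a time, from $C^\ell$ to $C^{\ell+1}$ for each $\ell < k$, until we reach $C^k$. Fix a base point $p_0 \in N$. Working in a $C^k$ chart of $N'$ around $p_0$ in which $N$ appears (since it is $C^1$) as the graph of a $C^1$ map $g : U \to \RR^{c}$, where $U$ is open in $\RR^{d}$ and $d = \dim N$, $c = \mathrm{codim}\, N$, the goal becomes: show that $g$ is actually $C^k$.

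The key mechanism is that local homogeneity lets us transport a one-sided regularity statement from a single good point to every point. First I would establish that $N$ is $C^\ell$ on a \emph{dense open} subset of itself, for the largest $\ell \le k$ for which this makes sense — in fact one expects $N$ to be $C^k$ on a residual set, since a $C^1$ submanifold of a $C^k$ manifold has a well-defined (possibly empty, but generically nonempty) set of points of higher regularity, and Baire-category arguments of the type used in \cite{rss} show this set is residual provided it is nonempty. The nonemptiness has to be extracted from the $C^k$ homogeneity itself: the diffeomorphisms realizing homogeneity form, near each point, a family whose orbit of $p_0$ is all of $N$ (locally), and differentiating this family in the parameter produces $C^{k-1}$ vector fields tangent to $N$; the flows of these vector fields are $C^{k-1}$ and act transitively on $N$ locally, which already forces $N$ to be $C^{k-1}$, hence $C^\ell$ for every $\ell \le k-1$, everywhere. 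Once one knows $N$ is $C^{k-1}$ everywhere, the set where $N$ is $C^k$ is open; it is nonempty because (for instance) a $C^{k-1}$ submanifold that is $C^k$ homogeneous under $C^k$ maps must have at least one point of $C^k$ regularity — the obstruction to $C^k$ regularity, being the "jump" in the $(k-1)$st derivative, is a $C^{k-1}$-but-not-necessarily-continuous quantity that must vanish somewhere by a measure/category argument — and then homogeneity spreads $C^k$ regularity to all of $N$, since a $C^k$ diffeomorphism carries a $C^k$ point of $N$ to a $C^k$ point of $N$.

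Concretely, the inductive step from $C^\ell$ to $C^{\ell+1}$ (for $\ell+1 \le k$) goes as follows. Assuming $N$ is $C^\ell$, the homogeneity diffeomorphisms $h$ are $C^k$, hence $C^{\ell+1}$; composing a $C^{\ell+1}$ chart that straightens $N$ near $p_0$ (which exists on a dense open set by the category argument, and then everywhere by homogeneity) with these diffeomorphisms straightens $N$ near every point, so $N$ is $C^{\ell+1}$ everywhere; the only thing to check is that the straightening at $p_0$ can be taken $C^{\ell+1}$ and not merely $C^\ell$, which is where one invokes that the set of $C^{\ell+1}$-regular points of a $C^\ell$ homogeneous submanifold is nonempty. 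I would prove this nonemptiness by the same Baire-type argument as in \cite{rss}: express the failure of $C^{\ell+1}$ regularity at $x$ as the nonexistence of the $(\ell+1)$st derivative of $g$ at $x$, note that by Rademacher/Lebesgue-differentiation type reasoning applied to the $C^\ell$ graph the $(\ell+1)$st derivative exists a.e. or on a residual set, pick such a point, and conclude.

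The main obstacle I anticipate is precisely this nonemptiness step: showing that a $C^1$ (resp. $C^\ell$) submanifold which is $C^k$ homogeneous has at least one point of higher regularity. Homogeneity gives a transitive pseudogroup action by $C^k$ maps, but a priori the tangency to $N$ could be only $C^1$, so differentiating the action in the parameter is delicate — one must ensure the parametrized family of homogeneity diffeomorphisms can itself be chosen to depend smoothly (at least $C^1$) on the target point $q$, so that the resulting vector fields are genuinely $C^{k-1}$ and not merely measurable. This is the analytic heart of the matter; I would handle it by choosing, for $q$ near $p$, the diffeomorphism $h_q$ via a $C^k$ "exponential-like" construction (e.g. flowing along a $C^k$ vector field on $N'$ realizing a local $C^k$ frame transverse structure, which exists because $N'$ is $C^k$), arranging $q \mapsto h_q$ to be $C^{k-1}$, and then differentiating to produce the required tangent vector fields. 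With that in hand, the bootstrap and the category/homogeneity spreading argument are routine, and the induction terminates at $C^k$, which is the assertion of Theorem~\ref{t.Crsubmanif}.
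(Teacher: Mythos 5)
Your overall inductive framework---working in a chart, upgrading from $C^\ell$ to $C^{\ell+1}$, and using Baire category plus homogeneity to spread regularity from one point to all of $N$---is in the right spirit and overlaps with the paper's strategy. But the key analytic step, which you correctly identify as ``the heart of the matter,'' is not actually resolved by what you propose, and the specific mechanisms you suggest do not work.

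The first problem is the claim that one can differentiate the homogeneity family to produce $C^{k-1}$ vector fields tangent to $N$. Homogeneity gives, for each pair $(p,q)$, \emph{some} $C^k$ local diffeomorphism preserving $N$ and sending $p$ to $q$; it says nothing about the existence of a family $q\mapsto h_q$ that is even measurable in $q$, let alone $C^{k-1}$. Were such a smoothly parametrized family available (say, coming from a Lie group action), the theorem would follow from Bochner--Montgomery \cite{bm}, which the paper cites precisely to flag that this stronger hypothesis is \emph{not} assumed. Your proposed fix---constructing $h_q$ by flowing along a $C^k$ vector field on $N'$ coming from a $C^k$ transverse frame---does not help, because such an ambient construction has no reason to preserve $N$; preserving the merely-$C^1$ set $N$ is exactly the constraint that makes this hard. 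The second problem is the appeal to ``Rademacher/Lebesgue-differentiation type reasoning applied to the $C^\ell$ graph'' to find points where the $(\ell+1)$st derivative exists. A $C^\ell$ function need not have a differentiable $\ell$th derivative anywhere; Rademacher requires Lipschitz control that you do not yet have. The whole point is to manufacture that Lipschitz control, and your argument assumes it rather than proving it.

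The paper's proof takes a genuinely different route here that bypasses both problems. It lifts each homogeneity diffeomorphism $H_u$ to a $C^{k-\ell}$ graph transform $H_u^\ell$ on the jet bundle $J^\ell(\RR^n,\RR^m)$ preserving the image of $j^\ell\Phi$ (Lemma~\ref{l=smoothgraph}), and then defines compact sets $\cG(A,\rho)\subset\overline{B_{\RR^n}(0,1)}$ of points $u$ for which $H_u^\ell$ is uniformly approximable by a fixed linear map $A$ on a $\rho$-ball. A Baire category argument on these sets, indexed by a countable dense set of invertible $A$'s and rational $\rho$'s, produces an open ball $U$ on which the maps $H^\ell_{(u,u')}$ are uniformly bi-Lipschitz and uniformly close to the identity. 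A telescoping argument along a carefully chosen chain $u_0,\dots,u_N$ with $N=\Theta(|u_1-u_0|^{-1})$ then shows directly that $j^\ell\Phi$ is Lipschitz on $U$---this is the analytic input that makes Rademacher applicable. After that, homogeneity spreads a.e.\ differentiability to everywhere, and a point of continuity of the derivative plus homogeneity gives $C^1$. The paper also records (in the remark after Lemma~\ref{l=smoothgraph}) a pitfall your proposal doesn't engage with: applying Theorem~\ref{t=rss} to the image $j^\ell\Phi(\overline{B})$ only shows that this \emph{image} is a $C^1$ submanifold, which does not imply that the \emph{function} $j^\ell\Phi$ is $C^1$---there is an explicit counterexample where the image of $j^1\Phi$ is real analytic but $\Phi$ fails to be $C^2$. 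So the conclusion must be drawn about the parametrization, not just the image, and that is exactly what the telescoping Lipschitz estimate accomplishes.
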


Theorem~\ref{t.Crsubmanif} is stated in \cite{rss} without proof.  In a recent article \cite{skopenkov}
by the third author of \cite{rss}, the problem of whether a $C^k$ homogeneous submanifold
a $C^k$ submanifold is stated as open for $k>1$ (and conjectured to hold).  
Combining Theorems~\ref{t=rss} and ~\ref{t.Crsubmanif}, we obtain
a proof of the conjecture in \cite{skopenkov}:

\begin{corollary}\label{c=homog} Let $k\geq 1$ be an integer. Then any locally compact subset of
a $C^k$ manifold that is $C^k$ homogeneous is a $C^k$ submanifold.
\end{corollary}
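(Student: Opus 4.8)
The plan is to split into two cases according to whether $k=1$ or $k\geq 2$, invoking the two theorems in exactly the right order. The regularity hierarchy is: ``locally compact $C^k$-homogeneous subset'' $\Rightarrow$ ``$C^1$ submanifold'' $\Rightarrow$ ``$C^k$ submanifold.'' So the argument has three lines once the bookkeeping is set up. If $k=1$, the statement is literally Theorem~\ref{t=rss}, so there is nothing to do. If $k\geq 2$, let $N\subset N'$ be a locally compact subset of the $C^k$ manifold $N'$ that is $C^k$ homogeneous in $N'$. First observe that $C^k$ homogeneity implies $C^1$ homogeneity (a $C^k$ local diffeomorphism is in particular a $C^1$ local diffeomorphism), so Theorem~\ref{t=rss} applies and yields that $N$ is a $C^1$ submanifold of $N'$. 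Now $N$ is a $C^1$ submanifold that is $C^k$ homogeneous, which is precisely the hypothesis of Theorem~\ref{t.Crsubmanif}; applying it gives that $N$ is a $C^k$ submanifold of $N'$. This is what we wanted.

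The one genuinely non-formal point — and the reason this deserves a sentence rather than a bare citation — is checking that $N'$ may be taken to be a $C^k$ manifold when we feed it into Theorem~\ref{t.Crsubmanif}, whose statement speaks of a $C^k$ homogeneous $C^1$ submanifold \emph{of a $C^k$ manifold}. Here $N'$ is $C^k$ by hypothesis, so this matches directly; no smoothing of the ambient manifold is needed. One should also note that the homogeneity data is the same in both invocations: the very same family of $C^k$ local diffeomorphisms of $N'$ preserving $N$ and acting transitively on $N$ serves as a witness for $C^1$ homogeneity in the first step and for $C^k$ homogeneity in the second. In particular there is no loss of regularity of the symmetries between the two applications, so nothing needs to be re-established after $N$ is upgraded to a $C^1$ submanifold.

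I do not expect any serious obstacle here: the corollary is a pure concatenation of Theorems~\ref{t=rss} and~\ref{t.Crsubmanif}, and all the real work lives in the proof of Theorem~\ref{t.Crsubmanif} (which is carried out elsewhere in the paper and which I am allowed to assume). If anything, the only place to be slightly careful is the degenerate case $k=1$, where Theorem~\ref{t.Crsubmanif} is not even stated (it requires $k\geq 2$), so the proof must explicitly peel that case off and dispatch it by Theorem~\ref{t=rss} alone. Beyond that, the argument is complete in three lines and requires no estimates.
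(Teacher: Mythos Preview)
Your proposal is correct and matches the paper's approach exactly: the paper states the corollary immediately after Theorems~\ref{t=rss} and~\ref{t.Crsubmanif} with the single sentence ``Combining Theorems~\ref{t=rss} and~\ref{t.Crsubmanif}, we obtain\ldots'' and gives no further argument. Your write-up is precisely the intended combination, including the correct handling of the $k=1$ case via Theorem~\ref{t=rss} alone.
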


A proof of Theorem~\ref{t.Crsubmanif} is given in \cite{bm} under the assumption
that the submanifold is $C^k$ homogeneous under the continuous action of a Lie group
by diffeomorphisms. 
It appears that
Theorem~\ref{t.Crsubmanif} is not an obvious consequence of Theorem~\ref{t=rss} (see the remark after the proof
of Lemma~\ref{l=smoothgraph}).

Returning to the proof of Theorem~\ref{t=main},
assuming dynamical coherence and using Corollary~\ref{c=homog}, 
one can obtain under the hypotheses of part IV that the graph of
the transfer function $\Phi$ over each center manifold is $C^{\lfloor r\rfloor}$.
With some more work, one can obtain that the graph of
the transfer function $\Phi$ over each center manifold is $C^{r}$.
A result of Journ\'e \cite{J} implies that for any $r>1$ that is not an integer, 
and any two transverse foliations with uniformly $C^{r}$ leaves, if a function $\Phi$ is uniformly $C^{r}$
along the leaves of both foliations, then it is uniformly $C^{r}$.
Since $f$ is assumed to be dynamically coherent, the $\cW^c$
and $\cW^s$ foliations transversely subfoliate the leaves of $\cW^{cs}$ .
Applying Journ\'e's result using $\cW^c$ and $\cW^s$, we obtain that
$\Phi$ is $C^{r}$ along the leaves of $\cW^{cs}$.  
Applying Journ\'e's theorem again, this time with
$\cW^{cs}$ and $\cW^u$, we obtain that $\Phi$  is $C^r$.

We have just described a proof of 
part IV under the assumption that $f$ is dynamically coherent.  If we drop the assumption
of dynamical coherence, the assertion that $\Phi$ is ``$C^r$ along center manifolds'' no
longer makes sense, as $f$ might not have center manifolds.  One can find locally invariant
center manifolds that are ``nearly'' tangent to the center distribution (as in \cite{BWannals}),
but the argument described above does not work for these manifolds.  
The analysis becomes considerably more delicate and is described in more detail 
in Section~\ref{s=journe}.  As one of the components in our argument, we prove
a strengthened version of Journ\'e's theorem (Theorem~\ref{t=journe}) that
works for plaque families as well as foliations, and replaces the assumption
of smoothness along leaves with the existence of an ``approximate $r$-jet'' 
at the basepoint of each plaque.

The main result that lies behind the proof of Theorem~\ref{t=main}, part IV is a saturated section 
theorem for fibered partially hyperbolic systems (Theorem~\ref{t=C^rsec}). A fibered partially hyperbolic
diffeomorphism is defined on a fiber bundle and is also a bundle isomorphism, covering a partially hyperbolic diffeomorphism (see Section~\ref{s=bundle}).  In this context, Theorem~\ref{t=C^rsec} states that under the additional
hypotheses that the bundle diffeomorphism is suitably bunched, and
the base diffeomorphism is accessible, then any continuous section of the bundle whose image is 
an accessibility class for the lifted map is in fact a smooth section.  Using Theorem~\ref{t=C^rsec}
it is also possible to extend in part the conclusions of Theorem~\ref{t=main} part IV to (suitably bunched) cocycles
taking values in other Lie groups.  The details are not carried out here, but the reader is referred
to \cite{NT, BWskew, AV}, where some of the relevant technical considerations are addressed (see also
the remark after the statement of Theorem~\ref{t=C^rsec} in Section~\ref{s=bundle}).

Theorem~\ref{t=C^rsec} would follow immediately if the following conjecture is correct.
 \begin{conjecture}\label{c=smoothaccess} Let $f\colon M\to M$ be $C^r$, partially hyperbolic and $r$-bunched.  Then every accessibility class for $f$ is an injectively immersed, $C^r$ submanifold of $M$.
\end{conjecture}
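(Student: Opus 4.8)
The plan is to manufacture the $C^r$ structure on an accessibility class $\mathcal{A}=\mathcal{A}(x)$ out of the two ingredients one can control: the leaves of $\W^s$ and $\W^u$, which are uniformly $C^r$ when $f$ is $C^r$, and the way one of these foliations varies along the leaves of the other, whose regularity is exactly what the $r$-bunching hypothesis is designed to govern. Near a point $p\in\mathcal{A}$, fix a combinatorial type of $su$-path, i.e. a finite word $(\ast_1,\dots,\ast_N)$ with $\ast_i\in\{s,u\}$ and $\ast_{i+1}\neq\ast_i$, and consider the map $\Theta$ sending a tuple of short legs $(v_1,\dots,v_N)$ --- with $v_1$ in the leaf $\W^{\ast_1}(p)$, and $v_{i+1}$ in the leaf $\W^{\ast_{i+1}}$ through the endpoint of $v_i$ --- to the endpoint $\Theta(v_1,\dots,v_N)$ of the resulting $su$-path. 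Since each leaf is $C^r$ and (I claim) varies $C^r$-ly with its base point along transverse leaves, $\Theta$ is a $C^r$ map of a neighborhood of $0$ in a Euclidean space into $M$, with image in $\mathcal{A}$. The first task is to choose $N$ and the word, uniformly in $p$, so that $\Theta$ is a homeomorphism onto its image in the leaf topology and its image is a full neighborhood of $p$ in $\mathcal{A}$.

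The analytic heart is the regularity claim used above: under the $r$-bunching hypothesis, the stable holonomies between nearby unstable leaves, and the unstable holonomies between nearby stable leaves, are uniformly $C^r$ and depend $C^r$-ly on the leaves. I would prove this exactly as Theorem~\ref{t=C^rsec} is proved: encode a candidate holonomy as a continuous section of a bundle of $r$-jets over a piece of $\mathcal{A}$, run the graph transform induced by $f$, and observe that the $r$-bunching inequalities make this graph transform a fiberwise contraction in the $C^r$ norm on jets, so it has a unique invariant section, which one then identifies with the true holonomy by uniqueness. This step uses $f\in C^r$ and $r$-bunching, but not accessibility; it makes each $\Theta$ a genuine $C^r$ map and, more importantly, forces any two such maps --- at the same point but with different words, or at nearby points of $\mathcal{A}$ --- to differ by a composition of stable and unstable holonomies, hence by a $C^r$ diffeomorphism between open subsets of Euclidean spaces.

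Granting these two ingredients, one checks that the rank of $\Theta$ is constant along $\mathcal{A}$, so that $\dim\mathcal{A}$ is well defined, and that the maps $\Theta$ form a $C^r$ atlas with the above overlap maps; then $\mathcal{A}$ carries a $C^r$ manifold structure for which the inclusion into $M$ is an injective immersion (injectivity is automatic because $\mathcal{A}$ is a subset of $M$). An attractive alternative for this last stage, once one knows the $su$-holonomies extend to $C^r$ local diffeomorphisms of $M$ preserving $\mathcal{A}$, would be to note that $\mathcal{A}$ is then $C^r$-homogeneous in $M$ and invoke Corollary~\ref{c=homog}; but that conclusion applies only where $\mathcal{A}$ is locally compact in the subspace topology, which an accessibility class in general is not, so homogeneity cannot by itself produce the immersed, leaf-topology structure.

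The step I expect to be the main obstacle is the first one --- the \emph{local} structure of $\mathcal{A}$: showing that near each of its points the class is swept out by $su$-paths of uniformly bounded combinatorial length with uniformly controlled legs, and that the resulting subset of $M$ is, locally, a topological manifold of locally constant dimension. This amounts to an ``automatic local accessibility within each accessibility class,'' and the difficulty of describing how stable and unstable plaques interleave --- with no dynamical coherence to organize them and none of the algebraic structure present in the examples where local accessibility is known --- is precisely the difficulty that has prevented local accessibility from being verified in general. The holonomy estimate, however sharp, is useless without this topological control, which is why Conjecture~\ref{c=smoothaccess} remains open.
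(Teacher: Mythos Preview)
This statement is labeled a \emph{Conjecture} in the paper, and the paper does not prove it; there is no ``paper's own proof'' to compare against. Your proposal is therefore not in conflict with anything in the paper --- and indeed your final paragraph reaches the correct conclusion, that the conjecture remains open.

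Your analysis is in fact finer-grained than, but consistent with, the paper's own remark following the conjecture. The paper says only that for \emph{locally compact} accessibility classes it should be possible to combine the techniques of \cite{rss} (to get a $C^1$ submanifold) with the methods of the paper (to upgrade to $C^r$). You make exactly this observation when you note that the $C^r$-homogeneity route via Corollary~\ref{c=homog} is available precisely under local compactness, and not otherwise. Your alternative route --- building charts directly from endpoint maps $\Theta$ of $su$-paths of bounded combinatorial type --- is a reasonable strategy the paper does not discuss, and you correctly identify its weak point: without something like local accessibility, one has no uniform control on the combinatorics needed to make the images of the $\Theta$ cover a full leaf-topology neighborhood, nor to guarantee constant rank. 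That topological step is indeed the missing ingredient, and the paper offers no way around it either.

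One small caution on the analytic side: your claim that stable holonomy between unstable leaves is $C^r$ under $r$-bunching is stronger than what the paper actually establishes. Proposition~\ref{p=localfol}(7) gives $C^r$ regularity of $\hW^s$ \emph{restricted to leaves of} $\hW^{cs}$ (and dually), not transversely across the full manifold; the genuine $\W^s$-holonomy between $\W^u$-leaves is only known to be H\"older (Proposition~\ref{p=Holder}). So even the regularity input to your chart construction would need more care than you indicate.
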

For locally compact accessibility classes, it should be possible to prove Conjecture~\ref{c=smoothaccess}
using the techniques from \cite{rss} to show that the accessiblity class is a submanifold
and the methods developed in this paper to show that the submanifold is smooth.  

\section{Partial hyperbolicity and bunching conditions}\label{s=defs}

We now define the bunching hypotheses in Theorem~\ref{t=main}; to
do so, we give a more precise definition of partial hyperbolicity.                     
Let $f:M\to M$ be a diffeomorphism of a compact 
manifold $M$.  We say that $f$ is {\em partially hyperbolic} if the 
following holds.
First, there is a nontrivial splitting of the tangent bundle,
$TM=E^s\oplus E^c\oplus E^u$, that is invariant under
the derivative map $Tf$.
Further, there is
a Riemannian metric for which we can choose continuous positive
functions $\nu$,  $\hat\nu$, $\gamma$ and $\hat\gamma$ with
\begin{eqnarray}\label{e=defn}
 \nu, \hat\nu < 1\quad\hbox{ and } \quad
 \nu <  \gamma < \hat\gamma^{-1} < \hat\nu^{-1}
 \end{eqnarray}
such that, for any unit vector $v\in T_pM$,
\begin{alignat}{4}\label{e=defn2}
\|&Tf v\| <  \nu(p) ,&\qquad \hbox{if }v\in E^s(p),  \\
\gamma(p) < \|&Tf v\|  <
\hat\gamma(p)^{-1} ,&\qquad \hbox{if } v\in E^c(p), \\
\hat\nu(p)^{-1}< \|&Tf v\|  ,&\qquad \hbox{if } v\in E^u(p).
\end{alignat}

We say that $f$ is {\em center bunched} if 
 the functions
$\nu, \hat\nu, \gamma$, and $\hat\gamma$
can be chosen so that:
\begin{eqnarray}\label{e=bunch}
\max\{\nu,\hat\nu\} < \gamma\hat\gamma.
\end{eqnarray}

Center bunching means that the hyperbolicity of $f$ dominates the
nonconformality of $Tf$ on the center. 
Inequality (\ref{e=bunch}) always holds
when $Tf\vert_{E^c}$ is conformal.
For then we have $\|T_pf v\| = \|T_pf|_{E^c(p)}\|$
for any unit vector $v \in E^c(p)$, and hence we can choose $\gamma(p)$
slightly  smaller and $\hat\gamma(p)^{-1}$ slightly bigger than
 $$
 \|T_pf|_{E^c(p)}\|.
 $$
By doing this we may make the ratio $\gamma(p)/\hat\gamma(p)^{-1} = \gamma(p)\hat\gamma(p)$ 
arbitrarily close to 1, and hence larger than both $\nu(p)$ and
$\hat\nu(p)$. In particular, center bunching holds whenever $E^c$ is one-dimensional.
The center bunching hypothesis considered here is natural and appears
in other contexts, e.g. \cite{BP1, BV, AV, NT, NicP}.  

For $r>0$, we say that $f$ is {\em $r$-bunched} if 
 the functions
$\nu, \hat\nu, \gamma$, and $\hat\gamma$
can be chosen so that:
\begin{eqnarray}\label{e=rbunch}
\nu < \gamma^r, \qquad \hat\nu < \hat\gamma^r\\
\nu < \gamma\hat\gamma^r, \quad \hbox{and } \quad  \hat\nu < \hat\gamma\gamma^r.
\end{eqnarray}

Note that every partially hyperbolic diffeomorphism is $r$-bunched, for some $r>0$.  
The condition of $0$-bunching is
merely a restatement of partial hyperbolicity, and $1$-bunching is center bunching.
The first pair of inequalities in (\ref{e=rbunch}) are $r$-normal hyperbolicity
conditions; when $f$ is dynamically coherent, these inequalities
ensure that the leaves of $\W^{cu}$, $\W^{cs}$, and $\W^c$ are $C^r$. 
Combined with the first group of inequalities, the second group of inequalities imply
that $E^u$ and $E^s$ are ``$C^r$ in the direction of  $E^c$.''  More
precisely, in the case that $f$
is dynamically coherent, the $r$-bunching inqualities imply that the
restriction of $E^u$ to $\W^{cu}$ leaves is a $C^r$ bundle and the
restriction of $E^s$ to $\W^{cs}$ leaves is a $C^r$ bundle.

For $r>0$, we say that $f$ is {\em strongly $r$-bunched} if the functions
$\nu, \hat\nu, \gamma$, and $\hat\gamma$
can be chosen so that:
\begin{eqnarray}\label{e=rbunchstrong}
\max\{\nu,\hat\nu\} < \gamma^r, \qquad \max\{\nu,\hat\nu\} < \hat\gamma^r\\
\nu < \gamma\hat\gamma^r, \quad \hbox{and } \quad  \hat\nu < \hat\gamma\gamma^r.
\end{eqnarray}
We remark that if $f$ is partially hyperbolic and there exists a Riemannian 
metric in which $Tf\vert_{E^c}$ is isometric, then $f$ is strongly $r$-bunched, for every $r>0$;
given a metric $\|\cdot\|$ for which $f$ satisfies (\ref{e=defn2}), 
and another metric $\|\cdot\|'$ in which $Tf\vert_{E^c}$ is isometric, 
it is a straighforward exercise to construct a Riemannian metric $\|\cdot\|''$  for which inequalities (\ref{e=rbunchstrong}) hold, with $\gamma=\hat\gamma\equiv 1$.

The reason strong $r$-bunching appears as a hypothesis in Theorem~\ref{t=main}
is the following.  Suppose that $f$ is partially hyperbolic and
that $\phi\colon M\to \RR$ is $C^1$.  Then the skew product
$f_\phi\colon M\times \RR/\ZZ \to M\times \RR/\ZZ$ given by
$$f_\phi(x,t) = (f(x), t+\phi(x))$$
is partially hyperbolic, and if $f$ is strongly $r$-bunched then $f_\phi$
is $r$-bunched.  This skew product and the corresponding lifted skew product
on $M\times \RR$ appears in a central way in our analysis, as we 
explain in the following section.

\subsection{Notation}

Let $a$ and $b$ be real-valued functions, with $b\ne 0$.
The notation $a=O(b)$ means that the ratio $|a/b|$ is bounded above,
and $a=\Omega(b)$ means $|a/b|$ is bounded below; $a=\Theta(b)$ means that
$|a/b|$ is bounded above and below. Finally,  $a=o(b)$ means that
$|a/b|\to 0$ as $b\to 0$.  
Usually $a$ and $b$ will depend on either an integer
$j$ or a real number $t$ and on one or more points in $M$. 
The constant $C$ bounding the appropriate ratios 
must be independent of $n$ or $t$ and the choice of the points. 

The notation $\alpha < \beta$, where 
$\alpha$ and $\beta$ are continuous functions, means that the inequality
holds pointwise, and the function $\min\{\alpha,\beta\}$ takes
the value $\min\{\alpha(p),\beta(p)\}$ at the point $p$.

We denote the Euclidean norm by $|\cdot|$. 
If  $X$ is a metric space and
$r>0$ and $x\in X$, the notation $B_{X}(x,r)$ denotes the open ball about $x$ of radius $r$.  
If the subscript is omitted, then the ball is understood to be in $M$.
Throughout the paper, $r$ always denotes a real number and
$j,k,\ell,m,n$ always denote integers.  $I$ denotes the interval $(-1,1)\subset \RR$,
and $I^n\subset \RR^n$ the $n$-fold product. 

If $\gamma_1$ and $\gamma_2$ are paths in $M$, then $\gamma_1\cdot\gamma_2$
denotes the concatenated path, and $\overline\gamma_1$ denotes the reverse path.

Suppose that $\F$ is a foliation of an $m$-manifold $M$ 
with $d$-dimensional smooth leaves. 
For $r>0$, we denote by 
$\F(x,r)$ the connected component of $x$ 
in the intersection of $\F(x)$ with the ball $B(x,r)$.

A {\em foliation box for
$\F$} is the image $U$ of 
$\RR^{m-d}\times \RR^{d}$ under a homeomorphism that sends each
vertical $\RR^d$-slice into a leaf of $\F$. The images of the
vertical $\RR^d$-slices will be called {\em local leaves of $\F$ in $U$}.

A {\em smooth transversal} to $\F$ in $U$ is a smooth codimension-$d$
disk in $U$ that intersects each local leaf in $U$ exactly once and whose
tangent bundle is uniformly transverse to $T\F$.
If $\Sigma_1$ and $\Sigma_2$ are two smooth transversals to $\F$ in $U$,
we have the {\em holonomy map} $h_{\F}: \Sigma_1 \to \Sigma_2$,
which takes a point in $\Sigma_1$ to the intersection  of its local leaf
in $U$ with $\Sigma_2$.

Finally, for $r>1$ a nonintegral real number, $M, N$ smooth manifolds, the $C^r$ metric 
on $C^r(M,N)$ is defined in local charts by:
$$d_{C^r}(f,g) =  d_{C^{\lfloor r\rfloor}}(f,g) + d_{C^0}(D^{\lfloor r\rfloor}f, D^{\lfloor r\rfloor}g).$$
This metric generates the (weak) $C^r$ topology on $C^r(M,N)$.

\section{The partially hyperbolic skew product associated to a cocycle}
Let $f:M\to M$ be $C^k$ and partially hyperbolic and let $\phi:M\to \RR$ be $C^{\ell,\alpha}$, for some integer $\ell\geq 0$ and $\alpha \in [0,1]$, with $0 < \ell + \alpha \leq k$.  
Define the skew product $f_\phi:M\times \RR\to M\times \RR$ by
$$f_\phi(p,t) = (f(p), t + \phi(p)).$$

The following proposition is the starting point for our proof of Theorem~\ref{t=main}.
\begin{proposition}\label{p=liftfol} There exist foliations $\W^u_\phi, \W^s_\phi$ of $M\times \RR$ with the following properties.
\begin{enumerate}
\item\label{liftfol1} The leaves of $\W^u_\phi, \W^s_\phi$ are $C^{\ell,\alpha}$.
\item\label{liftfol2} The leaves of $\W^u_\phi$ project to leaves of $\W^u$, and the leaves of $\W^s_\phi$ project to leaves of $\W^s$. Moreover, $(x',t')\in \W^s_\phi(x,t)$ if and only if $x'\in \cW^s(x)$ and 
        $$\liminf_{n\to\infty} d(f_\phi^n(x,t),   f_\phi^n(x',t'))=0.$$
\item\label{liftfol3} Define $T\colon \RR \times M \to \RR\times M$ by $T_t(z,s) = (z, s+t)$.
Then for all $z\in M$ and $s,t\in \RR$: 
$$\W^s_\phi(z, s + t) = T_t\W^s_\phi(z,s).$$
\item\label{liftfol4} If $(x,t)\in M\times\RR$ and $(x',t')\in \cW^s_\phi(x,t)$, then
$$t' - t = \sum_{i=0}^\infty \phi(f^{i}(x')) - \phi(f^{i}(x)) = PCF_{(x,x')} \phi,$$
and if  $(x',t')\in \cW^u_\phi(x,t)$, then
  $$t'-t = \sum_{i=1}^\infty \phi(f^{-i}(x)) - \phi(f^{-i}(x')) = PCF_{(x,x')} \phi.$$
\end{enumerate} 
\end{proposition}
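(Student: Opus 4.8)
\smallskip
\noindent\emph{Proof proposal.}
The plan is to build the leaves of $\cW^s_\phi$ and $\cW^u_\phi$ by hand, as graphs over the leaves of $\cW^s$ and $\cW^u$ whose graphing functions are precisely the series appearing in part~(\ref{liftfol4}); the cocycle (telescoping) structure of those series then glues the graphs into foliations, while parts~(\ref{liftfol2})--(\ref{liftfol4}) are read off by comparing fibre coordinates along orbits. Concretely, for $x\in M$ and $x'\in\cW^s(x)$ set $\Psi^s(x,x')=\sum_{i\ge 0}\bigl(\phi(f^i x')-\phi(f^i x)\bigr)$, for $x'\in\cW^u(x)$ set $\Psi^u(x,x')=\sum_{i\ge 1}\bigl(\phi(f^{-i}x)-\phi(f^{-i}x')\bigr)$, and put $\cW^s_\phi(x,t)=\{(x',\,t+\Psi^s(x,x')):x'\in\cW^s(x)\}$, $\cW^u_\phi(x,t)=\{(x',\,t+\Psi^u(x,x')):x'\in\cW^u(x)\}$. (Morally this is the same as observing that, at least when $\ell\ge 1$, the skew product $f_\phi$ is itself a $C^{\ell,\alpha}$ partially hyperbolic diffeomorphism of $M\times\RR$---the extra $\RR$-direction being neutral and isometric---whose strong stable and unstable bundles are graphs over $E^s$ and $E^u$, so that $\cW^s_\phi,\cW^u_\phi$ are just its strong stable and unstable foliations in the sense of \cite{HPS}; I prefer the hands-on version because it also covers the merely H{\"o}lder case $\ell=0$, in which $f_\phi$ is not differentiable in the $M$-direction, and because it produces~(\ref{liftfol4}) along the way.)

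\smallskip
The one step with quantitative content is part~(\ref{liftfol1}): showing that, for fixed $x$, the map $x'\mapsto\Psi^s(x,x')$ is uniformly $C^{\ell,\alpha}$ on precompact pieces $\cW^s(x,R)$ of the leaf (and likewise for $\Psi^u$). The leaves of $\cW^s$ are $C^k$ with uniformly bounded geometry---this is the standard leafwise regularity of the strong stable foliation \cite{HPS, BP1}, and it uses only the domination built into partial hyperbolicity, not any bunching hypothesis (the base direction $E^s$ being the one most expanded by $f^{-1}$, the graph transform for $W^{ss}$ contracts on $C^j$ for every $j$)---and $f$ carries them to leaves while contracting along them at a rate bounded by some $\lambda<1$. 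In charts of bounded geometry one then has uniform exponential bounds $\|D^j(f^n|_{\cW^s})\|=O(\lambda^n)$ for $1\le j\le k$ and $\operatorname{diam} f^n(\cW^s(x,R))=O(\lambda^n)$. Inserting these together with $\phi\in C^{\ell,\alpha}$ into a direct estimate when $\ell=0$, and into the Fa\`a di Bruno formula when $\ell\ge 1$, shows that the $i$th term $x'\mapsto\phi(f^i x')-\phi(f^i x)$ of the series has $C^{\ell,\alpha}(\cW^s(x,R))$-norm $O(\lambda^{i\theta})$ for some $\theta>0$, uniformly in $x$, so the series converges in $C^{\ell,\alpha}$ on each plaque. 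This is a routine estimate on exponentially contracted compositions, of the type familiar from Liv\v sic theory, and I would not belabour it---but it is where the real work sits.

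\smallskip
The remainder is formal telescoping. One checks $\Psi^s(x,x)=0$ and $\Psi^s(x,x')+\Psi^s(x',x'')=\Psi^s(x,x'')$ for $x,x',x''$ on one $\cW^s$-leaf, hence $\Psi^s(x',x)=-\Psi^s(x,x')$; therefore ``$(x',t')\sim(x,t)$ iff $x'\in\cW^s(x)$ and $t'-t=\Psi^s(x,x')$'' is an equivalence relation, so the sets $\cW^s_\phi(x,t)$ partition $M\times\RR$. Over a foliation box $U$ of $\cW^s$ in $M$, with $x_0(x)$ the basepoint of the plaque of $x$, the shear $(x,t)\mapsto(x,\,t-\Psi^s(x_0(x),x))$ of $U\times\RR$ is---by the previous paragraph---a homeomorphism carrying the sets $\cW^s_\phi$ to the horizontal leaves (plaque)$\times\{t\}$; combined with a product chart for $U$ it exhibits $U\times\RR$ as a foliation box for $\cW^s_\phi$ with $C^{\ell,\alpha}$ plaques, and the cocycle identity makes these boxes overlap-compatible. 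Thus $\cW^s_\phi$ is a continuous foliation with $C^{\ell,\alpha}$ leaves projecting onto the leaves of $\cW^s$, and symmetrically $\cW^u_\phi$; this is~(\ref{liftfol1}) together with the projection half of~(\ref{liftfol2}). A further telescoping identity $\Psi^s(x,x')+\phi(x')-\phi(x)=\Psi^s(fx,fx')$ gives $f_\phi(\cW^s_\phi(x,t))=\cW^s_\phi(f_\phi(x,t))$, and adding a constant to every fibre coordinate gives $T_t\cW^s_\phi(z,s)=\cW^s_\phi(z,s+t)$, which is~(\ref{liftfol3}).

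\smallskip
Finally, since $f_\phi^n(x,t)=\bigl(f^n x,\,t+\sum_{i=0}^{n-1}\phi(f^i x)\bigr)$, for $(x',t')\in\cW^s_\phi(x,t)$ the fibre coordinates of $f_\phi^n(x,t)$ and $f_\phi^n(x',t')$ differ by $(t-t')+\sum_{i=0}^{n-1}\bigl(\phi(f^i x)-\phi(f^i x')\bigr)$, which is the tail of a convergent series and hence tends to $0$, while the base coordinates converge because $\cW^s$ contracts; thus $d(f_\phi^n(x,t),f_\phi^n(x',t'))\to 0$, and letting $n\to\infty$ in the fibre identity proves~(\ref{liftfol4}). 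Conversely, if $x'\in\cW^s(x)$ and $\liminf_n d(f_\phi^n(x,t),f_\phi^n(x',t'))=0$, let $\tilde t$ be the unique height with $(x',\tilde t)\in\cW^s_\phi(x,t)$: the forward direction gives $d(f_\phi^n(x,t),f_\phi^n(x',\tilde t))\to 0$, while $d\bigl(f_\phi^n(x',t'),f_\phi^n(x',\tilde t)\bigr)\equiv|t'-\tilde t|$ because these two points share a base point and $f_\phi$ translates each fibre rigidly; the triangle inequality then forces $\liminf_n d(f_\phi^n(x,t),f_\phi^n(x',t'))\ge|t'-\tilde t|$, so $t'=\tilde t$, which completes~(\ref{liftfol2}). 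The $\cW^u_\phi$ statements are identical with $f_\phi^{-1}$ and backward orbits replacing $f_\phi$ and forward orbits. The only genuine obstacle in the whole argument is the leafwise $C^{\ell,\alpha}$ estimate of the second paragraph; everything surrounding it is bookkeeping with telescoping series.
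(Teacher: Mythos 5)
Your proposal is correct in its main architecture and reaches all four conclusions, but it takes a genuinely different route from the paper's. For $\ell\geq 1$, the paper simply observes that $f_\phi$ descends to a $C^{\ell,\alpha}$ partially hyperbolic diffeomorphism of $M\times\RR/\ZZ$ and invokes the standard invariant-manifold theory of \cite{HPS,BP1}, then lifts; for $\ell=0$ it defers to a separate graph-transform argument (Proposition~\ref{p=liftfolHolder}), exhibiting the unstable plaques as the fixed section of a fibrewise contraction on a Banach bundle of H{\"o}lder germs. You instead construct both foliations uniformly, for all $\ell\geq 0$, as explicit graphs of the Liv\v sic-type series $\Psi^s,\Psi^u$. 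This is a legitimate alternative: it absorbs the $\ell=0$ and $\ell\geq 1$ cases into one computation and yields part~(\ref{liftfol4}) by construction, at the cost of having to establish the $C^{\ell,\alpha}$-convergence estimate by hand (the Fa\`a di Bruno computation you sketch, which is exactly what the paper's graph transform packages away as the invariant-section theorem). For $\ell=0$ the two proofs are essentially the same fixed point, computed explicitly by you and abstractly by the paper. Your characterization of $\cW^s_\phi$ via the rigidity of the fibre translation -- $d(f_\phi^n(x',t'),f_\phi^n(x',\tilde t))\equiv|t'-\tilde t|$ -- is a nice clean way to close the ``if and only if'' in part~(\ref{liftfol2}).

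One point deserves flagging. With $\Psi^s(x,x')=\sum_{i\geq 0}\bigl(\phi(f^ix')-\phi(f^ix)\bigr)$ and $t'=t+\Psi^s(x,x')$, the fibre difference you compute is
\[
(t-t')+\sum_{i=0}^{n-1}\bigl(\phi(f^ix)-\phi(f^ix')\bigr)=\sum_{i\geq 0}\bigl(\phi(f^ix)-\phi(f^ix')\bigr)+\sum_{i=0}^{n-1}\bigl(\phi(f^ix)-\phi(f^ix')\bigr),
\]
which is not a tail and does not tend to zero; for your ``tail'' argument to work you need $\Psi^s(x,x')=\sum_{i\geq 0}\bigl(\phi(f^ix)-\phi(f^ix')\bigr)$, the opposite sign, and then the difference is $-\sum_{i\geq n}\bigl(\phi(f^ix)-\phi(f^ix')\bigr)\to 0$ as desired. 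The same sign flip occurs in the unstable case. You have evidently copied the sign from the formula printed in part~(\ref{liftfol4}) of the proposition (which, as one can check against the convergence requirement in part~(\ref{liftfol2}) or against the identity $PCF_{(x,x')}(\Phi\circ f-\Phi)=\Phi(x)-\Phi(x')$, appears to carry the opposite sign from what is actually forced), so the root cause is in the source rather than in your reasoning, but as written your definition of $\Psi^s$ and your verification of the decay do not match; the construction should be carried out with $\Psi^s(x,x')=\sum_{i\geq 0}\bigl(\phi(f^ix)-\phi(f^ix')\bigr)$ and the analogous sign for $\Psi^u$. This is a correction of a sign, not of the method.
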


\begin{proof}  The map $f_\phi$ covers the map  $(x,t) \mapsto (f(x), t+\phi(x))$ on the compact
manifold $M\times \RR/\ZZ$, which we also denote by $f_\phi$

In the case where $\ell\geq 1$, (\ref{liftfol1}) and (\ref{liftfol2}) follow directly from the fact that $f_\phi$ is $C^{\ell,\alpha}$
and partially hyperbolic.   The invariant foliations on $M\times \RR/\ZZ$ lift to invariant
foliations on $M\times \RR$. 

For $\ell=0$,  (\ref{liftfol1}) and (\ref{liftfol2}) are the content of Proposition~\ref{p=liftfolHolder},
which is proved in Section~\ref{s=Holder}.

Since $T_t\circ f_\phi = f_\phi\circ T_t$ for all $t\in\RR$, (\ref{liftfol3}) follows easily from  (\ref{liftfol2}).
Finally, (\ref{liftfol4}) is an easy consequence of (\ref{liftfol3}).
\end{proof}

Throughout the rest of the paper, we will mine extensively the properties
of the foliations $\W^s_\phi$ and $\W^u_\phi$: the regularity of the leaves,
their transverse regularity, and their accessibility properties.

This focus on the lifted foliations $\W^s_\phi$ and $\W^u_\phi$ is not
entirely new. Notably, Niti\c c\u a and T{\"o}r{\"o}k \cite{NT} established the 
regularity of solutions to equation (\ref{e=livsic2})
when $f$ is an Anosov diffeomorphism by examining these lifted foliations. 
The key observation in \cite{NT} is that {\em the smoothness of the leaves of 
$\W^s_\phi$ and $\W^u_\phi$ determines the smoothness of the 
transfer function along the leaves of $\W^s$ and $\W^u$.}  
The advantage of the approach in \cite{NT} is that it allowed them to prove
a natural generalization of Theorem~\ref{t=livsicold} to cocycles 
taking values in nonabelian lie groups; provided that the induced skew product
for such a cocycle is partially hyperbolic, the smoothness of the lifted invariant foliations
determines the smoothness of transfer functions when $f$ is Anosov. This focus on the 
foliations for the skew product associated to the cocycle turns out to be crucial
in our setting. 

\section{Saturated sections of admissible bundles}\label{s=asv}

In this section, we define a key property called {\em saturation} and
present some general results about saturated sections of bundles.
In the next section, we apply these results in the setting
of abelian cocycles to prove parts I and III of Theorem~\ref{t=main}.
Throughout this section, $f\colon M\to M$ denotes a partially hyperbolic
diffeomorphism.

Let $N$ be a  manifold, and let  $\pi\colon \cB \to M$  be a fiber bundle,
with fiber $N$. We say that $\cB$ is {\em admissible} if there exist foliations  $\W^s_{\hbox{\tiny lift}}$, $\W^u_{\hbox{\tiny lift}}$ of $\cB$ (not necessarily with smooth leaves) 
such that, for every $z\in \cB$ and $\ast\in \{s,u\}$, 
the restriction of $\pi$ to $\W^\ast_{\hbox{\tiny lift}}(z)$
is a homeomorphism onto $\W^\ast(\pi(z))$.

A more general definition
of admissibility for more general bundles in terms of holonomy
maps is given in \cite{ASV}; we remark that two definitions
are equivalent in this context.  
If $\pi\colon \cB\to M$ is an admissible bundle, then given any $su$-path $\gamma\colon [0,1]\to M$
and any point $z\in \pi^{-1}(\gamma(0))$, there is a unique 
path $\tilde\gamma_z\colon [0,1]\to \cB$ such that:
\begin{itemize}
\item $\pi\tilde\gamma_z = \gamma$,
\item $\tilde\gamma_z(0) = z$,
\item $\tilde\gamma_z$  is a concatenation of finitely many subpaths, 
each of which lies entirely in a single leaf of $\cW^s_{\hbox{\tiny lift}}$,
or $\cW^u_{\hbox{\tiny lift}}$. 
\end{itemize}
We call $\tilde \gamma_z$ an {\em $su$-lift path} and say that  
$\tilde\gamma_z$  is an  {\em $su$-lift loop} if $\tilde\gamma_z(0) = \tilde\gamma_z(1) = z$.
For a fixed $su$-path $\gamma$, the map $H_\gamma\colon \pi^{-1}(\gamma(0))\to \pi^{-1}(\gamma(1))$
that sends $z\in \pi^{-1}(\gamma(0))$ to $\tilde\gamma_z(1) \in \pi^{-1}(\gamma(1))$
is a homeomorphism.  It is easy to see that $H_{\gamma_1\cdot\gamma_2} = H_{\gamma_2}\circ H_{\gamma_1}$
and $H_{\overline{\gamma}} = H_\gamma^{-1}$.

Recall that any accessible sequence $\cS=(x_1,\ldots,x_k)$ determines
an $su$-path $\gamma_{\cS}$.  We fix the convention that
$\gamma_{\cS}$ is a concatenation of leafwise distance-minimizing arcs,
each lying in an alternating sequences of single leaves of $\cW^s$ or $\cW^u$.
Using this identification, we define the holonomy $H_{\cS}\colon
\pi^{-1}(x_1)\to \pi^{-1}(x_k)$ by setting  $H_{\cS} = H_{\gamma_{\cS}}$;
since the leaves of $\cW^u$, $\cW^s$, $\cW^u_{\hbox{\tiny lift}}$, and $\cW^s_{\hbox{\tiny lift}}$
are all contractible, $H_{\cS}$ is well-defined.

\begin{dfinition}\label{d=satdef} Let $\pi\colon \cB\to M$ be an admissible bundle.
A section $\sigma \colon M\to \cB$ is:
\begin{itemize}
\item {\em $u$-saturated} if for every $z\in \sigma(M)$ we have $\W^u_{\hbox{\tiny lift}}(z)\subset \sigma(M)$,
\item {\em $s$-saturated} if for every $z\in \sigma(M)$ we have $\W^s_{\hbox{\tiny lift}}(z)\subset \sigma(M)$,
\item {\em  bisaturated} if $\sigma$ is both $u$- and $s$-saturated, and
\item {\em  bi essentially saturated} if there exist sections $\sigma^u$ ($u$-saturated) and
$\sigma^s$ ($s$-saturated) such that $$\sigma^u = \sigma^s = \sigma\,\,\quad \hbox{a.e. (volume on $M$)}$$
\end{itemize}
\end{dfinition}

It follows from the preceding discussion that if $\sigma\colon M\to \cB$ is a bisaturated section, then for any
$x\in M$, for any accessible sequence $\cS$, from $x$ to $x'$, we have $H_{\cS}(\sigma(x)) = \sigma(x')$.

\begin{theorem}\label{t=asv}\cite{ASV} Let $f\colon M\to M$ be $C^1$ and partially hyperbolic,
let $\pi\colon \cB\to M$ be an admissible bundle over $M$, 
and let $\sigma\colon M\to \cB$ be a section.
\begin{enumerate}
\item\label{asv1}If $\sigma$ is bisaturated, and $f$ is accessible, then $\sigma$ is continuous.
\item\label{asv2}If $f$ is $C^2$ and center bunched, and $\sigma$ is bi essentially saturated, then there
exists a bisaturated section $\sigma^{su}$
such that $\sigma = \sigma^{su}$ a.e. (with respect to volume on $M$)
\end{enumerate}
\end{theorem}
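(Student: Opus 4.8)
The plan is to transport the statement to the $su$-holonomy framework that has just been set up and then quote the structural results of Avila--Santamaria--Viana. Throughout, write $\hW^s = \W^s_{\hbox{\tiny lift}}$, $\hW^u = \W^u_{\hbox{\tiny lift}}$ for brevity.

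For part (\ref{asv1}), suppose $\sigma$ is bisaturated and $f$ is accessible. The key reduction is that continuity is a \emph{local} matter, and locally an admissible bundle looks like a product: pick $x_0\in M$, a foliation box $U$ for $\W^s$ and $\W^u$ around $x_0$ small enough that $\pi^{-1}(U)$ trivializes as $U\times N$, and trivialize so that, say, $\hW^u_{\hbox{\tiny lift}}$ becomes horizontal in the $u$-direction. First I would show $\sigma$ is continuous along a single unstable leaf: for $z\in\sigma(M)$, $u$-saturation forces $\hW^u(z)\subset\sigma(M)$, and since $\pi|_{\hW^u(z)}$ is a homeomorphism onto $\W^u(\pi z)$, the section agrees on $\W^u(\pi z)$ with the (continuous) inverse of this homeomorphism; hence $\sigma$ is continuous along unstable leaves, and symmetrically along stable leaves, with moduli of continuity that are uniform on compact sets because the bundle charts and the leaf homeomorphisms vary continuously. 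Now invoke accessibility: any two nearby points $x,x'$ are joined by an $su$-path, and by the bisaturation remark preceding the theorem, $\sigma(x') = H_{\cS}(\sigma(x))$ for the corresponding accessible sequence. The content of \cite{ASV} is exactly that one can choose such $su$-paths with a \emph{uniformly bounded number of legs of uniformly bounded length} as $x'\to x$ --- this is the ``accessibility implies a quantitative accessibility'' step, and it is where I expect the real work to lie; it is established in \cite{ASV} by a Baire-category / open-dense argument on the set of points reachable by bounded $su$-paths. Granting it, $\sigma(x')$ is obtained from $\sigma(x)$ by composing boundedly many stable/unstable holonomy maps along short legs, each of which moves points a controlled amount by the previous paragraph, so $\sigma(x')\to\sigma(x)$. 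This is the main obstacle: everything else is soft, but the quantitative accessibility input is genuinely the heart of Theorem~\ref{t=asv}(\ref{asv1}) and I would simply cite \cite{ASV} for it rather than reprove it.

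For part (\ref{asv2}), assume $f$ is $C^2$ and center bunched and $\sigma$ is bi essentially saturated, with $u$-saturated $\sigma^u$ and $s$-saturated $\sigma^s$ agreeing a.e. The strategy is to produce a bisaturated section by ``averaging'' along accessibility classes, using the absolute continuity of the $\W^s$ and $\W^u$ foliations. Center bunching is precisely the hypothesis under which Burns--Wilkinson-style arguments give that stable and unstable holonomies are absolutely continuous and, more to the point here, that the partition into accessibility classes behaves measurably. The plan: define $\sigma^{su}(x) = H_{\cS}(\sigma^u(y))$ where $y$ is a ``typical'' point in the accessibility class of $x$ and $\cS$ is an accessible sequence from $y$ to $x$; one must check (i) this is independent of the choices for a.e.\ $x$, using that $\sigma^u = \sigma^s$ a.e.\ together with the fact that along $\W^u$-leaves $\sigma^u$ is genuinely $u$-saturated (so following a $u$-leg never leaves the graph) and along $\W^s$-leaves $\sigma^s$ is $s$-saturated; (ii) that $\sigma^{su}$ so defined is both $u$- and $s$-saturated, which is automatic from the cocycle identity $H_{\gamma_1\cdot\gamma_2}=H_{\gamma_2}\circ H_{\gamma_1}$ once the a.e.\ consistency is known; (iii) that $\sigma^{su}=\sigma$ a.e. The engine for (i) and (iii) is a Fubini argument over $\W^s$ and $\W^u$ plaques using absolute continuity: a.e.\ stable (resp.\ unstable) leaf meets the full-measure set $\{\sigma^u=\sigma^s\}$ in a full-measure subset, and one threads these together along a two-leg $su$-path to move the base point around within the accessibility class. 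Again, the detailed execution of this is carried out in \cite{ASV}, and Theorem~\ref{t=asv}(\ref{asv2}) is a direct translation of their result; the main subtlety --- and the thing I would be most careful about --- is checking that the relevant ``$u$-saturated = $s$-saturated a.e.'' set can be saturated up to a bisaturated set, which is exactly the technical payoff of center bunching. I would therefore organize the proof as: (a) recall from \cite{ASV} the quantitative accessibility statement and the absolute-continuity/essential-saturation machinery; (b) deduce (\ref{asv1}) by the leafwise-continuity-plus-bounded-$su$-paths argument above; (c) deduce (\ref{asv2}) by the averaging-along-accessibility-classes construction, with the Fubini/absolute-continuity verification of a.e.\ consistency as the one step requiring care.
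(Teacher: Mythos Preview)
Your argument for part~(\ref{asv1}) has a genuine gap. You correctly identify that bisaturation gives $\sigma(x') = H_{\cS}(\sigma(x))$ for any accessible sequence $\cS$ from $x$ to $x'$, and that one needs some quantitative accessibility. But you then write that $\sigma(x')$ is obtained from $\sigma(x)$ ``by composing boundedly many stable/unstable holonomy maps along short legs, each of which moves points a controlled amount.'' This conflates two different things. Uniform accessibility gives legs of \emph{bounded} length $L$, not \emph{short} legs; the property that nearby points are joined by $su$-paths with legs of length tending to zero is exactly the \emph{local accessibility} of Katok--Kononenko, which is precisely what the paper is trying to avoid assuming. With legs of length up to $L$, each individual holonomy $H_{(x_i,x_{i+1})}$ is not close to the identity, and your leafwise continuity of $\sigma$ gives no control on the composition.

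The paper's proof closes this gap with a different idea: it constructs, for each $x$, a family of $(K,L)$-accessible sequences $\{\cS_{x,x'}\}$ with the property that $\cS_{x,x'} \to \cS_{x,x}$ as $x'\to x$, where $\cS_{x,x}$ is a \emph{palindromic} accessible cycle (Lemma~\ref{l=pathfamily}, built on Proposition~\ref{p=asvkey} from \cite{ASV}). A palindromic cycle has $H_{\cS_{x,x}} = \mathrm{id}$ on $\pi^{-1}(x)$, so by continuity of $H$ in the accessible sequence one gets $\sigma(x') = H_{\cS_{x,x'}}(\sigma(x)) \to H_{\cS_{x,x}}(\sigma(x)) = \sigma(x)$. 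The legs remain of bounded length throughout; what makes the holonomy close to the identity is that the sequence is close to one that retraces itself, not that the legs are short. Your outline for part~(\ref{asv2}) is broadly in the right spirit and, like the paper, defers the substance to \cite{ASV} and the Burns--Wilkinson machinery.
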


Since we will use a proposition from the proof of Theorem~\ref{t=asv}, (\ref{asv1}) in
our later arguments, we give a sketch of the proof here, including a statement of the key proposition
(Proposition~\ref{p=asvkey} below).  
We remark that the proof of (\ref{asv2}) adapts techniques from \cite{BWannals},
where it is shown that if $f$ is $C^2$ and center bunched,
then any bi essentially saturated {\em subset} of $M$ is essentially bisaturated;
in effect, this is just Theorem~\ref{t=asv} for the bundle
$\cB=M\times\{0,1\}$, with $\W^\ast_{\hbox{\tiny lift}}(x,j) = \W^\ast(x)\times\{j\}$,
for $j\in\{0,1\}$.

\begin{proof}[Sketch of proof of Theorem~\ref{t=asv}, (\ref{asv1})]
We give a slightly modified version of the proof in \cite{ASV},
as we will need the results here in later sections.
The key proposition in the proof is:

\begin{proposition}[\cite{ASV}, Proposition 8.3]\label{p=asvkey} Suppose that $f$ is
accessible.  Then for every $x_0\in M$, there exists $w\in M$ and
an accessible sequence $(y_0(w),\ldots, y_k(w))$ connecting $x_0$ to $w$ and satisfying
the following property: for any $\eps>0$, there exist $\delta>0$ and $L>0$ such
that, for every $z\in B_M(w,\delta)$, there exists an accessible sequence $(y_0(z),\ldots y_K(z))$
connecting $x_0$ to $z$ and such that
$$d_M(y_j(z), y_j(w)) < \eps \quad\hbox{and} \quad d_{\cW^\ast}(y_{j-1}(z),y_j(z)) < L,\quad\hbox{for}\quad j=1,\ldots,K,$$
where $d_{\cW^\ast}$ denotes the distance along the stable or unstable leaf common to the two points.
\end{proposition}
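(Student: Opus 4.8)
The plan is to build the point $w$ and its connecting accessible sequence first, and then show that for nearby $z$ the same sequence can be perturbed to connect $x_0$ to $z$ with controlled leafwise jumps. The starting observation is that accessibility gives, for each $x_0$, an $su$-path from $x_0$ to every point; but we need one distinguished endpoint $w$ at which the $su$-path can be made \emph{transversally spreading}, in the sense that small motions of the terminal point are achievable by small motions of the intermediate points staying on the same leaves. The natural way to produce such a $w$ is to consider the ``accessibility map'' obtained by fixing an accessible sequence $(y_0,\dots,y_k)$ from $x_0$ and then varying the leafwise parameters: this defines a continuous map from a neighborhood of $0$ in a product of (local pieces of) stable and unstable leaves into $M$, sending $0$ to $y_k$. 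If for \emph{some} choice of accessible sequence this map is open at $0$ — equivalently, its image contains a neighborhood of $y_k$ — we take $w=y_k$ and we are essentially done. So the first step is to show that such a sequence exists.

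To get openness, I would argue by contradiction / by a dimension-accumulation argument. Start with \emph{any} accessible sequence from $x_0$ to some point $p_1$; the image of the associated parametrized family is some set $A_1 \ni p_1$. If $A_1$ already contains a neighborhood of $p_1$, stop. Otherwise, $A_1$ is ``thin'' near $p_1$, and one uses accessibility again: from a point of $A_1$ we may append further $su$-legs. The key mechanism is that appending an $\cW^s$-leg then an $\cW^u$-leg (or vice versa) to a family whose image is a $C^0$ ``lamination-like'' set strictly increases the dimension of the reachable set, unless that set is already everything locally — this is exactly the kind of ``$su$-accessibility implies local openness after finitely many legs'' phenomenon. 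Since $\dim M$ is finite, after at most $\dim M$ such enlargements the reachable family must contain an open set, and we let $w$ be a point in the interior, with $(y_0(w),\dots,y_k(w))$ the corresponding accessible sequence realizing $w$ as an interior value of the parametrized accessibility map $\Psi$.

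Once $w$ is interior to the image of $\Psi$, the rest is a continuity/compactness argument. By openness of $\Psi$ (or by a quantitative inverse-function-type statement for the continuous map $\Psi$, using that on a compact parameter neighborhood $\Psi$ is uniformly continuous and surjects onto a ball $B_M(w,\delta_0)$), there is $\delta\in(0,\delta_0]$ so that every $z\in B_M(w,\delta)$ equals $\Psi(\text{parameters})$ for parameters lying in a fixed compact neighborhood $K$ of $0$. Reading off those parameters gives the accessible sequence $(y_0(z),\dots,y_K(z))$ from $x_0$ to $z$ (here $K=k$, the same combinatorial length). The bound $d_{\cW^\ast}(y_{j-1}(z),y_j(z))<L$ holds with $L$ the diameter of the image of $K$ under the leafwise-arc maps (finite, since $K$ is compact and leaves vary continuously in the $C^0$ sense). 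The bound $d_M(y_j(z),y_j(w))<\eps$ is where we shrink $\delta$: as $z\to w$ the corresponding parameters can be chosen to converge to the parameters of $w$ (using openness of $\Psi$ together with properness on $K$, or simply a diagonal/sequential-compactness argument), and then $y_j(z)\to y_j(w)$ uniformly in $j$ by continuity of the leafwise-arc maps; so for $\delta$ small enough all $K$ of these distances are below $\eps$.

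The main obstacle is the middle step: proving that finitely many $su$-legs suffice to reach an \emph{open} set, i.e. that the parametrized accessibility map can be made open at some point. The difficulty is that the stable and unstable foliations are only continuous (indeed, in the $C^1$ setting only transversely H\"older, by Corollary~\ref{c=Holder}), so one cannot invoke the constant-rank or submersion theorem; the enlargement-of-reachable-set argument has to be carried out purely topologically, controlling dimensions of the continuous images $A_i$ and using that $E^s\oplus E^c\oplus E^u=TM$ forces the legs to eventually span all directions. Handling this carefully — in particular ruling out pathological ``dimension-stable but not open'' configurations — is the technical heart of Proposition~\ref{p=asvkey}, and is precisely what is done in \cite{ASV}; I would follow that argument, adapting the bookkeeping so that the length $K$ and the constants $\eps,\delta,L$ come out with the uniformity asserted here.
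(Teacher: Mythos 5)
Note that this proposition is quoted by the paper from \cite{ASV} without proof, so there is no in-text argument to compare against. Your three-step outline --- build a parametrized accessibility map $\Psi$ from leafwise displacements; locate a parameter at which $\Psi$ is open; read off the sequences for $z$ near $w$ by continuity on a compact parameter set --- does capture the broad shape of what \cite{ASV} do, and the first and third steps are reasonable in outline.

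The proposal nevertheless leaves the decisive step unproved, and within it contains two misstatements worth flagging. First, ``$\Psi$ is open at $0$'' is not equivalent to ``the image of $\Psi$ contains a neighborhood of $y_k$''; the former is strictly stronger, and the proposition needs the stronger statement, because local surjectivity alone does not force $y_j(z)\to y_j(w)$ rather than to some unrelated preimage of $w$. Your closing sequential-compactness step inherits this defect: it produces parameter limits depending on the subsequence, which need not match the parameters of $w$, so it does not deliver the uniform $\eps$-closeness the proposition asserts. Second, the ``dimension-accumulation'' heuristic is not correct as stated: an extra $\cW^s$-leg adds nothing once the reachable set is $\cW^s$-saturated, and a subsequent $\cW^u$-leg may keep it inside a lower-dimensional lamination; accessibility only guarantees that the union over all finite stages is $M$, not that dimension grows monotonically, so $\dim M$ iterations do not in general suffice. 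The mechanism that actually works is Baire category: the endpoint sets $A_{K,L}$ of $(K,L)$-accessible sequences from $x_0$ are closed (compactness of the sequence space in $M^{K+1}$ plus continuity of the leaves), accessibility gives $M=\bigcup_{K,L}A_{K,L}$, so some $A_{K,L}$ has nonempty interior; a second, finer category argument over a countable cover of the compact sequence space by small closed balls, pushed to $M$ by the endpoint map, then produces a $w$ and a sequence $(y_0(w),\ldots,y_k(w))$ at which the continuous-at-$w$ local section required by the proposition exists. That second step is the actual content of \cite{ASV}'s proof and is exactly what your sketch acknowledges deferring.
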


For $K\in\ZZ_+$ and $L\geq 0$, we say that $\cS$ is an {\em $(K,L)$-accessible
sequence} if $\cS =(x_0,\ldots,x_K)$ and
$$d_{\cW^\ast}(x_{j-1},x_j) \leq L,\quad\hbox{for}\quad j=1,\ldots,K,$$
where $d_{\cW^\ast}$ denotes the distance along the stable or unstable leaf common to the two points.

If $\{\cS_y = (x_0(y),\ldots,x_K(y))\}_{y\in U}$ 
is a family of $(K,L)$ accessible sequences in $U$ and $x\in U$, we say that $\lim_{y\to x}\cS_y = \cS_x$  if
$$\lim_{y\to x} x_j(y) = x_j(x),\quad\hbox{for}\quad j=0,\ldots K,$$
and we say that $y\mapsto \cS_y$ is uniformly continuous on $U$ if $y\mapsto x_j(y)$
is uniformly continuous, for $j=0,\ldots, K$.
An accessible cycle  $(x_0, \ldots, x_{2k} = x_0)$ is {\em palindromic} if $x_i = x_{2k-i}$, for $i=1,\ldots,k$.
Note that a  palindromic accessible cycle determines an $su$-path of the form $\eta\cdot\overline\eta$;
in particular, if $\cS$ is a palindromic accessible cycle from $x$ to $x$ , then $H_{\cS}$ is the identity
map on $\pi^{-1}(x)$.

The following lemma is stronger than we need for the proof of part (1) of Theorem~\ref{t=asv}, but
will be used in later sections.
\begin{lemma}\label{l=pathfamily} Let $f$ be accessible. There exist $K\in\ZZ_+$, $L\geq 0$ and $\delta>0$
such that for every $x\in M$ there is a family of $(K,L)$- accessible 
sequences $\{\cS_{x,y}\}_{y\in B_M(x,\delta)}$  such that
$\cS_{x,y}$ connects $x$ to $y$, $\cS_{x,x}$ is a palindromic accessible cycle and
$\lim_{y\to x}\cS_{x,y} = \cS_{x,x}$.
The convergence $\cS_{x,y}\to \cS_{x,x}$ is uniform in $x$.
\end{lemma}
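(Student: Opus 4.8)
The plan is to deduce the lemma from Proposition~\ref{p=asvkey} by a compactness-and-concatenation argument. The single accessible sequence from $x_0$ to $w$ in the proposition has to be turned into a \emph{uniform} family indexed by \emph{all} basepoints $x\in M$, producing for each $x$ a family $\{\cS_{x,y}\}_{y\in B_M(x,\delta)}$ with $\cS_{x,x}$ palindromic. Here is how I would organize it.

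First I would fix $x_0\in M$ once and for all and apply Proposition~\ref{p=asvkey} to it, obtaining the point $w=w(x_0)$, the distinguished sequence $(y_0(w),\dots,y_k(w))$ from $x_0$ to $w$, and for each $\eps>0$ the numbers $\delta_\eps, L_\eps$ and the sequences $(y_0(z),\dots,y_K(z))$ from $x_0$ to $z$ for $z\in B_M(w,\delta_\eps)$. The idea is that for a nearby pair $z,z'$ we can travel $z \rightsquigarrow x_0 \rightsquigarrow z'$ by following $\overline{(y_j(z))}$ then $(y_j(z'))$; this is an accessible sequence from $z$ to $z'$ whose leafwise legs have length $\le 2L_\eps$ (adding the leg $d_{\cW^\ast}(y_K(z),x_0)$ if needed, which is controlled) and which, \emph{when $z'=z$}, is literally of the form $\eta\cdot\overline\eta$ — hence palindromic with $H$ equal to the identity. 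Thus we get the desired local family, with uniform constants $(2K{+}1, \Theta(L_\eps))$, on the ball $B_M(w,\delta_\eps)$ around the single point $w$.

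Next I would upgrade from ``around $w$'' to ``around every $x\in M$.'' Cover $M$ by finitely many balls $B_M(w_i,\delta_i)$ of the type produced above — one gets such a ball around \emph{any} point by applying Proposition~\ref{p=asvkey} with that point in the role of $x_0$ (the proposition is stated for an arbitrary $x_0$). Take $K$ to be the max of the (finitely many) lengths, $L$ the max of the (finitely many) leafwise bounds, and $\delta=\min_i\delta_i/2$; pad all the sequences to the common length $K$ by repeating the last point (a repeated point contributes a length-$0$ leaf segment and does not disturb palindromicity or the holonomy). For a general $x\in M$, pick $i$ with $x\in B_M(w_i,\delta_i/2)$; then $B_M(x,\delta)\subset B_M(w_i,\delta_i)$, and the family around $w_i$ restricts to the required family around $x$ — except that I must re-center it to start at $x$ rather than at $w_i$. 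To do this I prepend a fixed short $su$-connection from $x$ to $w_i$ and append its reverse after reaching $y$; concatenating $(x\rightsquigarrow w_i)\cdot(w_i\rightsquigarrow y)$ in the case $y=x$ gives $(x\rightsquigarrow w_i)\cdot\overline\eta\cdot\eta\cdot(\overline{x\rightsquigarrow w_i})$, still a palindrome. Uniform continuity of $y\mapsto \cS_{x,y}$ on $B_M(x,\delta)$ and of the convergence $\cS_{x,y}\to\cS_{x,x}$ in $x$ follows by tracking that all the ingredient maps $z\mapsto y_j(z)$ are continuous on the relevant balls and there are only finitely many charts $i$ to worry about; the only subtlety is the continuity of $z\mapsto y_j(z)$ as $z$ varies, which is exactly the content built into Proposition~\ref{p=asvkey} (the estimates $d_M(y_j(z),y_j(w))<\eps$ for $z\in B_M(w,\delta)$).

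The main obstacle I expect is the re-centering step and the bookkeeping of leafwise lengths: Proposition~\ref{p=asvkey} is phrased relative to a single fixed $x_0$, and one must be careful that the ``short connection'' from $x$ to the chart center $w_i$ has \emph{uniformly} bounded leafwise length and varies \emph{uniformly continuously} with $x$ — this is where a finite subcover and a uniform modulus of continuity for the accessibility holonomies are really used. A secondary point to handle cleanly is that concatenating $su$-paths can momentarily switch consecutive legs to the same type ($s$-then-$s$ or $u$-then-$u$); since such a pair either collapses to a single leaf segment or can be split into $\le 2$ legs of bounded length, this only costs a bounded increase in $K$ and $L$ and does not affect palindromicity. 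Once these are absorbed into the constants, the lemma follows.
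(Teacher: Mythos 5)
Your local construction around $w$ is correct and matches the paper's palindromic idea: for $z,z'$ in the good ball $B_M(w,\delta_\eps)$ produced by Proposition~\ref{p=asvkey}, the concatenation $z\rightsquigarrow x_0 \rightsquigarrow z'$ via $\overline{(y_j(z))}$ and $(y_j(z'))$ gives the required family, and it degenerates to a palindrome when $z'=z$. The problem is the covering step. You write that ``one gets such a ball around \emph{any} point by applying Proposition~\ref{p=asvkey} with that point in the role of $x_0$,'' but this misreads the proposition: applying it to a basepoint $x_0=p$ produces a special point $w=w(p)$ together with a good ball around $w(p)$, \emph{not} a good ball around $p$. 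The point $w$ is where the accessibility holonomy from $x_0$ is uniformly controlled; it is produced by the proposition and there is no reason it should be near $p$, nor that the collection $\{w(p)\}_{p\in M}$ should cover $M$. So you cannot extract a finite cover of $M$ by good balls directly from the proposition, and consequently the step ``pick $i$ with $x\in B_M(w_i,\delta_i/2)$'' has no basis.

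What the paper does to bridge exactly this gap is to first prove Lemma~\ref{l=unifaccess} (accessibility implies \emph{uniform} accessibility): there exist $K_M,L_M$ so that any two points of $M$ are joined by a $(K_M,L_M)$-accessible sequence. The proof of that lemma is the compactness argument you are gesturing at, but run correctly: fix the single good ball $U_w$ once, observe that any two points of $U_w$ are joined through $x_0$ with uniform bounds, then for each $p\in M$ choose some accessible sequence from $p$ to $w$, use continuity of $\cW^s,\cW^u$ to get a neighborhood $V_p$ and a continuously varying family of accessible sequences from $p'\in V_p$ into $U_w$, and take a finite subcover by the $V_p$'s. Once uniform accessibility is in hand, the lemma is proved by concatenating $x\to w\to x_0\to z_{K_M}(x')\to x'$, where $x\to w$ is a uniformly bounded sequence from Lemma~\ref{l=unifaccess} and the varying piece $z_{K_M}(x')\in U_w$ comes from continuity; this collapses to a palindrome at $x'=x$. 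Your flagged ``re-centering'' obstacle is precisely the missing uniform accessibility, and without it the prepended ``short $su$-connection from $x$ to $w_i$'' has no uniform bound on its length or number of legs, and the family of accessible sequences cannot be taken to vary continuously with $x$.
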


\begin{proof}[Proof of Lemma~\ref{l=pathfamily}]
Fix an arbitrary point $x_0\in M$.  Proposition~\ref{p=asvkey} gives a point 
$w\in M$, a neighborhood $U_w$ of $w$,  and a family of 
$(K_0, L_0)$ -accessible sequences $\{(y_0(w'),\ldots, y_{K_0}(w'))\}_{w' \in U_w}$ 
such that $(y_0(w'),\ldots, y_{K_0}(w'))$ connects
$x_0$ to $w'$, and $(y_0(w'),\ldots, y_{K_0}(w'))\to (y_0(w),\ldots, y_{K_0}(w))$ uniformly
in $w'\in U_w$. 

\begin{lemma}[Accessibility implies uniform accessibility]\label{l=unifaccess} 
Let $f$ be accessible. There exist constants $K_M, L_M$
such that any two points $x,x'$ in $M$ can be connected by an $(K_M, L_M)$-accessible
sequence. 
\end{lemma}

\begin{proof}[Proof of Lemma~\ref{l=unifaccess}] First note that, since any point in $U_w$ can be connected to $x_0$ by an $(K_0,L_0)$-accessible sequence, we can connect any two points in $U_w$ by a  $(2K_0,L_0)$-accessible sequence. 

Consider an arbitrary point $p\in M$ and let $(p= q_0, q_1,\ldots, q_{K_p}=w)$
be an $(K_p,L_p)$-accessible sequence connecting $p$ and $w$.  
Continuity of $\W^s$ and $\W^u$ implies that there
is a neighborhood $V_p$ of $p$ and a family of $(K_p,L_p)$-accessible sequences
$\{(p' = q_0(p'), q_1(p'),\ldots, q_{K_p}(p'))\}_{p'\in V_p}$ with the property
that $p'\mapsto (q_0(p'),\ldots, q_{K_p}(p'))$ is uniformly continuous on $V_p$, and the map
$p'\mapsto q_{K_p}(p')$ sends $V_p$ into  $U_w$ and $p$ to $w$.
It easily follows that any two points in $V_p$ can be connected by an
$(K_0+ 2K_y,L_0+L_y)$-accessible sequence.  Covering $M$ by neighborhoods $V_p$,
and extracting a finite subcover, we obtain by concatenating accessible sequences
that there exist constants $K_M, L_M$ such that any two points $x,x'$ in $M$ 
can be connected by an $(K_M, L_M)$-accessible sequence.
\end{proof}

Returning to the proof of Lemma~\ref{l=pathfamily}, we now fix a point $x\in M$, and let $(x= z_0, z_1,\ldots, z_{K_M}=w)$ be an
$(K_M, L_M)$-accessible sequence connecting $x$ to $w$. As above,
 there exists a neighborood $V_x$ of $x$ and a family of $(K_M,L_M)$-accessible sequences
$\{(x' = z_0(x'), z_1(x'),\ldots, z_{K_M}(x'))\}_{x'\in V_x}$ with the property
that the map $$x'\mapsto ( z_0(x'), \ldots, z_{K_M}(x') ) $$ is uniformly continuous on $V_x$, and the map
$x'\mapsto z_{K_M}(x')$ sends $V_x$ into $U_w$ and $x$ to $w$.

For
$x'\in V_x$, we define $\cS_{x,x'}$ by concatenating the accessible sequences
$(x= z_0(x), z_1(x),\ldots, z_{K_M}(x)=w)$, $(w=y_{K_0}(w), \ldots, y_{0}(w) = x_0) $,
$(x_0 = y_0(z_{K_M}(x') ),\ldots, y_{K_0}(z_{K_M}(x') ) = z_{K_M}(x'))$ and 
$(z_{K_M}(x'),\ldots, z_{0}(x') = x' )$.
Then $\{\cS_{x,x'}\}_{x'\in V_x}$ is a family of $(K,L)$-accessible sequences
with the property that $\cS_{x,x'}$ connects $x$ to $x'$, where $K=2K_0 + 2K_M$
and $L= L_0 + L_M$.

Since  $x'\mapsto ( z_0(x'), \ldots, z_{K_M}(x') ) $ is uniformly continuous on $V_x$,  and
$$\lim_{w'\to w} (y_0(w'),\ldots, y_{K_0}(w')) = (y_0(w),\ldots, y_{K_0}(w)),$$  
we obtain that 
$\lim_{x'\to x}\cS_{x,x'} = \cS_{x,x}$. By construction, $\cS_{x,x}$ is palindromic.

Finally, observe that all of the steps in this construction are uniform over $x$, and so
we can choose $\delta>0$ such that $B_M(x,\delta)\subset V_x$, for all $x$, and
further, $\lim_{x'\to x}\cS_{x,x'} = \cS_{x,x}$  uniformly in $x$.  This completes the
proof of Lemma~\ref{l=pathfamily}.
\end{proof}

Returning to the proof of Theorem~\ref{t=asv}, part (\ref{asv1}), fix a point $x\in M$,
and let $\{\cS_{x,x'}\}_{x'\in B_M(x,\delta)}$ be the family of accessible paths given by
Lemma~\ref{l=pathfamily}.  Since $\lim_{x'\to x} \cS_{x,x'} = \cS_{x,x}$ and the lifted
foliations are continuous, it follows that
$$ \lim_{x'\to x} H_{\cS_{x,x'}} = H_{\cS_{x,x}},$$
uniformly on compact sets.  Since $\cS_{x,x}$ is palindromic, we have $H_{\cS_{x,x}} = id\vert_{\pi^{-1}(x)}$.

Let $\sigma\colon M\to \cB$ be a bisaturated section.  Then for any accessible sequence $\cS$ from $x$ to $x'$,
we have $H_{\cS}(\sigma(x)) = \sigma(x')$.  But then
$$ \lim_{x'\to x} \sigma(x') = \lim_{x'\to x} H_{\cS_{x,x'}}(\sigma(x)) = H_{\cS_{x,x}}(\sigma(x)) = \sigma(x),$$
which shows that $\sigma$ is continuous at $x$.
\end{proof}

\begin{proposition}[Criterion for existence of bisaturated section]\label{p=closedloop}  Let $f$
be $C^1$, partially hyperbolic and accessible, and let $\pi\colon \cB\to M$ be admissible.
   Let $z\in \cB$ and let $x=\pi(z)$.  Then there exists a bisaturated section $\sigma\colon M\to \cB$ with
   $\sigma(x) = z$ if and only if  for every  $su$-loop $\gamma$ in $M$ with $\gamma(0)=\gamma(1) = x$,
   the lift $\tilde\gamma_z$ is
   an $su$-lift loop (with $\tilde\gamma_z(0)=\tilde\gamma_z(1) = z$).
\end{proposition}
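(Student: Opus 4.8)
The plan is to prove both directions directly from the definition of bisaturation and the properties of $su$-lift paths established in Section~\ref{s=asv}.

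For the forward direction, suppose $\sigma\colon M\to\cB$ is a bisaturated section with $\sigma(x)=z$. Let $\gamma$ be any $su$-loop based at $x$. Since $\sigma$ is bisaturated, $\sigma(M)$ contains $\W^u_{\hbox{\tiny lift}}(w)$ and $\W^s_{\hbox{\tiny lift}}(w)$ for every $w\in\sigma(M)$; an easy induction on the number of legs of the $su$-path shows that for any $su$-path $\eta$ from $x$ to $x'$, the lift $\tilde\eta_{\sigma(x)}$ stays inside $\sigma(M)$ and satisfies $\tilde\eta_{\sigma(x)}(1)=\sigma(x')$ (this is the remark following Definition~\ref{d=satdef}, which says $H_{\cS}(\sigma(x))=\sigma(x')$). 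Applying this with $\eta=\gamma$ and $x'=x$ gives $\tilde\gamma_z(1)=\sigma(x)=z=\tilde\gamma_z(0)$, so $\tilde\gamma_z$ is an $su$-lift loop.

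For the reverse direction, assume that every $su$-loop $\gamma$ based at $x$ lifts through $z$ to an $su$-lift loop. Define $\sigma\colon M\to\cB$ by choosing, for each $y\in M$, an $su$-path $\gamma_y$ from $x$ to $y$ (which exists by accessibility) and setting $\sigma(y)=\tilde{(\gamma_y)}_z(1)$. The first task is \emph{well-definedness}: if $\gamma_y'$ is another $su$-path from $x$ to $y$, then $\gamma_y\cdot\overline{\gamma_y'}$ is an $su$-loop at $x$, so its lift through $z$ is an $su$-lift loop; using $H_{\overline{\gamma}}=H_\gamma^{-1}$ and $H_{\gamma_1\cdot\gamma_2}=H_{\gamma_2}\circ H_{\gamma_1}$, this forces $\tilde{(\gamma_y)}_z(1)=\tilde{(\gamma_y')}_z(1)$, so $\sigma$ is well defined. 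Clearly $\pi\circ\sigma=\id$ and $\sigma(x)=z$. For \emph{bisaturation}: given $y\in M$ and a point $w\in\W^u_{\hbox{\tiny lift}}(\sigma(y))$, let $y'=\pi(w)\in\W^u(y)$; then prepending to $\gamma_y$ the arc in $\W^u(y)$ from $y$ to $y'$ gives an $su$-path from $x$ to $y'$ whose lift through $z$ ends exactly at $w$ (since $\W^u_{\hbox{\tiny lift}}(\sigma(y))$ projects homeomorphically to $\W^u(y)$ by admissibility), so $w=\sigma(y')\in\sigma(M)$. Hence $\sigma$ is $u$-saturated, and symmetrically $s$-saturated, i.e.\ bisaturated. (Continuity of $\sigma$ is then automatic from Theorem~\ref{t=asv}, part~\eqref{asv1}, though it is not needed here.)

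The main obstacle is the well-definedness step: one must handle the fact that the lift $\tilde\gamma_z$ depends a priori on the chosen decomposition of $\gamma$ into leafwise legs, and that concatenating/reversing $su$-paths behaves correctly on lifts. This is exactly where the contractibility of the leaves of $\W^s,\W^u,\W^s_{\hbox{\tiny lift}},\W^u_{\hbox{\tiny lift}}$ enters (as invoked in the construction of $H_{\cS}$), guaranteeing that the holonomy $H_\gamma$ depends only on the $su$-path $\gamma$ and not on its subdivision, and that the cocycle identities $H_{\gamma_1\cdot\gamma_2}=H_{\gamma_2}\circ H_{\gamma_1}$, $H_{\overline\gamma}=H_\gamma^{-1}$ hold. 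Once these identities are in hand, the hypothesis ``every $su$-loop at $x$ lifts through $z$ to a loop'' is precisely the statement that $z$ is fixed by the holonomy group of $su$-loops at $x$, and the construction of $\sigma$ above is the standard ``transport $z$ along $su$-paths'' argument, with no further difficulty.
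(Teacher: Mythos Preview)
Your proof is correct and follows essentially the same route as the paper's: define $\sigma$ by transporting $z$ along $su$-paths, use the loop hypothesis to get well-definedness via concatenation with reversed paths, and then verify bisaturation leaf by leaf. Two minor points: you write ``prepending'' where you mean ``appending'' (the arc from $y$ to $y'$ goes after $\gamma_y$), and your bisaturation step invokes well-definedness directly whereas the paper re-forms a loop and applies the hypothesis again---these are logically equivalent, yours is slightly more economical.
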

   
\begin{proof} We first prove the ``if'' part of the proposition.  Define 
$\sigma:M\to \cB$ as follows. We first set $\sigma(x) = z$.  For each $x'\in M$,
fix an $su$-path $\gamma\colon [0,1]\to M$  from $x$ to $x'$.  Since $\cB$ is an admissible bundle,
$\gamma$ lifts to a path $\tilde\gamma_z\colon [0,1]\to \cB$ along the leaves of $\cW^s_{\hbox{\tiny lift}}$
and  $\cW^u_{\hbox{\tiny lift}}$ with $\tilde\gamma_z(0) = z$.  We set $\sigma(x') = \tilde\gamma_z(1)$.
Clearly $\pi\sigma(x') = x'$.

We first check that $\sigma$ is well-defined.  Suppose that $\gamma'\colon [0,1]\to M$ is another
$su$-path from $x$ to $x'$.  Concatenating $\gamma$ with $\overline\gamma'$, we obtain an $su$-loop
$\gamma \overline\gamma'$ from $x$ to $x$.  By the hypotheses, the lift of 
$\gamma \overline\gamma'$ through $z$ is an $su$-lift loop
in $\cB$.  But this implies that  $\tilde\gamma_z(1) = \tilde\gamma'_z(1)$.

The same argument shows that $\sigma$ is bisaturated.  Fix $y\in M$ and let $y'\in \cW^s(y)$.
We claim that $\sigma(y')\in \cW^s_{\hbox{\tiny lift}}(\sigma(y))$. To see this,
fix two $su$-paths in $M$, one from $x$ to $y$, and one from $x$ to $y'$.   Concatenating
these paths with a path from $y$ to $y'$ along $\cW^s(y)$, we obtain an $su$-loop $\gamma$
through $x$.  By hypothesis, the lift $\tilde\gamma_z$ is a lifted  $su$-loop.  It is easy to see that this means that $\sigma(y')\in \cW^s_{\hbox{\tiny lift}}(\sigma(y))$.  Hence $\sigma$ is $s$-saturated.  Similarly, $\sigma$
is $u$-saturated, and so $\sigma$ is bisaturated.

The ``only if'' part of the proposition is straightforward.
\end{proof}

\begin{remark} Upon careful inspection of the proofs in this subsection, one sees that the existence of
foliations $\cW^s_{\hbox{\tiny lift}}$ and $\cW^u_{\hbox{\tiny lift}}$ is not an essential 
component of the arguments. For example, instead of assuming the existence of 
these foliations, one might instead assume (in the context
where $\cB$ is a smooth fiber bundle) the existence of $E^u$ and $E^s$ {\em connections} on $\cB$, that
is, the existence of subbundles $E^u_\phi$ and $E^s_\phi$ of $T\cB$, disjoint from $\ker T\pi$,
that project to $E^u$ and $E^s$ under $T\pi$. In this context, at least when $E^u_\phi$ and $E^s_\phi$ are smooth,
there is a natural notion of a bisaturated section.  In particular, for every
$us$-path $\gamma$ in $M$ and $z\in \pi^{-1}(\gamma(0))$, there is a unique lift $\tilde\gamma_z$
to a path in $\cB$, projecting to $\gamma$ and everywhere tangent to $E^u_\phi$ or $E^s_\phi$.
Bisaturation of $\sigma$ in this context means that for every $su$-path $\gamma$ from $x$ to $x'$,
one has $\tilde\gamma_{\sigma(x)}(1) = \sigma(x')$.  The same proof as above
shows that a bisaturated section in this sense is also continuous.

For this reason, \cite{ASV} introduce the notions of {\em bi-continuous} and {\em bi-essentially continuous}
sections, which extract
the essential properties of a bisaturated section used in the proof of Theorem~\ref{t=asv}.
While we have no need for this more general notion here, it is worth observing that bi-continuity
might have applications in closely related contexts.

\end{remark}

\subsection{Saturated cocycles: proof of Theorem~\ref{t=main}, parts I and III}

We now translate the previous results into the context of abelian cocycles. Let $\phi:M\to \RR$
be such a cocycle, and let $\cB=M\times\RR$ be the trivial bundle with fiber $\RR$.  Then
$\cB$ is an admissible bundle; we define the lifted foliations $\W^\ast_{\hbox{\tiny lift}}$, $\ast\in\{s,u\}$ to be 
the $f_\phi$-invariant foliations $\W^\ast_\phi$ given by Proposition~\ref{p=liftfol}. There is a natural
identification between functions $\Phi\colon M\to\RR$ and sections $\sigma_\Phi\colon M\to \cB$
via $\sigma_\Phi(x) = (x,\Phi(x))$.  Definition~\ref{d=satdef} then extends to functions
$\Phi\colon M\to \RR$ in the obvious way, where saturation is defined with respect to
the $\W^\ast_\phi$-foliations.

\begin{proposition}\label{p=satprop} Suppose that $f$ is partially hyperbolic and $\phi$ is H{\"o}lder continuous. 
 \begin{enumerate} 
 \item\label{satpart1} Assume that $f$ is accessible, and let $\Phi\colon M\to \RR$
 be continuous.
 Then there exists $c\in \RR$ such that
\begin{eqnarray}\label{e=coh}
 \phi = \Phi\circ f - \Phi + c,
 \end{eqnarray} 
if and only if $\Phi$ bisaturated. 
 
 \item\label{satpart2} If $f$ is volume-preserving and ergodic, and $\Phi:M\to \RR$ is a measurable function
 satisfying (\ref{e=coh}) ($m$-a.e.), for some $c\in\RR$, then  $\Phi$ is  bi essentially saturated.
\end{enumerate}
\end{proposition}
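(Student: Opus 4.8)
The plan is to unwind the definitions in both directions, using the geometric description of $\W^s_\phi$ and $\W^u_\phi$ from Proposition~\ref{p=liftfol} (especially parts (\ref{liftfol3}) and (\ref{liftfol4})) to translate the functional equation into a saturation statement, and vice versa. For part (\ref{satpart1}), first I would prove the ``only if'' direction: suppose $\phi = \Phi\circ f - \Phi + c$. Iterating this relation gives, for $x'\in\cW^s(x)$,
$$\Phi(x') - \Phi(x) = \sum_{i=0}^{n-1}\big(\phi(f^i(x)) - \phi(f^i(x'))\big) + \big(\Phi(f^n(x')) - \Phi(f^n(x))\big),$$
and since $d(f^n(x),f^n(x'))\to 0$ along the stable leaf and $\Phi$ is continuous (hence uniformly continuous on the compact $M$), the boundary term vanishes as $n\to\infty$, yielding $\Phi(x') - \Phi(x) = -PCF_{(x,x')}\phi$. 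Comparing with Proposition~\ref{p=liftfol}(\ref{liftfol4}), this says exactly that $(x',\Phi(x'))\in\W^s_\phi(x,\Phi(x))$, i.e. $\sigma_\Phi$ is $s$-saturated; the unstable case is symmetric (using $n\to -\infty$ and the expansion of $E^u$), so $\sigma_\Phi$ is bisaturated.

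For the ``if'' direction of part (\ref{satpart1}), suppose $\sigma_\Phi$ is bisaturated. Applying $f_\phi$ to a point $(x,\Phi(x))$ and using invariance of the lifted foliations shows $f_\phi(\sigma_\Phi(M)) = \sigma_{\Phi'}(M)$ where $\Phi' := \Phi\circ f^{-1} + \phi\circ f^{-1}$; concretely, $f_\phi(x,\Phi(x)) = (f(x),\Phi(x)+\phi(x))$ lies in the graph of the function $y\mapsto \Phi(f^{-1}y) + \phi(f^{-1}y)$. One checks $\sigma_{\Phi'}$ is again bisaturated (since $f_\phi$ permutes the leaves of $\W^s_\phi,\W^u_\phi$ and $f$ is accessible). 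Now consider the difference $g := \Phi' - \Phi = \Phi\circ f^{-1} - \Phi + \phi\circ f^{-1}$, equivalently $g\circ f = \Phi\circ f - \Phi + \phi$ — wait, more directly: set $\psi := \Phi\circ f - \Phi - \phi$; the goal is to show $\psi$ is constant. Both graphs of $\Phi$ and of $\Phi\circ f - \psi = \Phi + \phi$ (pushed appropriately) are bisaturated; I would show that the difference of two bisaturated sections of $M\times\RR$, being invariant under the fiberwise translation structure of Proposition~\ref{p=liftfol}(\ref{liftfol3}), is a function that is constant along both $\cW^s$- and $\cW^u$-leaves, hence locally constant along $su$-paths, hence constant by accessibility; continuity comes from Theorem~\ref{t=asv}(\ref{asv1}). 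That constant is the required $c$. The cleanest route: the section $\sigma\colon x\mapsto (x, \Phi(f(x)) - \phi(x))$ is bisaturated (direct check from bisaturation of $\sigma_\Phi$ and the defining relations of $\W^s_\phi,\W^u_\phi$), so both $\Phi$ and $\Phi\circ f - \phi$ differ by a bisaturated-section difference; such a difference is continuous (Theorem~\ref{t=asv}(\ref{asv1})), constant along stable and unstable leaves (from Proposition~\ref{p=liftfol}(\ref{liftfol3}) and (\ref{liftfol4}): two leaves of $\W^s_\phi$ over the same $\cW^s$-leaf are translates of each other), and therefore constant by accessibility.

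For part (\ref{satpart2}): given a measurable $\Phi$ satisfying (\ref{e=coh}) $m$-a.e., I would produce $s$- and $u$-saturated sections agreeing with $\sigma_\Phi$ a.e. The idea is to define $\Phi^s(x)$ by ``pushing $\Phi$ along stable leaves'': for $m$-a.e.\ $x$ and a.e.\ $x'\in\cW^s(x)$, the relation $\Phi(x') - \Phi(x) = -PCF_{(x,x')}\phi$ holds (this is the a.e.\ consequence of the cocycle equation, derived as in the first paragraph but now valid only a.e.), so one fixes a full-measure set of ``good'' leaves and defines $\Phi^s$ on each good leaf by transporting the value from one good point using $PCF$; by absolute continuity of the stable foliation (the Fubini-type argument, which needs $f$ volume-preserving — here $C^2$ and center bunched guarantee the usual absolute continuity of $\W^s$) this $\Phi^s$ is defined a.e.\ and equals $\Phi$ a.e., and it is $s$-saturated by construction since $PCF$ is additive along concatenations within a stable leaf. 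Symmetrically define $\Phi^u$. Then $\sigma_{\Phi^u}$ is $u$-saturated, $\sigma_{\Phi^s}$ is $s$-saturated, and both equal $\sigma_\Phi$ a.e., which is precisely bi essential saturation in the sense of Definition~\ref{d=satdef}.

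The main obstacle I anticipate is the ``if'' direction of part (\ref{satpart1}) — specifically, showing rigorously that bisaturation forces the difference $\psi = \Phi\circ f - \Phi - \phi$ to be constant rather than merely locally constant along $su$-paths. The local-constancy-to-constancy step is exactly accessibility, so the real content is (a) identifying the right auxiliary bisaturated section and (b) verifying that the difference of two bisaturated sections of the trivial $\RR$-bundle is genuinely continuous and locally constant along leaves; for (b) I would lean on Proposition~\ref{p=liftfol}(\ref{liftfol3}), which says the leaves of $\W^s_\phi$ lying over a fixed $\cW^s$-leaf are precisely the vertical translates of one another, so two $s$-saturated sections over that leaf differ by a constant, and then Theorem~\ref{t=asv}(\ref{asv1}) upgrades measurable/pointwise control to continuity. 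Part (\ref{satpart2})'s subtlety is purely the measure theory — making the a.e.\ transport along leaves well-defined and checking the two essential-saturations are compatible — which is routine given absolute continuity of the invariant foliations under the stated hypotheses.
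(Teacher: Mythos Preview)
Your treatment of part~(\ref{satpart1}) is essentially the paper's. For the ``only if'' direction you telescope and use continuity of $\Phi$ to kill the boundary term; the paper phrases this via the liminf characterization in Proposition~\ref{p=liftfol}(\ref{liftfol2}), but it is the same computation. For the ``if'' direction the paper is more direct than your sketch: it sets $c(x)=\phi(x)-\Phi(f(x))+\Phi(x)$, which is already continuous since $\Phi,\phi,f$ are, so there is no need to invoke Theorem~\ref{t=asv}; then it uses the translation invariance in Proposition~\ref{p=liftfol}(\ref{liftfol3}) together with $f_\phi$-invariance of $\W^s_\phi$ to show directly that $c$ is constant along each $\cW^s$- and $\cW^u$-leaf, hence constant by accessibility. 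Your detour through auxiliary bisaturated sections and their differences reaches the same conclusion but is unnecessary overhead.

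For part~(\ref{satpart2}) there is a genuine gap. You assert that the relation $\Phi(x')-\Phi(x)=-PCF_{(x,x')}\phi$ holds for a.e.\ $x$ and a.e.\ $x'\in\cW^s(x)$ ``as in the first paragraph but now valid only a.e.,'' and dismiss the rest as routine absolute-continuity bookkeeping. But the first-paragraph derivation used continuity of $\Phi$ to make the boundary term $\Phi(f^n(x'))-\Phi(f^n(x))$ vanish; for a merely measurable $\Phi$ there is no reason this difference tends to zero, even along a subsequence, and absolute continuity of $\cW^s$ by itself does nothing to help. The paper supplies precisely this missing step: by Lusin's theorem, choose a compact set $C$ with $\mathrm{vol}(C)>\tfrac12\,\mathrm{vol}(M)$ on which $\Phi$ is uniformly continuous; then ergodicity of $f$ together with absolute continuity of $\cW^s$ guarantees that for a.e.\ $x$ and a.e.\ $x'\in\cW^s(x)$ the pair $(f^n(x),f^n(x'))$ lands in $C\times C$ for a positive-density set of times $n$, and along those times the boundary term does go to zero. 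This Lusin-plus-simultaneous-recurrence argument is the actual content of part~(\ref{satpart2}), not a routine detail.
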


\begin{proof}   (\ref{satpart1}) Suppose that $\Phi$ is a continuous solution to (\ref{e=coh}).  Then (\ref{e=coh}) 
     implies that for all $x\in M$ and all $n$, we have: 
      $$f^n_\phi (x,\Phi(x)) = (f^n(x), \Phi(f^n(x)) + cn).$$ 
     Let $x'\in\cW^s(x)$.  Then
     $$\liminf_{n\to\infty} d(f_\phi^n(x,t),   f_\phi^n(x',t'))=$$
     $$\lim_{n\to\infty} d((f^n(x), \Phi(f^n(x))) ,  (f^n(x'), \Phi(f^n(x'))) )= 0,$$
     and so $(x,\Phi(x)),(x',\Phi(x'))$ lie on the same $W^s_\phi$ leaf.  This implies that $\Phi$ is $s$-saturated.        Similarly $\Phi$ is $u$-saturated, and hence bisaturated. 
 
 Suppose on the other hand that $\Phi$ is continuous and bisaturated.  
 Define a function $c\colon M\to \RR$ by $c(x) = \phi(x) - \Phi(f(x)) + \Phi(x)$.   We want to show that
 $c$ is a constant function.  
Proposition~\ref{p=liftfol}, (\ref{liftfol3}) implies that, for all $z\in M$ and $s,t\in \RR$:
\begin{eqnarray}\label{e=translate}
\W^s_\phi(z, s + t) = T_t\W^s_\phi(z,s).
\end{eqnarray}

Suppose that  $y\in \W^s(x)$. Saturation of $\Phi$ and $f_\phi$-invariance of
$\W^s_\phi$, $\W^u_\phi$ imply that:
\begin{eqnarray}\label{e=equalmanif}
\W^s_\phi(f(x),\Phi(f(x))) = \W^s_\phi(f(y),\Phi(f(y))),&\hbox{ and }\\
f_\phi(\W^s_\phi(x,\Phi(x))) = f_\phi(\W^s_\phi(y,\Phi(y))).&
\end{eqnarray}
On the other hand, invariance of the $\W^s_\phi$-foliation under $f_\phi$ implies that,
for all $z\in M$:
\begin{eqnarray*}
f_\phi(\W^s_\phi(z,\Phi(z)))& = &\W^s_\phi(f(z),\Phi(z) + \phi(z))\\
&=&\W^s_\phi(f(z), \Phi(f(z)) + (\Phi(z) - \Phi(f(z)) + \phi(z)))\\
& =& T_{\Phi(z) - \Phi(f(z)) + \phi(z)}\left(\W^s_\phi(f(z), \Phi(f(z))) \right).
\end{eqnarray*}
Equations (\ref{e=equalmanif}) and
(\ref{e=translate}) now imply that
 $$\Phi(x) - \Phi(f(x)) + \phi(x) = \Phi(y) - \Phi(f(y)) + \phi(y);$$
in other words, $c(x) = c(y)$.  Hence the function $c$ is constant along $\W^s$-leaves;
similarly, $c$ is constant along $\W^u$-leaves.  Accessibility implies that $c$
is constant.  Hence $\Phi$ and $c$ satisfy (\ref{e=livsic2}).

     (\ref{satpart2}) Let $\Phi$ be a measurable solution to (\ref{e=coh}). 
     We may assume that (\ref{e=coh}) holds on an $f$-invariant set of full volume; for points in this set, we have 
     $$f^n_\phi (x,\Phi(x)) = (f^n(x), \Phi(f^n(x)) + cn),$$  for all $n$.
     
     Choose a compact set $C\subset M$ such that $\hbox{vol}(C) > .5\hbox{vol}(M)$, on which 
     $\Phi$ is uniformly continuous.  Ergodicity of $f$ and absolute 
     continuity of $\cW^s$ implies that for almost every $x\in M$,
     and almost every $x'\in \cW^s(x)$, the pair of points $x$ and $x'$ will visit $C$ 
     simultaneously for a positive density set of times. For such a pair of points $x,x'$ we have 
     $$\liminf_{n\to\infty} d(f_\phi^n(x,t),   f_\phi^n(x',t'))=$$
     $$\liminf_{n\to\infty} d((f^n(x), \Phi(f^n(x))) ,  (f^n(x'), \Phi(f^n(x'))) )= 0,$$
     and so $(x,\Phi(x)),(x',\Phi(x'))$ lie on the same $W^s_\phi$ leaf.  
     This implies that $\Phi$ is essentially $s$-saturated: one defines
     the $s$-saturate $\Phi^s$ of $\Phi$ at (almost every) $x$ to be equal to the almost-everywhere constant
     value of $\Phi$ on $\cW^s(x)$ (see \cite{NicP} for a version of this argument when
     $f$ is Anosov).
     
     Similarly $\Phi$ is essentially $u$-saturated, and hence bi essentially saturated. \end{proof}

\begin{proof}[Proof of Theorem~\ref{t=main}, part I]

Let $f$ be $C^1$ and accessible and let $\phi:M\to \RR$ be H{\"o}lder continuous. 
Part I of Theorem~\ref{t=main} asserts that there exists a continuous function $\Phi\colon M\to \RR$ and
$c\in\RR$ satisfying (\ref{e=livsic2}) if and only if
   $PCF_{\cC}(\phi)=0$, for every accessible cycle $\cC$.
 
We start with a lemma:
 
\begin{lemma}\label{l=pcfloop} Let $\gamma$ be an $su$-loop corresponding
to the accessible cycle $\cC$. Then
$PCF_\cC(\phi)=0$ if and only if {\em every} lift of $\gamma$ to an $su$-lift path in  $M\times \RR$ is an $su$-lift loop.
\end{lemma}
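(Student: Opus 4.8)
The plan is to unwind the definition of $PCF_\cC$ using Proposition~\ref{p=liftfol}, part~(\ref{liftfol4}), which computes the vertical displacement of an $su$-lift step exactly as the corresponding partial periodic-cycle sum. First I would fix an accessible cycle $\cC = (x_0, x_1, \ldots, x_k = x_0)$ and the associated $su$-loop $\gamma = \gamma_\cC$, which is a concatenation of leafwise arcs $\gamma_i$ from $x_{i-1}$ to $x_i$, each lying in a single leaf of $\cW^s$ or $\cW^u$. Given a lift starting at $(x_0, t_0) \in M \times \RR$, I would lift $\gamma$ step by step: by Proposition~\ref{p=liftfol}, part~(\ref{liftfol2}), the lifted arc over $\gamma_i$ stays inside a single leaf of $\cW^s_\phi$ or $\cW^u_\phi$ and covers $\gamma_i$ homeomorphically, so it is uniquely determined by its starting point $(x_{i-1}, t_{i-1})$ and ends at a point $(x_i, t_i)$. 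Part~(\ref{liftfol4}) then gives $t_i - t_{i-1} = PCF_{(x_{i-1}, x_i)}(\phi)$ exactly, whichever of the two leaf types $\gamma_i$ lies in.

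Summing telescopically over $i = 1, \ldots, k$, I get $t_k - t_0 = \sum_{i=1}^k PCF_{(x_{i-1},x_i)}(\phi) = PCF_\cC(\phi)$, by the very definition of $PCF$ on accessible sequences. The lift $\tilde\gamma_z$ is an $su$-lift loop precisely when it returns to its starting point, i.e. when $(x_k, t_k) = (x_0, t_0)$; since $x_k = x_0$ automatically (the cycle closes up in $M$), this is equivalent to $t_k = t_0$, i.e. to $PCF_\cC(\phi) = 0$. Note this equivalence holds for the lift through \emph{every} starting point $z = (x_0, t_0)$ simultaneously, because by Proposition~\ref{p=liftfol}, part~(\ref{liftfol3}), the whole picture is equivariant under the vertical translations $T_t$: lifting from $(x_0, t_0 + t)$ gives the translate by $T_t$ of the lift from $(x_0, t_0)$, so the net displacement $t_k - t_0 = PCF_\cC(\phi)$ is independent of $t_0$. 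Hence either all lifts of $\gamma$ are loops or none is, and in either case the criterion is the vanishing of the single number $PCF_\cC(\phi)$.

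There is essentially no obstacle here: the lemma is a direct bookkeeping consequence of Proposition~\ref{p=liftfol}, and the only mild point requiring care is matching the two sign conventions in the definition of $PCF_{(x,x')}$ (one for $x' \in \cW^u(x)$, one for $x' \in \cW^s(x)$) with the two cases of part~(\ref{liftfol4}) of that proposition — but the proposition is stated with exactly the conventions needed, so the match is immediate. I would also remark that the same computation shows $H_\cC$, the bundle holonomy around $\gamma$ in the language of Section~\ref{s=asv}, is the translation $T_{PCF_\cC(\phi)}$ on the fiber $\pi^{-1}(x_0) \cong \RR$; this reformulation is what connects Lemma~\ref{l=pcfloop} to Proposition~\ref{p=closedloop} and thereby to the proof of part~I.
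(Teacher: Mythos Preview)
Your proof is correct and follows essentially the same approach as the paper: both arguments use Proposition~\ref{p=liftfol}, part~(\ref{liftfol4}) to compute the vertical displacement of each lifted leg as $PCF_{(x_{i-1},x_i)}(\phi)$, telescope over the cycle to obtain total displacement $PCF_\cC(\phi)$, and observe this is independent of the starting height. Your closing remark that $H_\cC$ acts as the translation $T_{PCF_\cC(\phi)}$ on the fiber is exactly the computation the paper records.
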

   
\begin{proof}[Proof of Lemma~\ref{l=pcfloop}] Let $x\in M$ Proposition~\ref{p=liftfol}, part (\ref{liftfol4})
implies that if $\cC=(x_0,\ldots,x_k = x_0)$ is an accessible cycle, then for any $t\in \RR$ 
$$H_\cC(t) - t = \sum_{i=0}^{k-1} PCF_{(x_i,x_{i+1})}(\phi) = PCF_\cC(\phi)
$$
Let $\gamma$ be an $su$-loop corresponding to $\cC$.  Then for any $t\in\RR$, $H_\gamma(t) - t = PCF_\cC(\phi)$

Fix $t\in \RR$, and let $\tilde\gamma_t = \tilde\gamma_{x_0,t}\colon [0,1]\to M\times \RR$ be the $su$-lift path projecting to $\gamma$, with $\tilde\gamma_t(0) = (x_0,t)$.  Then
$\tilde\gamma_t(1) = (x_0, H_\gamma(t)) = (x_0, t + PCF_\cC(\phi)=0)$. Thus $PCF_\cC(\phi) = 0$ if and
only if $\tilde\gamma_t(1) = t$ if and only if $\tilde\gamma_t$ is an $su$-lift loop.  Since
$t$ was arbitrary, we obtain that $PCF_\cC(\phi)=0$ if and only if every lift of $\gamma$ to an $su$-lift path 
is an $su$-lift loop. \end{proof}

   By Proposition ~\ref{p=closedloop} and Lemma~\ref{l=pcfloop}, if
    $PCF_\cC(\phi)=0$, for every accessible cycle $\cC$, then there exists a
   bisaturated function $\Phi:M\to M\times R$. Theorem~\ref{t=asv}, part (1), plus accessibility 
   of $f$ implies that $\Phi$ is continuous.   Proposition~\ref{p=satprop} implies that there exists
   a $c\in \RR$ such that (\ref{e=coh}) holds.
   
   On the other hand, if $\Phi$ is continuous and there exists a $c\in\RR$ such that  (\ref{e=coh})
   holds, then Proposition~\ref{p=satprop}, (part \ref{satpart1}) implies that $\Phi$ is bisaturated.  
   Proposition ~\ref{p=closedloop} and Lemma~\ref{l=pcfloop} imply that $PCF_\cC(\phi)=0$, for every 
   accessible cycle $\cC$. \end{proof}

\begin{proof}[Proof of Part III of Theorem B] Assume that $f$ is 
$C^2$, volume-preserving, center bunched and accessible. 
Let $\Phi$ be a measurable solution to (\ref{e=livsic2}), for some $c\in\RR$. We
prove that there exists a continuous function $\hat\Phi$ satisfying $\Phi=\hat\Phi$ almost everywhere.
     
Since $f$ is center bunched and accessible, it is ergodic, by (\cite{BWannals}, Theorem 0.1). 
Proposition~\ref{p=satprop}, part (\ref{satpart2}) implies that 
$\Phi$ is bi essentially saturated.  Theorem~\ref{t=asv}, part (2) then implies that $\Phi$ is 
essentially bisaturated, which means there exists a bisaturated function $\hat\Phi$, with 
$\hat\Phi = \Phi$ a.e. Since $f$ is accessible, Theorem~\ref{t=asv}, part (1) then implies that $\hat\Phi$ is continuous. \end{proof}

\section{H{\"o}lder regularity: proof of Theorem~\ref{t=main}, part II.}\label{s=Holder}

Let $f\colon M\to M$ be partially hyperbolic and let $\phi\colon M\to \RR$ be $\alpha$-H{\"o}lder continuous, for some 
$\alpha>0$.  
As above, define the skew product $f_\phi\colon M\times \RR\to M\times \RR$ by
$$f_\phi(p,t) = (f(p), t + \phi(p)).$$

We start with a standard proposition showing that the stable and unstable foliations for
$f$ lift to invariant stable and unstable foliations for $f_\phi$.

\begin{proposition}\label{p=liftfolHolder} There exist foliations $\W^u_\phi, \W^s_\phi$ of $M\times \RR$ with the following properties.
\begin{enumerate}
\item The leaves of $\W^u_\phi, \W^s_\phi$ are $\alpha$-H{\"o}lder continuous.
\item The leaves of $\W^u_\phi$ project to leaves of $\W^u$, and the leaves of $\W^s_\phi$ project to leaves of $\W^s$.
 Moreover, $(x',t')\in \W^s_\phi(x,t)$ if and only if $x'\in \cW^s(x)$ and 
        $$\liminf_{n\to\infty} d(f_\phi^n(x,t),   f_\phi^n(x',t'))=0.$$
\end{enumerate}
\end{proposition}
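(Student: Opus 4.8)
The plan is to build the lifted foliations by the standard graph-transform (Hadamard--Perron) method applied to the skew product $f_\phi$, carried out one $\W^s$- or $\W^u$-leaf of $M$ at a time. First I would fix a point $x\in M$ and work over the leaf $\cW^s(x)$, which is an injectively immersed copy of Euclidean space carrying an intrinsic metric $d_{\cW^s}$. The set $\cW^s(x)\times\RR$ is naturally a (trivial) bundle over $\cW^s(x)$ with fibers $\RR$, and $f_\phi$ maps $\cW^s(x)\times\RR$ into $\cW^s(f(x))\times\RR$, contracting the base direction uniformly (at rate $\nu<1$) because $f$ contracts $\cW^s$. The leaf $\W^s_\phi(x,t)$ through a point $(x,t)$ should be the graph of a function $g_{x,t}\colon \cW^s(x)\to\RR$ with $g_{x,t}(x)=t$; the defining property in item (2) forces $g_{x,t}(x') = t + \sum_{i\ge 0}\bigl(\phi(f^i x') - \phi(f^i x)\bigr)$, exactly the $PCF$ formula. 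So really the content is: (i) this series converges and defines an $\alpha$-Hölder function on $\cW^s(x)$ with uniform Hölder constant, (ii) these graphs are pairwise disjoint, exhaust $\cW^s(x)\times\RR$, vary continuously with the basepoint in $M$, and hence piece together into a genuine foliation $\W^s_\phi$ of $M\times\RR$, and (iii) this foliation is $f_\phi$-invariant and characterized by the $\liminf$ condition.

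For convergence and the Hölder bound in (i): along $\cW^s(x)$ one has $d_{\cW^s}(f^i x', f^i x) \le C\,\lambda^i\, d_{\cW^s}(x',x)$ for some uniform $\lambda<1$ (taking $\lambda$ slightly larger than $\sup\nu$), and also $d_{\cW^s}(f^i x', f^i x)$ is bounded by the diameter of a local leaf; interpolating, $d(f^i x', f^i x)\le C\,\lambda^{i(1-\theta)} d(x',x)^\theta$ for any $\theta\in(0,1]$. Applying $\alpha$-Hölder continuity of $\phi$ gives $|\phi(f^i x')-\phi(f^i x)| \le C\,\lambda^{i\alpha(1-\theta)} d(x',x)^{\alpha\theta}$; choosing $\theta$ close to $1$ so the series $\sum_i \lambda^{i\alpha(1-\theta)}$ converges, the tail sums are $O(d(x',x)^{\alpha\theta})$, and letting $\theta\to1$ one recovers (with a slightly smaller exponent, or by a more careful split of the sum into $i\le N$ and $i>N$ with $N\sim \log(1/d)$) that $g_{x,t}$ is $\alpha'$-Hölder for every $\alpha'<\alpha$, and in fact $\alpha$-Hölder by the logarithmic-cutoff argument. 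The constants are uniform in $x$ and $t$ by compactness of $M$ and the uniform hyperbolicity estimates. The analogous construction over $\cW^u(x)$ uses the backward iterates $f^{-i}$ and the $PCF$ series with $\phi(f^{-i}x)-\phi(f^{-i}x')$.

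For (ii) and (iii): disjointness and exhaustion of the fiber bundle over a single $\cW^s$-leaf are immediate from the explicit formula (two graphs with the same value at some point coincide, by the cocycle identity for the series under changing basepoint; every value of $t$ at $x$ is attained). Continuity of $(x,t)\mapsto \W^s_\phi(x,t)$ transverse to the leaves follows from continuity of the local stable foliation $\cW^s$ of $f$ together with uniform (in the basepoint) convergence of the $PCF$ series, which is what gives the required uniform estimates to assemble the local graphs into foliation charts in the sense of Section~\ref{s=defs}; this is where one uses that $\cW^s$ is a continuous foliation with uniformly $C^1$ (indeed smooth) leaves and local product structure. Invariance $f_\phi(\W^s_\phi(z)) = \W^s_\phi(f_\phi z)$ is a one-line check from the series, since shifting the index by one and relabeling $x\mapsto f(x)$ reproduces the formula with the new basepoint and new height $t+\phi(x)$. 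Finally, the $\liminf$ characterization: if $(x',t')$ lies on the graph $g_{x,t}$, then $f_\phi^n(x,t) - f_\phi^n(x',t')$ has base component $d(f^n x, f^n x')\to0$ and height component equal to the tail $\sum_{i\ge n}(\cdots)\to0$, so the $\liminf$ (indeed the limit) is $0$; conversely, if $x'\in\cW^s(x)$ but $(x',t')$ is not on $g_{x,t}$, the height difference converges to a nonzero constant (the difference of the two full $PCF$ sums), so the $\liminf$ is positive. The main obstacle is purely technical: getting the Hölder exponent to come out as exactly $\alpha$ (rather than every $\alpha'<\alpha$) and verifying uniformity of all estimates in the basepoint across different leaves — this is the logarithmic-cutoff estimate combined with the uniform interpolation inequality for $d(f^i x, f^i x')$ on stable leaves, which must be stated carefully but is standard in the theory of Hölder cocycles over hyperbolic systems (cf. the Livšic regularity literature cited as \cite{L2,LMM,dlL1}).
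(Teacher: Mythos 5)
Your proposal is correct in substance, but it takes a genuinely different route from the paper.  The paper constructs $\W^u_\phi$ (and $\W^s_\phi$) by a soft fixed-point argument: it introduces a Banach bundle $\G$ over $M$ whose fiber over $x$ is the space of $\alpha$-H\"older functions $g\colon\W^u(x,\delta)\to\RR$ with $g(x)=0$ and $\|g\|_\alpha\le C$, shows the induced graph transform $\cT_x(g)(y)=g(f^{-1}y)+\phi(f^{-1}y)-\phi(f^{-1}x)$ is a fiberwise $\hat\nu^\alpha$-contraction in the $\alpha$-norm, and then invokes the invariant section theorem of \cite{HPS} to produce a continuous $\cT$-invariant section whose graphs are the local $\W^u_\phi$-leaves.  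You instead write down the explicit $PCF$-series for the putative leaf and verify directly that it converges, is $\alpha$-H\"older uniformly, assembles into a foliation, and is characterized by the $\liminf$ condition.  These are two packagings of the same underlying estimate: your explicit series is precisely the unique fixed point of the paper's fiberwise contraction, and your uniformity/continuity arguments reproduce what the invariant section theorem gives for free (continuity of the section, hence transverse continuity of the foliation).  Your route has the mild advantage of making items (2) and the invariance/cocycle properties completely transparent, at the cost of re-proving the standard contraction-mapping machinery by hand.

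One small point worth flagging: your interpolation detour (estimating $d(f^ix',f^ix)$ by splitting $d=d^\theta d^{1-\theta}$ and then letting $\theta\to1$, or invoking a logarithmic cutoff) is unnecessary for the local leaves relevant here.  For $x'\in\W^s(x,\delta)$ with $\delta$ small enough, one has $d(f^ix',f^ix)\le C\bar\nu^id(x',x)$ for all $i\ge0$ with $\bar\nu<1$ uniform; applying $\alpha$-H\"older continuity of $\phi$ and summing the resulting geometric series $\sum_i\bar\nu^{i\alpha}$ already yields the tail bound $O(d(x',x)^\alpha)$ with exponent exactly $\alpha$.  There is no loss of exponent to recover.  (Also, in your interpolation display the exponent on $\lambda$ should be $i\theta$, not $i(1-\theta)$, though this is only a typo.)  With that simplification your convergence-and-H\"older step becomes a two-line computation, matching the heart of the paper's contraction estimate.
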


\begin{proof} This result is by now standard (see \cite{NT}), although strictly speaking,
the proof appears in the literature only under a stronger partial hyperbolicity assumption
(in which the functions $\nu,\hat\nu,\gamma,\hat\gamma$ are assumed to be constant).
We sketch the proof under the slightly weaker hypotheses stated here.

For $x\in M$, let $\G_x = \{g \colon \W^u(x,\delta)\to \RR :  g\in C^\alpha, g(x)=0\}$. 
The number $\delta>0$ is chosen so that for all $x\in M$, if $y\in \W^u(x,\delta)$,
then $d(f(x), f(y)) \geq \hat\nu(x)^{-1} d(x,y)$. 
Notice that the function $\psi(y) = \phi(y) - \phi(x)$ belongs to $\G_x$.
The $\alpha$-norm
of an element $g\in \G_x$ is defined:
$$\|g\|_\alpha = \sup_{y\in \W^u(x,\delta)}\frac{|g(y)|}{d(x,y)^\alpha}.$$
The bundle $\G$ over $M$ with fiber $\G_x$ over
$x\in M$ has the structure of a Banach bundle.  The fiber is modelled on the Banach space
$B = \{g\colon B_{\RR^u}(0,\delta) \to \RR :  g\in C^\alpha , g(0)=0\}$,
with the norm $$\|g\|_\alpha = \sup_{v\in B_{\RR^u}(0,\delta)} \frac{|g(v)|}{|v|^\alpha}.$$
The restriction of $f$ to $\W^u$-leaves
sends $\W^u(x,\delta)$ onto $\W^u(f(x),\hat\nu(x)^{-1}\delta)$, which contains $\W^u(f(x),\delta)$.  
On $\W^u(x)\times \RR$,
the  map
$f_\phi$ takes the form 
$f_\phi(p,t) = (f(p), t + \phi(p))$, and
the induced graph transform map $\cT_x: \G_x\to \G_{f(x)}$ takes the form:
$\cT_x(g)(y) =   g(f^{-1}(y)) + \phi(f^{-1}(y)) - \phi(f^{-1}(x))$.

Suppose that $\|g\|_\alpha \leq C$.  Then 
\begin{eqnarray*}
|\cT_x(g)|_\alpha  &=& \sup_{z\in \W^u(f(x),\delta)} \frac{|\cT_x(g)(z)|}{d(f(x),z)^\alpha}\\
&\leq & \sup_{y\in \W^u(x,\delta)} \frac{|g(y) + \phi(y) - \phi(x)|}{d(f(x),f(y))^\alpha}\\
&=&  \sup_{y\in \W^u(x,\delta)} \frac{|g(y)|}{d(f(x),f(y))^\alpha} + \frac{|\phi(x) - \phi(y))|}{d(f(x),f(y))^\alpha} \\
&\leq & \hat\nu(x)^{\alpha} \left( \sup_{y\in \W^u_\delta(x)} \frac{|g(y)|}{d(x,y)^\alpha} + \frac{|\phi(x) - \phi(y))|}{d(x,y)^\alpha}\right)\\
&\leq &  \hat\nu(x)^{\alpha} \left(\|g\|_\alpha  + |\phi - \phi(x)|_\alpha\right)\\
&\leq & \hat\nu(x)^{\alpha} ( C + K) \leq C,
\end{eqnarray*}
provided that $C$ is larger than $\sup_x K/(1-\hat\nu(x))$.

Hence the closed sets $\G_x(C) = \{g\in \G_x :  \|g\|_\alpha \leq C\}$ are preserved  by the maps $\cT_x$.
Next we show that $\cT_x$ is a contraction in the $\alpha$-norm. To this end, let $g,g'\in \G_x(C)$.
Then 
\begin{eqnarray*}
\|\cT_x(g) - \cT_x(g')\|_\alpha  & = & \sup_{z\in \W^u(f(x),\delta)} \frac{|\cT_x(g)(z) - \cT_x(g')(z)|}{d(f(x),z)^\alpha}
\end{eqnarray*}
\begin{eqnarray*}
\qquad\qquad\qquad&\leq & \sup_{y\in \W^u(x,\delta)} \frac{|g(y) + \phi(y) - \phi(x) - (g'(y) + \phi(y) - \phi(x))|}{d(f(x),f(y))^\alpha}\\
&=&  \sup_{y\in \W^u(x,\delta)} \frac{|g(y) - g'(y)|}{d(f(x),f(y))^\alpha}\\
&\leq & \hat\nu(x)^{\alpha} \|g-g'\|_\alpha.
\end{eqnarray*}
The invariant section theorem (\cite{HPS}, Theorem 3.1) now implies that there is a unique $\cT$-invariant section $\sigma:M\to \G_x(C)$.
It is easy to check that the set $\hW^u_\phi(p,t) = \{(y,t+ \sigma_p(y)) :  y\in \cW^u(p,\delta)\}$
is a local unstable manifold for $f_\phi$.  The rest of the proof is standard.
\end{proof}

Fix a foliation box $U$ for $\W^s$.  For any two smooth transversals
$\Sigma$, $\Sigma'$ in $U$, there is the $\W^s$-holonomy map from
$\Sigma$ to $\Sigma'$ that sends $x\in \Sigma$ to the unique
point of intersection $x'$ between $\cW^s(x)$ and $\Sigma'$.  For
any such $\Sigma,\Sigma'$ there is also a well-defined $\cW^s_\phi$-holonomy 
between $\Sigma\times \RR$ and $\Sigma'\times \RR$,
sending $(x,t)\in \Sigma\times \RR$ to the unique
point of intersection $(x',t')$ between $\cW^s_\phi(x,t)$ and $\Sigma'\times \RR$.
Since the $\W^s$ leaves lift to $\W^s_\phi$-leaves, the $\W^s_\phi$ holonomy
covers the $\W^s$ holonomy under the natural projection.

\begin{proposition}\label{p=Holder} Suppose that $f$ is $C^1$ and $\phi$
is $\alpha$-H{\"o}lder continuous, for some $\alpha\in(0,1]$. 
Then the $\W^s_\phi$ and $\W^u_\phi$ holonomy maps are uniformly H{\"o}lder continuous.
Any $\theta\in (0,\alpha]$ satisfying the pointwise inequalities:
\begin{eqnarray}\label{e=holderbunch}\nu < (\nu\hat\mu)^{\theta/\alpha} \quad\hbox{and}\quad \nu\gamma^{-1} < (\nu\hat\mu)^{\theta/\alpha}
\end{eqnarray}
is a H{\"o}lder exponent for the $\cW^s_\phi$ holonomy,
where $\nu, \gamma, \hat\mu:M\to \RR$ are any continuous functions satisfying,
for every $p\in M$ and any unit vector $v\in T_pM$:
$$v\in E^s(p) \,\Rightarrow \,\|T_pf v\| < \nu(p),\quad v\in E^s(p)\, \Rightarrow \,\gamma(p) < \|T_pf v\|,$$
and
$$ v\in E^u(p)\, \Rightarrow \,\|T_pf v\| \leq \hat\mu(p)^{-1},$$
for some Riemannian metric.
\end{proposition}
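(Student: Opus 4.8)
The plan is to prove uniform H{\"o}lder continuity of the $\cW^s_\phi$-holonomy; the $\cW^u_\phi$-case is the same argument applied to $f^{-1}$. Work inside a foliation box $U$ for $\cW^s$ of size $R$ and fix smooth transversals $\Sigma,\Sigma'\subset U$. Since $f_\phi$ commutes with the translations $T_t$ and preserves $\RR$-fibers setwise, the $\cW^s_\phi$-holonomy $H\colon\Sigma\times\RR\to\Sigma'\times\RR$ has the form $H(x,t)=(h(x),\,t+\beta(x))$, where $h$ is the ordinary $\cW^s$-holonomy of $f$ and, by Proposition~\ref{p=liftfol}(\ref{liftfol4}), $\beta(x)=PCF_{(x,h(x))}\phi$ (only magnitude bounds on such $PCF$-sums will be needed). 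Thus it suffices to bound $d_{M\times\RR}(H(x,0),H(y,0))$ for $x,y\in\Sigma$ at small distance $d=d(x,y)$, the case of large $d$ being trivial; taking $\phi\equiv 0$ recovers Corollary~\ref{c=Holder}. The estimate is obtained by iterating the box forward: for each $n$ one has $H=f_\phi^{-n}\circ H^{(n)}\circ f_\phi^n$, where $H^{(n)}$ is the $\cW^s_\phi$-holonomy between the time-$n$ transversals $f^n\Sigma\times\RR$ and $f^n\Sigma'\times\RR$ inside the foliation box $f^n(U)$ (forward iterates of a $\cW^s$-transversal stay uniformly transverse to $\cW^s$).

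The core is the choice of an orbit-adapted \emph{escape time} $n=n(d,x)$, namely $n$ maximal with $\Lambda_n(x)\,d\le\nu_n(x)\,R$, where $\nu_n(x)=\prod_{i=0}^{n-1}\nu(f^ix)$ bounds the contraction of $f^n$ along $\cW^s$ and $\Lambda_n(x)=\prod_{i=0}^{n-1}\hat\mu(f^ix)^{-1}$ bounds the expansion under $f^n$ of any vector transverse to $E^s$ ($\hat\mu^{-1}$ dominating both the center and unstable rates, by (\ref{e=defn})). Since the rate functions are bounded away from $0$ and $1$, maximality forces $\Lambda_n(x)\,d=\Theta(\nu_n(x)\,R)$, hence $d=\Theta\!\bigl(R\prod_{i<n}(\nu\hat\mu)(f^ix)\bigr)$. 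At time $n$ everything has been pulled into a region of size $O(\nu_nR)$: the transversals $f^n\Sigma,f^n\Sigma'$ lie within leafwise distance $\nu_nR$ of one another, so $H^{(n)}$ sends $f_\phi^n(x,0)=(f^nx,S_n\phi(x))$ (where $S_n\phi=\sum_{i=0}^{n-1}\phi\circ f^i$) to $(f^nh(x),\,S_n\phi(x)+PCF_{(f^nx,f^nh(x))}\phi)$ with $d(f^nh(x),f^nx)\le\nu_nR$ and, using only the H{\"o}lder continuity of $\phi$ and the stable contraction, $|PCF_{(f^nx,f^nh(x))}\phi|=O((\nu_nR)^\alpha)$; likewise $d(f^nx,f^ny)\le\Lambda_nd=O(\nu_nR)$ and $|S_n\phi(x)-S_n\phi(y)|\le\sum_{i<n}|\phi|_\alpha d(f^ix,f^iy)^\alpha=O((\Lambda_nd)^\alpha)=O((\nu_nR)^\alpha)$. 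Hence $d_{M\times\RR}\bigl(H^{(n)}f_\phi^n(x,0),\,H^{(n)}f_\phi^n(y,0)\bigr)=O((\nu_nR)^\alpha)$.

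It remains to apply $f_\phi^{-n}$. On the submanifold $f^n\Sigma'\times\RR$, which is nearly tangent to $E^c\oplus E^u\oplus\RR$, the map $f^{-n}$ is Lipschitz in the $M$-direction with constant $\Gamma_n(x):=\prod_{i=0}^{n-1}\max\{\gamma(f^ix)^{-1},1\}$ (the $\RR$-direction being $f_\phi$-neutral), while the $\RR$-component $s\mapsto s-S_n\phi(f^{-n}q)$ of $f_\phi^{-n}$ contributes an extra term $|S_n\phi(f^{-n}q)-S_n\phi(f^{-n}q')|=O\bigl(\Gamma_n^\alpha\,d(q,q')^\alpha\bigr)$ — and here one genuinely uses that $\phi$, hence $f_\phi$, is only $C^\alpha$. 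This yields
$$d_{M\times\RR}\bigl(H(x,0),H(y,0)\bigr)\ \lesssim\ \Gamma_n\nu_nR\ +\ (\nu_nR)^\alpha\ +\ \Gamma_n^\alpha(\nu_nR)^\alpha.$$
Since $\Gamma_n\nu_nR=R\prod_{i<n}\max\{\nu\gamma^{-1},\nu\}(f^ix)$, $(\nu_nR)^\alpha=R^\alpha\bigl(\prod_{i<n}\nu(f^ix)\bigr)^\alpha$ and $\Gamma_n^\alpha(\nu_nR)^\alpha=R^\alpha\bigl(\prod_{i<n}\max\{\nu\gamma^{-1},\nu\}(f^ix)\bigr)^\alpha$, comparing each with $d^\theta=\Theta\bigl(R^\theta(\prod_{i<n}\nu\hat\mu)^\theta\bigr)$ and absorbing the fixed powers of $R$ (here $\theta<\alpha\le 1$, which is automatic from the hypotheses), every term is $O(d^\theta)$ precisely because of the two pointwise inequalities (\ref{e=holderbunch}): $\nu<(\nu\hat\mu)^{\theta/\alpha}$ (equivalently $\nu^\alpha\le(\nu\hat\mu)^\theta$) controls $(\nu_nR)^\alpha$ and the $\nu$-parts, and $\nu\gamma^{-1}<(\nu\hat\mu)^{\theta/\alpha}$ controls the $\gamma^{-1}$-parts — the square-root-like loss in the last, $C^\alpha$-induced, term being exactly why the exponent $\theta/\alpha$, rather than $\theta$, is the correct one. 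All constants are uniform over $U$ and over $x,y$, giving uniform H{\"o}lder continuity.

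The main obstacle is that the bunching inequalities (\ref{e=holderbunch}) are only \emph{pointwise}, so the rate functions cannot be replaced by their suprema: the escape time must be defined as an orbit-adapted stopping time and the resulting products of rate functions telescoped by hand, with care that every implied constant is genuinely uniform. A secondary difficulty, visible above, is that for $\alpha<1$ the skew product $f_\phi$ is not $C^1$, so one cannot invoke smooth normally-hyperbolic theory; one works throughout with the graph-transform description of $\cW^s_\phi,\cW^u_\phi$ from Proposition~\ref{p=liftfolHolder} (in particular the uniform bound on the $C^\alpha$-norms of the local transfer functions and the resulting uniform transversality), and must carry the extra H{\"o}lder term produced by $S_n\phi$, which is what makes the analysis more delicate than the classical $C^{1+\theta}$ case.
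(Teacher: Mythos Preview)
Your approach is correct and is essentially the same as the paper's: the choice of escape time $n$ with $d=\Theta(\nu_n\hat\mu_n)$, the forward iteration to scale $O(\nu_n)$, and the pull-back via a lower bound $\rho\le\min\{1,\gamma\}$ on transverse contraction are identical in substance. Your conjugation $H=f_\phi^{-n}H^{(n)}f_\phi^n$ is just a clean repackaging of the paper's direct computation with $(p,r),(p',r'),(q,s),(q',s')$. One small point: when you sum $\sum_{i<n}|\phi|_\alpha\,d(f^ih(x),f^ih(y))^\alpha$ backward from time $n$, the bound $O(\Gamma_n^\alpha(\nu_nR)^\alpha)$ needs the terms to decay geometrically, which fails if $\gamma\ge 1$ somewhere (then $\Gamma_{n,i}\equiv 1$ and you pick up a factor $n$); the paper handles this by taking $\rho$ \emph{strictly} less than $\min\{1,\gamma\}$, and you should do the same with your $\Gamma$.
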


By considering the trivial (constant) cocycle, we also obtain:

\begin{corollary}\label{c=Holder} The  stable holonomy maps for a $C^1$ partially
hyperbolic diffeomorphism $f$ are uniformly H{\"o}lder continuous. Any $\theta\in (0,1]$ satisfying
$$\nu < (\nu\hat\mu^{-1})^\theta\quad\hbox{and}\quad \nu\gamma^{-1} < (\nu\hat\mu^{-1})^\theta$$
is a H{\"o}lder exponent for the stable holonomy,
where $\nu, \gamma, \hat\mu$ are defined as in Proposition~\ref{p=Holder}.
\end{corollary}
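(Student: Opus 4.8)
The plan is to deduce Corollary~\ref{c=Holder} from Proposition~\ref{p=Holder} by applying the latter to the trivial cocycle $\phi\equiv 0$. A constant function is H\"older continuous of every exponent $\alpha\in(0,1]$, so Proposition~\ref{p=Holder} applies to it; choosing $\alpha=1$ gives the widest admissible range of exponents, and with $\alpha=1$ the bunching condition (\ref{e=holderbunch}) reduces to the one appearing in the statement of the Corollary. So the whole matter comes down to identifying, in the case $\phi\equiv 0$, the $\W^s_\phi$-holonomy of Proposition~\ref{p=Holder} with the stable holonomy $h_{\W^s}$ of $f$ itself.

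First I would pin down the lifted foliation. For $\phi\equiv 0$ the skew product is $f_\phi(p,t)=(f(p),t)$, so by part (2) of Proposition~\ref{p=liftfolHolder} a point $(x',t')$ lies in $\W^s_\phi(x,t)$ precisely when $x'\in\W^s(x)$ and $\liminf_{n\to\infty}d((f^n(x),t),(f^n(x'),t'))=0$; since $d(f^n(x),f^n(x'))\to 0$ along stable leaves, this forces $t'=t$. Hence $\W^s_\phi$ is the product foliation, $\W^s_\phi(x,t)=\W^s(x)\times\{t\}$. Consequently, if $U$ is a foliation box for $\W^s$ and $\Sigma,\Sigma'$ are smooth transversals to $\W^s$ in $U$, then $\Sigma\times\RR$ and $\Sigma'\times\RR$ are smooth transversals to $\W^s_\phi$ in the foliation box $U\times\RR$, and the $\W^s_\phi$-holonomy between them is $(x,t)\mapsto(h_{\W^s}(x),t)$, i.e. it equals $h_{\W^s}\times\id$.

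Next I would invoke Proposition~\ref{p=Holder}: since $\phi\equiv 0$ is H\"older of exponent $\alpha=1$, the map $h_{\W^s}\times\id$ is uniformly $\theta$-H\"older for every $\theta\in(0,1]$ satisfying (\ref{e=holderbunch}) with $\alpha=1$. Restricting this map to the slice $\{t=0\}$ and using the identification $\Sigma\times\{0\}\cong\Sigma$ shows that $h_{\W^s}$ is uniformly $\theta$-H\"older with the same exponent and constant; conversely $h_{\W^s}\times\id$ is $\theta$-H\"older as soon as $h_{\W^s}$ is (for $\theta\le 1$), so nothing is lost in the specialization and the uniformity is preserved. This is precisely the assertion of the Corollary.

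Since the argument is essentially a translation of Proposition~\ref{p=Holder}, I do not expect a serious obstacle; the only points requiring a moment's attention are (i) confirming that a product $\Sigma\times\RR$ genuinely qualifies as a smooth, uniformly transverse transversal to $\W^s_\phi$ in the sense used by Proposition~\ref{p=Holder} --- it has codimension $\dim\W^s$, and $T(\Sigma\times\RR)=T\Sigma\oplus\RR\partial_t$ is visibly complementary to $T\W^s_\phi=T\W^s\oplus 0$ --- and (ii) the bookkeeping in passing from the exponent bound $\theta/\alpha$ in (\ref{e=holderbunch}) to the exponent $\theta$ in the Corollary via the choice $\alpha=1$. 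Both are immediate.
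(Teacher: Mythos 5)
Your proof is correct and takes exactly the same route as the paper, which derives the corollary from Proposition~\ref{p=Holder} with the single remark ``By considering the trivial (constant) cocycle, we also obtain:''. You have merely supplied the (routine but worth stating) details: that for $\phi\equiv 0$ the lifted foliation $\W^s_\phi$ is the product foliation, that the lifted holonomy is $h_{\W^s}\times\id$, and that $\alpha=1$ is the optimal choice of H\"older exponent for the zero cocycle.
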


\begin{remark}  In  (\cite{PSW}, Theorem A)  it is shown that the holonomy 
maps for $\W^u$ and $\W^s$ are H{\"o}lder continuous
if $f$ is at least $C^2$ (or $C^{1+\alpha}$, for some $\alpha>0$).
The proof in \cite{PSW} uses a graph transform argument and an invariant section theorem to show that
the plaques of $\W^u$ and $\W^s$ form a H{\"o}lder continuous family.  Here in the proof of Proposition~\ref{p=liftfol}, 
as in the first part of the proof in \cite{PSW}, we have exhibited the plaques of $\W^u_\phi$
as an invariant section of a fiber-contracting bundle map $\cT$.
It is not possible, however, to carry over the rest of the proof in \cite{PSW} to this setting:
the low regularity of $\cT$ prevents one from using a H{\"o}lder section theorem to conclude
that the invariant section is H{\"o}lder continuous.

Hence we employ a different approach to prove that the holonomy maps are H{\"o}lder continuous.  The proof
here has some similarities with the proof that stable foliations are absolutely continuous.  We fix two
transversals $\tau$ and $\tau'$ to $\W^s_\phi$ and a pair of points $x,y\in \tau$.  We iterate the picture forward until
$f_\phi^n(\tau)$ and $f_\phi^n(\tau')$ are very close and then
push $f_\phi^n(x)$ and $f_\phi^n(y)$ across a short distance to 
points $f_\phi^n(x'),f_\phi^n(y')\in f_\phi^n(\tau')$. 
The points $x',y'$ are the images of $x,y$ under
$\W^s_\phi$-holonomy; the iterate $n$ is chosen carefully so that the distance between $x$ and $y$ can
be compared to some power of the distance between $x'$ and $y'$.  Unlike the proof of absolute continuity 
of stable foliations, in which $n$ is chosen arbitrarily large,
the choice of $n$ is delicate and depends on the distance between $x$ and $y$.
We will employ this type of argument again in later sections.

As a final remark, we note that for every partially hyperbolic diffeomorphism $f$ and every
H{\"o}lder continuous cocycle $\phi$, there is a choice of $\theta>0$ satisfying (\ref{e=holderbunch}),
for some Riemannian metric.
\end{remark}

\begin{proof}[Proof of Proposition~\ref{p=Holder}.]

In this proof, we will  use the  convention that if $q$ is a point in $M$ and $j$
is an integer, then $q_j$ denotes the point $f^j(q)$, with $q_0=q$.
If $\alpha:M\to {\bf R}$ is a positive function, and $j\geq 1$ is
an integer, we set
$$
\alpha_j(p) = \alpha(p)\alpha(p_1)\cdots\alpha(p_{j-1}), 
$$
and
$$
\alpha_{-j}(p) = \alpha(p_{-j})^{-1}\alpha(p_{-j+1})^{-1}\cdots\alpha(p_{-1})^{-1}.
$$
We set $\alpha_0(p) = 1$.
Observe that $\alpha_j$ is a
multiplicative cocycle; in particular, we have
$\alpha_{-j}(p)^{-1}= \alpha_j(p_{-j})$. Note also that $(\alpha\beta)_j = \alpha_j\beta_j$, and if 
$\alpha$ is a constant function, then $\alpha_n = \alpha^n$.

Fix $\theta\in(0,\alpha]$ satisfying (\ref{e=holderbunch}). Next, fix 
a continuous positive function $\rho\colon M\to \RR_+$ satisfying:
\begin{itemize}
\item $\rho < \min\{1,\gamma\}$, and
\item $\nu\rho^{-1} \leq (\nu\hat\mu^{-1})^{\theta/\alpha}$.
\end{itemize}
We say that a smooth transversal $\Sigma$ to $\W^s$ is {\em admissible} if the angle
between $T\Sigma$ and $E^s$ is at least $\pi/4$.

The next lemma follows from an elementary inductive argument and
continuity of the functions $\nu$, $\hat\mu$ and $\rho$ (cf. \cite{BWannals}, Lemma 1.1).
\begin{lemma}\label{l=pointwise} There exists $\delta_0>0$ such that
for any $p\in M$, and for any  $p'\in\W^s(p,\delta_0)$:
\begin{enumerate}
\item for any $i\geq 0$,
$$d(p_i, p_i') \leq	 \nu_{i}(p) d(p,p');$$
\item for any admissible transversal $ \Sigma'$ to $\W^s$ at $p'$,
and any point $q'\in  \Sigma'$, if $d(p_i', q_i')<\delta_0$, for $i=1,\ldots, n$, then
$$\rho_i(p) d(p',q') \leq d(p_i', q_i') \leq	 \hat\mu_{i}(p)^{-1} d(p',q'),$$ for $i=1,\ldots, n$.
\end{enumerate}
\end{lemma}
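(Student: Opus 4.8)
The plan is to prove both parts by one-step inductions on $i$, the real work being confined to the preliminary choice of $\delta_0$ (and of a convenient metric), after which everything is bookkeeping of the sort summarized in the statement as ``an elementary inductive argument'' and carried out in \cite{BWannals}, Lemma~1.1. First I would work in a metric adapted to the splitting, so that $E^s$, $E^c$, $E^u$ are mutually orthogonal; since all metrics on the compact $M$ are uniformly comparable and the defining inequalities are strict, one may arrange this while keeping functions $\nu,\gamma,\rho,\hat\mu$ as in the statement. Using compactness, continuity of $\nu,\gamma,\rho,\hat\mu$, and the strictness of $\|T_pf|_{E^s(p)}\|<\nu(p)$, $\gamma(p)<\|T_pf|_{E^s(p)}\|$, $\rho<\min\{1,\gamma\}$, I would then fix $\delta_0>0$ so small that: (a) $d(p,q)<\delta_0$ forces $\|T_qf|_{E^s(q)}\|<\nu(p)$ and $\gamma(p)<\|T_qf|_{E^s(q)}\|$; (b) there is a fixed narrow cone field $\mathcal{C}$ about $E^c\oplus E^u$, forward-invariant under $Tf$, with $\rho(p)\|v\|\le\|T_qfv\|\le\hat\mu(p)^{-1}\|v\|$ whenever $d(p,q)<\delta_0$ and $v\in\mathcal{C}(q)$; and (c) admissible transversals have diameter $<\delta_0$, and on $\W^s$-plaques of diameter $\delta_0$ the leafwise distance $d_{\W^s}$ and the ambient distance $d$ agree up to a factor $1+o(1)$. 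For (b), the upper bound uses $\hat\gamma^{-1}<\hat\nu^{-1}$ and orthogonality, while the lower bound uses $\rho<\gamma$ together with domination (vectors in $E^c\oplus E^u$ are expanded by at least $\gamma$); the $\pm\epsilon$ errors incurred by widening from $E^c\oplus E^u$ to a genuine cone, and the $1+o(1)$ errors in (c), are absorbed using the remaining slack in the strict inequalities (equivalently: shrinking $\rho$ and enlarging $\hat\mu^{-1}$ a little, harmless for the later use of the lemma, e.g. in (\ref{e=holderbunch})).

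Part (1) is then immediate. The case $i=0$ is trivial; if $d_{\W^s}(p_i,p_i')\le\nu_i(p)\,d_{\W^s}(p,p')<\delta_0$, then the shortest arc in $\W^s(p_i)$ joining $p_i$ to $p_i'$ lies in $B(p_i,\delta_0)$, so $f$ multiplies its length by at most $\nu(p_i)$ by (a), which advances the induction; then $d\le d_{\W^s}$ and (c) give $d(p_i,p_i')\le\nu_i(p)\,d(p,p')$ after absorbing the factor $1+o(1)$.

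Part (2) follows the same scheme, the invariant cone being the new ingredient. We are \emph{handed} $d(p_i',q_i')<\delta_0$ for $i=1,\dots,n$, and Part (1) gives $d(p_i,p_i')<\delta_0$ (since $\nu_i(p)<1$), so $p_i,p_i',q_i'$ all sit in one $2\delta_0$-ball --- halving $\delta_0$ at the outset covers this. Domination forces $Tf^m(T\Sigma')$ into $\mathcal{C}$ after some fixed number $m$ of iterates, so after replacing $\Sigma'$ by its $m$-th forward image (and shrinking $\delta_0$) I may assume $Tf^i(T\Sigma')\subset\mathcal{C}$ for all $i\ge0$. Then I would join $p_i'$ to $q_i'$ by a geodesic $c_i$ of the induced metric on $f^i(\Sigma')$, so that $c_{i+1}=f\circ c_i$ up to reparametrization and every velocity of $c_i$ lies in $\mathcal{C}$, and apply (b) to get $\rho(p_i)\,\mathrm{length}(c_i)\le\mathrm{length}(c_{i+1})\le\hat\mu(p_i)^{-1}\,\mathrm{length}(c_i)$; iterating from $i=0$ and using $\mathrm{length}(c_i)=(1+o(1))\,d(p_i',q_i')$ gives the two inequalities with the cocycles based at $p'$. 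Finally, replacing $p'$ by $p$ in $\rho_i$ and $\hat\mu_i$ costs a factor at most $\prod_{j\ge0}(1+\omega(\nu_j(p)\,d(p,p')))$, with $\omega$ a common modulus of continuity; since $\nu_j(p)\le\kappa^j$ with $\kappa=\max\nu<1$, this product is bounded uniformly in $i$ and tends to $1$ with $\delta_0$, so it too is absorbed.

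The one step I expect to need genuine care rather than bookkeeping is (b): producing a \emph{single} cone and metric in which $Tf$ expands cone vectors by at least $\rho$ and at most $\hat\mu^{-1}$, and into which admissible transversals are funnelled under iteration. That is the only place where the geometry of the partially hyperbolic splitting and the domination inequalities, as opposed to one-dimensional cocycle bookkeeping, really enter.
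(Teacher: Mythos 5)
The paper does not actually write out a proof of this lemma: it says only that the lemma ``follows from an elementary inductive argument and continuity of the functions $\nu$, $\hat\mu$ and $\rho$ (cf. [BWannals], Lemma 1.1).'' Your general strategy -- adapted metric, uniform continuity to fix $\delta_0$, one-step induction absorbing errors into slack -- is of the right type and matches the spirit of that remark; part (1) in particular is fine. But your treatment of part (2) has two genuine gaps.

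First, the funnelling step is not legitimate. You replace $\Sigma'$ by $f^m(\Sigma')$ so that every iterate of $T\Sigma'$ lies in the narrow cone $\mathcal{C}$, but the lemma requires the inequalities for all $i=1,\dots,n$; after the replacement your argument only controls $i\geq m$, and the first $m$ steps are simply missing. The replacement also changes the cocycles $\rho_i(p)$, $\hat\mu_i(p)^{-1}$ by a bounded but unaccounted-for factor. In fact the detour through a narrow cone is unnecessary: in the orthogonal metric, using $\rho < \gamma < \nu < \gamma_{ph}$ (where $\gamma_{ph}$ is the partial-hyperbolicity lower bound on $E^c$) and $\hat\gamma_{ph}^{-1} < \hat\nu_{ph}^{-1}\leq\hat\mu^{-1}$, one checks directly that $\rho(p)\|v\|\leq\|T_qfv\|\leq\hat\mu(p)^{-1}\|v\|$ for \emph{every} $v\in T_qM$ with $d(p,q)<\delta_0$, so no cone condition is needed at all.

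Second, and more seriously, your curve-length argument does not actually deliver the lower bound. You set $c_0$ to be a geodesic of $\Sigma'$ and $c_i = f^i(c_0)$, and you assert $\mathrm{length}(c_i)=(1+o(1))\,d(p_i',q_i')$; but for $i>0$ the curve $c_i$ is not a geodesic of $f^i(\Sigma')$, and a priori $\mathrm{length}(c_i)$ could be much larger than $d(p_i',q_i')$ (your assertion would require a uniform bound on the second fundamental forms of $f^i(\Sigma')$, which you do not establish, and which is not part of the admissibility hypothesis). The only inequality that comes for free is $d(p_i',q_i')\leq\mathrm{length}(c_i)$, and that yields the upper bound but not the lower one. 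The standard way around this is to argue one step at a time using ambient geodesics: fix the $M$-geodesic from $p_{i+1}'$ to $q_{i+1}'$ (of length $d(p_{i+1}',q_{i+1}')<\delta_0$), pull it back by $f^{-1}$, and use $\sup_{B(p_i,\delta_0)}\|T f^{-1}\| \leq \rho(p_i)^{-1}$ to conclude $d(p_i',q_i')\leq\rho(p_i)^{-1}d(p_{i+1}',q_{i+1}')$. This also makes your final conversion of cocycles from base point $p'$ to base point $p$ unnecessary; as written that step is itself suspect, since $\prod_{j}(1+\omega(\nu_j(p)\,d(p,p')))$ need not converge when $\rho$ is merely continuous (a modulus of continuity $\omega$ need not be summable along a geometric sequence).
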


Let $\delta_0>$ be given by this lemma; by rescaling the metric, we may assume that $\delta_0=1$.
Fix $p\in M$ and $p'\in \W^s(p,1)$.  
Let $ \Sigma$ and $ \Sigma'$ be admissible transversals to $\W^s$, with $p\in \Sigma$ and
$p'\in \Sigma'$, so that the $\W^s$-holonomy $h^s: \Sigma\to  \Sigma'$, with $h^s(p)=p'$ is well-defined.  Let
$\tau =  \Sigma\times\RR$, and let $\tau' =  \Sigma'\times\RR$.  Fix $q\in  \Sigma$ with
$d(p,q) <1$, and let $q'=h^s(q)$. 

For $(z,t)\in M\times\RR$ and $n\geq 0$, write 
$(z_n,t_n)$ for $f_\phi^n(z,t)$.  We introduce the notation
$S_n\phi(z) = \sum_{i=0}^{n-1} \phi(z_i)$, and note that $S_1\phi(z) = \phi(z)$.  
With these notations, we have $(z_n, t_n) = (z_n, t + S_n\phi(z))$.
Denote by $h^s_\phi\colon  \Sigma\times\RR\to  \Sigma\times\RR$ the $\W^s_\phi$-holonomy,
which covers the map $h^s$. 
We first establish H{\"o}lder continuity of the base holonomy map $h^s\colon  \Sigma\to  \Sigma'$.

Since $\nu  < \hat\mu^{-1}$, there exists an $n$ 
so that $d(p,q) = \Theta(\nu_n(p) \hat\mu_{n}(p))$; fix such an $n$.  Lemma~\ref{l=pointwise} applied
in the transversal $ \Sigma$ implies that 
$d(p_i, q_i) \leq  \hat\mu_{i}(p)^{-1} d(p,q) \leq O(\nu_n(p))$, for $i=1,\ldots, n$.

On the other hand, since $p'\in \W^s(p,1)$, we have $d(p_i,p'_i) \leq O(\nu_i)$, for all $i$;
in particular, $d(p_n,p'_n) \leq O(\nu_n)$.  Similarly,  $(q_n,q'_n) \leq O(\nu_n)$.
By the triangle inequality, we have that
\begin{eqnarray*}
d(p_n',q_n')& \leq & d(p_n, q_n) + d(p_n, p_n') +  d(q_n, q_n')\\
&=& O(\nu_n(p)).
\end{eqnarray*}
Now applying $f^{-n}$ to the pair of points $p_n',q_n'$  we obtain the pair of points
$p',q'$, which lie in the admissible transversal $ \Sigma'$.  
Lemma~\ref{l=pointwise} then implies that $d(p',q') \leq \rho_n(p)^{-1} d(p_n',q_n')
\leq O(\rho_n(p)^{-1} \nu_n(p))$.  Since $\rho_n(p)^{-1} \nu_n(p) < (\nu_n(p)\hat\mu_n(p))^{\theta/\alpha}
= O(d(p,q)^{\theta/\alpha})$, we obtain that $d(p',q') \leq O(d(p,q)^{\theta/\alpha}) \leq O(d(p,q)^{\theta})$,
and so $h^s$ is $\theta$-H{\"o}lder continuous.

We next turn to the H{\"o}lder continuity of $h^s_\phi$.  Since $h^s_\phi$ covers $h^s$, it suffices to establish
H{\"o}lder continuity in the $\RR$-fiber. Fix a point $(p,r)\in  \Sigma\times\RR$ and write
$h^s_\phi(p,r) = (p', r')$ and $h^s_\phi(q,s) = (q',s')$.

H{\"o}lder continuity of $\phi$ with exponent $\alpha$ implies that 
\begin{eqnarray*}
|S_n\phi(p) -  S_n\phi(q)| &\leq& \sum_{i=0}^{n-1} O(d(p_i,q_i)^\alpha)\\
&\leq&  \sum_{i=0}^{n-1} O((\nu_n(p)\hat\mu_n(p)\hat\mu_i(p)^{-1})^\alpha)\\
&=& \nu_n(p)^\alpha \sum_{i=0}^{n-1} O (\hat\mu_{-i}(p_n)^{-\alpha})\\
&\leq& \nu_n(p)^\alpha \sum_{i=0}^{n-1} O (\bar \mu^{i\alpha}) = O(\nu_n(p)^\alpha)\\
\end{eqnarray*}
where $\bar \mu < 1$ is an upper bound for $\hat\mu$. This means that 
$|r_n - s_n|  \leq  |r-s| + O(\nu_n(p)^\alpha)$.

Note that $(p_n',r_n') \in \W^s_\phi(p_n,r_n)$. Proposition~\ref{p=liftfol} implies
that $\W^s_\phi(p_n,r_n)$ is the graph of an $\alpha$-H{\"o}lder continuous function
from $\W^s(p_n)$ to $\RR$. Hence
\begin{eqnarray*}
|r_n-r_n'| &\leq & O(d(p_n,p_n')^{\alpha}) = O(\nu_n(p)^\alpha),
\end{eqnarray*}
and similarly,  $|s_n-s_n'| = O(\nu_n(p)^\alpha)$.
Now, by the triangle inequality,
\begin{eqnarray}\label{e=r_n'-s_n'}
|r_n'-s_n'| & \leq & |r_n-s_n|  + |r_n - r_n'| +  |s_n - s_n'|\\
&\leq&  |r-s| + O(\nu_n(p)^\alpha);
\end{eqnarray}

Since $d(p_{n-i}',q_{n-i}') \leq O(\nu_n(p)\rho_{-i}(p_n))$, for $i=1,\ldots n$, the
$\alpha$-H{\"o}lder continuity of $\phi$ implies that 
$|S_n\phi (p')) -  S_n\phi (q')| \leq \sum_{i=1}^{n} O( (\nu_n(p)\rho_{-i}(p_n))^\alpha)
= O( (\nu_n(p)\rho_{n}(p)^{-1})^\alpha) $, since $\rho < 1$.
The inequality $(\nu\rho^{-1})^\alpha < (\nu\hat\mu)^\theta$ now implies that
\begin{eqnarray}\label{e=S_n}
|S_n\phi (p')) -  S_n\phi (q')| &\leq & O((\nu_n(p)\hat\mu_n(p))^\theta).
\end{eqnarray}

Combining (\ref{e=r_n'-s_n'}) and (\ref{e=S_n}), we obtain:
\begin{eqnarray*}
|r'-s'| &=& |(r_n'-s_n') - (S_n \phi (p')) -  S_n \phi(q'))| \\
&\leq & |r-s| + O(\nu_n(p)^\alpha) + O((\nu_n(p)\hat\mu_n(p))^\theta)\\
&\leq& |r-s| + O((\nu_n(p)\hat\mu_n(p))^\theta),
\end{eqnarray*}
since  $\nu^\alpha < (\nu\hat\mu)^\theta$.

We would like to compare $|r'-s'|$ to $d((p,r), (q,s))^\theta$; the latter quantity 
is equal to $(|r-s| + d(p,q))^\theta = (|r-s| + \Theta((\nu_n(p) \hat\mu_{n}(p))^\theta)$;
by the preceding calculation, $|r'-s'| \leq O(d((p,r), (q,s))^\theta)$. Hence
$h^s_\phi$ is $\theta$-H{\"o}lder continuous.
\end{proof}

Having completed this preliminary step, we turn to the proof of the main result in this section.
\begin{proof}[Proof of Theorem~\ref{t=main}, part II]

Suppose that $f$ is accessible and $\phi\colon M\to \RR$ is H{\"o}lder continuous.
Let $\Phi\colon M\to \RR$ be a continuous map satisfying $\phi = \Phi\circ f - \Phi + c$, for some $c\in\RR$.
We show that $\Phi$ is H{\"o}lder continuous.
The key ingredient in the proof is the following lemma.
\begin{lemma}\label{l=holderpaths} There exist $C>0$, $r_0>0$ and $\kappa\in (0,1)$ with the following properties.

For any pair of points $p,q\in M$, there exist functions $\alpha\colon B_M(p,r_0) \to B_M(q,1)$ 
and  $\beta\colon B_M(p,r_0) \to \RR $ with the following
properties:
\begin{enumerate}
\item $\alpha(p) = q$
\item for all $z,z'\in B_M(p,r_0)$,
$$d(\alpha(z), \alpha(z')) \leq C d(z,z')^\kappa,$$
and 
$$|\beta(z) - \beta(z')| \leq C d(z,z')^\kappa,$$
\item for all $z\in B_M(p,r_0)$, $\alpha(z)$ is the endpoint of an $su$-path in $M$ originating at $z$,
\item for all $z\in B_M(p,r_0)$, and $t\in \RR$,
$\Delta(z,t)$ is the endpoint of an $su$-lift path in $M\times \RR$ originating at $(z,t)$,
where $\Delta\colon B_M(p,r_0)\times \RR \to B_M(q,1)\times \RR$ is the map
$\Delta(z,t) = (\alpha(z), t+\beta(z))$.
\end{enumerate}
\end{lemma}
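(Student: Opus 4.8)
The plan is to produce $\alpha$ and $\beta$ by fixing a single $su$-path from $p$ to $q$ of uniformly bounded combinatorial complexity and then \emph{deforming} it over a small ball about $p$: the deformation of the base path is governed by $\cW^s$- and $\cW^u$-holonomy (this gives $\alpha$), and the deformation of its lift by $\cW^s_\phi$- and $\cW^u_\phi$-holonomy (this gives $\beta$). Hölder control of $\alpha$ and $\beta$ then comes from Corollary~\ref{c=Holder} and Proposition~\ref{p=Holder}, and uniformity in the pair $(p,q)$ comes from Lemma~\ref{l=unifaccess} together with compactness, in the same spirit as the proof of Lemma~\ref{l=unifaccess} itself (where connecting sequences were deformed \emph{continuously}; here we upgrade to Hölder dependence). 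Concretely, by Lemma~\ref{l=unifaccess} there are constants $K_M,L_M$, independent of $p,q$, and a $(K_M,L_M)$-accessible sequence $(p=x_0,x_1,\dots,x_K=q)$ with $K\le K_M$; for each $i$ there is $\sigma_i\in\{s,u\}$ with $x_{i+1}\in\cW^{\sigma_i}(x_i)$ and leafwise distance $\le L_M$. Choose admissible smooth transversals $\Sigma_i$ through $x_i$ of a fixed small size, cover the leafwise arc from $x_i$ to $x_{i+1}$ by foliation boxes (a uniformly bounded number $N$ of them suffices, by a Lebesgue-number argument applied to a fixed finite cover of $M$ by foliation boxes, since the arc has length $\le L_M$), and let $h_i\colon\Sigma_i\to\Sigma_{i+1}$ be the resulting $\cW^{\sigma_i}$-holonomy, which is well defined because $\cW^{\sigma_i}$-leaves are contractible.

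For $z\in B_M(p,r_0)$, with $r_0>0$ uniformly small (to be fixed below), set $y_0(z)=z$, $y_{i+1}(z)=h_i(y_i(z))$, and $\alpha(z)=y_K(z)$; here $r_0$ is taken small enough that each $y_i(z)$ remains inside the fixed transversal $\Sigma_i$, which is possible uniformly since $y_i(p)=x_i$ and the $y_i$ are continuous. Since $x_i$ lies on the leaf $\cW^{\sigma_{i-1}}(x_{i-1})=\cW^{\sigma_{i-1}}(x_i)$, one checks $y_i(p)=x_i$ by induction, so $\alpha(p)=q$, giving (1); and $z=y_0(z),y_1(z),\dots,y_K(z)=\alpha(z)$ is an accessible sequence, so $\alpha(z)$ is the endpoint of an $su$-path from $z$, giving (3). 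By Corollary~\ref{c=Holder} each box-holonomy is $\theta_0$-Hölder with a constant uniform over $M$, where we fix $\theta_0\in(0,1)$ (any valid exponent will do, and decreasing it only weakens the conclusion). Since a composition of uniformly Hölder maps on a bounded set is uniformly Hölder with exponent the product of the exponents, each $h_i$ is $\theta_0^{N}$-Hölder and hence $\alpha$ is Hölder with exponent $\kappa_0:=\theta_0^{NK_M}\in(0,1)$ and a constant $C$ depending only on $f$. Finally choose $r_0$, independent of $p,q$, with $Cr_0^{\kappa_0}<1$; then $d(\alpha(z),q)=d(\alpha(z),\alpha(p))\le C\,d(z,p)^{\kappa_0}<1$, so $\alpha$ maps $B_M(p,r_0)$ into $B_M(q,1)$.

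For $\beta$, lift each holonomy: let $h_{i,\phi}\colon\Sigma_i\times\RR\to\Sigma_{i+1}\times\RR$ be the $\cW^{\sigma_i}_\phi$-holonomy, which covers $h_i$ because $\cW^{\sigma_i}$-leaves lift to $\cW^{\sigma_i}_\phi$-leaves. By Proposition~\ref{p=liftfol}(\ref{liftfol3}) the foliation $\cW^{\sigma_i}_\phi$ is invariant under the translations $T_t(z,s)=(z,s+t)$, so $h_{i,\phi}$ commutes with $T_t$ and therefore has the form $h_{i,\phi}(y,t)=(h_i(y),\,t+g_i(y))$ for a function $g_i$ on $\Sigma_i$ that does not depend on $t$; by Proposition~\ref{p=Holder}, $g_i$ is Hölder with a uniform constant, with exponent we may take to be $\ge\theta_0$ after shrinking $\theta_0$ once more if necessary. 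Set
$$\beta(z)\;=\;\sum_{i=0}^{K-1} g_i\bigl(y_i(z)\bigr),\qquad \Delta(z,t)\;=\;\bigl(\alpha(z),\,t+\beta(z)\bigr).$$
Lifting, leg by leg, the $su$-path $z=y_0(z),\dots,y_K(z)=\alpha(z)$ through the starting point $(z,t)$ using the $h_{i,\phi}$ produces an $su$-lift path from $(z,t)$ whose endpoint is exactly $\Delta(z,t)$, which is (4). Each $y_i(\cdot)$ is $\kappa_0$-Hölder with uniform constant and each $g_i$ is $\theta_0$-Hölder, so each summand $g_i\circ y_i$, and hence $\beta$, is Hölder with a uniform constant and exponent $\ge\theta_0\kappa_0$; enlarging $C$ and shrinking $\kappa$ to $\kappa:=\theta_0\kappa_0\in(0,1)$ makes one pair $(C,\kappa)$ work simultaneously for $\alpha$ and $\beta$, giving (2).

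The argument involves no deep point; the entire content is the uniformity of $C$, $r_0$, and $\kappa$ over all pairs $(p,q)$, and this is exactly where the work goes. It rests on three inputs: Lemma~\ref{l=unifaccess}, which bounds both the combinatorial length $K$ and the leafwise lengths of the connecting sequence uniformly; a compactness/Lebesgue-number argument bounding the number $N$ of box-holonomies needed to realize each leafwise step and supplying the uniform Hölder constants in Corollary~\ref{c=Holder} and Proposition~\ref{p=Holder}; and the $T_t$-equivariance of $\cW^{s}_\phi,\cW^u_\phi$ from Proposition~\ref{p=liftfol}(\ref{liftfol3}), which is what forces $\Delta$ to split as $(z,t)\mapsto(\alpha(z),t+\beta(z))$ rather than being a general fibered map. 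Granting these, the Hölder estimates reduce to the elementary fact that a composition (and finite sum) of uniformly Hölder maps on a bounded domain is uniformly Hölder, with exponent the product (respectively minimum) of the exponents.
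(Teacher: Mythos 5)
Your strategy is the same as the paper's: connect $p$ to $q$ by a $(K_M,L_M)$-accessible sequence from Lemma~\ref{l=unifaccess}, realize each leg by $\cW^s$- or $\cW^u$-holonomy between fixed transversals, take $\alpha$ to be the composition of the base holonomies and $\beta$ the accumulated $\RR$-translation from the lifted holonomies, and get H\"older control from Corollary~\ref{c=Holder} and Proposition~\ref{p=Holder}; the $T_t$-equivariance of $\cW^{s}_\phi,\cW^u_\phi$ (Proposition~\ref{p=liftfol}, item~\ref{liftfol3}) forcing $\Delta$ to split as $(\alpha(z),t+\beta(z))$ is exactly the paper's mechanism as well. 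The exponent you get, $\theta_0^{NK_M+1}$ rather than the paper's $\theta^{K_M}$, is worse only because you slice each leg into $N$ box-holonomies rather than treating it as a single holonomy over leafwise distance $\le L_M$; this is immaterial, since any $\kappa\in(0,1)$ suffices.

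There is, however, one genuine gap at the first leg. You define $h_0\colon\Sigma_0\to\Sigma_1$ as holonomy between two transversals and then set $y_0(z)=z$, $y_1(z)=h_0(y_0(z))$, asserting that for $r_0$ small, ``each $y_i(z)$ remains inside the fixed transversal $\Sigma_i$.'' For $i\ge 1$ this is automatic ($y_i(z)\in\Sigma_i$ by definition of $h_{i-1}$, and shrinking $r_0$ keeps you in the domain of the next holonomy). But for $i=0$ it fails: $\Sigma_0$ is a codimension-$\dim E^{\sigma_0}$ disk, so no ball $B_M(p,r_0)$ is contained in $\Sigma_0$, and $h_0(z)$ is simply undefined for generic $z$ in that ball. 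Shrinking $r_0$ does not help because this is a dimension obstruction, not a proximity one. The repair is what the paper does: instead of a single transversal $\Sigma_0$, fix a smooth \emph{foliation} of a neighborhood of $p$ by admissible transversals $\{\Sigma_x\}$ and define the first map on the full neighborhood as ``intersect the local $\cW^{\sigma_0}$-leaf through $z$ with the target transversal.'' Equivalently, precompose $h_0$ with the (Lipschitz, since the transversal foliation is smooth) projection $\pi_0\colon B_M(p,r_0)\to\Sigma_0$ along local $\cW^{\sigma_0}$-plaques; note that $z$, $\pi_0(z)$, and $h_0(\pi_0(z))$ all lie on the same $\cW^{\sigma_0}$-leaf, so properties (3) and (4) survive, and the extra Lipschitz factor only adjusts the constant $C$, not the exponent. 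With that adjustment your argument is complete and correct.
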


Assuming this lemma, the proof proceeds as follows. Let $C,r_0,\kappa$ be given
by Lemma~\ref{l=holderpaths}.  Fix $x_0,x_1\in M$ with $d(x_0,x_1)<r_0$. For $i\geq 1$, we construct
a sequence of points $x_i$ and maps $\alpha_i \colon B_M(x_0,r_0) \to B_M(x_{i},1)$, 
$\beta_i\colon B_M(x_0,r_0) \to \RR $ and $\Delta_i\colon B_M(x_0,r_0)\times \RR \to B_M(x_{i},1)\times \RR$
inductively as follows.  The point $x_1$ is already defined. 
Assume that $x_i$, for $i\geq 1$ has been defined. 
Let $\alpha_{i}$ and $\beta_i$
be given by the lemma, setting $p=x_0$ and $q=x_{i}$ (so that $h(x_0) = x_i$).
Define $\Delta_i$, as in Lemma~\ref{l=holderpaths},
by $\Delta_i(z,t) = (\alpha_i(z),t+ \beta_{i}(z))$.  We then set $x_{i+1} = \alpha_i(x_1)$.  

We next argue that, for any $i\geq 1$, the map $\Delta_i$ has the property
that, for all $z\in B_M(x_0, r_0)$, $$\Delta_i(z,\Phi(z)) = (\alpha(z), \Phi(z) + \beta_i(z)) = (\alpha(z), \Phi(\alpha(z))).$$
Since $\Phi$ is a continuous solution to (\ref{e=livsic2}), Proposition~\ref{p=satprop}
implies then the graph of $\Phi$ is bisaturated.  That is, for any $p,q\in M$,
if $(q,t)$ is the endpoint of any ${su}$-lift path 
originating at $(p,\Phi(p))$, then $t=\Phi(q)$.
But properties 3 and 4 of the maps $\Delta_i$ given by Lemma~\ref{l=holderpaths} imply
that $\alpha_i(z)$ is the endpoint of an $su$-path originating at $z$, and
$\Delta_i(z,\Phi(z))$ is the endpoint of an ${su}$-lift path originating at $(z,\Phi(z))$.
Hence we obtain that $\Delta_i(z,\Phi(z)) = (\alpha_i(z), \Phi(\alpha_i(z)))$, as claimed.

It now follows from the properties of $\Delta_i$ and the definition of $x_i$ that, for $i\geq 1$:
$$\Phi(x_0) + \beta_i(x_0) = \Phi(\alpha_i(x_0)) = \Phi(x_i),$$
and
$$\Phi(x_1) + \beta_i(x_1) = \Phi(\alpha_i(x_1)) = \Phi(x_{i+1}).$$
Thus:
\begin{eqnarray}\label{e=phidifference}
\Phi(x_1) - \Phi(x_0) &=& \left(\Phi(x_{i+1}) - \Phi(x_i)\right) + \left(\beta_i(x_0) - \beta_i(x_1)\right).
\end{eqnarray}
Summing equation (\ref{e=phidifference}) over $i\in\{1,\ldots,n\}$, we obtain:
\begin{eqnarray*}
n\left(\Phi(x_1) - \Phi(x_0)\right) &=& \left(\Phi(x_{n+1}) - \Phi(x_1)\right) + \sum_{i=1}^n\left(\beta_i(x_0) - \beta_i(x_1)\right),
\end{eqnarray*}
and so:
\begin{eqnarray*}
|\Phi(x_1) - \Phi(x_0)| &\leq& \frac{1}{n}\left|\Phi(x_{n+1}) - \Phi(x_1)\right| + \frac{1}{n}\sum_{i=1}^n\left|\beta_i(x_0) - \beta_i(x_1)\right|\\
&\leq& \frac{1}{n}\|\phi\|_{\infty} + \frac{1}{n}\sum_{i=1}^n C d(x_0, x_1)^{\kappa}\\
&\leq& \frac{1}{n}\|\phi\|_{\infty} + C d(x_0, x_1)^{\kappa}.
\end{eqnarray*}
Sending $n\to \infty$, we obtain that $|\Phi(x_1) - \Phi(x_0)| \leq  C d(x_0, x_1)^{\kappa}$;
since $x_0$ and $x_1$ were arbitrary points within distance $r_0$ of each other, this implies
that $\Phi$ is $\kappa$-H{\"o}lder continuous.  This completes the proof of Proposition~\ref{p=Holder}, assuming
Lemma~\ref{l=holderpaths}. \end{proof}

\begin{proof}[Proof of Lemma~\ref{l=holderpaths}] 
Let $\theta$ be given by Proposition~\ref{p=Holder}, and let $N_M$, $L_M$ be given by 
Lemma~\ref{l=unifaccess}.

We first describe how to construct the maps $\alpha$ and $\beta$ in the case where
$q\in \W^s(p,L_M)$. The analogous construction works for $q\in\W^u(p,L_M)$.
Lemma~\ref{l=unifaccess} implies that any $p$ and $q$ can be connected by an $(K_M,L_M)$-accessible sequence.
We can therefore construct $\alpha,\beta$ for a general pair of points $p$ and $q$ by composing
at most $K_M$ maps along stable and unstable segments.

Suppose then that $p'\in \W^s(p, L_M)$.  We define $\alpha = \alpha_{p,p'}$ as follows.  Fix a foliation
box $U$ of $\cW^s$ containing $\W^s(p, L_M)$, and let $\{\Sigma_x\}_{x\in U}$ be a (uniformly-chosen)
smooth foliation by admissible transversals to $\cW^s$ in $U$.  For $z\in U$, we define $\alpha_{p,p'}(z)$ to
be the unique point of intersection of $\cW^s(z,L_M)$ with $\Sigma_{p'}$ in $U$.
The map $\alpha_{p,p'}\colon U\to \Sigma_{p'}$ sends $p$ to $p'$ and is $\theta$-H{\"o}lder continuous
when restricted to any transversal $\Sigma_x$. Since $\{\Sigma_x\}_{x\in \cW^s(p)}$ is a smooth foliation,
it follows that $\alpha_{p,p'}$ is $\theta$-H{\"o}lder continuous, uniformly in $p'\in U$.

Similarly, for $(z,t)\in U\times \RR$, we define $\Delta_{p,p'}(z,t)$ to be the unique point of intersection of $\cW^s_\phi(z)$ with $\Sigma_{p'}\times\RR$ in $U\times \RR$. 
Proposition~\ref{p=liftfol} implies that $\Delta_{p,p'}$ takes the form
$$\Delta_{p,p'}(z,t) = (\alpha_{p,p'}(z), t+ \beta_{p,p'}(z)),$$
for some function $\beta_{p,p'}\colon U\to \RR$.   Proposition~\ref{p=Holder} implies
that $\Delta_{p,p'}$, and so $\beta_{p,p'}$, is $\theta$-H{\"o}lder continuous, uniformly in $p'\in U$.

The same construction defines $\alpha_{p,p'}$ and $\beta_{p,p'}$ for $p'\in \W^u(p,K_M)$.
Finally, for $p,q$ in $M$, we fix an $(K_M,L_M)$-accessible sequence $(y_0, y_1, \ldots, y_{K_M})$ connecting $p$ and $q$ and define 
$$\alpha_{p,q} = \alpha_{y_{K_M-1}, y_{K_M}} \circ  \alpha_{y_{K_M-2}, y_{K_M-1}}\circ\cdots\circ  \alpha_{y_{0}, y_{1}}.$$  
By construction, $\alpha_{p,q}(p) = q$.
Similarly define $\beta_{p,q}$.

Then there exists $r_0>0$ such that for every pair $p,q$, $\alpha_{p,q}$ and $\beta_{p,q}$ are
defined in the neighborhood $B_M(p,r_0)$ and $\alpha_{p,q}$ takes values in $B_M(q,1)$.  Furthermore, there exists
$C>0$ such that (1) and (2) in the statement of the lemma hold, for $\kappa = \theta^{K_M}$.
Finally, property (4) holds by construction.

\end{proof}

\begin{remark} The H{\"o}lder exponent for $\Phi$ obtained in this proof can be considerably smaller
than the exponent for $\phi$.  In particular, the largest possible exponent for
the $\cW^s_\phi$  or $\cW^u_\phi$ holonomy given by Proposition~\ref{p=Holder} is $\frac{1}{2}$.
Concatenating these holonomies along $K$ steps of an accessible sequence reduces this exponent
further to $\frac{1}{2^K}$.  In contrast, the exponents for $\Phi$ and $\phi$ in
Theorem~\ref{t=livsicold} are the same.  This is because the transverse H{\"o}lder
continuity of $\cW^s_\phi$ and $\cW^u_\phi$ does not play a role in the proof when $f$ is
Anosov, and so only the  H{\"o}lder exponent of the leaves, which is the same as for $\phi$,
determines the exponent for $\Phi$.

\end{remark}

\section{Jets}

In this section we review  basic facts about jets and jet bundles that will
be needed in subsequent sections. The reader is
referred to \cite{Hirsch, Kolar} for a more detailed account.

If $N_1$ and $N_2$ are $C^k$ manifolds and $\ell \leq k$, 
we denote by $\Gamma^\ell(N_1,N_2)$ the set of local $C^k$ maps from
$N_1, N_2$; each element of $\Gamma^\ell(N_1,N_2)$ is a triple
$(p,\phi, U)$, where $\phi$ is a $C^\ell$ map from a neighborhood $U$ of $p$ in $N_1$ to $N_2$.
For $p\in N_1$, we denote by $\Gamma^\ell_p(N_1,N_2)$ the set of
elements of $\Gamma^\ell(N_1,N_2)$ based at $p$.
We denote by
$J^\ell(N_1,N_2)$ the bundle of $C^\ell$ jets from $N_1$
into $N_2$: each element of $J^\ell(N_1,N_2)$ is an equivalence class of triples $(p,\phi,U)\in \Gamma^\ell_p(N_1,N_2)$,
where two triples  $(p,\phi,U)$ and  $(p', \phi', U')$ are equivalent if $p=p'$, and the partials of $\phi$ and $\phi'$ at $p$ up to order $\ell$ coincide.  

We denote by $[p,\phi,U]_\ell$ the equivalence
class containing $(p,\phi,U)$, which is called a {\em $\ell$-jet at $p$}.
Alternately, we use the notation $j^\ell_p\phi$.
The point $p$ is called the {\em source} of $(p,\phi,U)$
and $\phi(p)$ is the {\em target}. The source map $\sigma$ gives $J^\ell(N_1,N_2)$ the structure
of a $C^{k-\ell}$ bundle over $N_1$; we denote by $J^\ell_p(N_1,N_2)$ the $\ell$-jets with source 
$p\in N_1$.
We also denote by $J^\ell(N_1,N_2)_q$ the set of jets with target $q$.  

More generally one has the $\ell$-jet bundle associated to a fiber bundle.
If $\pi\colon \cB \to M$ is a $C^k$ fiber bundle, and $\ell\leq k$, 
we denote by $\Gamma^\ell(\pi\colon \cB \to M)$ the set of $C^\ell$ local sections
of $\cB$, and by $\Gamma^\ell_p(\pi\colon \cB \to M)$ the set of $C^\ell$ local sections whose 
domain contains $p\in M$. We then define the
$\ell$-jet bundle $J^\ell(\pi\colon \cB\to M)$ to be 
the set of pairs $(p,\phi)$, where $\phi\in \Gamma^r_p(\pi\colon \cB \to M)$, 
and two pairs  $(p,\phi)$ and  $(p', \phi')$ are 
equivalent if $p=p'$, and the partials of $\phi$ and $\phi'$ at $p$ up to 
order $\ell$ coincide. Then $J^\ell(\pi\colon M\to \cB)$ is a $C^{k-\ell}$ bundle over
$M$. Observe that $J^\ell(N_1,N_2) = J^\ell(\proj_{N_1}\colon N_1\times N_2 \to N_1)$
under the natural identification of sections of $N_1\times N_2$ with functions $\phi:N_1\to N_2$.

For $\ell'\leq \ell$, there is a natural projection $\pi_{\ell,\ell'}$
from the $\ell$-jet bundle to the $\ell'$-jet bundle
that sends $j^\ell_p\phi$ to $j^{\ell'}_p\phi$.   Under this projection,
$J^\ell$ has the structure of a $C^{k-\ell'}$ fiber bundle over $J^{\ell'}$.
Moreover, $J^{\ell-\ell'}(J^{\ell'}) = J^{\ell}$. 

The bundle
$J^\ell(\RR^m,\RR^n)$ is a trivial bundle over $\RR^m$.
The fiber space $J^\ell_v(\RR^m,\RR^n)$ is the $\ell+1$-fold 
product $P^\ell(m,n) = \Pi_{i=0}^\ell L^i_{sym}(\RR^m,\RR^n)$, where $L^i_{sym}(\RR^m,\RR^n)$ is the vector space of
of symmetric, $i$-multilinear maps from $\RR^m$ to $\RR^n$.   Each $\ell$-jet $[v,\phi,U]_\ell$ in
$J^\ell_v(\RR^m,\RR^n)$ has a canonical representative, which is the $\ell$th order Taylor polynomial 
of $\phi$ about $v$. To denote an element of $J^\ell(\RR^m,\RR^n)$, we sometimes use the notation 
$(v,\wp)$ with $v\in \RR^m$ and $\wp$ a degree $\ell$ polynomial (suppressing the neighborhood $U$,
since polynomials are globally defined).  These give $C^\infty$ global coordinates on $J^\ell(\RR^m,\RR^n)$;
in this way we regard $J^\ell(\RR^m,\RR^n)$ as a finite dimensional vector space with a Euclidean structure $|\cdot|$.

\subsection{Prolongations}

If $\phi:N_1\to N_2$ is a $C^\ell$ function, then $\phi$ gives rise to a section
of the bundle $J^\ell(N_1,N_2)$ over $N_1$ via the map $v\mapsto j^\ell_v\phi$.  This
section, denoted $j^\ell\phi$ is called the {\em $\ell$-prolongation} of $\phi$.  In the case
$\ell=0$, the jet bundle $J^0(N_1,N_2)$ is just the product $N_1\times N_2$, and
the image of $N_1$ under the prolongation $j^0\phi$ is the just the graph of $\phi$.  

The function $\phi:M\to M$ is $C^k$ if and only if the $\ell$-prolongation of $\phi$ is $C^{k-\ell}$.  
Not every continuous section of $J^\ell(M,N)$ is the prolongation of a $C^\ell$ function; however,
the set of prolongations of smooth functions is  closed:
\begin{proposition}\label{p=prolongation} If $f_n\in C^\ell(M,N)$ and $j^\ell f_n\to j^\ell f$ in the weak topology
on $C^0(M, J^\ell(M,N))$, then $f\in C^\ell(M,N)$.
\end{proposition}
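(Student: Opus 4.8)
The plan is to work in local coordinates and reduce the statement to a pointwise convergence fact about Taylor polynomials, together with a Taylor‐remainder estimate that upgrades this to $C^\ell$ regularity of the limit. Fix a point $p\in M$ and a chart in which $M$ is identified with a neighborhood of $0$ in $\RR^m$ and $N$ with a neighborhood in $\RR^n$; in these coordinates $J^\ell(M,N)$ is identified (locally) with a product $\RR^m\times P^\ell(m,n)$, and for a $C^\ell$ function $g$ the prolongation $j^\ell g$ is the section $v\mapsto (v,\wp_v^g)$, where $\wp_v^g$ is the $\ell$th-order Taylor polynomial of $g$ at $v$. Writing $f_n$ in coordinates and using $j^\ell f_n\to j^\ell f$ in the weak $C^0$ topology, we obtain that for each $v$ the Taylor polynomials $\wp_v^{f_n}$ converge, uniformly on compact sets in $v$, to a polynomial $\wp_v$ of degree $\le \ell$ whose constant term is $f(v)$; denote the coefficient of order $i$ by $g_i(v)$, so $g_0=f$ and each $g_i\colon \RR^m\to L^i_{sym}(\RR^m,\RR^n)$ is continuous (being a uniform limit of the continuous coefficient functions of $f_n$).

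**Next I would** show that $g_i$ is exactly the $i$th derivative $D^i f$, proving along the way that $f\in C^\ell$. The key input is the quantitative Taylor estimate: for a $C^\ell$ function $g$ on a convex neighborhood,
$$\Bigl| g(w) - \sum_{i=0}^\ell \tfrac{1}{i!}\, D^i g(v)[(w-v)^{\otimes i}] \Bigr| = o(|w-v|^\ell)$$
uniformly, with the $o(\cdot)$ controlled by the modulus of continuity of $D^\ell g$. Applying this to each $f_n$ and passing to the limit $n\to\infty$ — using that the coefficients $\tfrac1{i!}D^i f_n(v)$ converge to $g_i(v)$ uniformly in $v$, and that the moduli of continuity of $D^\ell f_n$ need not be uniform, so one must instead argue directly — gives
$$f(w) = \sum_{i=0}^{\ell} g_i(v)[(w-v)^{\otimes i}] + R(v,w),$$
and the remaining task is to prove $R(v,w)=o(|w-v|^\ell)$. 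This does \emph{not} follow from a naive limit interchange; the honest route is to integrate up from the top order. Since $g_{\ell}$ is continuous and, by the relation $J^{\ell}=J^{1}(J^{\ell-1})$ applied to prolongations, the coefficient functions satisfy the compatibility $D g_{i} = g_{i+1}$ in the distributional sense (each $g_i$ is the uniform limit of $\tfrac1{i!}D^if_n$ and $D(\tfrac1{i!}D^if_n) = \tfrac1{(i)!}D^{i+1}f_n = (i+1)\cdot\tfrac1{(i+1)!}D^{i+1}f_n$, matching after the combinatorial bookkeeping), one deduces by downward induction that $g_{\ell}$ is continuous, $g_{\ell-1}$ is $C^1$ with $Dg_{\ell-1}=g_\ell$, and so on down to $f=g_0$ being $C^\ell$ with $D^if = i!\,g_i$.

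**The cleanest organization** is therefore: (1) pass to coordinates and extract the continuous limit coefficient functions $g_0,\dots,g_\ell$ from weak $C^0$ convergence of the prolongations; (2) observe that $j^\ell$ applied to $f_n$ already lives inside the subbundle cut out by the prolongation relations $\pi_{\ell,\ell-1}\circ j^\ell f_n$ being the $(\ell-1)$-prolongation of $\pi_{\ell,0}\circ j^\ell f_n$, i.e.\ that the coefficient functions of $f_n$ satisfy $D g_i^{(n)} = (i+1) g_{i+1}^{(n)}$ — these are closed conditions preserved in the limit in the sense of distributions; (3) run the downward induction to conclude $g_i = D^i f / i!$ and hence $f\in C^\ell(M,N)$; (4) note all of this is local, so it globalizes. **The main obstacle** is step (3): one cannot simply take $n\to\infty$ inside a Taylor remainder bound, because the remainders for $f_n$ are only controlled by the (non-uniform) moduli of continuity of $D^\ell f_n$. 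The right way around it is to transfer the derivative relations to the weak (integral) form $g_{i}(w)-g_{i}(v) = \int$-of-$g_{i+1}$ along segments — a form that \emph{is} stable under uniform limits — and then invoke the standard fact that a continuous function whose integral form matches a continuous candidate derivative is genuinely $C^1$ with that derivative. Iterating from $i=\ell-1$ down to $i=0$ yields $f\in C^\ell$, which is the assertion.
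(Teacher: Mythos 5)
The paper states Proposition~\ref{p=prolongation} without proof, treating it as a standard consequence of the fact that differentiation commutes with uniform limits; so there is no in-text argument to compare yours against. Your final organization — pass to local coordinates, extract the continuous limit coefficient functions $g_0,\dots,g_\ell$ from weak $C^0$ convergence of the prolongations, and bootstrap via the fundamental theorem of calculus in its integral form — is the correct classical argument, and the bookkeeping $Dg_i^{(n)}=(i+1)g_{i+1}^{(n)}$ is right.

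That said, the write-up is cluttered by a false start that you yourself diagnose and then abandon. The middle discussion of Taylor remainders $R(v,w)=o(|w-v|^\ell)$ is a dead end, exactly for the reason you give: the moduli of continuity of $D^\ell f_n$ need not be uniform in $n$, so nothing useful survives the limit. There is no reason to introduce that remainder at all. The lemma you actually need, and should state up front, is the one-step version: if $h_n\to h$ and $Dh_n\to k$ uniformly on compacts with each $h_n\in C^1$, then from
$$h_n(w)-h_n(v)=\int_0^1 Dh_n\bigl(v+t(w-v)\bigr)[w-v]\,dt$$
one passes to the limit to get $h(w)-h(v)=\int_0^1 k\bigl(v+t(w-v)\bigr)[w-v]\,dt$, whence $h\in C^1$ and $Dh=k$. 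Iterating this across the chain of coefficient functions (upward in $i$, or ``downward'' as you phrase it — the two describe the same sequence of applications) gives $f\in C^\ell$ with $D^if=i!\,g_i$, and the statement globalizes because $C^\ell$-ness is local. A tighter presentation would begin with this lemma and omit the Taylor-remainder detour entirely.
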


More generally, if $\sigma\colon M\to \cB$ is a section (resp. local section) of a $C^k$ bundle $\pi\colon \cB\to M$,
then the $\ell$-prolongation $j^\ell\sigma\colon M\to J^\ell(\pi\colon M\to \cB)$  is a $C^{k-\ell}$ section
(resp. local section). The analogue of Proposition~\ref{p=prolongation} holds for prolongations
of sections.

\subsection{Isomorphism of jet bundles}

The next lemma is used extensively in various forms in this paper.

\begin{lemma}\label{l=smoothops} Let $N_1, N_2,$ and $N_3$ be $C^k$ manifolds.
\begin{enumerate}
\item Let $g:N_2\to N_3$ be a $C^k$ map.  Then for every $\ell\leq k$, the map $j_x^\ell\phi\mapsto j_{x}^\ell(g\circ \phi)$
is a $C^{k-\ell}$ map from $J^\ell(N_1, N_2)$ to $J^\ell(N_1, N_3)$.
\item Let $h\colon N_1\to N_2$ be a $C^k$ diffeomorphism.
Then for every $\ell\leq k$, the map $j_x^\ell\phi\mapsto j_{h(x)}^\ell(\phi\circ h^{-1})$ is a $C^{k-\ell}$ diffeomorphism
from $J^\ell(N_1,N_3)$ to  $J^\ell(N_2,N_3)$.
\end{enumerate}
\end{lemma}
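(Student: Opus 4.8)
The plan is to reduce both assertions to a computation in local charts: over Euclidean space the jet bundles carry the global polynomial coordinates recalled above, and the operations in question become, on each fiber, polynomial maps whose coefficients are the derivatives of $g$ (respectively of $h^{-1}$) up to order $\ell$. Since those derivatives are $C^{k-\ell}$ and polynomial maps are $C^\infty$, the conclusion will follow; the chart-independence of the resulting formulas is automatic because they compute intrinsically defined jet operations.

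To prove (1), I would fix charts and assume $N_1=\RR^{m_1}$, $N_2=\RR^{m_2}$, $N_3=\RR^{m_3}$, so that a point of $J^\ell(N_1,N_2)$ is a pair $(x,\wp)$ with $\wp=(\wp_0,\dots,\wp_\ell)\in\prod_{i=0}^\ell L^i_{sym}(\RR^{m_1},\RR^{m_2})$ the tuple of Taylor coefficients of $\phi$ at $x$ and $\wp_0$ the target. In these coordinates the map $j^\ell_x\phi\mapsto j^\ell_x(g\circ\phi)$ has the form $(x,\wp)\mapsto(x,F(x,\wp))$, and by the chain rule for higher derivatives (Fa\`a di Bruno's formula) the degree-$i$ component $F_i$ is a universal polynomial in $\wp_1,\dots,\wp_i$ and in $g(\wp_0),Dg(\wp_0),\dots,D^i g(\wp_0)$. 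Since $g\in C^k$, the map $\wp_0\mapsto(g(\wp_0),\dots,D^i g(\wp_0))$ is $C^{k-i}$; composing it with the smooth coordinate projections and with the (globally defined, hence $C^\infty$) Fa\`a di Bruno polynomial shows $F_i$ is $C^{k-i}$, hence $C^{k-\ell}$ for every $i\le\ell$, so the whole map is $C^{k-\ell}$. More invariantly, the map factors as $\xi\mapsto\big((j^\ell g)(\tau(\xi)),\,\xi\big)$ followed by the jet-composition map $J^\ell(N_2,N_3)\times_{N_2}J^\ell(N_1,N_2)\to J^\ell(N_1,N_3)$, where $\tau$ is the smooth target projection and $j^\ell g$ is the $C^{k-\ell}$ prolongation of $g$; since jet composition is $C^\infty$ — which one checks by exactly the coordinate computation just made — the map is $C^{k-\ell}$.

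For (2), the map $j^\ell_x\phi\mapsto j^\ell_{h(x)}(\phi\circ h^{-1})$ reads in coordinates $(x,\wp)\mapsto(h(x),G(x,\wp))$; the source component $x\mapsto h(x)$ is $C^k$, and the degree-$i$ component $G_i$ is again a universal polynomial, now in $\wp_1,\dots,\wp_i$ and in the Taylor coefficients $D(h^{-1})(h(x)),\dots,D^i(h^{-1})(h(x))$ of $h^{-1}$ at $h(x)$. Since $h$ is a $C^k$ diffeomorphism, $h^{-1}$ is $C^k$, so these coefficients are $C^{k-i}$ functions of $x$ and $G_i$ is $C^{k-i}$; hence the whole map is $C^{k-\ell}$. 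Applying the same argument to the $C^k$ diffeomorphism $h^{-1}$ shows that $j^\ell_y\psi\mapsto j^\ell_{h^{-1}(y)}(\psi\circ h)$ is also $C^{k-\ell}$, and since $(\phi\circ h^{-1})\circ h=\phi$ these two maps are mutually inverse; therefore the map of (2) is a $C^{k-\ell}$ diffeomorphism.

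The content here is routine once the framework is in place; the one point to watch — a bookkeeping matter rather than a genuine obstacle — is the degree count in Fa\`a di Bruno's formula, namely that the degree-$i$ output component involves only derivatives of $g$ (resp.\ $h^{-1}$) up to order $i$, so that the worst regularity, attained in the top component $i=\ell$, is exactly $C^{k-\ell}$ and not worse. With that in hand, all the local regularity statements globalize immediately.
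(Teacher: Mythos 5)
Your proof is correct. The paper does not actually supply a proof of Lemma~\ref{l=smoothops} --- it is stated as a standard fact about jet bundles, with the related Corollary~\ref{c=bundleiso} cited to Kol\'a\v{r}--Michor--Slov\'ak --- and your chart-level argument via Fa\`a di Bruno, together with the degree count showing the $i$-th component of the induced map involves derivatives of $g$ (resp.\ $h^{-1}$) only up to order $i$, is exactly the standard argument such a reference supplies; the inverse-by-symmetry observation correctly upgrades (2) to a diffeomorphism.
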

\begin{remark} There is some subtlety in item 2.  If  $h\colon N\to N$ is  a $C^{k-\ell}$ diffeomorphism
other than the identity,
then neither of the following maps is even differentiable on $J^\ell(N,N)$:
$$j_x^\ell\phi\mapsto j_{h(x)}^\ell\phi \qquad \hbox{or}\quad j_x^\ell\phi\mapsto j_{x}^\ell(\phi\circ h^{-1}).$$
It is at first glance a fortuitous fact that the composition of these maps is $C^{k-\ell}$. What
item 2 expresses is the fact that the $\ell$-jet bundle is a $C^{k-\ell}$ invariant under
$C^k$-diffeomorphisms. More generally:

\begin{corollary}\label{c=bundleiso}(see, e.g. \cite{Kolar}, Chapter 14.4) 
If $\pi\colon \cB\to M$ and $\pi'\colon \cB'\to M'$ are $C^k$ fiber bundles,
and $H\colon \cB\to \cB'$ is a $C^k$ isomorphism of fiber bundles, covering
the $C^k$ diffeomorphism $h\colon M\to M'$, then for every $\ell\leq k$ there is a 
canonical $C^{k-\ell}$ isomorphism of fiber bundles 
$$H^\ell\colon J^\ell(\pi\colon \cB\to M)\to J^\ell(\pi'\colon \cB'\to M')$$ covering $h$. 
For $\ell'\leq \ell$, the map $H^\ell$ covers $H^{\ell'}$ under the natural projection.

The map $H^\ell$ is defined by:
$$H^\ell(j^\ell_x\sigma) = j_{h(x)}^\ell(H\circ\sigma \circ h^{-1}).$$
\end{corollary}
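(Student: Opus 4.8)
The plan is to deduce Corollary~\ref{c=bundleiso} from Lemma~\ref{l=smoothops} by a local computation, taking care to decompose the operation $\sigma\mapsto H\circ\sigma\circ h^{-1}$ into pieces each covered by one part of that lemma.

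First I would verify the purely formal properties of $H^\ell$, as given by $H^\ell(j^\ell_x\sigma)=j^\ell_{h(x)}(H\circ\sigma\circ h^{-1})$. If $\sigma$ is a local section of $\cB$ near $x$, then $\pi'\circ(H\circ\sigma\circ h^{-1})=h\circ\pi\circ\sigma\circ h^{-1}=\id$, so $H\circ\sigma\circ h^{-1}$ is a local section of $\cB'$ near $h(x)$, and its $\ell$-jet at $h(x)$ depends only on $j^\ell_x\sigma$ because the $\ell$-jet of a composition of maps depends only on the $\ell$-jets of the factors (the chain rule, or Fa\`{a} di Bruno). Hence $H^\ell$ is a well-defined map of jet bundles, it sends the fiber over $x$ into the fiber over $h(x)$ (so it covers $h$), and truncating $j^\ell_{h(x)}(H\circ\sigma\circ h^{-1})$ to order $\ell'$ yields $H^{\ell'}(\pi_{\ell,\ell'}(j^\ell_x\sigma))$, giving compatibility with the projections. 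The functoriality identities $(\id_\cB)^\ell=\id$ and $(H_2\circ H_1)^\ell=H_2^\ell\circ H_1^\ell$ follow at once from associativity of composition; applied to the $C^k$ bundle isomorphism $H^{-1}$ (which covers $h^{-1}$) they show $(H^{-1})^\ell=(H^\ell)^{-1}$, so $H^\ell$ is a bijection.

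The substantive point is $C^{k-\ell}$ regularity, which is a local question. I would fix $z\in\cB$, $x=\pi(z)$, and choose a chart $V\subseteq\RR^m$ for $M$ near $x$, a trivialization $\pi^{-1}(V)\cong V\times W$ with $W\subseteq\RR^n$ a chart of the fiber near $z$, and corresponding charts $V'\subseteq\RR^m$, $W'\subseteq\RR^n$ on the $\cB'$ side near $H(z)$ and $h(x)$. Under the identification $J^\ell(\proj\colon V\times W\to V)\cong J^\ell(V,W)$, in these coordinates $H$ has the form $(v,w)\mapsto(h(v),G(v,w))$ for a $C^k$ map $G$, and for a local section $\sigma\colon V\to W$ one has $H\circ\sigma\circ h^{-1}(v')=G\bigl(h^{-1}(v'),\sigma(h^{-1}(v'))\bigr)$. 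Thus, in coordinates, $H^\ell$ is the composition of three maps: the graphing map $j^\ell_v\sigma\mapsto j^\ell_v(\id_V,\sigma)$, which is affine, hence $C^\infty$, in jet coordinates; the map $j^\ell_v\psi\mapsto j^\ell_v(G\circ\psi)$ (restricted to jets of sections), which is $C^{k-\ell}$ by Lemma~\ref{l=smoothops}(1) with $g=G\colon V\times W\to W'$; and the map $j^\ell_v\phi\mapsto j^\ell_{h(v)}(\phi\circ h^{-1})$, which is $C^{k-\ell}$ by Lemma~\ref{l=smoothops}(2) with the diffeomorphism $h\colon V\to V'$. A composition of $C^{k-\ell}$ maps is $C^{k-\ell}$, so $H^\ell$ is $C^{k-\ell}$; applying the same argument to $(H^\ell)^{-1}=(H^{-1})^\ell$ shows $H^\ell$ is a $C^{k-\ell}$ fiber bundle isomorphism.

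The one delicate point — and the reason the decomposition must be grouped exactly as above — is the subtlety noted in the remark after Lemma~\ref{l=smoothops}: for a nonidentity $C^{k-\ell}$ diffeomorphism $h$, neither $j^\ell_v\phi\mapsto j^\ell_{h(v)}\phi$ nor $j^\ell_v\phi\mapsto j^\ell_v(\phi\circ h^{-1})$ is even differentiable, while the combined operation $j^\ell_v\phi\mapsto j^\ell_{h(v)}(\phi\circ h^{-1})$ is $C^{k-\ell}$; this is precisely Lemma~\ref{l=smoothops}(2). So the only real obstacle is to keep the source-change bundled with the reparametrization; once the composition is organized this way, the rest is routine bookkeeping with jet coordinates.
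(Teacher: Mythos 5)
Your proof is correct. The paper gives no proof of this corollary --- it simply cites \cite{Kolar} --- but a deduction from Lemma~\ref{l=smoothops} is clearly what the surrounding text intends, and your local decomposition (the affine graphing map $j^\ell_v\sigma\mapsto j^\ell_v(\id_V,\sigma)$, then post-composition with $G$ via part (1), then the combined pre-composition and source-change $j^\ell_v\phi\mapsto j^\ell_{h(v)}(\phi\circ h^{-1})$ via part (2)) is exactly the right way to organize it; you also correctly identified that the third factor must be kept bundled, which is the precise content of the remark following Lemma~\ref{l=smoothops}.
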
 

\end{remark}

\subsection{The graph transform on jets}\label{s=jets}

In its local form, Corollary~\ref{c=bundleiso} tells us that for diffeomorphisms
of $\RR^m\times\RR^n$
of the form $H(x,y) = (h(x),g(x,y))$, the induced {\em graph transform}
on functions $\Phi\colon \RR^m\to \RR^n$ produces a map that is smooth on the level of jets.
By graph transform, we mean the map $\cT_H\colon \{\Phi\colon \RR^m\to \RR^n\}\to \{\Phi\colon \RR^m\to \RR^n\}$
defined by:
$$\cT_H(\Phi)(x) = g(h^{-1}(x),\Phi(h^{-1}(x))).$$
It is easy to see that if $H$ is $C^k$,
then $\cT_H(C^\ell(\RR^m,\RR^n)) = C^\ell(\RR^m,\RR^n)$,
for all $\ell\leq k$;
nonetheless, the restriction of 
$\cT_H$ to $C^\ell(\RR^m,\RR^n)$ is not smooth at all, even for $\ell=0$.
What is smooth, however, is the induced map $H^\ell\colon J^\ell(\RR^m,\RR^n) \to J^\ell(\RR^m,\RR^n)$:
$$H^\ell(j_x^\ell\psi) = j_{h(x)}^\ell(\cT_H(\psi)).$$
This map on $\ell$-jets is $C^{k-\ell}$.

More generally, whenever a graph transform is well-defined, it induces a 
continuous map on jets, which we now describe.
Suppose that $H(x,y) = (h(x,y),g(x,y))$ is a $C^k$ local diffeomorphism of $\RR^m\times\RR^n$.
Write 
$$D_v H =
\left(\begin{array}{cc}
A_v & B_v\\
C_v & K_v
\end{array}\right),
$$
where $A_v\colon \RR^m\to\RR^m$, $B_v\colon \RR^n\to\RR^m$,
$C_v\colon \RR^m\to\RR^n$ and $K_v\colon \RR^n\to\RR^n$.
Suppose that there exists $\rho_0>0$ such that for all $v\in B_{\RR^{m+n}}(0,\rho_0)$,
the map $A_v$ is invertible.

Then there exists $\rho_1>0$ such that, 
for every $\ell\leq k$, there exists a $C^{k-\ell}$ local diffeomorphism
$$H^\ell \colon J^\ell(\RR^m,\RR^n) \to J^\ell(\RR^m,\RR^n),$$
defined in the $\rho_1$-neighborhood of the $0$-section of  
$J^\ell_{B_{\RR^m}(0,\rho_0)}(\RR^m,\RR^n)$, given by:
$$H^\ell(j^\ell_x\psi) =  j^\ell_{h(x,\psi(x))} \left((g\circ (id, \psi)) \circ (h \circ (id,\psi))^{-1}\right).$$
The map $H^\ell$ has the defining property that for every $\psi\in \Gamma^\ell(\RR^m,\RR^n)$,
if $j_x^\ell\psi$ is in the domain of $H^\ell$, and $\psi'\in \Gamma^\ell(\RR^m,\RR^n)$
satisfies:
$$\hbox{graph}(\psi') = H(\hbox{graph}(\psi))
$$
in a neighborhood of $h(x,\psi(x))$, then $H^\ell(j^\ell_x\psi) = j_{h(x,\psi(x))}^\ell\psi'$.
This fact motivates the term ``graph transform.''

We explore the properties of these maps in more detail; this will be used in subsequent sections.
Writing $P^\ell(m,n) = \Pi_{i=0}^\ell 
 L^i_{sym}(\RR^m,\RR^n)$, we have coordinates
$$(x,\wp) \mapsto (x,\wp_0,\ldots, \wp_\ell)$$ on $\RR^m\times P^\ell(m,n)$, where
$\wp_i = D^i_x\wp \in  L^i_{sym}(\RR^m,\RR^n)$. 
Denote by $H^\ell(x,\wp)_i$
the $L^i_{sym}(\RR^m,\RR^n)$-coordinate of $H^\ell(x,\wp)$, so that
$$H^\ell(x,\wp) = (h(x,\wp_0), H^\ell(x,\wp)_0,\ldots, H^\ell(x,\wp)_\ell).$$
Clearly $H^0(x,\wp_0)_0  = g(x,\wp_0)$.  Because jets are natural, for $\ell'\leq \ell$, we have 
$$H^{\ell}(x,\wp_0,\ldots,\wp_\ell)_{\ell'} = H^{\ell'}(x,\wp_0,\ldots,\wp_{\ell'})_{\ell'}.$$
Furthermore,
$$H^1(x,\wp_0, \wp_1)_1 = \left(C_{(x,\wp_0)}+K_{(x,\wp_0)}\wp_1   \right) \left(A_{(x,\wp_0)} + B_{(x,\wp_0)}\wp_1 \right)^{-1}.$$
Differentiating this expression $\ell$ times (implicitly), we get, for $\ell\geq 1$:
\begin{eqnarray*}
H^{\ell}(x,\wp_0,\ldots,\wp_\ell)_{\ell}
& = &(K_{(x,\wp_0)} \wp_\ell - H^1(v,\wp_0, \wp_1)_1 B_{(x,\wp_0)} \wp_\ell \\
&& + S^\ell(x,\wp_0,\ldots ,\wp_{\ell-1})) \circ (A_{(x,\wp_0)}  +  B_{(x,\wp_0)}\wp_1 )^{-1},
\end{eqnarray*}
where $S^\ell$ is a polynomial in $(x,\wp_0,\ldots ,\wp_{\ell-1})$ and in the
partial derivatives of $H$ at $(x,\wp_0)$ up to order $\ell$. 

Notice that if $B_{(x,\wp_0)}=0$, then these expressions reduce to:
\begin{eqnarray*}
H^{\ell}(x,\wp_0,\ldots,\wp_\ell)_{\ell}
& = &(K_{(x,\wp_0)} \wp_\ell + S^\ell(x,\wp_0,\ldots ,\wp_{\ell-1})) \circ A_{(x,\wp_0)}^{-1}.
\end{eqnarray*}
In particular, if 
$B_{(x,\wp_0)}=0$, then there exists $\rho_2>0$ such that
for all $(x',\wp')$ lying in the $\rho_2$-neighborhood of $(x,\wp)$ in
$J^\ell(\RR^m,\RR^n)$, we have:
\begin{eqnarray}\label{e=skewnice}
|H^{\ell}(x,\wp)_{\ell} - H^{\ell}(x',\wp')_{\ell}|\qquad\qquad\qquad\qquad \qquad\qquad\qquad\qquad\qquad \\
\qquad\quad\leq Q^\ell_{(x,\wp_0)}(\wp_\ell - \wp_\ell') + O\left(|(x,\wp_0,\ldots,\wp_{\ell-1})- (x',\wp_0',\ldots,\wp_{\ell-1}')|\right),
\end{eqnarray}
where $Q^\ell_{(x,\wp_0)}\colon L^\ell_{sym}(\RR^m,\RR^n) \to L^\ell_{sym}(\RR^m,\RR^n)$ is the linear map:
$$Q^\ell_{(x,\wp_0)} (\overline\wp_\ell) = K_{(x,\wp_0)} \circ \overline\wp_\ell  \circ A_{(x,\wp_0)}^{-1}.$$
Observe that, because $\overline\wp_\ell$ is a symmetric map of order $\ell$, we have $\|Q^\ell_{(x,\wp_0)}\| \leq \|K_{(x,\wp_0)}\|/m(A_{(x,\wp_0)})^\ell$, where
$m(X) = \|X^{-1}\|^{-1}$ denotes the conorm of an invertible matrix $X$.

For $\ell\geq 1$, we may regard $J^\ell(\RR^m,\RR^n)$ as a 
vector bundle over $J^0(\RR^m,\RR^n)$ ($ = \RR^m\times\RR^n$) under the natural projection $\pi_{\ell,0}$;
the fiber is $\Pi_{i=1}^\ell  L^i_{sym}(\RR^m,\RR^n)$.
In a variety of contexts (see Section~\ref{ss=fake} ff.) we will consider the case where the map $H^\ell$ is
a fiberwise contraction on a neighborhood of the $0$-section of this bundle.
We assume that $\|K_{(x,\wp_0)}\| <  m(A_{(x,\wp_0)})$ and $\|K_{(x,\wp_0)}\| <  m(A_{(x,\wp_0)})^\ell$
(which together imply that  $\|K_{(x,\wp_0)}\| <  m(A_{(x,\wp_0)})^i$, for $1\leq i\leq \ell$).

Continuing to assume that $B_{(x,\wp_0)}=0$, we next construct in the standard way a norm $|\cdot|'$ 
on  $\Pi_{i=1}^\ell  L^i_{sym}(\RR^m,\RR^n)$  such that:
\begin{eqnarray}\label{e=lipbound}
|H^{\ell}(x,\wp) - H^{\ell}(x,\wp') |'  \qquad \qquad\qquad\qquad \qquad\qquad\qquad\qquad\qquad\quad\\
\qquad \qquad\qquad\qquad\leq \max\left\{ \frac{\|A_{(x,\wp_0)}\|}{m(K_{(x,\wp_0)})}, \frac{\|K_{(x,\wp_0)}\|}{m(A_{(x,\wp_0)})^\ell}\right\}\cdot
|(x,\wp)_{\ell} - (x,\wp')_{\ell}|',
\end{eqnarray}
for $(x,\wp),(x,\wp')$ lying in the set $\{(x,\wp_0, \overline\wp_1,\ldots,\overline\wp_\ell)\colon
| (\overline\wp_1,\ldots,\overline\wp_\ell) |' \leq  1\}$.
To do this,  fix $L>0$ and for $( \overline\wp_1,\ldots,\overline\wp_\ell)\in \Pi_{i=1}^\ell  
L^i_{sym}(\RR^m,\RR^n)$, define:
$$|( \overline\wp_1,\ldots,\overline\wp_\ell)|_L =  L^{\ell}|\overline\wp_1|+ \cdots + L|\overline\wp_\ell|.$$
It is not difficult to verify using (\ref{e=skewnice}) that if $L>0$ is sufficiently large, then (\ref{e=lipbound}) holds for $|\cdot|' = |\cdot|_L$ and all  $(x,\wp),(x,\wp')$ lying in the set  $\{(x,\wp_0, \overline\wp_1,\ldots,\overline\wp_\ell)\colon | (\overline\wp_1,\ldots,\overline\wp_\ell) |' \leq  1\}$.

The same holds true if $\|B_{(x,\wp_0)}\|$ is sufficiently small. Summarizing this
discussion, we have:
\begin{lemma}\label{l=graphjets} Fix $\ell\geq 1$.
For every $R>0$ and $\kappa\in (0,1)$ there exist $\eps>0$ and $L>0$ with the following
properties.

Let $H\colon B_{\RR^{m+n}}(0,1)\to \RR^{m+n}$ be a $C^\ell$ local diffeomorphism
such that:
\begin{itemize}
\item  $d_{C^\ell}(H,Id)\leq R$, and
\item  writing $D_v H =
\left(\begin{array}{cc}
A_v & B_v\\
C_v & K_v
\end{array}\right)
$, we have:
$$\inf_{v\in B_{\RR^{m+n}}(0,1)}m(A_v) > 0,$$
$$\kappa > \sup_{v\in B_{\RR^{m+n}}(0,1)}\max \left\{ \frac{\|K_{v}\|}{m(A_{v})}, \frac{\|K_{v}\|}{m(A_{v})^\ell}\right\},$$
and
$$\sup_{v\in B_{\RR^{m+n}}(0,1)} \|B_v\| < \eps.$$
\end{itemize}

Then for all $v = (v^m,v^n)\in\RR^{m+n}$ and all $j_{v^m}^\ell\psi, j_{v^m}^\ell\psi'\in \pi_{\ell,0}^{-1}(v)$, with $|j_{v^m}^\ell\psi|,|j^\ell_{v^m}\psi'|\leq 1$, we have:
$$| H^\ell(j^\ell_{v^m}\psi) - H^\ell(j^\ell_{v^m}\psi') |_L \leq \kappa |j^\ell_{v^m}\psi-j^\ell_{v^m}\psi'|_L.$$
\end{lemma}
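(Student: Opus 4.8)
The plan is to treat Lemma~\ref{l=graphjets} as a uniform repackaging of the explicit jet graph-transform computations carried out just above it. First I would reduce to the fiber over a fixed base point. Since $j^\ell_{v^m}\psi$ and $j^\ell_{v^m}\psi'$ both lie in $\pi_{\ell,0}^{-1}(v)$ with $v=(v^m,v^n)$, they share the same source $v^m$ and the same target $v^n$; hence $H^\ell(j^\ell_{v^m}\psi)$ and $H^\ell(j^\ell_{v^m}\psi')$ both lie in $\pi_{\ell,0}^{-1}(H(v))$, so their difference is a genuine vector in the fiber $\Pi_{i=1}^\ell L^i_{sym}(\RR^m,\RR^n)$ on which $|\cdot|_L$ lives. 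Writing the two jets in canonical coordinates as $(v^m,v^n,\wp_1,\dots,\wp_\ell)$ and $(v^m,v^n,\wp_1',\dots,\wp_\ell')$, the base-point is held fixed and all of the subtle base-point-moving phenomena disappear: the explicit formulas for $H^\ell(x,\wp)_i$ displayed just before \eqref{e=skewnice}, evaluated with $x=v^m$ and $\wp_0=v^n$ fixed, express $H^\ell(\cdot)_i-H^\ell(\cdot')_i$ as something linear in $\wp_i-\wp_i'$ plus a remainder depending only on the coordinates of order $<i$. The hypotheses $\inf_v m(A_v)>0$, $\sup_v\|B_v\|<\eps$ also guarantee that $H^\ell$ is defined on the unit ball of each fiber $\pi_{\ell,0}^{-1}(v)$, $v\in B_{\RR^{m+n}}(0,1)$.

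Next I would record the per-order estimate. Taking $B_v=0$ first, from $H^\ell(x,\wp)_i=(K_{(x,\wp_0)}\wp_i+S^i(x,\wp_0,\dots,\wp_{i-1}))\circ A_{(x,\wp_0)}^{-1}$ one gets, with $x,\wp_0$ fixed,
\[
H^\ell(x,\wp)_i-H^\ell(x,\wp')_i = Q^i_{(x,\wp_0)}(\wp_i-\wp_i') + \bigl(S^i(x,\wp_0,\dots,\wp_{i-1})-S^i(x,\wp_0,\dots,\wp_{i-1}')\bigr)\circ A_{(x,\wp_0)}^{-1}.
\]
Because $S^i$ is a fixed expression in its arguments, in the partials of $H$ up to order $i$, and in $A_{(x,\wp_0)}^{-1}$, the bounds $d_{C^\ell}(H,Id)\le R$ and $m(A_v)\ge c>0$ yield a constant $C_0=C_0(\ell,R,c)$ with
\[
\bigl|H^\ell(x,\wp)_i-H^\ell(x,\wp')_i\bigr| \le \theta\,|\wp_i-\wp_i'| + C_0\sum_{j<i}|\wp_j-\wp_j'|, \qquad 1\le i\le\ell,
\]
for all jets of norm $\le 1$ over the common base point, where $\theta:=\sup_v\max\{\|K_v\|/m(A_v),\,\|K_v\|/m(A_v)^\ell\}<\kappa$. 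This uses $\|Q^i_v\|\le\|K_v\|/m(A_v)^i$ together with the elementary fact that, for $1\le i\le\ell$, the number $\|K_v\|/m(A_v)^i$ lies between $\|K_v\|/m(A_v)$ and $\|K_v\|/m(A_v)^\ell$ and is therefore bounded by the maximum of the two. This is precisely \eqref{e=skewnice} made uniform over the unit ball and over the class of $H$.

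The weighting step is then the routine telescoping already indicated before the lemma: with $|(\overline\wp_1,\dots,\overline\wp_\ell)|_L=\sum_{i=1}^\ell L^{\ell-i+1}|\overline\wp_i|$, summing the per-order estimates over $i$ and collecting the coefficient of each $|\wp_j-\wp_j'|$ gives (suppressing the fixed base point)
\[
|H^\ell(\wp)-H^\ell(\wp')|_L \le \sum_{j=1}^\ell |\wp_j-\wp_j'|\Bigl(\theta L^{\ell-j+1}+C_0\sum_{i>j}L^{\ell-i+1}\Bigr) \le \Bigl(\theta+\tfrac{C_0}{L-1}\Bigr)\,|\wp-\wp'|_L,
\]
using $\sum_{i>j}L^{j-i}\le(L-1)^{-1}$ for $L>1$. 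Since $\theta<\kappa$, choosing $L$ large enough that $\theta+C_0/(L-1)\le\kappa$ proves the case $B_v=0$, which is exactly \eqref{e=lipbound}.

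Finally, for the small-$B$ case I would run the same argument with $A_{(x,\wp_0)}$ replaced throughout by $A_{(x,\wp_0)}+B_{(x,\wp_0)}\wp_1$, which for $\|B_v\|<\eps$ and $|\wp_1|\le 1$ is invertible with conorm $\ge c-\eps$; the explicit formula then picks up the extra term $H^1(\cdot)_1B_{(x,\wp_0)}\wp_\ell$ and $B$-dependent contributions to $S^i$, all still polynomial in bounded data, so the per-order estimate survives with $\theta$ replaced by $\theta_\eps:=\sup_v\max\{\|K_v\|/(m(A_v)-\eps),\,\|K_v\|/(m(A_v)-\eps)^\ell\}$ and $C_0$ by a nearby $C_0(\ell,R,c,\eps)$. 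As $\theta_\eps\to\theta<\kappa$ when $\eps\to0$, one fixes $\eps$ first so that $\theta_\eps<\kappa$, then picks $L$ as before. The one point that needs genuine care — and which I expect to be the only real obstacle, the rest being the computation already sketched above — is the uniformity bookkeeping: verifying that $\eps$ and $L$ depend on $H$ only through $\ell$, $R$, $\kappa$ and the conorm lower bound $c=\inf_v m(A_v)$, which holds because the auxiliary polynomials $S^i$ are universal in the derivatives of $H$ and every norm appearing is an operator norm, so nothing that enters depends on $m$, $n$, or the particular $H$ beyond these data.
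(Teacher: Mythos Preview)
Your proposal is correct and is essentially the argument the paper has in mind: the lemma is stated as a summary of the discussion in Section~\ref{s=jets} immediately preceding it, and you have reconstructed that discussion faithfully --- the per-order triangular estimate coming from the explicit formula for $H^\ell(x,\wp)_i$, the weighted-norm telescoping (which the paper leaves as ``not difficult to verify''), and the perturbation from $B_v=0$ to small $B_v$. Your explicit computation $\sum_{i>j}L^{\ell-i+1}\le L^{\ell-j+1}/(L-1)$ is exactly the verification the paper omits.

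One small remark: you are right to flag the uniformity bookkeeping, and in fact your version is slightly more honest than the paper's statement. As written, the lemma asserts that $\eps$ and $L$ depend only on $R$, $\kappa$, and $\ell$, but the constant $C_0$ in your per-order estimate carries a factor $\|A_v^{-1}\|=1/m(A_v)$, so without a uniform lower bound on $c=\inf_v m(A_v)$ the choice of $L$ is not uniform over all admissible $H$. This is harmless in every application in the paper (where $m(A_v)$ is bounded below by a fixed dynamical quantity such as $\gamma$), and the paper's own discussion does not address it either; your formulation, making the dependence on $c$ explicit, is the cleaner one.
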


\section{Proof of Theorem~\ref{t.Crsubmanif}}

Before proving our main higher regularity result (part IV of Theorem~\ref{t=main}), we give a 
proof of Theorem~\ref{t.Crsubmanif}, as the proof conveys some of the
basic techniques we will use later, but in a simpler setting.

Suppose that $N$ is an embedded $C^1$ submanifold of $\RR^{m+n}$ such that, for every $x,y$ in $N$,
there exist neighborhoods $U$ of $x$ and $V$ of $y$ and a $C^k$ diffeomorphism
$H:U\to V$ such that $H(U) = V$ and $H(U\cap N) = V\cap N$, where $k\geq 2$.  

We prove that $N$ is a $C^\ell$ submanifold of $\RR^{m+n}$, for all $\ell\leq k$, by induction on $\ell$. 
By assumption, $N$ is a $C^1$ submanifold. Suppose that $N$ is a $C^\ell$ submanifold, for some $\ell\leq k-1$.
We prove that $N$ is $C^{\ell+1}$ submanifold. As the problem is local, we may restrict attention to a small neighborhood in $N$.

Fix a point $x_0\in N$ and a neighborhood $V$ of $x_0$ in $N$. 
By a local $C^{k}$ change of coordinates in $N'$ sending $x_0$ to $0\in \RR^n\times \RR^m$,
we may assume that $N$ is the graph of a $C^\ell$ function $\Phi: \overline{B_{\RR^n}(0,1)}\to \RR^m$ satisfying
$j^k_0\Phi = 0$.  The first main step in the proof of Theorem~\ref{t.Crsubmanif} is the following lemma.

\begin{lemma}\label{l=smoothgraph} For every $u\in \overline{B_{\RR^n}(0,1)}$ there exists $\rho=\rho(u)>0$, and
for every $i\in\{0,\ldots, \ell\}$,  a $C^{k-i}$ local diffeomorphism
$$H_u^i\colon B_{J^i(\RR^n,\RR^m)}(0,\rho) \to J^i(\RR^n,\RR^m)$$
with the following properties:
\begin{enumerate}
\item  $H_u^{i}$ covers $H_u^{i-1}$ under the projection $J^i(\RR^n,\RR^m)\to J^{i-1}(\RR^n,\RR^m)$,
and
\item  writing $H_u^0(v,w) = (h_u(v,w), g_u(v,w))$, we have $h_u(0,\Phi(0)) = u$, and:
$$ H_u^\ell(j^\ell_v\Phi) = j_{h_u(v,\Phi(v))}^\ell \Phi,$$ 
for every $v$ such that $j^\ell_v\Phi\in  B_{J^\ell(\RR^n,\RR^m)}(0,\rho)$.
\end{enumerate}
\end{lemma}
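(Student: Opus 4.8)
The plan is to realize each $H^i_u$ as the jet-level graph transform induced by a $C^k$ diffeomorphism attached to $u$ by $C^k$-homogeneity, and then to extract the regularity, the covering property, and property (2) from the jet-bundle machinery of Section~\ref{s=jets}. Fix $u\in\overline{B_{\RR^n}(0,1)}$. Since $0\in N$ we have $\Phi(0)=0$, and the point $(u,\Phi(u))$ also lies on $N$, so $C^k$-homogeneity supplies a $C^k$ local diffeomorphism $H_u$ from a neighborhood of $0$ onto a neighborhood of $(u,\Phi(u))$ in $\RR^n\times\RR^m$ with $H_u(0)=(u,\Phi(u))$ and $H_u(\mathrm{graph}(\Phi))=\mathrm{graph}(\Phi)$ near $(u,\Phi(u))$. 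Writing $H_u(v,w)=(h_u(v,w),g_u(v,w))$ with $h_u$ valued in $\RR^n$ and $g_u$ in $\RR^m$, I set $H^0_u:=H_u$; then $h_u(0,\Phi(0))=h_u(0,0)=u$, as required.

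Next I would check that the $\RR^n\to\RR^n$ block $A_{(v,w)}$ of $D_{(v,w)}H_u$ is invertible near $0$. The reduction normalizes $\Phi$ so that, in particular, $D\Phi(0)=0$, hence $T_0N=\RR^n\times\{0\}$; since $H_u$ maps $T_0N$ onto $T_{(u,\Phi(u))}N=\mathrm{graph}(D\Phi(u))$, the image $\{(A_{(0,0)}\xi,\,C_{(0,0)}\xi):\xi\in\RR^n\}$ is a graph over $\RR^n$, which forces $A_{(0,0)}$ to be invertible. This is exactly the step where $D\Phi(0)=0$ is used: without it one only controls the composite block $A_{(0,0)}+B_{(0,0)}D\Phi(0)$. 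Since $(v,w)\mapsto A_{(v,w)}$ is continuous, there is $\rho_0>0$ with $A_{(v,w)}$ invertible for all $(v,w)\in B_{\RR^{n+m}}(0,\rho_0)$.

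This is precisely the hypothesis under which the construction in Section~\ref{s=jets} produces a graph transform on jets: it yields $\rho_1>0$ so that for each $i\le\ell$ there is a $C^{k-i}$ local diffeomorphism $H^i_u\colon J^i(\RR^n,\RR^m)\to J^i(\RR^n,\RR^m)$, defined on the $\rho_1$-neighborhood of the $0$-section over $B_{\RR^n}(0,\rho_0)$, with the defining property that whenever $j^i_v\psi$ lies in its domain and $\mathrm{graph}(\psi')=H_u(\mathrm{graph}(\psi))$ near $h_u(v,\psi(v))$, then $H^i_u(j^i_v\psi)=j^i_{h_u(v,\psi(v))}\psi'$. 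The $C^{k-i}$ regularity, together with the fact that $H^i_u$ covers $H^{i-1}_u$ under the projection $J^i(\RR^n,\RR^m)\to J^{i-1}(\RR^n,\RR^m)$, is exactly the naturality of jet bundles recorded in Section~\ref{s=jets} and Corollary~\ref{c=bundleiso}; this gives conclusion (1). For conclusion (2) I would apply the defining property with $\psi=\psi'=\Phi$: since $H_u$ preserves $\mathrm{graph}(\Phi)$ locally, $\mathrm{graph}(\Phi)=H_u(\mathrm{graph}(\Phi))$ near $h_u(v,\Phi(v))$, so $H^\ell_u(j^\ell_v\Phi)=j^\ell_{h_u(v,\Phi(v))}\Phi$ for every $v$ with $j^\ell_v\Phi$ in the domain of $H^\ell_u$. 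Finally I would choose $\rho=\rho(u)>0$ small enough that $B_{J^\ell(\RR^n,\RR^m)}(0,\rho)$ lies in the common domain of $H^0_u,\dots,H^\ell_u$ and that, for $v$ with $j^\ell_v\Phi$ in this ball (which forces $v$ near $0$), the point $h_u(v,\Phi(v))$ stays in $\overline{B_{\RR^n}(0,1)}$; a harmless initial enlargement of the ball deals with the case $u\in\partial B_{\RR^n}(0,1)$.

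The genuinely substantive points are the invertibility check of the second paragraph and the bookkeeping of the three nested radii ($\rho_0$ for invertibility of $A$, $\rho_1$ for the jet transform, $\rho(u)$ for the conclusion), compounded by the fact that $H_u$, and hence every $H^i_u$, is only locally defined, so all the identities above hold only on suitable neighborhoods. The hard part is really making sure the normalization of $\Phi$ is strong enough to force $A$ invertible; once that is in place, the assertion that $H^i_u$ is $C^{k-i}$ — which at first glance looks like the heart of the lemma — is not an obstacle at all, being the content of the naturality of jet bundles under $C^k$ diffeomorphisms proved in Section~\ref{s=jets}.
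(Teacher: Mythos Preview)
Your proof is correct and follows essentially the same approach as the paper: both obtain $H_u$ from $C^k$-homogeneity, verify invertibility of the $\RR^n$-block $A_{(0,0)}$ via the observation that $DH_u$ carries $T_0N=\RR^n\times\{0\}$ (using $D\Phi(0)=0$) onto a graph over $\RR^n$, and then invoke the jet-level graph transform of Section~\ref{s=jets} to produce the $C^{k-i}$ maps $H^i_u$ with the covering and graph-preservation properties. The paper writes out the formula for $H^i_u$ explicitly and cites Lemma~\ref{l=smoothops} rather than Corollary~\ref{c=bundleiso} (the latter is for genuine bundle maps, whereas $H_u$ mixes base and fiber), but this is the same machinery you are pointing to.
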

\begin{proof}
For $i=0$, this follows immediately from $C^k$ homogeneity.
Given $u\in \overline{B_{\RR^n}(0,1)}$, select a $C^k$ local diffeomorphism 
$$H_u = (h_u, g_u)\colon B_{\RR^n\times\RR^m}(0,\rho_0)\to \RR^n\times\RR^m$$
sending $(0,0) = (0,\Phi(0))$ to 
$(u,\Phi(u))$ and preserving the graph of $\Phi$. 
Under the natural identification of $J^0(\RR^n,\RR^m)$ with
$\RR^n\times\RR^m$, this defines the map $H^0_u$:
$$H^0_u(v,w) = (h_u(v,w), g_u(v,w)).$$

Suppose $i\geq 1$, and fix a point $v'\in \RR^n$ near $0$,
and a function $\psi\in \Gamma^i_{v'}(\RR^n,\RR^m)$.
Consider the local map $h_u\circ(id, \psi)\in  \Gamma^{i}_{v'}(\RR^n,\RR^n)$ given by:
$$H_u\circ (id,\psi)(v) = h_u(v,\psi(v)).$$
Its derivative at $v'$ is 
\begin{eqnarray}\label{e=diffgraph}
D_{v'}\left(h_u\circ (id,\psi)\right) = \frac{\partial h_u}{\partial v}(v',\psi(v')) +  \frac{\partial h_u}{\partial w}(v',\psi(v'))D_{v'}\psi.
\end{eqnarray}
Since $DH^0_u$ preserves the tangent space to the graph of $\Phi$,
it follows that the map  ${\partial H_u}/{\partial v}(0, 0)$ is a diffeomorphism onto a neighborhood of $u$.
On the other hand, plugging in ${v'}=0$, $D_{v'}\psi = 0$ into equation (\ref{e=diffgraph}) we obtain that 
for any $\psi\in \Gamma^i_{0}(\RR^n,\RR^m)$ with $j^1_0\psi = 0$,
 $D_0\left(h_u\circ (id,\psi)\right) = \frac{\partial h_u}{\partial v}(0,0)$. 
 
Since $H^0$ is $C^1$, from this it follows that for $|j^i_{v'}\psi|$ and $|v'|$ sufficiently small,
the derivative $D_{v'}\left(h_u\circ (id,\psi)\right)$
is invertible. The inverse function theorem then implies that $h_u\circ (id,\psi)$
is a $C^i$ local diffeomorphism in a neighborhood of $v\in\RR^n$, 
provided $|j_v^i\psi|$ is sufficiently small; in particular, $(h_u\circ (id,\psi))^{-1}$
is defined.

For $i\geq 1$, we then set
$$H^i_u(j^i_v\psi) =  j^i_{h_u(v,\psi(v))} \left((g_u\circ (id, \psi)) \circ (h_u\circ (id,\psi))^{-1}\right).$$
Lemma~\ref{l=smoothops} implies that $H^i_u$ is a $C^{k-i}$ local diffeomorphism.  By construction,
the maps $H_u^i$ satisfy properties (1) and (2).
\end{proof}

\begin{remark} Notice that Lemma~\ref{l=smoothgraph} implies that the image of 
$\overline{B_{\RR^n}(0,1)}$ under $j^\ell\Phi$ is a $C^1$ homogeneous submanifold of $J^\ell(\RR^n,\RR^m)$. 
At this point, it is tempting to appeal to Theorem~\ref{t=rss} to finish the proof.  If one does so, one obtains that  
$j^\ell\Phi(\overline{B_{\RR^n}(0,1)})$ is a $C^1$ submanifold of $J^\ell(\RR^n,\RR^m)$.  
{\em However, this alone does not imply that $\Phi$ is $C^{\ell+1}$.}   To conclude
that $\Phi$ is $C^{\ell+1}$, for $\ell\geq 1$, it is in fact necessary
to show that the {\em function} $j^\ell\Phi$, and not just its image, is  $C^1$.

Here is an example that illustrates what can go wrong. Let $\Psi\colon\RR\to\RR$ be any $C^2$ function, and
let $\Phi(x) = \Psi(x) + x^{4/3}$.  Then the graph of $\Phi$ is a $C^1$ submanifold of $\RR^2$, but
not a $C^2$ submanifold, as is easily checked.  On the other hand, $j^1_x\Phi = (x,\Psi(x) + x^{4/3},  \Psi'(x)+ (4/3) x^{1/3})$, and the image of $j^1\Phi$ is the image of the following $C^1$ embedding:
$$\psi\colon \RR\to \RR^3;\quad \psi(t) = (t^3, \Psi(t^3) + t^4, \Psi'(t^3) + (4/3) t).$$
Hence the image of $j^1\Phi$ is $C^1$, but the graph of $\Phi$ is not $C^2$.

The situation can be arbitrarily bad: if $\Psi$ is real analytic, then 
the image of $j^1\Phi$ is analytic, while the graph of $\Phi$ still fails to be $C^2$.
(The construction we have described is a very simple example of the procedure in algebraic geometry
of resolving a singularity.) 
\end{remark}

\bigskip

Returning to the proof of Theorem~\ref{t.Crsubmanif}, our next step is to show:

\bigskip

{\em If $\Phi$ is $C^\ell$ and $j^\ell\Phi$ is a $C^1$-homogeneous function (in the
sense of} \\
{\em Lemma~\ref{l=smoothgraph}), then $j^\ell\Phi$ is  $C^1$, and so $\Phi$ is $C^{\ell+1}$.}

\bigskip

To this end, let $A\colon J^\ell(\RR^n,\RR^m)\to J^\ell(\RR^n,\RR^m)$ 
be an invertible linear transformation, and let $\rho>0$.
We next define a subset $\cG(A,\rho)\subset \overline{B_{\RR^n}(0,1)}$ consisting
of the set of all $u\in  \overline{B_{\RR^n}(0,1)}$ with the following properties:
\begin{itemize}
\item For each $i\in\{0,\ldots \ell\}$, there exists a bilipschitz embedding
$$\tilde H_u^i\colon B_{J^i(\RR^n,\RR^m)}(0,\rho) \to J^i(\RR^n,\RR^m)$$ such that:
\item  $\tilde H_u^{i}$ covers $\tilde H_u^{i-1}$ under the projection $J^i(\RR^n,\RR^m)\to J^{i-1}(\RR^n,\RR^m)$,
\item  writing $\tilde H_u^0(v,w) = (\tilde h_u(v,w), \tilde g_u(v,w))$, we have $\tilde h_u(0,\Phi(0)) = u$, and:
$$\tilde H_u^\ell(j^\ell_v\Phi) = j_{\tilde h_u(v,\Phi(v))} \Phi,$$ for every $v$ such that $j^\ell_v\Phi\in  B_{J^\ell(\RR^n,\RR^m)}(0,\rho)$, and
\item $\hbox{Lip}(A - \tilde H^\ell_u) \leq \frac{m(A)}{5}$ on $B_{J^\ell(\RR^n,\RR^m)}(0,\rho)$,
where $m(A) = \|A^{-1}\|^{-1}$ denotes the conorm of $A$.
\end{itemize}

Fix a countable dense subset $\{A_j\}_{j\in\ZZ_+}\subset GL(J^\ell(\RR^n,\RR^m))$ of invertible linear transformations.

\begin{lemma} For each $A\in GL(J^\ell(\RR^m,\RR^n))$, and $\rho>0$, the set $\cG(A,\rho)$ is
compact in $\overline{B_{\RR^n}(0,1)}$.  Moreover: 
$$\overline{B_{\RR^n}(0,1)} = \bigcup_{j_1,j_2\in\ZZ_+} \cG(A_{j_1},j_2^{-1}).$$
\end{lemma}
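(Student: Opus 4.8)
The plan is to prove the two assertions separately. For compactness, since $\overline{B_{\RR^n}(0,1)}$ is compact it suffices to show $\cG(A,\rho)$ is sequentially closed, and this will be an Arzelà--Ascoli argument: a sequence $u_k\in\cG(A,\rho)$ with $u_k\to u$ comes equipped with bilipschitz embeddings $\tilde H^i_{u_k}$ ($i=0,\dots,\ell$), and I would extract a subsequence along which all of them converge uniformly to maps $\tilde H^i_\infty$ realizing $u\in\cG(A,\rho)$. For the covering statement I would use Lemma~\ref{l=smoothgraph} to produce the $C^{k-i}$ local diffeomorphisms $H^i_u$ at a given $u$, linearize the top one $H^\ell_u$ at $j^\ell_0\Phi=0$, and choose an $A_{j_1}$ from the dense set close to that linear part.

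\emph{Compactness.} The condition $\mathrm{Lip}(A-\tilde H^\ell_{u_k})\le m(A)/5$ yields $\mathrm{Lip}(\tilde H^\ell_{u_k})\le\|A\|+m(A)/5$ and, since $|A(p-q)|\ge m(A)|p-q|$, the two-sided estimate $\tfrac45 m(A)|p-q|\le|\tilde H^\ell_{u_k}(p)-\tilde H^\ell_{u_k}(q)|\le(\|A\|+\tfrac15 m(A))|p-q|$, with constants independent of $k$. Evaluating the jet identity at $v=0$ and using $j^\ell_0\Phi=0$ (the normalization of $\Phi$) gives $\tilde H^\ell_{u_k}(0)=j^\ell_{u_k}\Phi$, which is bounded because $j^\ell\Phi$ is continuous on the compact ball; hence $\{\tilde H^\ell_{u_k}\}$ is equi-Lipschitz and uniformly bounded, and via the covering relations composed with the $1$-Lipschitz projections $\pi_{\ell,i}$ so is each family $\{\tilde H^i_{u_k}\}$. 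Arzelà--Ascoli (on closed sub-balls, diagonalized over an exhaustion of $B_{J^i}(0,\rho)$) gives a subsequence along which $\tilde H^i_{u_k}\to\tilde H^i_\infty$ uniformly on compacta, for every $i$. The covering relations pass to the limit by continuity of the $\pi_{\ell,i}$; the normalization $\tilde h_\infty(0,\Phi(0))=\lim u_k=u$ holds; the identity $\tilde H^\ell_\infty(j^\ell_v\Phi)=j^\ell_{\tilde h_\infty(v,\Phi(v))}\Phi$ follows from uniform convergence and continuity of $j^\ell\Phi$; the bound $\mathrm{Lip}(A-\tilde H^\ell_\infty)\le m(A)/5$ survives the uniform limit; and $\tilde H^\ell_\infty$ is a bilipschitz embedding by the two-sided estimate. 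The remaining point --- that each $\tilde H^i_\infty$ with $i<\ell$ is still a bilipschitz embedding, given that only the top map carries a priori uniform control --- is the step I expect to be the main obstacle. I would resolve it by first upgrading to \emph{uniform} bilipschitz bounds for all the $\tilde H^i_{u_k}$: since $\tilde H^{i+1}_{u_k}$ is a bundle map over $\tilde H^i_{u_k}$ for $J^{i+1}\to J^i$ and is uniformly co-Lipschitz (being covered by $\tilde H^\ell_{u_k}$), its restriction to a fiber is proper and injective between Euclidean spaces of equal dimension, hence a uniformly bilipschitz bijection onto the corresponding fiber over $\tilde H^i_{u_k}$; pushing a point of the target fiber back through this bijection produces lifts $p,p'$ of $q,q'$ with $|\tilde H^{i+1}_{u_k}(p)-\tilde H^{i+1}_{u_k}(p')|=|\tilde H^i_{u_k}(q)-\tilde H^i_{u_k}(q')|$ and $|p-p'|\ge|q-q'|$, so $|\tilde H^i_{u_k}(q)-\tilde H^i_{u_k}(q')|\ge\tfrac45 m(A)|q-q'|$, and downward induction from $i=\ell$ handles all $i$. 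Carrying out the surjectivity-onto-fibers step honestly --- the fiber domains are balls, not whole affine spaces --- is where the real work is.

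\emph{The covering.} Fix $u\in\overline{B_{\RR^n}(0,1)}$. Lemma~\ref{l=smoothgraph} supplies $\rho=\rho(u)>0$ and $C^{k-i}$ local diffeomorphisms $H^i_u\colon B_{J^i(\RR^n,\RR^m)}(0,\rho)\to J^i(\RR^n,\RR^m)$ with $H^i_u$ covering $H^{i-1}_u$, $h_u(0,\Phi(0))=u$, and $H^\ell_u(j^\ell_v\Phi)=j^\ell_{h_u(v,\Phi(v))}\Phi$; since $\ell\le k-1$, $H^\ell_u$ is $C^1$. Set $A:=D_0 H^\ell_u\in GL(J^\ell(\RR^n,\RR^m))$, differentiating at $j^\ell_0\Phi=0$. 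By continuity of $DH^\ell_u$ choose $\rho'\in(0,\rho]$ with $\mathrm{Lip}(H^\ell_u-A)\le m(A)/20$ on $B_{J^\ell}(0,\rho')$ and $H^\ell_u$ bilipschitz there, shrinking $\rho'$ if necessary. By density of $\{A_j\}$, pick $A_{j_1}$ with $\|A_{j_1}-A\|\le m(A)/20$; then $m(A_{j_1})\ge\tfrac{19}{20}m(A)$, and on $B_{J^\ell}(0,\rho')$
$$\mathrm{Lip}(A_{j_1}-H^\ell_u)\le\|A_{j_1}-A\|+\mathrm{Lip}(A-H^\ell_u)\le\tfrac{1}{10}m(A)\le\tfrac15 m(A_{j_1}).$$
Taking $j_2$ with $j_2^{-1}\le\rho'$ and $\tilde H^i_u:=H^i_u|_{B_{J^i}(0,j_2^{-1})}$ --- a bilipschitz embedding inheriting the covering relations and the jet identity from Lemma~\ref{l=smoothgraph} --- exhibits $u\in\cG(A_{j_1},j_2^{-1})$. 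Since $u$ was arbitrary, $\overline{B_{\RR^n}(0,1)}=\bigcup_{j_1,j_2\in\ZZ_+}\cG(A_{j_1},j_2^{-1})$.
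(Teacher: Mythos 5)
Your approach matches the paper's, which also extracts a convergent subsequence of the $\tilde H^i_{u_k}$ and passes the defining conditions to the limit; the difference is that you unpack, with uniform bounds, the paper's single-sentence assertion that ``the space of bilipschitz embeddings is locally compact in the uniform topology,'' and in doing so you surface a genuine subtlety that the paper's wording glides past. Your analysis of the top level is correct: the condition $\mathrm{Lip}(A-\tilde H^\ell_{u_k})\le m(A)/5$ gives a $k$-independent two-sided Lipschitz estimate for $\tilde H^\ell_{u_k}$, the normalization $j^\ell_0\Phi=0$ and the jet identity give the uniform bound $\tilde H^\ell_{u_k}(0)=j^\ell_{u_k}\Phi$, Arzel\`a--Ascoli applies, and the covering relations force the $\tilde H^i_{u_k}$ to converge as well since $\tilde H^i_u = \pi_{\ell,i}\circ\tilde H^\ell_u\circ s_i$ for any linear section $s_i$. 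The covering half of your argument is also correct and, modulo the particular choice of $1/20$ versus the paper's $1/11$, is the same as the paper's.

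The one step that does not go through as you wrote it is exactly the one you flag as ``where the real work is.'' The problem is that nothing in the definition of $\cG(A,\rho)$ controls the co-Lipschitz constant of $\tilde H^i_{u}$ for $i<\ell$, and your proposed downward induction --- choosing lifts $p,p'$ of $q,q'$ whose images under $\tilde H^{i+1}_{u_k}$ have the same fiber coordinate --- requires the restriction of $\tilde H^{i+1}_{u_k}$ to the fiber $\pi_{i+1,i}^{-1}(q')\cap B_{J^{i+1}}(0,\rho)$ to pass through the fiber coordinate of $\tilde H^{i+1}_{u_k}(p)$. Since that restricted map is a bilipschitz embedding of a \emph{ball} (not an entire affine fiber) into the fiber over $\tilde H^i_{u_k}(q')$, its image need not contain the required point, and I do not see how to force it to. So the lemma as literally stated (requiring each $\tilde H^i_u$, $0\le i\le\ell$, to be a bilipschitz embedding) does not obviously have a closed $\cG(A,\rho)$, and the paper's one-line appeal to local compactness does not fix this either, because the lower-level bilipschitz constants are not uniformly bounded below.

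The clean repair is to notice that the bilipschitz requirement on $\tilde H^i_u$ for $0\le i<\ell$ is never used in the sequel: in the next lemma and the telescoping estimate that follows, only $H^\ell_{z}=\tilde H^\ell_{u'}\circ(\tilde H^\ell_u)^{-1}$ and its action on $j^\ell\Phi$ are invoked, and $h_z$ can be read off directly from $H^\ell_z$ applied to the graph of $j^\ell\Phi$ (one has $H^\ell_z(j^\ell_v\Phi)=j^\ell_{h_z(v,\Phi(v))}\Phi$), without ever inverting a lower-level map. So one may relax the definition of $\cG(A,\rho)$ to demand a bilipschitz embedding only at level $\ell$, with the $\tilde H^i_u$ for $i<\ell$ required merely to be continuous covers. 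With that relaxation your Arzel\`a--Ascoli argument proves closedness with no further work, the covering assertion is unaffected (the $H^i_u$ from Lemma~\ref{l=smoothgraph} are diffeomorphisms, so certainly continuous), and the rest of the proof of Theorem~\ref{t.Crsubmanif} goes through verbatim. In short: you correctly identified the weak point, your attempted patch has the hole you suspected, and the right move is to weaken the definition rather than to try to propagate co-Lipschitz bounds downward.
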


\begin{proof} 
Suppose that $\cG(A,\rho)$ is nonempty.  Let $u_j$ be a sequence in $\cG(A,\rho)$,
and for each $i\in\{0,\ldots,\ell\}$, let $\tilde H^i_{u_j}$ be the associated
sequence of bilipschitz embeddings.  Since the space of bilipschitz embeddings
is locally compact in the uniform topology, there exists a convergent subsequence 
$u_{j_\ell}\to u \in \overline{B_{\RR^n}(0,1)}$ with 
$\tilde H^i_{u_{j_\ell}}\to \tilde H^i_{u}$ uniformly for all $i$.
The maps $\tilde H^i_{u}$ are bilipschitz embeddings, with $\tilde H^i_{u}$ covering
$\tilde H^{i-1}_u$, and $\hbox{Lip}(H^i_{u}-A)\leq \frac{m(A)}{5}$.  Since the $\ell$-jet
$j^\ell\Phi$ is a closed subset of $J^\ell(\RR^n,\RR^m)$, the limiting
map $\tilde H^\ell_u$ preserves $j^\ell\Phi$.
Hence $u\in \cG(A,\rho)$, and so $\cG(A,\rho)$ is compact.

Lemma~\ref{l=smoothgraph} implies that for each $u$, and each $i$ there exists
a $C^{r-i}$ diffeomorphism $H^i_u$ satisfying the first two properties.
Let $\eps = m(D_{0} H^\ell_u)/11$.
Fix $A_{j_1}\in GL(J^\ell(\RR^n,\RR^m))$ such that $\|D_{0} H^\ell_u - A_{j_1}\|< \eps$.
A simple estimate shows that  $\|D_{0} H^\ell_u - A_{j_1}\| < \frac{m(A_{j_1})}{10}$.
Next, fix  $j_2$ such that $\hbox{Lip}(D_{0} H^\ell_u - H^\ell_u) < \frac{m(A_{j_1})}{10}$ on
$B_{J^\ell(\RR^m,\RR^n)}(0,j_2^{-1})$.  Then
$\hbox{Lip}(A_{j_1} - H^\ell_u) < \frac{m(A_{j_1})}{5}$ on $B_{j^\ell(\RR^n,\RR^m)}(0, j_2^{-1})$, which implies 
that $u\in \cG(A_{j_1},j_2^{-1})$.  Hence:
$$\overline{B_{\RR^n}(0,1)} = \bigcup_{j_1,j_2\in\ZZ_+} \cG(A_{j_1},j_2^{-1}),$$
completing the proof of the lemma.
\end{proof}

Since $\overline{B_{\RR^n}(0,1)}$ is a Baire space, there exist integers $j_1, j_2$ such that
$\cG(A_{j_1},{j_2}^{-1})$ has nonempty interior.  Let $U$ be an open ball contained
in $\cG(A_{j_1},j_2^{-1})$.  For each pair $u, u'\in U$ and $i\in\{0,\ldots,\ell\}$,
we set ${H}^i_{(u,u')} = \tilde {H_{u'}^i}\circ \tilde {H^i_{u}}^{-1}$,
which is defined on a neighborhood of $j_u^i\Phi$ in $J^i(\RR^n,\RR^m)$.
We thus obtain:
\begin{lemma} There exists $\rho>0$ such that, for every pair $z = (u,u')\in U\times U$, the following hold:
\begin{itemize}
\item  for each $i\in\{0,\ldots \ell\}$,  ${H}^i_{z}$ is a bilipschitz
homeomorphism, defined on a $\rho$-neighborhood of $j^i_u\Phi$,
\item  $H_z^{i}$ covers $H_z^{i-1}$ under the projection $J^i(\RR^n,\RR^m)\to J^{i-1}(\RR^n,\RR^m)$,
\item  writing $H_{z}^0(v,w) = (h_{z}(v,w), g_{z}(v,w))$, we have $h_{z}(u,\Phi(u)) = u'$, and:
$$ H_{z}^\ell(j^\ell_v\Phi) = j_{h_{z}(v,\Phi(v))} \Phi,$$ for every $v$ such that $j^\ell_v\Phi\in  B_{J^\ell(\RR^n,\RR^m)}(j_u\Phi,\rho)$, and
\item $\hbox{Lip}(I - H^\ell_{z}) \leq \frac{1}{2}$ on $B_{J^\ell(\RR^n,\RR^m)}(j^\ell_u\Phi,\rho)$.
\end{itemize}
\end{lemma}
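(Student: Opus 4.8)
The plan is to take $A := A_{j_1}$, $\rho_* := j_2^{-1}$ and set
$$H^i_z \;:=\; \tilde H^i_{u'}\circ (\tilde H^i_u)^{-1}\qquad\bigl(z=(u,u')\in U\times U,\ 0\le i\le\ell\bigr),$$
then read the four assertions off the properties that membership in $\cG(A,\rho_*)$ grants $\tilde H^i_u,\tilde H^i_{u'}$. The one piece of input with quantitative ($u$-independent) content is $\hbox{Lip}(A-\tilde H^\ell_u)\le m(A)/5$, which gives the uniform bounds $\hbox{Lip}(\tilde H^\ell_u)\le \|A\|+m(A)/5$ and $m(\tilde H^\ell_u)\ge \frac45 m(A)$ for all $u\in U$; combined with $\tilde H^\ell_u(0)=\tilde H^\ell_u(j^\ell_0\Phi)=j^\ell_{\tilde h_u(0,0)}\Phi = j^\ell_u\Phi$ (using $j^k_0\Phi=0$, hence $\Phi(0)=0$), these show that $\hbox{image}(\tilde H^\ell_u)$ contains a ball about $j^\ell_u\Phi$ of radius $\gtrsim m(A)\rho_*$ and that $(\tilde H^\ell_u)^{-1}$ is $\frac{5}{4m(A)}$-Lipschitz there. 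Crucially, the covering conditions in the definition of $\cG$ force $\tilde H^i_u$ to be the descent of $\tilde H^\ell_u$ along the (norm-nonincreasing, linear) projection $\pi_{\ell,i}$, i.e.\ $\tilde H^i_u\circ\pi_{\ell,i}=\pi_{\ell,i}\circ\tilde H^\ell_u$; so these bounds propagate to every level $i\le\ell$ (with $\tilde H^i_u(0)=j^i_u\Phi$). Thus a single $\rho>0$, a fixed small multiple of $m(A)\rho_*$ and independent of $z$ and $i$, makes $(\tilde H^i_u)^{-1}$ defined on $B_{J^i(\RR^n,\RR^m)}(j^i_u\Phi,\rho)$ and carries that ball into the domain $B_{J^i(\RR^n,\RR^m)}(0,\rho_*)$ of $\tilde H^i_{u'}$, so $H^i_z$ is a well-defined bilipschitz homeomorphism there; this is the first bullet.

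The second bullet is immediate: $\tilde H^i_{u'}$ covers $\tilde H^{i-1}_{u'}$ and $(\tilde H^i_u)^{-1}$ covers $(\tilde H^{i-1}_u)^{-1}$, hence $\pi_{i,i-1}\circ H^i_z = H^{i-1}_z\circ\pi_{i,i-1}$. For the third bullet, write $H^0_z=(h_z,g_z)$: from $\tilde H^0_u(0,0)=(u,\Phi(u))$ we get $(\tilde H^0_u)^{-1}(u,\Phi(u))=(0,0)$, so $H^0_z(u,\Phi(u))=\tilde H^0_{u'}(0,0)=(u',\Phi(u'))$, i.e.\ $h_z(u,\Phi(u))=u'$. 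For the jet identity, recall $\tilde H^\ell_u$ maps $j^\ell\Phi$ (inside $B(0,\rho_*)$) into itself, realizing the base action $v''\mapsto \tilde h_u(v'',\Phi(v''))$; after shrinking $\rho$ once more one gets, for every $j^\ell_v\Phi\in B_{J^\ell(\RR^n,\RR^m)}(j_u\Phi,\rho)$, that $(\tilde H^\ell_u)^{-1}(j^\ell_v\Phi)=j^\ell_{v''}\Phi$ for the unique $v''$ with $\tilde h_u(v'',\Phi(v''))=v$ and $j^\ell_{v''}\Phi\in B(0,\rho_*)$; applying $\tilde H^\ell_{u'}$ yields $H^\ell_z(j^\ell_v\Phi)=j^\ell_{\tilde h_{u'}(v'',\Phi(v''))}\Phi$, and reading off the $J^0$-coordinate identifies the target as $h_z(v,\Phi(v))$.

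The fourth bullet is the quantitative heart, and it is short. Using $\tilde H^\ell_u\circ(\tilde H^\ell_u)^{-1}=\hbox{id}$,
$$I - H^\ell_z \;=\; \tilde H^\ell_u\circ(\tilde H^\ell_u)^{-1}-\tilde H^\ell_{u'}\circ(\tilde H^\ell_u)^{-1} \;=\; \bigl(\tilde H^\ell_u-\tilde H^\ell_{u'}\bigr)\circ(\tilde H^\ell_u)^{-1},$$
so $\hbox{Lip}(I-H^\ell_z)\le \hbox{Lip}(\tilde H^\ell_u-\tilde H^\ell_{u'})\cdot\hbox{Lip}((\tilde H^\ell_u)^{-1})$. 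Since $\hbox{Lip}(\tilde H^\ell_u-\tilde H^\ell_{u'})\le \hbox{Lip}(A-\tilde H^\ell_u)+\hbox{Lip}(A-\tilde H^\ell_{u'})\le \frac25 m(A)$ and $\hbox{Lip}((\tilde H^\ell_u)^{-1})\le 1/m(\tilde H^\ell_u)\le \frac{5}{4m(A)}$, we obtain $\hbox{Lip}(I-H^\ell_z)\le \frac12$ on $B_{J^\ell(\RR^n,\RR^m)}(j^\ell_u\Phi,\rho)$; the constant $m(A)/5$ in the definition of $\cG$ was calibrated for exactly this.

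The step I expect to be the main obstacle is not any single estimate but the uniformity bookkeeping behind the first and third bullets: producing one $\rho>0$ valid for all $z\in U\times U$ and all $i\le\ell$ simultaneously. The definition of $\cG$ supplies quantitative control only at the top level $\ell$, so the argument must lean on the rigidity forced by the covering conditions — each $\tilde H^i_u$ is a descent of $\tilde H^\ell_u$ — to transport the level-$\ell$ bounds downward, and it must combine the fixed, $u$-independent modulus of continuity of $j^\ell\Phi$ with those bounds (via a uniform invariance-of-domain argument for the base maps $\tilde h_u\circ(\mathrm{id},\Phi)$) to see that the jet identity persists on a ball of uniform radius. Once this is set up, everything else is a direct manipulation of the composition $\tilde H^i_{u'}\circ(\tilde H^i_u)^{-1}$.
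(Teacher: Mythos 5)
Your proposal is correct and follows the same route the paper takes: the paper defines $H^i_{(u,u')}=\tilde H^i_{u'}\circ(\tilde H^i_u)^{-1}$ and then simply asserts the lemma, while you supply the verification — in particular the algebraic identity $I-H^\ell_z=(\tilde H^\ell_u-\tilde H^\ell_{u'})\circ(\tilde H^\ell_u)^{-1}$ and the pairing $\frac{2m(A)}{5}\cdot\frac{5}{4m(A)}=\frac12$ that the constant $m(A)/5$ in the definition of $\cG$ is calibrated to produce. The one place where your write-up is a bit loose is the phrase "these bounds propagate to every level $i\le\ell$": the upper Lipschitz bound on $\tilde H^i_u$ does descend from level $\ell$ via the norm-nonincreasing projection, but the lower bound does not descend in the same way; this is harmless because the first bullet asks only for bilipschitz (which is already part of membership in $\cG$) on a uniform domain, and the domain issue is handled by lifting a point of $B_{J^i}(j^i_u\Phi,\rho)$ to $B_{J^\ell}(j^\ell_u\Phi,\rho)$ (filling in the higher coordinates of $j^\ell_u\Phi$), applying $(\tilde H^\ell_u)^{-1}$ there, and projecting back — exactly the descent you invoke.
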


Let $K=3/2$, which is a bound, over all $z=(u,u')\in U\times U$, for the Lipschitz norm of $H^\ell_{z}$ on
$B_{J^\ell(\RR^n,\RR^m)}(j^\ell_u\Phi,\rho)$. Since $\Phi$ is assumed to be at least 
$C^1$, there exists a constant $C>0$ such that,
for all $u,u'\in U$,
$$|j^0_u\Phi - j^0_{u'}\Phi| \leq C|u-u'|.$$ 
Fix a point $u_0\in U$, and
let $\alpha = d(u_0, \RR^n\setminus U)$ (which depends uniformly on $u_0$). 
Since $j^\ell\Phi$ is continuous, if $u$ is sufficiently close to $u_0$ (uniformly in $u_0$), 
we will have $j^\ell_{u}\Phi\in B_{J^{\ell}(\RR^m,\RR^n)}(j^{\ell}_{u_0}\Phi,\rho)$. 

Let $u_1\in U$ be such a point. 
Fix $N\in \ZZ_+$ such that:
$$\frac{\alpha}{CK(N+1)}\leq |u_1-u_0| < \frac{\alpha}{CKN}.$$

We construct a sequence of points $u_0, u_1, u_2,\ldots, u_N$ in $U$ inductively as follows. The
points $u_0$ and $u_1$ have already been defined.
For $i\in\{1,\ldots,n-1\}$, we set $z_i = (u_0,u_i)\in U\times U$ and  
$u_{i+1} = h_{z_i}(u_1,\Phi(u_1))$. We need to check that if $u_i$ is contained
in $U$, then $u_{i+1}$ is also contained in $U$.

To see this, note that, for $i\leq N$, we have:
\begin{eqnarray*}
|u_{i} - u_{i-1}| &=& |h_{z_i}(u_1,\Phi(u_1)) - h_{z_i}(u_0,\Phi(u_0))|\\ 
&\leq& K |j^{0}_{u_1}\Phi - j^{0}_{u_0}\Phi|\\
&\leq & KC |u_1 - u_0| 
\end{eqnarray*}
Hence, for $i\leq N$, this implies that 
$|u_i-u_0| \leq K Ci |u_1 - u_0|< \alpha$, so
that $u_i\in U$, for all $i\in\{1,\ldots,N\}$.

Then, for each $i$:
\begin{eqnarray*}
j^\ell_{u_i}\Phi - j^\ell_{u_{i-1}}\Phi &=& H^\ell_{z_i}(j_{u_1}^\ell\Phi) - H^\ell_{z_i}(j_{u_0}^\ell\Phi)\\
&=& j_{u_1}^\ell\Phi-j_{u_0}^\ell\Phi +  (H^\ell_{z_i} - Id)(j_{u_1}^\ell\Phi) -
(H^\ell_{z_i} - Id)(j_{u_2}^\ell\Phi)
\end{eqnarray*}

Summing these equations
from $i=1,\ldots, N$, and taking the norm, we obtain:
\begin{eqnarray*}
|j^\ell_{u_N}\Phi - j^\ell_{u_{0}}\Phi| &\geq  & N|j_{u_1}^\ell\Phi-j_{u_0}^\ell\Phi|\\
\qquad\qquad\qquad\qquad\qquad\quad &&
- \sum_{i=1}^N \left| (H^\ell_{z_i} - Id)(j_{u_1}^\ell\Phi) -
(H^\ell_{z_i} - Id)(j_{u_0}^\ell\Phi)\right|\\
&\geq& \frac{N}{2}|j_{u_1}^\ell\Phi-j_{u_0}^\ell\Phi|,
\end{eqnarray*}
since $\hbox{Lip}(H^\ell_{z_i} - Id) < \frac{1}{2}$, for $i=1,\ldots, N$.

Since $j^\ell\Phi$ is continuous, by assumption, there exists a constant
$M>0$ such that $|j^\ell_{v}\Phi|\leq M$, for all $v\in U$.
Then:
\begin{eqnarray*}
|j^\ell_{u_1}\Phi - j^\ell_{u_{0}}\Phi| &\leq  & \frac{2}{N}|j_{u_N}^\ell\Phi-j_{u_0}^\ell\Phi|\\
&\leq& \frac{4M}{N}\\
&=& \frac{4MCK(N+1)}{n\alpha} \frac{\alpha}{CK(N+1)}\\
&\leq& \frac{12MC}{\alpha} |u_1-u_0|.
\end{eqnarray*}

From this it follows that $u\mapsto j_u^\ell\Phi$ is Lipschitz at $u_0$; since $u_0$ was arbitrary,
the map is locally Lipschitz on $U$.  Hence $j^\ell\Phi$ is differentiable almost everywhere
 on $U\subset V$. $C^{\ell+1}$-homogeneity of $V$ now implies that $j^\ell\Phi$ is
differentiable everywhere on $V$.  Taking
a point of continuity for the derivative of $j^\ell\Phi$, and applying
$C^{\ell+1}$-homogeneity one more time, we obtain that $j^\ell\Phi$ is $C^1$,
and so $V$ is a $C^{\ell+1}$ submanifold
of $\RR^n\times \RR^m$.  This completes the inductive step of our proof, and so
completes the proof that $N$ is a $C^k$ submanifold of $\RR^{m+n}$.

\section{Journ\'e's theorem, re(re)visited.}\label{s=journe}

Journ\'e's theorem \cite{J} is widely used in rigidity theory to show that a continuous function is smooth. The theorem states
that any function that is uniformly smooth along leaves of two transverse foliations with uniformly smooth leaves
is smooth. This theorem is typically
applied in the Anosov setting as follows:  according to Proposition~\ref{p=satprop},
the graph of a continuous transfer function $\Phi$ for a smooth coboundary $\phi$ is bisaturated, i.e.
saturated by leaves of the unstable and stable foliation for the skew product $f_\phi$.  
Since $f_\phi$ is smooth, the leaves of these foliations are smooth graphs over the corresponding foliations
for $f$.  This implies that the function $\Phi$ is smooth along leaves of the stable and unstable foliations $\W^s$ and $\W^u$ for $f$.  In the Anosov setting, these foliations are transverse, so applying Journ\'e's theorem, we obtain that $\Phi$ is smooth (see \cite{NT}).

Here in the partially hyperbolic setting, we reproduce this argument in part.  Indeed, by the same argument, any continuous transfer function $\Phi$ of a smooth coboundary $\phi$ 
is smooth along leaves of $\W^s$ and $\W^u$.  Since the stable and unstable foliations are not necessarily transverse,
we cannot apply Journ\'e's theorem at this point. The idea is to use accessibility and center bunching to show that the restriction of $\Phi$ to leaves of a center foliation is also smooth.  
One then applies Journ\'e's theorem twice, first to the pair of 
foliations $\W^c$ and $\W^u$, and then to the pair $\W^{cu}$ and $\W^s$, to conclude that $\Phi$ is smooth.

If one assumes that $f$ is dynamically coherent, then it is possible to turn this idea into a rigorous argument,
as we outlined above in Section~\ref{s=techniques}. Here are a few more details on how one can
show that $\Phi$ is smooth along leaves of $\W^c$ in the dynamically coherent setting.
Bisaturation of $\Phi$ implies that the graph of $\Phi$ when restricted to the $\W^c$-manifolds
is invariant under the $\W^{s}_{\phi}$ and $\W^{u}_{\phi}$-holonomy maps between lifted 
$\W^c_{\phi}$-manifolds.  The strong bunching hypothesis on $f$ implies that these 
holonomy maps are smooth when restricted to
center manifolds of $f_\phi$.  Each center manifold $\W^c_{\phi}(p,t)$ of $f_\phi$ is the product 
$\W^c(p)\times \RR$ of a center manifold for $f$ with $\RR$, and the $\W^{s}_{\phi}$ and $\W^{u}_{\phi}$-holonomies
between $\W^c_{\phi}$-manifolds covers the corresponding $\W^{s}$ and $\W^{u}$-holonomies between $\W^c$-manifolds.  
Since $f$ is accessible and $\Phi$ is bisaturated, any two points on the graph of $\Phi$
can be connected by an $su$-lift path. Corresponding to any such $su$-lift path is
a composition of $\W^{s}_{\phi}$ and $\W^{u}_{\phi}$-holonomy diffeomorphisms between $\W^c_{\phi}$-manifolds
that preserves the graph of $\Phi$. Putting all of this together, we get that the graph of $\Phi$ over any 
given center manifold $\W^c(p)$ is  a smoothly 
homogeneous submanifold of $\W^c(p)\times\RR$ and so by Corollary~\ref{c=homog} is a smooth submanifold.  
Hence the restriction of $\Phi$ to $\W^c$ leaves is also uniformly smooth. 

If we do not assume dynamical coherence, then this argument fails. One can attempt to use in place
of a center foliation a local ``fake'' center foliation $\hW^c_x$, as is done in \cite{BWannals} to
prove ergodicity.
However, the fake center foliation $\hW^c_x$ available to us is not sufficiently canonical  
to allow a dynamical proof that the graph of $\Phi$ is smoothly homogeneous over $\hW^c_x$ leaves.  Another
difficulty is that the fake center foliation and the unstable foliation $\W^u$ are not 
jointly integrable, and so we cannot apply Journ\'e's theorem in the two steps outlined above.  
Fortunately, both problems can be overcome, and it is possible to employ the fake foliations
of \cite{BWannals} to prove Theorem~\ref{t=main}.  The key observations that allow is to do this are:
\begin{enumerate}
\item the fake center foliation $\hW^c_x$ and the {\em fake} unstable foliation
$\hW^u_x$ are jointly integrable,
\item one can show that $\Phi$ has continuous ``approximate jets'' along 
leaves of  $\hW^u_x$ and $\hW^c_x$, and
\item Journ\'e's theorem has a stronger formulation in terms of ``approximate jets''.
\end{enumerate}
We detail the argument in the next section.  In this section, we describe the stronger formulation
of Journ\'e's theorem and what we mean by ``approximate jets.''

\begin{dfinition} Let $D$ be a domain in $\RR^m$, $C\geq 1$, $\alpha>0$ and $\ell\in \ZZ_+$.  A function $\psi\colon D\to \RR^n$ {\em has an $(\ell,\alpha,C)$-expansion at $z$} if there exists a polynomial $\wp_z$ of degree $\leq \ell$
such that:
$$|\psi(z')- \wp_z(z')| \leq C |z-z'|^{\ell+\alpha},$$
for all $z'\in D$.
\end{dfinition}

The following theorem was proved by Campanato (in a more general context):

\begin{theorem}\label{t=campanato}\cite{campanato}  For $\ell\in \ZZ_+$ and 
$\alpha \in (0,1]$, a function
$\psi:\RR^m\to \RR^n$ is $C^{\ell,\alpha}$ if and only if, for every compact set $D\in \RR^m$,
there exists $C>0$  such that $\psi$ has an $(\ell,\alpha,C)$-expansion at every $z\in D$.

Furthermore, $\psi$ is a polynomial of degree $\leq \ell$ if and only if there exists
$\alpha > 1$ such that, for every compact set $D\in \RR^m$,
there exists a $C>0$ such that $\psi$ has an $(\ell,\alpha,C)$-expansion at every $z\in D$.
\end{theorem}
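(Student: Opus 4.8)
The plan is to give a direct elementary proof of the finite‑dimensional statement (the only case needed here), treating the two directions of the first assertion separately and then deducing the ``furthermore'' clause from the first part. \emph{Necessity} is just Taylor's theorem: if $\psi\in C^{\ell,\alpha}$ and $\wp_z$ is its degree‑$\ell$ Taylor polynomial at $z$, the integral form of the remainder expresses $\psi(z')-\wp_z(z')$ as a weighted integral along the segment from $z$ to $z'$ of $D^\ell\psi(\cdot)-D^\ell\psi(z)$ applied to $(z'-z)^{\otimes\ell}$; on a compact neighborhood of a given $D$, $\alpha$‑Hölder continuity of $D^\ell\psi$ bounds this by $C|z-z'|^{\ell+\alpha}$ with $C$ depending only on $D$ and the $C^{\ell,\alpha}$ seminorm, which is the required $(\ell,\alpha,C)$‑expansion at every $z\in D$.

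\emph{Sufficiency} is the substantive direction, and I would prove it by induction on $\ell\ge 0$. First, putting $z'=z$ in the defining inequality forces $\wp_z(z)=\psi(z)$. The base case $\ell=0$ then reads $|\psi(z')-\psi(z)|\le C|z-z'|^{\alpha}$ on compacts, i.e.\ $\psi\in C^{0,\alpha}$. For the inductive step the key observation is a coefficient‑comparison estimate: for $z,w$ in a fixed compact set, adding the two expansion inequalities shows that the degree‑$\le\ell$ polynomial $\wp_z-\wp_w$ is $O(|z-w|^{\ell+\alpha})$ on the ball $B(w,|z-w|)$ (on which $|z-\cdot|\le 2|z-w|$); the standard bound comparing the Taylor coefficients of a polynomial at a point to its sup‑norm on a ball about that point then yields $\|D^j(\wp_z-\wp_w)(w)\|\le C'|z-w|^{\ell-j+\alpha}$ for $0\le j\le\ell$, uniformly on compacts. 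Since $\ell+\alpha>1$ here, the expansion at $z$ already shows $\psi$ is differentiable with $D\psi(z)=D\wp_z(z)$; writing this at $z$ and at $z'$ gives $D\psi(z')-D\wp_z(z')=D(\wp_{z'}-\wp_z)(z')$, which the comparison estimate ($j=1$) bounds by $C'|z-z'|^{(\ell-1)+\alpha}$. Thus $D\psi\colon\RR^m\to\RR^{mn}$ has an $(\ell-1,\alpha,C')$‑expansion at every point of every compact set, with approximating polynomial $D\wp_z$, so by the inductive hypothesis $D\psi\in C^{\ell-1,\alpha}$ and hence $\psi\in C^{\ell,\alpha}$. (The passage from ``locally $C^{\ell,\alpha}$'' to ``$C^{\ell,\alpha}$ on every compact set'' is routine once $D^\ell\psi$ is continuous, hence bounded on compacts.)

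For the \emph{polynomial case}, suppose $\psi$ admits $(\ell,\alpha,C)$‑expansions with a single fixed $\alpha>1$. On a compact set of diameter $R$ we may trade the exponent $\ell+\alpha$ for $(\ell+1)+\alpha'$, where $\alpha'=\min\{1,\alpha-1\}\in(0,1]$, at the cost of a larger constant; viewing $\wp_z$ as a polynomial of degree $\le\ell+1$, the first part gives $\psi\in C^{\ell+1,\alpha'}$. Then $\wp_z$ and the degree‑$(\ell+1)$ Taylor polynomial $T_z$ of $\psi$ at $z$ both approximate $\psi$ to order strictly greater than $\ell+1$, so $\wp_z-T_z$ is a polynomial of degree $\le\ell+1$ that is $o(|z'-z|^{\ell+1})$ at $z$ and therefore vanishes identically; since $\deg\wp_z\le\ell$, this forces $D^{\ell+1}\psi(z)=0$ for every $z$, and connectedness of $\RR^m$ makes $\psi$ a polynomial of degree $\le\ell$. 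The converse is immediate, taking $\wp_z\equiv\psi$.

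I expect the only real obstacle to be the bookkeeping in the coefficient‑comparison estimate: making every constant uniform over the relevant compacts by working on a slightly enlarged compact neighborhood so that the auxiliary balls $B(w,|z-w|)$ remain in the region where the hypothesis is available, and keeping straight the (clean, but easily mis‑stated) fact that the derivative of the approximating polynomial of $\psi$ at $z$ is the approximating polynomial of $D\psi$ at $z$. Everything else in the argument is soft.
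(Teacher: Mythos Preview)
The paper does not prove this theorem; it is stated with a citation to Campanato's 1964 paper and used as a black box. Your argument is correct and is in fact the standard elementary proof of the finite-dimensional Campanato characterization: the coefficient-comparison estimate $\|D^j(\wp_z-\wp_w)(w)\|\le C'|z-w|^{\ell-j+\alpha}$ is precisely the heart of Campanato's method, and the induction on $\ell$ by passing to $D\psi$, together with the deduction of the polynomial case by bumping the order by one, is the expected route. The only minor points to keep clean are the ones you already anticipated: work on a slight enlargement of $D$ so the auxiliary balls $B(w,|z-w|)$ stay in the region where the hypothesis is available, and note that the comparison estimate is symmetric in the two base points, since in the inductive step you invoke it as $\|D(\wp_{z'}-\wp_z)(z')\|\le C'|z'-z|^{\ell-1+\alpha}$, i.e.\ evaluated at $z'$ rather than at $z$.
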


\begin{dfinition} A {\em parametrized $C^{\ell,\alpha}$ transverse pair of plaque families} is
a pair of maps $(\omega^H,\omega^V)$, with
$$\omega^H\colon  I^{m+n} \times I^m \to \RR^{m+n},\quad\hbox{and}\quad\omega^V \colon I^{m+n}\times I^n \to \RR^{m+n},$$
of the form:
$$\omega^H_z(x) = z + (\beta^H_z(x),x),\quad\hbox{and}\quad\omega^V_z(y) = z + (y, \beta^V_z(y)),$$
for $z\in  I^{m+n}$, where $\beta^H_z \in C^{\ell,\alpha}(I^m,\RR^n)$ and $\beta^V_z \in C^{\ell,\alpha}(I^n,\RR^m)$
have the following additional properties:
\begin{enumerate}
\item $\beta_z^H(0) = 0$ and  $\beta_z^V(0) = 0$, for all $z\in I^{m+n}$,
\item $\beta^H_{(0,0)}(x)  = 0$  for every $x\in I^m$,
and $\beta^V_{(0,0)}(y) =  0$,  for every $y\in I^n$,
\item The maps $z\mapsto \beta^H_z \in C^{\ell,\alpha}(I^m,\RR^n)$
and
$z\mapsto \omega^V_z \in C^{\ell,\alpha}(I^n,\RR^m)$ are continuous.
\end{enumerate}

\medskip

\noindent
If $(\omega^H,\omega^V)$ is a parametrized $C^{\ell,\alpha}$ transverse pair of plaque families,
we define the norm  $\|(\omega^H,\omega^V)\|_{\ell,\alpha}$ as follows:
$$\|(\omega^H,\omega^V)\|_{\ell,\alpha} := \sup_{z\in I^{m+n}} \|\beta^H_z\|_{C^{\ell,\alpha}(I^m,\RR^n)}
+ \|\beta^V_z\|_{C^{\ell,\alpha}(I^n,\RR^m)}.$$

\end{dfinition}

\begin{remark} A pair of transverse foliations with uniformly $C^{\ell,\alpha}$ leaves, after a  $C^{\ell,\alpha}$
local change of coordinates, becomes a parametrized transverse pair of plaque families.  
Similarly, a pair of continuous plaque families (where the plaques depend continuously on the their center point
in the $C^{\ell,\alpha}$ topology) transverse at every point gives a transverse pair of plaque families.
\end{remark}

\begin{theorem}\label{t=journe} Fix $\ell\in\ZZ_+$ and $\alpha\in(0,1)$.
Let $(\omega^H,\omega^V)$ be a parametrized $C^{\ell,\alpha}$ transverse pair of  plaque families in  $I^{n+m}\subset \RR^n\times \RR^m$.  For every $C>0$ there exist
 $C' =  C'(C,\|(\omega^H,\omega^V)\|_{\ell,\alpha})$  and $\rho = \rho(C,\|(\omega^H,\omega^V)\|_{\ell,\alpha})$ such that the following holds.

Suppose that $\psi:I^{n+m}\to \RR$ has the properties:
\begin{itemize}
\item[(1)] for every $z\in I^{m+n}$, there exists a polynomial $\wp_z^H: I^m\to \RR$ of degree $\leq \ell$ such that,
for all $x\in I^m$:
$$|\psi(\omega^H_z(x))- \wp_z^H(x)| \leq C (|x|^{\ell+\alpha} + |z|^{\ell+\alpha}),$$
\item[(2)] for every $z\in I^{m+n}$, there exists a polynomial $\wp_z^V: I^n\to \RR$ of degree $\leq \ell$ such that,
for all $y\in I^n$:
$$|\psi(\omega^V_z(y))- \wp_z^V(y)| \leq C  (|y|^{\ell+\alpha} + |z|^{\ell+\alpha}),$$
\end{itemize}
Then $\psi$ has an $(\ell,\alpha, C')$-expansion at $(0,0)$ in $B_{\RR^{m+n}}(0,\rho)$.
\end{theorem}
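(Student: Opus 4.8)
The plan is to reduce everything to one telescoping estimate along a ``zigzag'' path, in the spirit of Journé's original argument \cite{J}, but carried out in a form that tolerates the basepoint slack $|z|^{\ell+\alpha}$ in the hypotheses and that only demands a conclusion at the single point $(0,0)$. I would organize the argument as an induction on $\ell$, which is also how the crucial ``consistency of the leafwise jets'' gets extracted, the order-$(\ell-1)$ statement feeding the order-$\ell$ one. The case $\ell=0$ already exhibits the whole geometric mechanism, so I would set it up carefully first and then add the polynomial bookkeeping.

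\emph{The candidate jet.} Since $\omega^H_{(0,0)}$ parametrizes the coordinate plaque $\{(0,x):x\in I^m\}$ and $\omega^V_{(0,0)}$ the plaque $\{(y,0):y\in I^n\}$, hypotheses (1) and (2) with $z=(0,0)$ give polynomials $P(x)=\wp^H_{(0,0)}(x)$ and $Q(y)=\wp^V_{(0,0)}(y)$ of degree $\le\ell$ with $|\psi(0,x)-P(x)|\le C|x|^{\ell+\alpha}$, $|\psi(y,0)-Q(y)|\le C|y|^{\ell+\alpha}$, and $P(0)=Q(0)=\psi(0,0)$. The ambient candidate $\wp$, a polynomial of degree $\le\ell$ in $(u,v)\in\RR^n\times\RR^m$, I would build by a finite procedure: the points $(0,v)$ lie on the central horizontal plaque, so the vertical plaques based at them carry polynomials $\wp^V_{(0,v)}$, and the ``mixed'' Taylor coefficients of $\wp$ are obtained by differentiating the continuous family $v\mapsto\wp^V_{(0,v)}$ up to the allowed order and matching, with Campanato's theorem (Theorem~\ref{t=campanato}) used to pass between the expansion bounds and genuine smoothness where needed. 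This pins down $\wp$ uniquely.

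\emph{The zigzag and telescoping.} Given a target $w\in B_{\RR^{m+n}}(0,\rho)$, I would build a sequence $w_0=w,w_1,w_2,\dots$, where $w_{k+1}$ lies on the horizontal plaque (for $k$ even) or vertical plaque (for $k$ odd) based at $w_k$, chosen so as to land in the complementary coordinate subspace. Because $\beta^H_{(0,0)}\equiv0$, $\beta^V_{(0,0)}\equiv0$, and the families are continuous, after shrinking $\rho$ the plaques through $B(0,\rho)$ are so nearly affine that $|w_{k+2}|\le\frac12|w_k|$ and $|w_{k+1}-w_k|=O(|w_k|)$; hence $w_k\to0$ geometrically and $\psi(w_k)\to\psi(0,0)$. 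From
$$\psi(w)-\wp(w)=\sum_{k\ge0}\big[(\psi(w_k)-\psi(w_{k+1}))-(\wp(w_k)-\wp(w_{k+1}))\big],$$
I would bound the $k$-th term using the plaque-expansion based at $w_k$: both the plaque parameter of $w_{k+1}$ and the basepoint slack are $O(|w_k|)$, so that expansion controls $\psi(w_k)-\psi(w_{k+1})$ by the polynomial increment of $\wp^{\ast}_{w_k}$ up to $O(|w_k|^{\ell+\alpha})$, and that increment matches $\wp(w_k)-\wp(w_{k+1})$ up to $O(|w_k|^{\ell+\alpha})$ by the inductive hypothesis together with the consistency of $\wp$. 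Since $|w_k|=O(2^{-k/2}|w|)$, the geometric series sums to $O(|w|^{\ell+\alpha})$, yielding an $(\ell,\alpha,C')$-expansion of $\psi$ at $(0,0)$; $C'$ and $\rho$ depend only on $C$ and $\|(\omega^H,\omega^V)\|_{\ell,\alpha}$ because every estimate in the construction is uniform in the basepoint.

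\emph{The main obstacle.} The hard part is the claim used twice above: that the leafwise polynomials $\wp^{\ast}_{w_k}$ met along the zigzag fit together — each being, up to error $O(|w_k|^{\ell+\alpha})$, the restriction of a single ambient $\ell$-jet at $w_k$ that depends on $w_k$ at a controlled Hölder rate, with this jet agreeing at $0$ with the Taylor jet of $\wp$. This is the essential content of Journé's theorem, needed here in a ``pointwise at $0$, with slack'' form and hence not directly quotable. My route is the induction: an auxiliary order-$(\ell-1)$ statement, formulated to be stable under recentering along the zigzag, would determine the lower-order part of each $\wp^{\ast}_{w_k}$, and the degree-$\ell$ coefficient is then fixed by comparing the horizontal and vertical expansions through $w_k$. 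The delicate point, to which I would devote most of the work, is showing that this degree-$\ell$ coefficient varies with $w_k$ at the $\alpha$-Hölder rate required for the telescoped series to converge — a difference estimate for the family $z\mapsto\wp^{\ast}_z$ in which the transversality of $(\omega^H,\omega^V)$ enters essentially.
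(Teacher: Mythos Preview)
Your approach is genuinely different from the paper's, but there is a real gap, and it lies exactly where you say the work is.

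Two concrete problems. First, your construction of the candidate $\wp$ by ``differentiating the continuous family $v\mapsto\wp^V_{(0,v)}$'' presupposes that this family of polynomials is $\ell$ times differentiable in $v$. The hypotheses give no such thing: the coefficients of $\wp^V_{(0,v)}$ beyond the constant term are not controlled by anything you can read off from (1) and (2) directly, and Campanato only applies once you already have expansions at \emph{every} basepoint, which is the conclusion. Second, the inductive mechanism is circular as stated. The $(\ell-1)$-statement gives an expansion at the origin only; it says nothing about the zigzag points $w_k$, so it cannot feed the degree-$\ell$ step there. And ``comparing horizontal and vertical expansions through $w_k$'' does not determine a full ambient degree-$\ell$ jet: the horizontal expansion sees only the $x$-derivatives, the vertical one only the $y$-derivatives, and the mixed terms of total degree $\ell$ are invisible to both. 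The $\alpha$-H\"older variation of the top coefficient that you need is, in effect, the theorem itself; your induction has not broken the loop.

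The paper avoids both issues by a completely different mechanism: it builds a sequence of grids $S_m$ of $(\ell+1)^2$ points (nested, shrinking geometrically toward $0$, with $S_m$ and $S_{m+1}$ overlapping along $\ell$ leaves), interpolates $\psi$ on each $S_m$ by a polynomial $\wp_m$ via a Lagrange-type lemma (Lemma~\ref{l=interp}), and shows the degree-$\le\ell$ parts of $\wp_m$ converge. The candidate $\wp$ is \emph{produced} by this limit, not guessed in advance, and the ``consistency of leafwise jets'' is never isolated as a separate claim---it is absorbed into the convergence estimate (Lemma~\ref{l=gunk}), which compares $\wp_m$ and $\wp_{m+1}$ using the leafwise expansions along the shared leaves. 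The approximation on a cone then follows from the interpolation bound, and overlapping cones force uniqueness of $\wp$. Your zigzag idea is natural for $\ell=0$, but for $\ell\ge1$ the interpolation-grid route is what actually closes the argument.
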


\begin{remark} Note that the hypotheses of Theorem~\ref{t=journe} are  weaker than requiring that 
$\psi\circ \omega^H_z$ and $\psi\circ \omega^H_z$ be $C^{\ell,\alpha}$ for every $z\in I^{m+n}$.
They are also weaker than requiring that $\psi\circ \omega^H_z$ and 
$\psi\circ \omega^H_z$ have $(\ell,\alpha,C)$-expansions about $0$ for every $z$.
This latter condition corresponds to the stronger conditions:
$$|\psi(\omega^H_z(x))- \wp_z^H(x)| \leq C |x|^{\ell+\alpha},\quad\hbox{and}\quad
|\psi(\omega^V_z(y))- \wp_z^V(y)| \leq C  |y|^{\ell+\alpha},$$
for every $(x,y)$.
Note also that the conclusion of Theorem~\ref{t=journe} is in some aspects very weak: it 
does not even imply that $\psi$ is continuous (except at the origin). 

One can recover Journ\'e's original result from Theorems~\ref{t=journe} and \ref{t=campanato}
as follows. Suppose that $\psi$ is uniformly $C^{\ell,\alpha}$ along the leaves of
two transverse foliations with uniformly $C^{\ell,\alpha}$ leaves.  Fix an arbitrary point $x$;
in local coordinates sending $x$ to $0$,
the transverse foliations give a  parametrized $C^{\ell,\alpha}$ transverse pair of plaque families.
In the coordinates given by this parametrization, $\psi$ has a Taylor expansion at every point
with uniform remainder term on the order $\ell +\alpha$.  This implies that conditions (1) and (2)
in Theorem~\ref{t=journe} hold, for some $C$ that is uniform in the point $x$.  Theorem~\ref{t=journe}
implies that $\psi$ has an $(\ell,\alpha,C')$ expansion (in these coordinates) at $x$, where
$C'$ is uniform in $x$. Since $x$ was arbitrary, Theorem~\ref{t=campanato} then implies that $\psi$
is $C^{\ell,\alpha}$.

We also remark that whereas Theorem~\ref{t=campanato} holds for $\alpha=1$, Theorem~\ref{t=journe} is
false for $\alpha=1$, if $\ell>1$ (see \cite{PSW} for an example with $\alpha=1$, $\ell=1$).
\end{remark}

\begin{proof}[Proof of Theorem~\ref{t=journe}] The proof amounts to a careful inspection of the main result in \cite{J}.  We follow the
format in \cite{NicT}, where the structure of the original treatment in \cite{J} has been clarified.
We retain as much as possible the notation from \cite{J, NicT}, though there are some small changes.
The two differences in the way the result is stated here and the way it is stated in \cite{J} are the following:
\begin{enumerate}
\item In \cite{J}, the transverse plaque family arises from a transverse pair of local foliations $\cF_s$ and $\cF_u$; this is not assumed here. An extra lemma (Lemma~\ref{l=goodgrid}) deals with this.
\item In \cite{J}, it is assumed that $\psi$ is $C^{\ell,\alpha}$ along leaves of the foliations $\cF_s$ and $\cF_u$. This is replaced by (1) and (2).  A slight adaptation of the proof of Lemma~\ref{l=gunk}, part 1, deals with this.
\end{enumerate}

As in \cite{J} and \cite{NicT}, we give the proof for $m=n=1$; the proof for general $m,n$ is
completely analogous.  We first reduce Theorem~\ref{t=journe}
to the following lemma. 

\begin{lemma}[cf. \cite{NicT}, Lemma 4.4]\label{l=cone}  Under the hypotheses of
Theorem~\ref{t=journe}, there is a polynomial $\wp=\wp(\psi)$
of degree $\leq \ell$ with the following property.  Given $\kappa>0$ and the cone
$\cK_\kappa = \{(u,v)\in\RR^2 :  |v|\leq \kappa|u|\}$, there exist positive constants $C_1=C_1(\kappa,C,\|(\omega^H,\omega^V)\|_{\ell,\alpha})$
and  $\rho=\rho(\kappa,C,\|(\omega^H,\omega^V)\|_{\ell,\alpha})$ such that:
\begin{eqnarray}\label{e=conepolynomial}
|\psi(z) - \wp(z)| \leq C_1|z|^{\ell +\alpha},\quad\hbox{for}\,z\in \cK\cap B(0,\rho_1).
\end{eqnarray}
\end{lemma}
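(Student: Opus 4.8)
The plan is to follow Journ\'e's argument in the form given in \cite{J, NicT}, incorporating the two modifications flagged above (plaque families in place of local foliations; hypotheses (1)--(2) in place of smoothness along leaves), and then to deduce Theorem~\ref{t=journe} from Lemma~\ref{l=cone}. The deduction is short: apply Lemma~\ref{l=cone} with $\kappa=1$ to get a polynomial $\wp$ of degree $\le\ell$ and constants $C_1,\rho$ with $|\psi(z)-\wp(z)|\le C_1|z|^{\ell+\alpha}$ on $\cK_1\cap B_{\RR^2}(0,\rho)$; since hypotheses (1) and (2) are symmetric under interchanging $\omega^H$ and $\omega^V$, the same lemma in the transposed coordinates gives a polynomial $\wp'$ of degree $\le\ell$ with $|\psi(z)-\wp'(z)|\le C_1'|z|^{\ell+\alpha}$ on $\{(u,v):|u|\le|v|\}\cap B_{\RR^2}(0,\rho')$. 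These two cones cover $B_{\RR^2}(0,\min\{\rho,\rho'\})$ and overlap in a set with nonempty interior, on which $\wp-\wp'=O(|z|^{\ell+\alpha})$; as $\wp-\wp'$ has degree $\le\ell$ and $\ell+\alpha>\ell$, the second part of Theorem~\ref{t=campanato} forces $\wp=\wp'$, so $\psi$ has an $(\ell,\alpha,\max\{C_1,C_1'\})$-expansion at $0$. (For general $m,n$ the argument is identical in form.)

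For the lemma itself I would take $m=n=1$ and isolate a single-scale estimate as the engine: \emph{there is a constant $c_0\in(0,1)$ and, for every small $t$, a polynomial $\wp_t$ of degree $\le\ell$ depending on $\psi$ but not on $\kappa$, with $|\psi(z)-\wp_t(z)|\le C_2\,t^{\ell+\alpha}$ for $z\in\cK_\kappa\cap B_{\RR^2}(0,c_0 t)$, where $C_2$ depends only on $\kappa$, $C$ and $\|(\omega^H,\omega^V)\|_{\ell,\alpha}$.} To build $\wp_t$ one first constructs, for each small $t$, a grid $\{z^{(t)}_{i,j}\}_{0\le i,j\le\ell}$ of $(\ell+1)^2$ points in $B_{\RR^2}(0,C_3 t)$ with $z^{(t)}_{0,0}=0$, consecutive points in each row joined by a common $\omega^H$-plaque and in each column by a common $\omega^V$-plaque, and $z^{(t)}_{i,j}$ within $O(\|(\omega^H,\omega^V)\|_{\ell,\alpha}\,t^{1+\alpha})$ of the model lattice point $(it,jt)$; for foliations this is automatic, and in general it (together with the $C^{\ell,\alpha}$-control of how plaques move with their base point) is the content of the grid lemma, Lemma~\ref{l=goodgrid}, proved by a transversality and implicit-function-theorem argument using only $\|(\omega^H,\omega^V)\|_{\ell,\alpha}$. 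One then lets $\wp_t$ be the unique polynomial of total degree $\le\ell$ interpolating $\psi$ on the triangular sub-array $\{z^{(t)}_{i,j}:i+j\le\ell\}$, and estimates $|\psi(z)-\wp_t(z)|$ for $z$ in the cone by transporting the grid values to $z$ along short staircases of alternating $\omega^H$- and $\omega^V$-plaque segments: along each segment $\psi$ is within $O(t^{\ell+\alpha})$ of a degree-$\le\ell$ polynomial by (1)--(2) (here the extra term $C|z|^{\ell+\alpha}$ in (1)--(2), evaluated at base points of size $O(t)$, is itself $O(t^{\ell+\alpha})$ and hence harmless --- this is the adaptation of Lemma~\ref{l=gunk}, part~1), and switching between nearby plaques costs only $O(\|(\omega^H,\omega^V)\|_{\ell,\alpha}\,t^{\ell+\alpha})$; the accumulated combinatorial factor is bounded by a constant depending only on $\ell$, via a Bernstein/Markov-type inequality controlling a total-degree-$\le\ell$ polynomial on a ball by its values on a spread-out $\binom{\ell+2}{2}$-point set.

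To pass from one scale to all scales, apply the single-scale estimate at $t=\rho\,2^{-k}$, $k\ge0$, obtaining polynomials $\wp_k:=\wp_{\rho 2^{-k}}$ with $|\psi-\wp_k|\le C_2(\rho 2^{-k})^{\ell+\alpha}$ on $\cK_\kappa\cap B_{\RR^2}(0,c_0\rho 2^{-k})$. On the smaller set $\cK_\kappa\cap B_{\RR^2}(0,c_0\rho 2^{-k-1})$ --- which contains a genuine ball of radius $\Theta_\kappa(\rho 2^{-k})$ --- the difference $\wp_k-\wp_{k+1}$ is $O((\rho 2^{-k})^{\ell+\alpha})$, so comparing coefficients (rescale and use Markov's inequality) shows its degree-$i$ part is $O_\kappa((\rho 2^{-k})^{\ell+\alpha-i})$; hence the $\wp_k$ converge to a polynomial $\wp=\wp(\psi)$ of degree $\le\ell$, independent of $\kappa$ (as the $\wp_k$ are), with the degree-$i$ part of $\wp-\wp_k$ of size $O_\kappa((\rho 2^{-k})^{\ell+\alpha-i})$. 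Finally, given $z\in\cK_\kappa\cap B_{\RR^2}(0,\rho)$, pick $k$ with $c_0\rho 2^{-k-1}<|z|\le c_0\rho 2^{-k}$; then $|\psi(z)-\wp(z)|\le|\psi(z)-\wp_k(z)|+\sum_{i=0}^{\ell}O_\kappa((\rho 2^{-k})^{\ell+\alpha-i})|z|^i\le C_1|z|^{\ell+\alpha}$, which is exactly (\ref{e=conepolynomial}).

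The step I expect to be the real obstacle is the single-scale estimate of the second paragraph: as in \cite{J}, the delicate point is the error accounting in the staircase argument --- verifying that the combinatorial factor coming from reconstructing a bivariate polynomial from values transported along non-flat plaques stays bounded by a constant depending only on $\ell$, and that the two weakenings relative to \cite{J, NicT} inject only further errors of the same order $t^{\ell+\alpha}$, which then disappear into $C_2$. The grid lemma and the transversality estimates behind it are elementary by comparison; essentially all the care of Journ\'e's original proof goes into the propagation-of-errors bookkeeping, here carried out with these extra, harmless, terms present.
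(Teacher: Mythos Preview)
Your reduction of the theorem to the lemma (paragraph one) matches the paper, and your multi-scale step (paragraph three) is a correct and standard way to pass from scale-wise approximants to a single limit polynomial. The gap is in paragraph two: the ``single-scale estimate'' is essentially the whole lemma, and the staircase argument you sketch does not prove it. Already for $\ell=1$ with the coordinate foliations, write the hypotheses as $\psi(x,y)=\psi(0,y)+a_y x+O(t^{1+\alpha})$ and $\psi(x,y)=\psi(x,0)+b_x y+O(t^{1+\alpha})$; a staircase through the triangular grid yields $\psi(x,y)-\wp_t(x,y)=\tfrac12(a_y-a_0)x+\tfrac12(b_x-b_0)y+O(t^{1+\alpha})$, so what you actually need is $|a_y-a_0|,|b_x-b_0|=O(t^{\alpha})$, i.e.\ H\"older variation of the one-dimensional jets in the \emph{transverse} direction. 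Nothing in (1)--(2) gives this directly; it is precisely the content Journ\'e's argument extracts, and it cannot be obtained from a single grid and a Markov/Bernstein bound.

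The paper's mechanism for producing that transverse regularity is different from what you describe. One works with the full $(\ell+1)\times(\ell+1)$ grid and the bidegree-$(\ell,\ell)$ interpolant $\wp_m(x,y)=\sum_{0\le p,q\le \ell}c^m_{p,q}x^p y^q$ (not a total-degree-$\le\ell$ polynomial on a triangular array), and one takes the grids $S_m$ at geometric scales $r^{m/2}$ so that $S_m$ and $S_{m+1}$ share $\ell$ full rows or columns. Then $\wp_{m+1}-\wp_m$ is controlled by $\psi-\wp_m$ at the $\ell+1$ new points only; estimating the latter uses (1)--(2) along the new leaf together with a bootstrap inequality (Lemma~\ref{l=gunk}) that feeds the high-degree coefficients $\sum_{p+q>\ell}|c^m_{p,q}|T_m^{p+q}$ back into the right-hand side with a factor $\delta=\|x_j\|_{C^{\ell,\alpha}}\ll 1$. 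This closed recursion is what forces $|c^m_{p,q}-c^{m+1}_{p,q}|=O(T_m^{\ell+\alpha-p-q})$ and hence convergence of the degree-$\le\ell$ part. Finally, the estimate is a priori only along the grid leaves; to get it at an arbitrary $w$ in the cone one substitutes one leaf of the grid by the leaf through $w$ and observes the limit polynomial is unchanged. Your last paragraph correctly locates the difficulty, but the ``staircase'' picture is not what \cite{J,NicT} do; the overlapping-grid recursion with the high-degree bootstrap is the missing idea.
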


We first prove Theorem~\ref{t=journe} using Lemma~\ref{l=cone}. Fix $\kappa>2$.
Applying Lemma~\ref{l=cone}
to the cones $\cK = \{(u,v)\in\RR^2 :  |v|\leq \kappa|u|\}$ and $\cK' = \{(u,v)\in\RR^2 :  |u|\leq \kappa|v|\}$
(with the roles of $u$ and $v$ switched), we obtain polynomials $\wp$ and $\wp'$
of degree $\leq \ell$ and constants $C',\rho$ such that
$$
|\psi(z) - \wp(z)| \leq C'|z|^{\ell +\alpha},\quad\hbox{for}\,z\in \cK\cap B(0,\rho),
$$
and
$$
|\psi(z) - \wp'(z)| \leq C'|z|^{\ell +\alpha},\quad\hbox{for}\,z\in \cK'\cap B(0,\rho).
$$
Note that $V=B(0,\rho)\cap \cK\cap\cK'$ has nonempty interior.
But then $\wp$ and $\wp'$ must agree because they have contact higher than $\ell$ on $V$.
Hence $\psi$ has an $(\ell,\alpha,C')$ jet on $B_{\RR^2}(0,\rho)$.
This completes the proof of Theorem~\ref{t=journe}, assuming Lemma~\ref{l=cone}.
\end{proof}

\begin{proof}[Proof of Lemma~\ref{l=cone}]
Replacing $\psi$ by $\psi(x,y) - \psi(x,0) - \psi(0,x) + \psi(0,0)$, we may
assume that $\psi$ vanishes along the $x$- and $y$-axes.  
For $z\in I^{m+n}$, let $\cF^H(z) = \omega^H_z(I^m)$
and let $\cF^V(z) = \omega^V_z(I^n)$.

The structure of the 
proof is as follows.  We construct a sequence of degree $(\ell+1)^2$ polynomials $\wp_m$ on
$I^2$ that interpolate the values of $\psi$ on a carefully chosen collection $S_m$ of
$(\ell+1)^2$ points in $\RR^2$. The terms of degree
$\leq \ell$ in $\wp_m$ converge to a degree $\ell$ polynomial $\wp$ that 
satisfies (\ref{e=conepolynomial}) on a cone $\cK_\kappa$.
\begin{figure}[h]\label{f=grids}
\psfrag{sk}{$S_{2k}$}
\psfrag{sk+1}{$S_{2k+1}$}
\psfrag{sk+2}{$S_{2k+2}$}
\begin{center}
\includegraphics[scale=1.0]{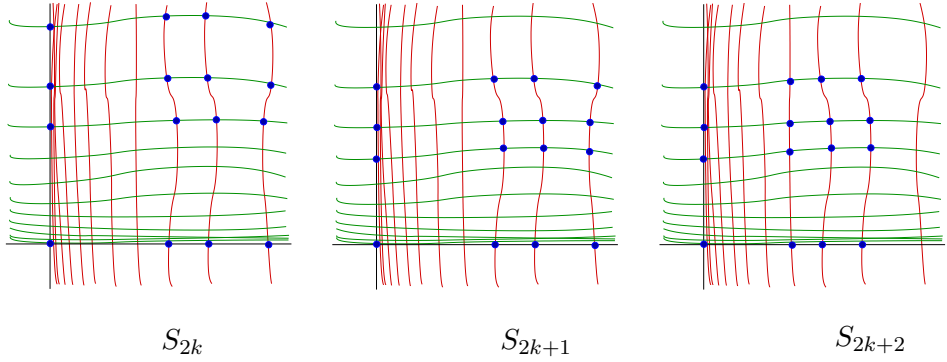}
\caption{The geometry of the sets $S_m$, when $\ell=3$.}
\end{center}
\end{figure}

We say more about the selection of sets $S_m$.  Each set $S_m$ is the union of
four subsets $S_m = \{(0,0)\}\cup (H_m\times\{0\}) \cup (\{0\} \times V_m) \cup J_m$,
where $H_m$ and $V_m$ each contain $\ell$ distinct real positive numbers.
The sets $S_m$ are chosen with several properties:
\begin{itemize}
\item the minimum and maximum distance between any two points in $S_m$ are comparable by
a fixed factor $B\geq 1$ and are both $O(r^{m/2})$, for some fixed $r\in(0,1)$, 
\item  $J_m$ is approximately the cartesian product $H_m\times V_m$, with error $o(r^{m/2})$,
\item  any ``vertical'' collection of $\ell+1$ points in $S_m$ lies on a vertical $\cF^V$-plaque,
and any ``horizontal'' collection of $\ell+1$ points in $S_m$ lies on a horizontal $\cF^H$-plaque,
\item  $S_m$ and $S_{m+1}$ agree on $\ell$ (horizontal or vertical, depending on the parity of $m$) 
collections of $\ell+1$ points.
\end{itemize}
These properties, combined with properties (1) and (2) of $\psi$ 
ensure both that the degree $\leq\ell$ terms in the polynomials $\wp_m$ converge  
and that the limiting polynomial is a good approximation
to $\psi$ on any cone $\cK_\kappa$. We will say more about the construction of $S_m$ shortly;
we note that it will be necessary to construct more than one such sequence,
in order to prove that $\wp$ is a good approximation at all points in $\cK$,
and not just those points on which $\psi$ was interpolated.

The starting point in Journ\'e's  argument is to prove a higher dimensional version of the
following interpolation lemma.

\begin{lemma}[Basic interpolation lemma. \cite{J}]\label{l=basicinterp} Fix $\ell \geq 1$. For each $B\geq 1$,
there exist $\eps_0=\eps_0(B)>0$ and $C_0=C_0(B)>0$ with the following property.  
If the collection of points $\{z_0,z_1, \ldots, z_\ell\}\subset \RR$ satisfies
$R/\eta < B$, where
$$R = \sup_{j} |z_{j}|\quad\hbox{and}\quad \eta = \inf_{j\ne j'}| z_{j} - z_{j'} |,$$
Then for any values $\{b_0,\ldots, b_\ell\}\subset \RR$,
there exists a unique polynomial
$$\wp(x) =  \sum_{p=0}^{\ell} c_{p}x^p$$
such that $\wp(z_{j}) = b_{j}$, for $0\leq j \leq \ell$. Moreover,
$$\sum_{p} |c_{p}|R^{p} \leq C\sup_{j}|b_{j}|.
$$
\end{lemma}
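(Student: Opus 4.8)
\emph{The plan.} The plan is to obtain existence and uniqueness of $\wp$ from the standard linear algebra of polynomial interpolation, and then to derive the coefficient bound from the explicit Lagrange formula after reducing to a scale-invariant situation. Existence and uniqueness are classical: the $z_j$ are distinct, so the Vandermonde matrix $V=(z_j^{\,p})_{0\le j,p\le \ell}$ is invertible and the linear system $\sum_{p}c_p z_j^{\,p}=b_j$ has a unique solution $(c_0,\dots,c_\ell)$. The content of the lemma is the quantitative bound $\sum_p|c_p|R^p\le C_0\sup_j|b_j|$, with $C_0$ depending only on $B$ (and on the fixed $\ell$), and I would prove this directly.

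\emph{Reduction by scaling.} Since $\ell\ge 1$ and the $z_j$ are distinct, $R>0$, so I would rescale. Set $w_j:=z_j/R$, so that $|w_j|\le 1$ and $\min_{j\ne j'}|w_j-w_{j'}|=\eta/R>1/B$. The polynomial $\tilde\wp(x):=\wp(Rx)$ has coefficients $\tilde c_p=c_pR^p$ and still satisfies $\tilde\wp(w_j)=b_j$, so it suffices to bound $\sum_p|\tilde c_p|$; that is, it suffices to prove the lemma in the normalized case $R=1$, $\eta>1/B$.

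\emph{The estimate.} In the normalized case I would write the Lagrange basis polynomials $L_j(x)=\prod_{i\ne j}\frac{x-w_i}{w_j-w_i}$, so that $\tilde\wp=\sum_j b_j L_j$. Expanding the numerator, $\prod_{i\ne j}(x-w_i)=\sum_{p=0}^{\ell}(-1)^{\ell-p}\,e_{\ell-p}\big(\{w_i\}_{i\ne j}\big)\,x^{p}$, where $e_k$ is the $k$th elementary symmetric polynomial in $\ell$ variables each of modulus $\le 1$, so $|e_{\ell-p}|\le\binom{\ell}{\ell-p}$; and $\big|\prod_{i\ne j}(w_j-w_i)\big|>(1/B)^{\ell}$. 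Hence the coefficient of $x^{p}$ in $L_j$ has modulus $<\binom{\ell}{\ell-p}B^{\ell}$, and summing over the $\ell+1$ indices $j$ gives $|\tilde c_p|\le(\ell+1)\binom{\ell}{\ell-p}B^{\ell}\sup_j|b_j|$, whence $\sum_{p=0}^{\ell}|\tilde c_p|\le(\ell+1)2^{\ell}B^{\ell}\sup_j|b_j|$. Undoing the rescaling gives $\sum_p|c_p|R^{p}\le C_0\sup_j|b_j|$ with $C_0:=(\ell+1)2^{\ell}B^{\ell}$, which depends only on $B$ and $\ell$. An alternative, less explicit route is a compactness argument: the configuration space $\{(w_0,\dots,w_\ell)\in[-1,1]^{\ell+1}:|w_i-w_j|\ge 1/B\text{ for }i\ne j\}$ is compact, the interpolating coefficients depend continuously on the configuration via Cramer's rule since the Vandermonde determinant is nonzero there, and $C_0$ is the maximum over this compact set of the (configuration-dependent) operator norm of $(b_j)\mapsto(\tilde c_p)$.

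\emph{On the main obstacle.} There is no serious obstacle; this is a routine interpolation estimate. The only point demanding care is the homogeneity bookkeeping that forces $C_0$ to be independent of $R$ and of the particular points $z_j$, and that is exactly the rescaling carried out in the second step. The auxiliary constant $\eps_0=\eps_0(B)$ does not enter the one-dimensional statement itself; it is a placeholder for the threshold on relative spacing that is needed when this lemma is applied to the multi-dimensional grids $S_m$ in the proof of Lemma~\ref{l=cone}, and one may simply take, e.g., $\eps_0:=1/B$.
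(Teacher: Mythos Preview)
Your proof is correct. The paper does not supply its own proof of this lemma; it is simply quoted from Journ\'e \cite{J} and used as a black box, so there is nothing to compare against beyond confirming that your argument is sound. Your rescaling to $R=1$ followed by the explicit Lagrange-basis estimate is the standard route and yields the clean constant $C_0=(\ell+1)2^{\ell}B^{\ell}$; the alternative compactness argument you mention is equally valid. Your remark about $\eps_0$ is also correct: it plays no role in the one-dimensional statement and appears to be a vestige of the two-dimensional version (Lemma~\ref{l=interp}, where the analogous constant $\theta_0$ controls how close the grid must be to a product configuration).
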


Journ\'e's generalization of Lemma~\ref{l=basicinterp} allows
one to interpolate values of a function on a collection of $(\ell+1)^2$ points in $\RR^2$ 
that lie in a rectangle-like configuration -- like
the sets $S_m$ described above -- by a degree $(\ell+1)^2$ polynomial whose
$C^0$ size is controlled on the scale of the grid:

\begin{lemma}[Rectangle interpolation lemma. \cite{J}, Lemma 1; cf. \cite{NicT}, Lemma 4.5]\label{l=interp} Fix $\ell \geq 1$. For each $B\geq 1$,
there exist $\theta_0=\theta_0(B)>0$ and $C_0=C_0(B)>0$ with the following property.  If the collections of
points $\{z_{j,k} :  0\leq j\leq \ell,\, 0\leq k\leq \ell\}\subset \RR^2$,
$\{x_{j} :  0\leq j\leq \ell \}\subset \RR$ and $\{y_{k} :  0\leq k\leq \ell \}\subset \RR$
satisfy:
$$R/\eta < B,\quad\hbox{and}\quad |z_{j,k} - (x_{j},y_{k})| \leq \theta_0\eta,$$ where
$$R = \sup_{j,k} |z_{j,k}|\quad\hbox{and}\quad \eta = \inf_{(j,k)\ne (j',k')}| z_{j,k} - z_{j',k'} |,$$
Then for any values $\{b_{j,k} :  0\leq j\leq \ell, \,0\leq k\leq \ell\}\subset \RR$,
there exists a unique polynomial
$$\wp(x,y) =  \sum_{0\leq p,q\leq \ell} c_{pq}x^p y^q$$
such that $\wp(z_{j,k}) = b_{j,k}$, for $0\leq j,k \leq \ell$. Moreover,
$$\sum_{p,q} |c_{p,q}|R^{p+q} \leq C_0\sup_{j,k}|b_{j,k}|.
$$
\end{lemma}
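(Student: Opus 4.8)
The plan is to regard the interpolation conditions as a square linear system in the $(\ell+1)^2$ unknowns $c_{pq}$ and to run a perturbation argument off the exact product configuration, feeding in the one-variable statement (Lemma~\ref{l=basicinterp}) in each variable separately.

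First I would normalize the scale. Replacing $(x,y)$ by $(Rx,Ry)$ replaces $z_{j,k}$ by $z_{j,k}/R$, the candidate product data $(x_j,y_k)$ by $(x_j/R,y_k/R)$, and a coefficient $c_{pq}$ by $c_{pq}R^{p+q}$; the hypotheses $R/\eta<B$ and $|z_{j,k}-(x_j,y_k)|\le\theta_0\eta$ are scale invariant, and the target estimate $\sum_{p,q}|c_{pq}|R^{p+q}\le C_0\sup_{j,k}|b_{j,k}|$ turns into $\sum_{p,q}|c_{pq}|\le C_0\sup_{j,k}|b_{j,k}|$. So I may assume $R=1$, hence $1/B<\eta\le 2$ and all $z_{j,k}$ lie in the closed unit ball. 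Next I would record that the one-variable data is well separated: since $z_{j,k}$ and $z_{j',k}$ are at distance $\ge\eta$ and each lies within $\theta_0\eta$ of $(x_j,y_k)$, resp.\ $(x_{j'},y_k)$, we get $|x_j-x_{j'}|\ge(1-2\theta_0)\eta$, and symmetrically for the $y_k$; also $|x_j|,|y_k|\le 1+\theta_0\eta$. Thus for $\theta_0<\tfrac14$ the collections $\{x_j\}$ and $\{y_k\}$ each satisfy the hypothesis of Lemma~\ref{l=basicinterp} with $B$ replaced by a constant $B'=B'(B)$, and since their diameters are bounded above and below in terms of $B,\ell$, that lemma yields inverse-norm bounds $\|V_x^{-1}\|,\|V_y^{-1}\|\le C_1(B,\ell)$ for the one-variable interpolation matrices $V_x=(x_j^{\,p})_{j,p}$, $V_y=(y_k^{\,q})_{k,q}$.

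I would then introduce the interpolation operator $M(z)\colon(c_{pq})\mapsto\bigl(\sum_{p,q}c_{pq}\,z_{j,k,1}^{\,p}z_{j,k,2}^{\,q}\bigr)_{j,k}$ on the $(\ell+1)^2$-dimensional space, where $z_{j,k}=(z_{j,k,1},z_{j,k,2})$. For the exact product configuration $z_{j,k}=(x_j,y_k)$ this operator is $M_0\colon C\mapsto V_x C\,V_y^{\mathsf{T}}$ (identifying $(c_{pq})$ with the matrix $C$), which is invertible with $\|M_0^{-1}\|\le C_1(B,\ell)^2$. For a general configuration each matrix entry of $M(z)-M_0$ is $z_{j,k,1}^{\,p}z_{j,k,2}^{\,q}-x_j^{\,p}y_k^{\,q}$, and since $(u,v)\mapsto u^{\,p}v^{\,q}$ is Lipschitz with constant $O(\ell)$ on the unit ball, this is $O(\ell\,\theta_0\eta)=O(\ell\,\theta_0)$; summing over the finitely many entries, $\|M(z)-M_0\|\le C_2(\ell)\,\theta_0$. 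Choosing $\theta_0=\theta_0(B,\ell)$ so small that $C_2(\ell)\,\theta_0\,\|M_0^{-1}\|\le\tfrac12$, a Neumann series gives that $M(z)$ is invertible with $\|M(z)^{-1}\|\le 2\|M_0^{-1}\|\le C_0(B,\ell)$. Invertibility is exactly existence and uniqueness of $\wp$; the inverse-norm bound, together with the equivalence of norms in this fixed dimension, gives $\sum_{p,q}|c_{pq}|\le C_0\sup_{j,k}|b_{j,k}|$, which is the assertion after undoing the rescaling.

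The step I expect to demand the most care is the bookkeeping: verifying that every constant — above all the threshold $\theta_0$ — depends only on $B$ (and the fixed $\ell$) and not on the particular configuration, which is precisely why one must first pass to scale $R=1$ and check that the separation hypothesis descends to the one-variable data before invoking Lemma~\ref{l=basicinterp}. If one prefers, the quantitative perturbation estimate can be replaced by a soft compactness argument: were no uniform $(\theta_0,C_0)$ to work, one extracts from a sequence of bad rescaled configurations a limiting exact product configuration with pairwise-separated coordinates, whose interpolation operator is invertible by Lemma~\ref{l=basicinterp}, contradicting the openness of invertibility and continuity of matrix inversion.
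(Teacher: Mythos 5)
Your argument is correct. A small remark on context: the paper itself does not prove Lemma~\ref{l=interp} --- it is stated with citations to Journ\'e \cite{J} and Nicol--T{\"o}r{\"o}k \cite{NicT} and used as a black box --- so there is no in-paper proof for a line-by-line comparison. What you have supplied is therefore a self-contained derivation, and it holds up under scrutiny.

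The skeleton is sound: the scale-invariance observation reducing to $R=1$ with $\eta\in(1/B,2]$ is exactly what is needed to make $\theta_0$ depend only on $B,\ell$; the deduction that $\{x_j\}$ and $\{y_k\}$ satisfy the one-variable hypotheses with $B'=B'(B)$ for $\theta_0<\tfrac14$ is right (the triangle inequality gives $|x_j-x_{j'}|\ge(1-2\theta_0)\eta$, and $R'/\eta'\le(B+\theta_0)/(1-2\theta_0)\le 3B$); and the observation that the product-grid operator factors as $C\mapsto V_x C V_y^{\mathsf T}$, so that the one-variable Lemma~\ref{l=basicinterp} controls $\|M_0^{-1}\|\le\|V_x^{-1}\|\,\|V_y^{-1}\|$, is the right structural point. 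The perturbation estimate $\|M(z)-M_0\|\le C_2(\ell)\,\theta_0$ uses only that $(u,v)\mapsto u^pv^q$ is uniformly Lipschitz on the $3/2$-ball with $p,q\le\ell$, and the Neumann series then gives invertibility and the inverse-norm bound with constants depending only on $B$ and $\ell$, which after undoing the rescaling is the asserted bound $\sum_{p,q}|c_{pq}|R^{p+q}\le C_0\sup_{j,k}|b_{j,k}|$. The one step that deserves making explicit (you gesture at it) is extracting $\|V_x^{-1}\|\le C_1(B,\ell)$ from the weighted bound $\sum_p|c_p|(R')^p\le C_0(B')\sup|b_j|$: since $R'\in(1/(2B),3/2]$, one has $\min_p (R')^p\ge\min\{1,(2B)^{-\ell}\}$, giving $\sum_p|c_p|\le(2B)^\ell C_0(B')\sup|b_j|$.

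On the comparison with the cited source: Journ\'e's original presentation of his Lemma~1 (and the Nicol--T{\"o}r{\"o}k variant) is organized more combinatorially, interpolating variable by variable and propagating the error through the one-dimensional lemma, rather than packaging the problem as a single linear system perturbed off a Kronecker product. Your formulation is shorter and makes the source of the constant $\theta_0$ transparent --- it is just the radius of a Neumann ball around an operator whose inverse norm is already controlled --- at the cost of being less elementary in phrasing. Both deliver the same uniformity in $B$ and $\ell$, which is what the application in Lemma~\ref{l=goodgrid} and Lemma~\ref{l=convergence} requires.
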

As mentioned above, to create the sets $S_m$, we will intersect plaques
of our transverse plaque families.  The next lemma gives control over
the location of the intersection of two transverse plaques.
\begin{figure}[h]\label{f=grids3}
\psfrag{K}{$\cK$}
\psfrag{(x,y)}{$(x,y)$}
\psfrag{(0,y')}{$(0,y')$}
\psfrag{(x,y')}{$(x,y')$}
\psfrag{[(x,y),(0,y')]}{$[(x,y),(0,y')]$}
\begin{center}
\includegraphics[scale=.85]{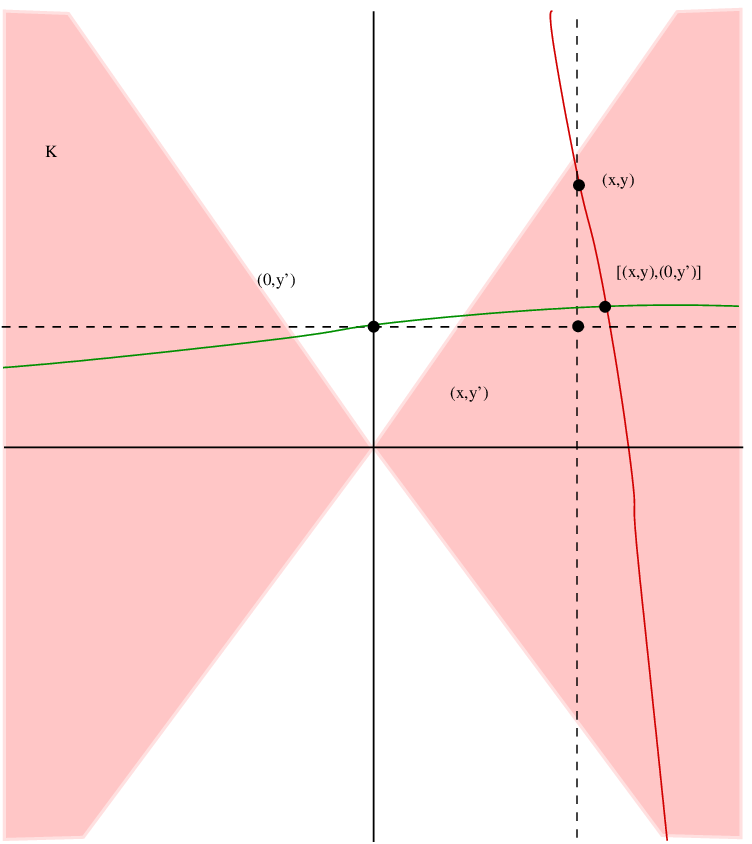}
\caption{Lemma~\ref{l=lps}}
\end{center}
\end{figure}
\begin{lemma}[Local product structure]\label{l=lps} There exists $\rho_0 = \rho_0(K)>0$, 
and for every $K, \theta >0$, a $\rho_1 = \rho_1(K,\theta)>0$ with $\rho_1<\rho_0$
such that, for any  parametrized $C^{\ell,\alpha}$ transverse pair of
plaque families $(\omega^H,\omega^V)$ with $\|(\omega^H,\omega^V)\|_{1}\leq K$,
and any $z_1,z_2\in B_{\RR^{m+n}}(0,\rho_0)$, the manifolds $\omega^V_{z_1}(I^m)$
and $\omega^H_{z_2}(I^n)$ intersect transversely in a single point $[z_1,z_2]\in I^{m+n}.$
Moreover, if $|(x,y)| < \rho_1$, and $|(x',y')|<\rho_1$ then  
$$|[(x,y),(0,y')] - (x,y') | < \theta (|(x,y)|+|y'|),$$
and
$$|[(x',0),(x,y)] - (x',y)|<\theta (|(x,y) + |x'|).$$
\end{lemma}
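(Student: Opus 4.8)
The plan is to prove this in two stages: first the transversality and single-intersection claim, then the quantitative estimate on the location of $[z_1,z_2]$. For the first stage, I would recall that by hypotheses (1) and (2) in the definition of a parametrized $C^{\ell,\alpha}$ transverse pair of plaque families, the plaques $\omega^H_z$ and $\omega^V_z$ pass through $z$ and are graphs over the horizontal ($I^m$) and vertical ($I^n$) coordinate directions respectively, with $\beta^H_z,\beta^V_z$ vanishing at $0$ and depending continuously on $z$ in the $C^1$ topology. Since $\|(\omega^H,\omega^V)\|_1\le K$, the tangent spaces of $\omega^V_{z_1}(I^m)$ and $\omega^H_{z_2}(I^n)$ are uniformly transverse for $z_1,z_2$ near $0$ (the relevant Jacobian is a small perturbation of the identity when the first-order data of $\beta^H,\beta^V$ is small, which near $z=0$ it is, by property (2) and continuity). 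Writing the intersection condition $\omega^V_{z_1}(y)=\omega^H_{z_2}(x)$ as a fixed-point/implicit-function problem in $(x,y)$, the uniform transversality plus a contraction-mapping argument gives a unique solution $[z_1,z_2]\in I^{m+n}$ for all $z_1,z_2\in B(0,\rho_0)$, with $\rho_0=\rho_0(K)$; this is the standard local product structure argument.

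For the quantitative estimate, I would specialize to $z_1=(0,y')$ and $z_2=(x,y)$ and compare $[(x,y),(0,y')]$ to the "naive" intersection point $(x,y')$ that one would get if the plaques were flat. Concretely, $[(x,y),(0,y')]$ is the unique point of the form $\big(u,\, y' + \beta^V_{(0,y')}(u)\big)$ lying also on the graph $\{(t + \text{(stuff)}, \ldots)\}$ of $\omega^H_{(x,y)}$; plugging in and using that $\beta^H_{(x,y)}(x)=\ldots$ and $\beta^V_{(0,y')}(0)=0$, one gets $u = x + O(K)\cdot(\text{errors})$ and the vertical coordinate is $y' + O(K)\cdot(\text{errors})$, where the errors are controlled by the $C^1$-size of $\beta^H$ and $\beta^V$ near the relevant points. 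The key point is that $\beta^H_z$ and $\beta^V_z$ have first-order parts that are small when $z$ is small — this follows from property (2) ($\beta^H_{(0,0)}\equiv 0$, $\beta^V_{(0,0)}\equiv 0$) together with the continuity in property (3): given $\theta>0$, we may choose $\rho_1=\rho_1(K,\theta)<\rho_0$ so small that $\|\beta^H_z\|_{C^1},\|\beta^V_z\|_{C^1}<\theta'$ whenever $|z|<\rho_1$, with $\theta'$ chosen in terms of $\theta$ and $K$. Feeding this into the implicit-function estimates from stage one yields $|[(x,y),(0,y')]-(x,y')| < \theta(|(x,y)|+|y'|)$, and the symmetric estimate $|[(x',0),(x,y)]-(x',y)| < \theta(|(x,y)|+|x'|)$ is proved identically with the roles of horizontal and vertical swapped.

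The main obstacle I expect is bookkeeping the dependence of the error terms on the basepoints of the plaques: the plaque $\omega^H_{(x,y)}$ is centered at $(x,y)$, which is itself moving, so one must be careful that the relevant $C^1$-smallness is evaluated at points whose distance to the origin is controlled (comparable to $|(x,y)|+|y'|$), not merely at the origin. This is handled by shrinking $\rho_1$ uniformly and using the triangle inequality together with the fact that all the plaques in question lie in $B(0, O(\rho_1))$; once one commits to tracking everything in terms of $|(x,y)|+|y'|$ rather than absolute constants, the estimate closes. No genuinely new idea beyond the standard graph-transform/implicit-function toolkit is needed; the statement is essentially a quantitative refinement of the classical local product structure lemma adapted to plaque families rather than foliations.
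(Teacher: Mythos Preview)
Your proposal is correct and is essentially a detailed elaboration of the paper's own argument, which consists of the single sentence ``This is a simple consequence of the fact that the transverse plaque families are continuous in the $C^1$ topology.'' You have unpacked exactly the implicit-function/contraction-mapping content that this sentence is pointing to, so your approach matches the paper's.
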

\begin{proof} This is a simple consequence of the fact that the transverse plaque families are
continuous in the $C^1$ topology.
\end{proof}

Fix $K>0$ and $\kappa \geq 1$ and let $\rho_0 = \rho_0(K)$. Fix  $(\omega^H,\omega^V)$ such that $\|(\omega^H,\omega^V)\|_{\ell,\alpha}\leq K$ .
We now define the {\em base grid:} 
$$\cG_0 = \cG_0(\omega^H,\omega^V) = 
(\{\cF^V_j\}_{j\in \ZZ_+\cup\{\infty\}},\{ \cF^H_k \}_{k\in \ZZ_+\cup\{\infty\}})$$ 
of horizontal and vertical plaques from which  we will eventually
construct the sets $S_m$.  We fix $r\in (0,1)$, and
let $\cF^H_\infty = \cF^H(0,0)$ and $\cF^V_\infty = \cF^V(0,0)$,   and for $j, k\geq 1$ set
$\cF^V_j = \cF^V(r^j,0)$ and $\cF^V_k = \cF^V(0,r^j)$.

For each (nonzero) $w\in B_{\RR^{m+n}}(0,\rho_0)$, we also define a new grid  $\cG_w$ as follows.
We choose $j = j(w)\in\ZZ_+$ such that the quantity
$$|[w, (0,0)] - r^j|$$ 
is minimized.  The grid $\cG_w$ is then the same as $\cG_0$, except that the 
vertical leaf $\cF_j^V$ in $\cG_0$ is redefined: $\cF_j^V = \cF^V(w)$.
This is illustrated in Figure 3.
\begin{figure}[h]\label{f=grids2}
\psfrag{z}{$w\,$}
\begin{center}
\includegraphics[scale=1.0]{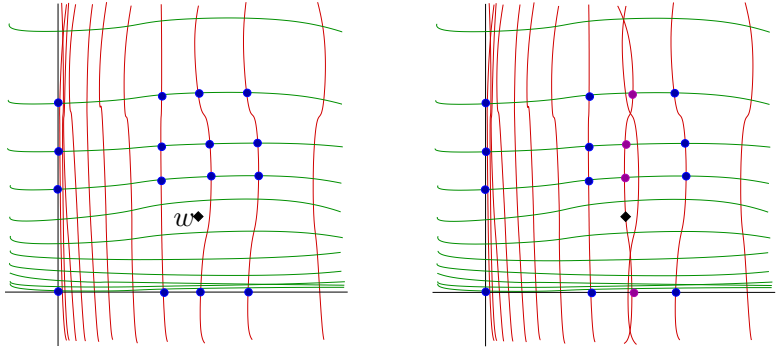}
\caption{Grid substitution}
\end{center}
\end{figure}

Each grid $\cG = (\{\cF^V_j\},\{ \cF^H_k \}) $ defines  sequences of points 
$\{z_{j,k}\}_{j,k\in\ZZ_+}\subset \RR^2$, and $\{x_j\}, \{y_k\}\subset \RR$
via:
$\{z_{j,k}\} = \cF^V_j\cap \cF^H_k$, $\{(x_j,0)\} =  \cF^V_j\cap \cF^H_\infty$,
and $\{(0,y_k)\} =  \cF^V_\infty\cap \cF^H_k$.
For each pair $(j,k)$ with $|j-k|\leq 1$, 
we then define 
$$H_{j,k} = H_{j,k}(\cG) = \{x_{j'} :  j\leq j'\leq j+ \ell \},
V_{j,k} =  V_{j,k}(\cG) =\{y_{k'} :  k\leq k'\leq k + \ell \}$$
and $$J_{j,k} =J_{j,k}(\cG) =  \{z_{j',k'} :  j\leq j'\leq j+\ell,\, k\leq k'\leq k+\ell\}.$$

\begin{lemma}[Grids are good]\label{l=goodgrid} For every $K>0$ and $\kappa>1$, 
there exists $\rho_2 = \rho_2(K,\kappa)>0$ such that
if $\|(\omega^H,\omega^V)\|_{1}\leq K$, then 
for every $\theta>0$, there exists an integer $k_0 = k_0(K,\kappa,\theta)>0$ such that: 
for all $k\geq k_0$, for all $j$ with $|j-k|\leq 1$, and for all
$w\in B_{\RR^{m+n}}(0,\rho_2)\cap \cK_\kappa$,
the grid $\cG_w$ has the following properties.
$$R_{j,k}/\eta_{j,k} \leq 6 r^{\ell-2},\quad\hbox{and}\quad \sup_{z_{j',k'}\in J_{j,k}}|z_{j',k'} - (x_{j'},y_{k'})| \leq \theta\eta_{j,k},$$
where 
$$R_{j,k} =  \sup_{z\in J_{j,k}} |z|  \quad\hbox{and}\quad \eta_{j,k} = \inf_{z,z'\in J_{j,k},\, z\ne z'}| z - z' |.$$
Moreover, $R_{j,k} \leq 3 r^{k-1}$.
\end{lemma}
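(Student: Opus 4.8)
The plan is to reduce the lemma to tracking the positions of the interpolation points $x_{j}$, $y_{k}$, $z_{j,k}$ that define the grid $\cG_{w}$, and then to extract the three inequalities from the elementary geometry of the Cartesian array $\{(r^{j'},r^{k'})\}$. The only external input needed is the local product structure Lemma~\ref{l=lps}, whose ``moreover'' clause controls the intersection of a vertical and a horizontal plaque with an error $\theta_{1}$ that tends to $0$ as the working radius tends to $0$.

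First I would record the positions for the \emph{base} grid $\cG_{0}$. Since $\cF^{V}_{j}=\cF^{V}(r^{j},0)$ already passes through $(r^{j},0)$, which lies on $\cF^{H}_{\infty}$ (the $x$-axis), transversality gives $x_{j}=r^{j}$ exactly, and similarly $y_{k}=r^{k}$. For the grid points, $z_{j,k}=\cF^{V}_{j}\cap\cF^{H}_{k}=[(r^{j},0),(0,r^{k})]$, so Lemma~\ref{l=lps} yields
$$
|z_{j,k}-(x_{j},y_{k})|=|z_{j,k}-(r^{j},r^{k})|\le\theta_{1}(r^{j}+r^{k}).
$$
Hence, for $|j-k|\le1$, the set $J_{j,k}$ is a $\theta_{1}$-perturbation of the $(\ell+1)\times(\ell+1)$ array $\{(r^{j'},r^{k'}):j\le j'\le j+\ell,\ k\le k'\le k+\ell\}$. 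The points of that array have norm between $r^{j+\ell}+r^{k+\ell}$ and $r^{j}+r^{k}$, and their minimal pairwise separation is comparable to $r^{\max(j,k)+\ell-1}(1-r)$; using $k-1\le\min(j,k)$ and $\max(j,k)\le k+1$, and choosing the working radius small enough that $\theta_{1}(r^{j}+r^{k})$ is negligible against this separation, one reads off all three claimed bounds. This disposes of every $(j,k)$ for which $J_{j,k}$ meets only unsubstituted columns.

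The one case requiring genuine care — and the place where the cone hypothesis $w\in\cK_{\kappa}$ is used — is when the substituted column $j(w)$ lies in $\{j,\dots,j+\ell\}$. Writing $w=(w_{1},w_{2})$ in the plaque coordinates above, membership in $\cK_{\kappa}$ gives $|w|/\sqrt{1+\kappa^{2}}\le|w_{1}|\le|w|$. By Lemma~\ref{l=lps}, $x_{j(w)}=\cF^{V}(w)\cap\cF^{H}_{\infty}$ has the form $(c,0)$ with $|c-w_{1}|\le\theta_{1}|w|$, so $|c|=\Theta_{\kappa}(|w|)$; since $j(w)$ is chosen to minimize $|(r^{j},0)-(c,0)|$ over $j\in\ZZ_{+}$, this forces $r^{j(w)}=\Theta_{r}(|c|)=\Theta_{r,\kappa}(|w|)$, and for $\rho_{2}=\rho_{2}(K,\kappa)$ small it forces $|w_{1}|\le 2r^{j(w)-1}$ and $j(w)$ large. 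When $j(w)\in\{j,\dots,j+\ell\}$ with $|j-k|\le1$ we then have $j(w)\ge j\ge k-1$, hence $|w_{1}|\le 2r^{k-2}$ and $|w|\le C_{r,\kappa}r^{k-1}$; in particular $|w|<\rho_{1}(K,\theta)$ as soon as $k\ge k_{0}$, so Lemma~\ref{l=lps} does apply to $w$. The remaining points of the substituted column are $z_{j(w),k'}=[w,(0,r^{k'})]$, which Lemma~\ref{l=lps} places within $\theta_{1}(|w|+r^{k'})$ of $(w_{1},r^{k'})$. Feeding $x_{j(w)}=c\approx w_{1}$ and these $z_{j(w),k'}$ into the computations of $R_{j,k}$ and $\eta_{j,k}$ reproduces the same estimates as in the base case, with constants again depending only on $r$ and $\ell$ (the multiplicative slack between $r^{j(w)}$ and $w_{1}$ is what prevents a sharper exponent); the lower bound on $\eta_{j,k}$ survives because $\theta_{1}$ is small.

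Finally I would fix the constants in the order the statement demands. Take $\rho_{2}=\rho_{2}(K,\kappa)$ so small that, for $|w|<\rho_{2}$ and $w\in\cK_{\kappa}$, the two ``$\Theta$'' relations above hold and $j(w)$ is large. Then, given $\theta$, take $k_{0}=k_{0}(K,\kappa,\theta)$ large enough that (i) every point of every $J_{j,k}$ with $k\ge k_{0}$, together with $w$ itself whenever it is actually used, lies within the radius $\rho_{1}(K,\theta)$ on which Lemma~\ref{l=lps} gives the error $\theta_{1}$, with $\theta_{1}$ taken $\le\theta$ times a small $r,\ell$-constant, and (ii) the resulting error $\theta_{1}(|x|+|z|)=O(\theta_{1}r^{k-1})$ is at most $\theta\,\eta_{j,k}$; this is possible since $\eta_{j,k}=\Omega_{r,\ell}(r^{k+\ell})$. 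I expect the only substantive step to be the middle one: showing that the cone condition forces the substituted plaque $\cF^{V}(w)$ to sit inside the grid ``like'' the base plaque $\cF^{V}(r^{j(w)},0)$ up to bounded multiplicative factors; once that is in hand the three inequalities are a routine bookkeeping exercise with Lemma~\ref{l=lps}.
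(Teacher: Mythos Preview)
Your proposal is correct and follows essentially the same approach as the paper: both reduce the lemma to Lemma~\ref{l=lps}, treat the unsubstituted and substituted columns separately, and use the cone hypothesis precisely to bound $|w|$ by a constant times $r^{j(w)}$ so that Lemma~\ref{l=lps} applies to the substituted plaque. Your treatment of the base grid is in fact slightly cleaner than the paper's---you observe that $x_j=r^j$ and $y_k=r^k$ exactly (since $(r^j,0)\in\cF^V_j\cap\cF^H_\infty$ by property~(1) of the plaque families and the flatness of $\cF^H_\infty$), then apply Lemma~\ref{l=lps} directly to $z_{j,k}=[(r^j,0),(0,r^k)]$, whereas the paper works backward from $z=(x,y)$ to the axes; but this is a cosmetic difference, and the substituted-column analysis and the order of choosing $\rho_2$, $\theta_1$, $k_0$ are the same in both.
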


\begin{proof}[Proof of Lemma~\ref{l=goodgrid}]
Let $K>0$ and $\kappa>1$ be given, and
suppose that $\|(\omega^H,\omega^V)\|_{1}\leq K$.

We choose $\rho_2$ such that for all $w\in B_{\RR^2}(0,\rho_2)\cap \cK_\kappa$,
and for $j$ sufficiently large (greater than some $j_0$),
if $j$ minimizes the quantity $|[w, (0,0)] - r^j|$, then $|w|\leq  2(1+\kappa)r^{j}$. 
This is possible, by Lemma~\ref{l=lps}.

Let $\theta>0$ be given;  we will describe below how to choose a constant
$\theta_1 = \theta_1(K,\kappa,\theta)$.
Assuming this choice has been made, let $\rho_1 =  \rho_1(K,\theta_1)$ be given by Lemma~\ref{l=lps}.
We  choose $k_0>j_0$ such that $\max\{2(1+\kappa)r^{k-1},  R_{j,k} \}  < \rho_1$, for all $|j-k|\leq 1$
and $k\geq k_0$.

Let $w\in B_{\RR^{m+n}}(0,\rho_2)\cap \cK_\kappa$,
and consider the grid $\cG_w$.
For $j,k \in \ZZ_+$ satisfying $|j-k|\leq 1$, and $k\geq k_0$, fix a point $z\in J_{j,k}$,
which by definition is the point of intersection of $\cF^V_{j'}$ and $\cF^H_{k'}$,
for some $k-1 \leq j',k'\leq  k+ \ell + 1$.  Write $z= (x,y)$ and $w=(x',y')$.
There are two possibilities.  Either $\cF^V_{j'}$  
is in the base grid $\cG_0$,  or 
$\cF^V_{j'} = \cF^V(w)$.

In the first case, since $z\in \cF^V_{j'}\cap \cF^H_{k'}$, we have $|z| < \rho_1$.
Lemma~\ref{l=lps} implies that 
$$|[(0,0),(x,y)] - (0,y)| = |y_{k'} - y| = |r^{k'}  - y | < \theta_1|(x,y)|$$
and
$$|[(x,y),(0,0)] - (x,0)| = |x_{j'} - x| = |r^{j'} - x|  < \theta_1|(x,y)|.$$
and so  $|z - (x_{j'},y_{k'})| \leq  |x_{j'} - x|  + |y_{k'} - y|  \leq  2\theta_1|z|$.
Since $|(x_{j'},y_{k'})|\leq 2r^{k-1}$, we therefore have, for $\theta_1$ sufficiently small:
\begin{eqnarray}\label{e=Rbound1}
|z| &\leq & 3 r^{k-1},
\end{eqnarray}
and
\begin{eqnarray}\label{e=gridbound1}
|z - (x_{j'},y_{k'})| \leq 6\theta_1 r^{k-1}.
\end{eqnarray}

Suppose, on the other hand, that $\cF^V_{j'} = \cF^V(w)$.  
Then the point $(x_{j'},0) = [w,(0,0)]$ has the property that
$$|x_{j'} - r^{j'}|\leq \frac{1}{2}|r^{j'} - r^{j'+1}| =  \frac{( 1-r )}{2}r^{j'} < \frac{r^{j'}}{2}.$$
Since $w\in B_{\RR^2}(0,\rho_2)\cap \cK_\kappa$, and $j' \geq k_0$, we have that $|w|< 2(1+\kappa)r^{j'-1} < \rho_1$.
Hence Lemma~\ref{l=lps} implies that $|x_{j'} - x'| = |[w,(0,0)] - (x',0)|\leq \theta_1 (|w| + |x'|)$;
This implies that  $|x_{j'} - x'| = \leq \theta_1 (|w| + |x'|) \leq   2 \theta_1 |w| \leq  4\theta_1 (1+\kappa)r^{j'-1}$.

Now $z = [w,(0,r^{k'})]$ and $|[w,(0,r^{k'})] - (x',r^{k'})| \leq \theta_1(|w| + r^{k'}) \leq \theta_1(3+2\kappa)  r^{k-1}$. Using the triangle inequality, we conclude that, for $\theta_1$ sufficiently small, we have
\begin{eqnarray}\label{e=Rbound2}
|z| &\leq & 3r^{k-1}
\end{eqnarray}
and
\begin{eqnarray}\label{e=gridbound2}
|z - (x_{j'}, y_{k'})| \leq |z - (x', r^{k'})| + |x_{j'} - x'| \leq \theta_1 (7+6\kappa) r^{k-1}.
\end{eqnarray}
Hence, in either case, we conclude that
\begin{eqnarray}\label{e=Rbound3}
R_{j,k} &\leq & 3r^{k-1}
\end{eqnarray}
and
\begin{eqnarray}\label{e=gridbound3}
\sup_{z_{j',k'}\in J_{j,k}} |z_{j',k'} - (x_{j'}, y_{k'})| \leq \theta_1(7 + 6\kappa) r^{k-1}.
\end{eqnarray}

On the other hand,
\begin{eqnarray}\label{e=etabound}
\eta_{j,k} &\geq &\inf_{j'\ne j''} |y_{j'} - y_{j''}| - \sup_{z_{j',k'}\in J_{j,k}} |z_{j',k'} - (x_{j'}, y_{k'})|\\
& > & r^{\ell+k+1} - \theta_1(7 + 6\kappa) r^{k-1},
\end{eqnarray}
and for $\theta_1$ sufficiently small, we get
$\eta_{j,k} \geq r^{\ell+k+1}/2$.  Combining this with (\ref{e=Rbound3}), we have $R_{j,k}/\eta_{j,k} \leq 6 r^{\ell-2}$.
Combining (\ref{e=etabound}) with (\ref{e=gridbound3}) we also get:
\begin{eqnarray*}
\sup_{z_{j',k'}\in J_{j,k}}|z_{j',k'} - (x_{j'},y_{k'})| \leq \eta_{j,k} \frac{\theta_1(7 + 6\kappa) r^{k-1}}
{ r^{\ell+k + 1} - \theta_1(7 + 6\kappa) r^{k-1}}.
\end{eqnarray*}
Choosing $\theta_1 = \theta_1(K,\kappa,\theta)$ small enough, we obtain that
\begin{eqnarray*}
\sup_{z_{j',k'}\in J_{j,k}}|z_{j',k'} - (x_{j'},y_{k'})| \leq \theta \eta_{j,k},
\end{eqnarray*}
which finishes the proof.
\end{proof}

Let $B =  6 r^{\ell-2}$ and let $\rho_2 = \rho_2(K,\kappa)>0$ be given by 
Lemma~\ref{l=goodgrid}. Let $\theta_0=\theta_0(B)>0$ and $C_0=C_0(B)>0$
be given by Lemma~\ref{l=interp}. Now let $k_0 = k_0(K,\kappa,\theta_0)>0$ be given
by Lemma~\ref{l=goodgrid}.

Fix $w\in B_{\RR^{m+n}}(0,\rho_2)$. We now define the sequence $S_m$ of rectangles associated
to the grid $\cG_w$.  For $|j-k|\leq 1$,  we set:
$$S_{j,k} = \{0,0\}\cup (H_{j,k}\times\{0\}) \cup (\{0\} \times V_{j,k}) \cup J_{j,k}.$$
Now, let $S_{2k} = S_{k,k}$ and let $S_{2k+1} = S_{k,k+1}$.
Define the sets $H_m$, $V_m$, and $J_m$ analogously, for $m\in \ZZ_+$.
Let $R_m = \sup_{z\in J_{m}} |z| $ and let $\eta_m =  \inf_{z,z'\in J_{m},\, z\ne z'}| z - z' |$.
Lemma~\ref{l=goodgrid} implies that for $m\geq  2k_0$, we have $|R_m| \leq 3r^{(m-1)/2}$, and  
$R_m/\eta_m \leq B$.

By Lemma~\ref{l=interp}, there exists a constant $C_0=C_0(B)>0$ such that
for each $m\geq 2k_0$, and any function $\psi$, 
there exists a unique (degree $(\ell+1)^2$) polynomial 
$\wp_m = \wp_m((\omega^H,\omega^V),w,\psi)$:
$$\wp_m(x,y) = \sum_{0\leq p,q\leq\ell} c_{p,q}^m x^py^q$$
that interpolates $\psi$ on the rectangle $S_m$.
Furthermore:
\begin{eqnarray}\label{e=sumbound}
\sum_{p,q}|c^m_{p,q}| R_{m}^{p+q} \leq C_0\sup \{\psi(z) :  z\in S_m\},
\end{eqnarray}
where $R_m$ is defined above.

\begin{lemma}\label{l=convergence} 
For every $K,C>0$, there exist  constants  $C_1 = C_1(K,C)>0$ and $\rho = \rho(K,C)>0$,
such that for all $(\omega^H,\omega^V)$ with $\|(\omega^H,\omega^V)\|_{\ell,\alpha}\leq K$,
for all $w\in B_{\RR^2}(0,\rho_2)\cap \cK $ and for all $\psi$ satisfying
hypotheses (1) and (2) of Theorem~\ref{t=journe} for this value of $C$, 
the sequence $c_{p,q}^m = c_{p,q}^m(\cG_w,\psi)$
has the following property.

Let $\overline{\wp}_m(x,y) = \sum_{p + q \leq \ell} c^m_{p,q} x^p y^q$.  Then there exists
a polynomial $\overline{\wp}$
such that $\overline\wp = \lim_{m\to\infty} \overline{\wp}_m$ (uniformly on compact sets).  Furthermore:
$$|\overline\wp(z) - \psi(z)| \leq C_1 |z|^{\ell+\alpha}\quad\hbox{for}\quad z\in \cK\cap \bigcup_{k\geq k_0} \cF^V_k \cap B_{\RR^{m+n}}(0,\rho).$$ 
\end{lemma}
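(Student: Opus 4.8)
The plan is to reproduce, with the two modifications announced at the start of the proof of Theorem~\ref{t=journe}, Journé's induction as organized in \cite{NicT}. Two statements must be established: \emph{(A)} for every $p,q$ with $p+q\le\ell$ the numbers $c^m_{p,q}=c^m_{p,q}(\cG_w,\psi)$ converge, so that $\overline\wp_m\to\overline\wp$ for a polynomial $\overline\wp$ of degree $\le\ell$; and \emph{(B)} this $\overline\wp$ satisfies $|\overline\wp(z)-\psi(z)|=O(|z|^{\ell+\alpha})$ for $z\in\cK\cap\bigcup_{k\ge k_0}\cF^V_k\cap B_{\RR^{m+n}}(0,\rho)$, with all implied constants depending only on $K$ and $C$. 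Both come out of a single induction on $m\ge 2k_0$ that simultaneously controls $\sup_{S_m}|\psi|$ (hence, via the coefficient bound (\ref{e=sumbound}), the rescaled $C^{\ell,\alpha}$-size of $\wp_m$ on the ball of radius $\Theta(r^{m/2})$) and shows $|\psi-\wp_m|=O(r^{(m/2)(\ell+\alpha)})$ at every grid point of $S_m$; the Cauchy estimate for the $\overline\wp_m$ is a by-product.

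The heart of the matter is the inductive step: passing from the grid data of $S_m$ to that of $S_{m+1}$. By construction $S_m$ and $S_{m+1}$ share $\ell$ aligned collections of $\ell+1$ comparably-spaced points, and the points of $S_{m+1}\setminus S_m$ lie on a single plaque of $\cG_w$ adjacent, by one grid step, to the block already used. I would first bound $\psi-\wp_m$ at these new points: each new point lies on a row $\cF^H_{k'}$ or column $\cF^V_{j'}$ of $\cG_w$ on which $\wp_m$ already interpolates $\psi$ at $\ell+1$ comparably-spaced grid points. Restricting $\wp_m$ to that plaque via the $C^{\ell,\alpha}$ parametrization, and bounding its rescaled $C^{\ell,\alpha}$-size by means of (\ref{e=sumbound}) and the inductive control of $\sup_{S_m}|\psi|$, one compares it against the degree-$\le\ell$ polynomial $\wp^V_{z_0}$ furnished by hypothesis (1) or (2) of Theorem~\ref{t=journe} (base point $z_0$ of norm $O(r^{m/2})$, so $|\psi-\wp^V_{z_0}|=O(r^{(m/2)(\ell+\alpha)})$ along the plaque); Lemma~\ref{l=basicinterp} then pins the two together to within $O(r^{(m/2)(\ell+\alpha)})$ at the $\ell+1$ interpolation points, hence also one grid step beyond (the ratio $R/\eta$ remaining bounded, by Lemma~\ref{l=goodgrid}), so $|\psi-\wp_m|=O(r^{(m/2)(\ell+\alpha)})$ on $S_{m+1}\setminus S_m$. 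Since $\wp_{m+1}-\wp_m$ is the bidegree-$(\ell,\ell)$ polynomial interpolating $\psi-\wp_m$ on $S_{m+1}$, which vanishes on $S_m\cap S_{m+1}$, the coefficient bound of Lemma~\ref{l=interp} gives
$$\sum_{p,q}|c^{m+1}_{p,q}-c^m_{p,q}|\,R_{m+1}^{p+q}\le C_0\,O\!\big(r^{(m/2)(\ell+\alpha)}\big).$$
As $R_{m+1}=\Theta(r^{(m+1)/2})$ this yields $|c^{m+1}_{p,q}-c^m_{p,q}|=O\!\big(r^{(m/2)(\ell+\alpha-p-q)}\big)$, which closes the induction (it also updates the bound on $\sup_{S_{m+1}}|\psi|$, again via hypothesis (1)/(2) at the new grid points). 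For $p+q\le\ell$ the exponent is $\ge\alpha>0$, so the series telescopes: $c^m_{p,q}\to c_{p,q}$, and we set $\overline\wp=\sum_{p+q\le\ell}c_{p,q}x^py^q=\lim_m\overline\wp_m$. The same estimate records $|c_{p,q}-c^m_{p,q}|=O\!\big(r^{(m/2)(\ell+\alpha-p-q)}\big)$ for $p+q\le\ell$ and $|c^m_{p,q}|=O\!\big(r^{(m/2)(\ell+\alpha-p-q)}\big)$ for $p+q>\ell$; the latter need not converge, but this is all that is needed.

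For \emph{(B)}, let $z\in\cK\cap\cF^V_k\cap B_{\RR^{m+n}}(0,\rho)$ with $k\ge k_0$. By the local product structure (Lemma~\ref{l=lps}) and the cone condition, $|z|=\Theta(r^k)$. Choosing $m=\Theta(k)$ so that $\cF^V_k$ is one of the plaques of $\cG_w$ in the range defining $S_m$, the argument of the previous paragraph (comparing $\wp_m$ and $\psi$ along $\cF^V_k$ with $\wp^V_{z_0}$, $z_0$ the base point of $\cF^V_k$) gives $|\psi(z)-\wp_m(z)|=O(r^{k(\ell+\alpha)})=O(|z|^{\ell+\alpha})$. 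On the other hand, since $|z|=\Theta(r^{m/2})$, the coefficient estimates above give
$$|\overline\wp(z)-\wp_m(z)|\le\sum_{p+q\le\ell}|c_{p,q}-c^m_{p,q}|\,|z|^{p+q}+\sum_{p+q>\ell}|c^m_{p,q}|\,|z|^{p+q}=O\!\big(r^{(m/2)(\ell+\alpha)}\big)=O(|z|^{\ell+\alpha}),$$
since each term is $O\!\big(r^{(m/2)(\ell+\alpha-p-q)}\cdot r^{(m/2)(p+q)}\big)=O\!\big(r^{(m/2)(\ell+\alpha)}\big)$. Combining the two displays gives $|\overline\wp(z)-\psi(z)|\le C_1|z|^{\ell+\alpha}$, with $C_1$ obtained by tracking constants through Lemmas~\ref{l=basicinterp}, \ref{l=interp}, \ref{l=goodgrid} and \ref{l=lps}, hence depending only on $K$ and $C$; the same tracking fixes the threshold $\rho=\rho(K,C)$. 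This is exactly the input needed to proceed with Lemma~\ref{l=cone}.

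The main obstacle — and the one place where the argument genuinely departs from the foliated, $C^{\ell,\alpha}$-along-leaves case of \cite{J} — is the one-grid-step extrapolation used in paragraph two: $\wp_m$ restricted to a curved plaque is only $C^{\ell,\alpha}$ (not polynomial) in the plaque parameter, so controlling its deviation, one step beyond the $\ell+1$ interpolation points, from the along-leaf polynomial $\wp^V_{z_0}$ requires a uniform bound on the rescaled $C^{\ell,\alpha}$-size of $\wp_m$ along the plaque. Producing that bound is precisely why the induction must carry $\sup_{S_m}|\psi|$ along (fed back through (\ref{e=sumbound})), and it is where the replacement of ``$C^{\ell,\alpha}$ along leaves'' by the approximate-jet hypotheses (1), (2) of Theorem~\ref{t=journe} must be absorbed — the ``slight adaptation'' of the \cite{J} argument anticipated in the second of the two modifications listed at the start of the proof of Theorem~\ref{t=journe}. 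Once this step is in place, everything else is the scale-and-constant bookkeeping already prepared by Lemmas~\ref{l=lps} and \ref{l=goodgrid}.
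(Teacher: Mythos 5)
Your proposal is correct and follows essentially the same route the paper takes (which is itself a close adaptation of Journ\'e's argument as organized in \cite{NicT}): interpolation on the nested grids $S_m$, the rectangle lemma \ref{l=interp} to bound coefficient differences by the interpolation error at the new points, the comparison of $\wp_m$ restricted to a plaque against the along-leaf polynomial $\wp^V_{z_0}$ via Lemma~\ref{l=basicinterp}, and telescoping. The one thing you gloss over is that the coefficient recursion one actually gets — the display the paper derives from Lemma~\ref{l=gunk} — is \emph{self-referential}: it bounds $\sum_{p,q}|c^{m+1}_{p,q}-c^m_{p,q}|T_k^{p+q}$ by $C_4\bigl(T_k^{\ell+\alpha}+\delta\sum_{p+q>\ell}|c^m_{p,q}|T_k^{p+q}+\sum_{p+q\le\ell}|c^m_{p,q}|T_k^{\ell+\alpha}\bigr)$, and closing it requires carrying the inductive hypothesis $|c^m_{p,q}|=O(T_k^{\ell+\alpha-p-q})$ for $p+q>\ell$ together with the $\delta$-damping obtained by taking $k_0$ large (so $\|x_j\|_{C^{\ell,\alpha}(I_k)}<\delta$); the quantity that must be bootstrapped is thus the rescaled coefficient sum (equivalently the $C^{\ell,\alpha}$-size of $\wp_m$ at scale $T_k$), not $\sup_{S_m}|\psi|$ per se, and the clean estimate $\sum|c^{m+1}_{p,q}-c^m_{p,q}|R_{m+1}^{p+q}=O(r^{(m/2)(\ell+\alpha)})$ is the output of that bootstrap rather than a direct consequence of the interpolation error on $S_{m+1}\setminus S_m$.
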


We first finish the proof of Lemma~\ref{l=cone}, assuming Lemma~\ref{l=convergence}.  
Let
$C>0$ and  $\psi$ be given satisfying
hypotheses (1) and (2) for this value of $C$.
Let  $C_1 = C_1(K,C)>0$ and $\rho = \rho(K,C)>0$ be given by Lemma~\ref{l=convergence}.
Given $w\in B_{\RR^{2}}(0,\rho)\cap \cK$, let $\overline{\wp} = \overline{\wp}(\cG_w,\psi)$ be given
by  Lemma~\ref{l=convergence}.  By construction of the grid $\cG_w$, we
have that $w\in \bigcup_{k\geq k_0} \cF^V_k$.  This implies in particular that
$$|\overline\wp(w) - \psi(w)| \leq C_1 |w|^{\ell+\alpha}.$$
Let $w'\in B_{\RR^{2}}(0,\rho)\cap \cK$ be another point, and let $\overline{\wp}' = \overline{\wp}(\cG_{w'},\psi)$.
By the same reasoning,
$$|\overline\wp'(w') - \psi(w')| \leq C_1 |w'|^{\ell+\alpha}.$$
Note that the sequences $c^{m}_{p,q}(\cG_w,\psi)$ and $c^{m}_{p,q}(\cG_{w'},\psi)$
differ in only finitely many places.  This implies that $\overline{\wp}'=\overline{\wp}$.
The polynomial $\wp = \overline\wp$ satisfies the conclusions of Lemma~\ref{l=cone}.
This completes the proof of Lemma~\ref{l=cone}.
\end{proof}

\begin{proof}[Proof of Lemma~\ref{l=convergence}]
The proof follows the proof of Lemma 4.4 in \cite{NicT} very closely;
the only slight change occurs in the proof of Lemma~\ref{l=gunk}, part (1) below,
which corresponds to Lemma 4.8 in \cite{NicT}.  We outline the proof
and refer the reader to \cite{NicT} or \cite{J} for the details.

Fix $k$ and let $\wp = \wp_{2k}$ and $\wp' = \wp_{2k+1}'$ be the interpolating polynomials on 
$S_{2k} = S_{k,k}$ and $S_{2k+1} = S_{k,k+1}$, respectively. Denote their coefficients by
$c_{p,q}$ and $c_{p,q}'$ respectively. Let $T_k = 3r^{k-1}$.
We will show that
$$|c_{p,q} - c_{p,q}'| = O(T_k^{\ell+\alpha - p - q}).$$

By Lemma~\ref{l=interp}, it is enough to consider the polynomial $\wp - \wp'$
and find an upper bound for $|\wp-\wp'|$ on $S_{k,k+1}$.
Note that $\wp$ and $\wp'$ agree on $S_{k,k+1}$, except at the $\ell$ points
$z_{j,k+\ell}$, $k\leq j\leq k+\ell$. On these points we have 
$\wp'(z_{j,k+\ell}) = \psi(z_{j,k+\ell})$.  Hence we need only estimate
$|\psi(z_{j,k+\ell}) - \wp(z_{j,k+\ell})|$, for $k\leq j\leq k+\ell$.
For such a $j$, write $\cF^V_j$ as a graph of a function of the second coordinate:
$\cF^V_j = \{(x_j(y),y)\colon y\in I\}$, and let $z_j(y) = (x_j(y),y)$.
Notice that, in the case where $j=j(w)$, we have 
$z_j(y) = \omega^V_{w}(y-y_w)$, where $w = (x_w,y_w)$;
otherwise, $z_j(y) = \omega^V_{(x_j,0)}(y)$.
Note that in either case, $x_{j}(0)=x_j$, and the function $x_j(y)$ would be constant if the curve
$\cF^V_{j,k}$ were truly vertical. The following estimates would be trivial if $x_j$ were
a constant function. The hypothesis that $(\omega^H,\omega^V)$ is uniformly $C^{\ell,\alpha}$ will be
used as in \cite{J, NicT} to estimate the $C^{\ell,\alpha}$ size of $x_j(y)$.

Choose a constant $C_2>0$ 
so that $\{z_j(y) :  y\in I_k\}$ contains all the points in 
$\omega^V_{(x_j,0)}(I)\,\cap \, S_{k,k}\,\cap \,\cK$, for all $k\geq k_0$ and
$k\leq j\leq k+\ell$, where $I_k$ is the interval $I_k:=[-C_2T_k,C_2T_k]$.
We next show that $|\psi(z_j(y)) - \wp(z_j(y))| = O(T_k^{\ell+\alpha})$,
for $k\leq j\leq k+\ell$ and any $y\in I_k$.  
Fix such a $j$.  For $h\colon I^2\to \RR$, write $\tilde h(y)$
for $h(z_j(y))$.  We will restrict attention to the domain $I_k$.

\begin{lemma}\label{l=gunk} There exists $C_3>0$ such that if $k\geq k_0$, $k\leq j\leq j+\ell$,
and $y\in I_k$, then:
\begin{enumerate}
\item $$|(\tilde\psi - \tilde \wp)(y)|\leq C_3 \left\|\frac{d^\ell}{dy^\ell}(\tilde\wp)\right\|_\alpha T^{\ell + \alpha}_k + C_3T^{\ell+\alpha}_k,$$
\item if $p,q\leq \ell$ and $p+q>\ell$, then
$$\left\|\frac{d^\ell}{dy^\ell}x_j^p(y)y^q\right\|_\alpha \leq C_3T_k^{p+q-\ell-\alpha}\|x_{j}\|_{C^{\ell,\alpha}(I_k)}, $$
\item if $p+q\leq \ell$, then
$$\left\|\frac{d^\ell}{dy^\ell}x_j^p(y)y^q\right\|_\alpha \leq C_3,$$
\item and therefore
$$\left\|\frac{d^\ell}{dy^\ell} \tilde\wp \right\|_\alpha \leq C_3\|x_j\|_{C^{\ell,\alpha}(I_k)}\sum_{p+q>\ell} |c_{p,q}| T_k^{p+q-\ell-\alpha} + C_3\sum_{p+q\leq \ell}|c_{p,q}|.$$
\end{enumerate}
\end{lemma}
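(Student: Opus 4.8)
**

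The plan is to prove the four estimates of Lemma~\ref{l=gunk} in the order listed, since each builds on the previous, and to treat parts (2) and (3) as essentially elementary Leibniz-rule computations on the interval $I_k$, with the main work concentrated in part (1).

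For part (1), the idea is a one-dimensional interpolation argument. The polynomial $\tilde\wp(y)=\wp(z_j(y))$ need not be a polynomial in $y$ (since $z_j(y)$ is not linear), but it is a $C^{\ell,\alpha}$ function, and by the hypothesis (1) of Theorem~\ref{t=journe} applied along the plaque $\cF^V_j$, the function $\tilde\psi(y)=\psi(z_j(y))$ has an $(\ell,\alpha,O(1))$-expansion about its basepoint on $I_k$. I would first write, using Taylor's theorem with remainder for $\tilde\wp$ at a suitable node $y_0\in I_k$ (one of the $\ell$ interpolation points $y_k,\ldots,y_{k+\ell-1}$ that $\cF^V_j$ meets), that
$$\Big|\tilde\wp(y) - \sum_{i=0}^{\ell-1}\frac{(\tilde\wp)^{(i)}(y_0)}{i!}(y-y_0)^i - \frac{(\tilde\wp)^{(\ell)}(y_0)}{\ell!}(y-y_0)^\ell\Big| \leq \Big\|\tfrac{d^\ell}{dy^\ell}\tilde\wp\Big\|_\alpha\, |y-y_0|^{\ell+\alpha}.$$
Since $\tilde\wp$ interpolates $\tilde\psi$ at the $\ell$ nodes in $J_{k,k}$ lying on $\cF^V_j$, the difference $\tilde\psi-\tilde\wp$ vanishes at those $\ell$ points; combining the Taylor expansion of $\tilde\wp$ with the $(\ell,\alpha)$-expansion of $\tilde\psi$ from hypothesis (1) of Theorem~\ref{t=journe}, and using the divided-difference/interpolation bound of Lemma~\ref{l=basicinterp} (with $B=6r^{\ell-2}$ from Lemma~\ref{l=goodgrid}) to control the node spacing, one gets that the degree-$\leq\ell$ part of $\tilde\psi$ and of $\tilde\wp$ agree to within $O(T_k^{\ell+\alpha})$ plus the $\|\frac{d^\ell}{dy^\ell}\tilde\wp\|_\alpha T_k^{\ell+\alpha}$ term coming from the Taylor remainder. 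Evaluating at a general $y\in I_k$ (with $|y-y_0|=O(T_k)$) yields the claimed bound. This is where I expect the main obstacle: one must be careful that the basepoint $z_j(0)$ of the plaque (which, when $j=j(w)$, is the substituted point $w$, not a grid point $(x_j,0)$) still lies within $O(T_k)$ of all the nodes so that hypothesis (1) of Theorem~\ref{t=journe} — which only gives the expansion with error $C(|x|^{\ell+\alpha}+|z|^{\ell+\alpha})$ relative to the plaque basepoint — can be converted into a genuine $(\ell,\alpha,O(1))$-expansion on $I_k$. This is exactly the ``slight change'' alluded to in the proof outline, and it is handled by Lemma~\ref{l=goodgrid}, which guarantees $|w|\leq 2(1+\kappa)r^{j}$ and hence $|z_j(y)|=O(T_k)$ for all $y\in I_k$, so $|z_j(y)|^{\ell+\alpha}=O(T_k^{\ell+\alpha})$ and the $|z|^{\ell+\alpha}$ term in hypothesis (1) is absorbed.

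For parts (2) and (3), I would differentiate $x_j(y)^p y^q$ exactly $\ell$ times via the Leibniz and Faà di Bruno formulas. The function $x_j(y)$ is $C^{\ell,\alpha}$ with $C^{\ell,\alpha}$-norm bounded in terms of $\|(\omega^H,\omega^V)\|_{\ell,\alpha}\leq K$, by construction of $\cF^V_j$ as a graph $z_j(y)=\omega^V_{(x_j,0)}(y)$ or $\omega^V_w(y-y_w)$. On the interval $I_k=[-C_2T_k,C_2T_k]$ the monomial $y^q$ and all its derivatives are bounded by powers of $T_k$; specifically $\|\frac{d^a}{dy^a}y^q\|_{C^0(I_k)}=O(T_k^{q-a})$ and $\|\frac{d^\ell}{dy^\ell}y^q\|_\alpha=O(T_k^{q-\ell-\alpha})$. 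Multiplying out, each term in the Leibniz expansion of $\frac{d^\ell}{dy^\ell}(x_j^p y^q)$ is a product of derivatives of $x_j^p$ (bounded by $\|x_j\|_{C^{\ell,\alpha}(I_k)}$ times constants, since $p\leq \ell$ and $x_j$ is bounded) and derivatives of $y^q$, so picking up a net power of $T_k$ equal to $p+q-\ell-\alpha$ when $p+q>\ell$ (part (2)), and a bounded quantity when $p+q\leq\ell$ since then all the $T_k$-powers are nonnegative and $T_k\leq 3r^{k_0-1}\leq 1$ (part (3)). Part (4) is then immediate: expand $\tilde\wp=\sum_{p,q\leq\ell}c_{p,q}x_j(y)^py^q$, apply the triangle inequality to $\|\frac{d^\ell}{dy^\ell}\tilde\wp\|_\alpha$, split the sum into $p+q>\ell$ and $p+q\leq\ell$, and insert the bounds from parts (2) and (3), which gives exactly the stated inequality with a new constant $C_3$ depending only on $\ell$, $K$, and the previously fixed constants.

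A final remark on bookkeeping: all constants $C_2,C_3$ must be chosen depending only on $K$, $\kappa$, $\ell$, $\alpha$, and $C$ (and on $r$, which is fixed at the outset), uniformly in $k\geq k_0$, $k\leq j\leq k+\ell$, $w\in B_{\RR^{m+n}}(0,\rho_2)\cap\cK_\kappa$, and in the choice of plaque family with $\|(\omega^H,\omega^V)\|_{\ell,\alpha}\leq K$; this uniformity is what makes the lemma usable in the subsequent convergence argument for the coefficients $c_{p,q}^m$ in Lemma~\ref{l=convergence}. Once Lemma~\ref{l=gunk} parts (1)–(4) are in hand, one combines (1) and (4) to get a self-improving bound: $\|\frac{d^\ell}{dy^\ell}\tilde\wp\|_\alpha$ appears on both sides, and since the $\sum_{p+q>\ell}|c_{p,q}|T_k^{p+q-\ell-\alpha}$ factor can be made small using the interpolation bound (\ref{e=sumbound}) and $T_k\to 0$, one absorbs it to conclude $|c_{p,q}-c_{p,q}'|=O(T_k^{\ell+\alpha-p-q})$, which drives the convergence of the degree-$\leq\ell$ parts $\overline\wp_m$. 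That last step belongs to the proof of Lemma~\ref{l=convergence} rather than Lemma~\ref{l=gunk} itself, so I would stop the proof of Lemma~\ref{l=gunk} after establishing (1)–(4).
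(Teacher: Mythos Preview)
Your proposal is essentially correct and follows the same approach as the paper: for part (1), compare the Taylor expansion of $\tilde\wp$ with the $(\ell,\alpha)$-expansion of $\tilde\psi$ coming from the hypothesis of Theorem~\ref{t=journe}, use that $\wp$ interpolates $\psi$ at the $\ell+1$ nodes on $\cF^V_j$ and apply Lemma~\ref{l=basicinterp}; for parts (2)--(4), Leibniz/Fa\`a di Bruno computations on $I_k$ using $|y|=O(T_k)$ and the uniform $C^{\ell,\alpha}$ bound on $x_j(y)$ (the paper simply cites \cite{NicT} for these).

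One correction and one minor remark. First, since $\cF^V_j$ is a \emph{vertical} plaque parametrized by $\omega^V$, you must invoke hypothesis (2) of Theorem~\ref{t=journe}, not hypothesis (1); your repeated reference to ``hypothesis (1)'' is a slip. The paper writes $\tilde\psi(y)=\wp^V_{z_0}(y-y_0)+r^V_j(y-y_0)$ with $z_0\in\{w,(x_j,0)\}$ and $y_0\in\{0,y_w\}$, and uses $|z_0|=O(T_k)$, $|y_0|=O(T_k)$ (the latter because $w\in\cK$) to get $|r^V_j|=O(T_k^{\ell+\alpha})$ on $I_k$---exactly the absorption of the $|z|^{\ell+\alpha}$ error you describe. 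Second, the number of interpolation nodes on $\cF^V_j$ is $\ell+1$, not $\ell$, which is what makes Lemma~\ref{l=basicinterp} applicable to the degree-$\ell$ polynomial $\overline Q(y)=Q(y)-\wp^V_{z_0}(y-y_0)$; you should state this count correctly. With these fixes your argument is the paper's argument.
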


\begin{proof} To prove (1), recall that 
$z_j(y) = \omega^V_{w}(y-y_w)$, if $j=j(w)$, and $z_j(y) = \omega^V_{(x_j,0)}(y)$ otherwise.
The hypotheses of Theorem~\ref{t=journe} imply that 
$$\tilde\psi(z_j(y)) = \psi(\omega^V_{z_0}(y-y_0)) = \wp_{z}^V(y - y_0) + r_j^V(y-y_0),$$
where $z_0\in\{w,(x_j,0)\}$ and $y_0\in\{0,y_w\}$, 
and $|r_j^V(y-y_0)| \leq C(|z|^{\ell+\alpha} + |y-y_0|^{\ell+\alpha})$. 
Now $|z_0| = O(T_k)$ and $|y_0| = O(T_k)$ (since $w\in \cK$), which implies that
$|r_j^V(y)| = O(T_k^{\ell+\alpha})$, for $y\in I_k$.

Writing the Taylor expansion of of the $C^{\ell,\alpha}$ function $\tilde\wp$ about $0$, we have
$$\tilde\wp(y) = Q(y) + R_j(y),$$
where $Q$ is a degree $\ell$ polynomial and 
$|R_j(y)| = O(|y|^{\ell+\alpha} \left\|\frac{d^\ell}{dy^\ell}\tilde\wp\right\|_\alpha) = O(T_k^{\ell+\alpha} \left\|\frac{d^\ell}{dy^\ell}\tilde\wp\right\|_\alpha)$, for $y\in I_k$.
Recall that, since $k\leq j \leq k+\ell$,
the polynomial $\wp$ interpolates $\psi$ on the $\ell+1$ points in 
$S_{k,k+1}\cap \cF^V(x_j,0)$.
Therefore the degree $\ell$ polynomial $\overline Q(y) = Q(y) - \wp_{z_0}^V(y-y_0)$ on $I_k$
takes the value $r_j^V(t_i) + R_j(t_i)$ at the $\ell+1$ points $$\{0=t_0, t_1, \ldots, t_\ell\} =
(\omega^V_{(x_j,0)})^{-1}\left(S_{k,k+1}\cap \cF^V(x_j,0)\right).$$  
Lemma~\ref{l=lps} implies the points $\{0, t_1, \ldots, t_\ell\}$ in $I_k$ are spaced 
$\Theta(T_k)$ apart.  Since $|\overline Q(t_i)| \leq |r_j^V(t_i) + R_j(t_i)| = O(T_k^{\ell+\alpha} +  T_k^{\ell+\alpha}\left\|\frac{d^\ell}{dy^\ell}\tilde\wp\right\|_\alpha)$, for $i\in\{0,\ldots,\ell\}$, 
Lemma~\ref{l=basicinterp} then gives the desired inequality in (1).

The last three parts are proved in \cite{NicT} (part (4) follows from (2) and (3)).
\end{proof}

Given $\delta>0$, we may assume that $k_0>0$ was chosen sufficiently large so that
$\|x_j\|_{C^{\ell,\alpha}(I_k)} < \delta$. Then we have  
$$|(\psi - \wp)(z_{j}(y))| \leq C_3T^{\ell+\alpha}_k + C_3\delta\sum_{p+q>\ell}|c_{p,q}| T_k^{p+q}
+ C_3\sum_{p+q\leq \ell} |c_{p,q}|T^{\ell +\alpha}_k,$$
for all $y\in I_k$.
Plugging $y= z_{j,k+\ell}$ into this equation (and recalling that $\wp'(z_{j,k+\ell}) = \psi(z_{j,k+\ell})$), 
and using (\ref{e=sumbound}) for $\wp-\wp'$ on $S_{k,k+1}$, we get:
$$\sum_{p,q} |c'_{p,q} - c_{p,q}| T^{p+q}_k \leq C_4\left(T_k^{\ell+\alpha} + \delta \sum_{p+q>\ell}|c_{p,q}|T_k^{p+q}
\sum_{p+q\leq\ell} |c_{p,q}|T^{\ell+\alpha}_k \right).$$
(cf. equation (4.11) in \cite{NicT}).

Now the proof proceeds exactly as in \cite{NicT}, and we obtain a polynomial $\overline \wp$
satisfying the conclusions of Lemma~\ref{l=convergence}.
\end{proof}

\section{Saturated sections of partially hyperbolic extensions}\label{s=bundle}
We recast Theorem~\ref{t=main}, part IV  as a more general statement about
saturated sections of partially hyperbolic extensions.

\begin{dfinition} Let $f:M\to M$ be $C^{k}$ and partially hyperbolic. 
A {\em $C^k$ partially hyperbolic extension of $f$} is a tuple $(N,\cB,\pi, F)$, where
$N$ is a $C^\infty$ manifold, $\pi:\cB\to M$ is a $C^\infty$ fiber bundle over $M$ with fiber $N$,
and $F:\cB\to \cB$ is a $C^k$, partially hyperbolic diffeomorphism satisfying:
\begin{enumerate}
\item $\pi\circ F = f\circ \pi$,
and
\item $E^c_F = T\pi^{-1}(E^c_f)$.
\end{enumerate}

We say that $(N,\cB,\pi, F)$ is {\em an $r$-bunched extension} if there exists a Riemannian metric $<\cdot,\cdot>$ on
$\cB$ and functions
$\nu, \hat\nu, \gamma$, and $\hat\gamma$ on $\cB$ satisfying (4)--(6) such that, for every $x\in M$:
$$\sup_{z\in \pi^{-1}(x)} \nu(z) < \inf_{z\in \pi^{-1}(x)} \{\gamma(z), \gamma^{r}(z)\}, \, \sup_{z\in \pi^{-1}(x)} \hat\nu(z) < \inf_{z\in \pi^{-1}(x)} \{\hat\gamma(z),\hat\gamma^{r}(z)\},$$
$$\frac{\sup_{z\in \pi^{-1}(x)} \nu(z)}{\inf_{z\in \pi^{-1}(x)}\gamma(z)} <  \inf_{z\in \pi^{-1}(x)}\hat\gamma^{r}(z), \quad  \hbox{and}\quad \frac{\sup_{z\in \pi^{-1}(x)}  \hat\nu(z)}{\inf_{z\in \pi^{-1}(x)}\hat\gamma(z)} < \inf_{z\in \pi^{-1}(x)}\gamma^{r}(z).
$$
\end{dfinition}

If $(N,\cB,\pi, F)$ is an $r$-bunched extension of $f$, then $f$ is $r$-bunched.  To see this,
we construct a Riemannian metric on $M$ in which the inequalities in (8) and (9) hold. This is achieved
by fixing a horizontal distribution $\hbox{Hor}\subset T\cB$, transverse to $\ker T\pi$ and containing
$E^u_F\oplus E^s_F$, and defining, for $v\in T_xM$, the metric $<\cdot,\cdot>'$  by
$<v_1,v_2>_x' = \sup <w_1,w_2>_z$,
where the supremum is taken over all $w_i\in T\pi^{-1}(v_i)\cap \hbox{Hor}(z)$, with $z\in \pi^{-1}(x)$.
In this metric, the $r$-bunching inequalities hold for $f$, with
$\nu(x) = \sup_{z\in \pi^{-1}(x)} \nu(z)$, $\hat\nu(x) = \sup_{z\in \pi^{-1}(x)} \hat\nu(z)$,
$\gamma(x) = \inf_{z\in \pi^{-1}(x)} \gamma(z)$, and  $\hat\gamma(x) = \inf_{z\in \pi^{-1}(x)} \hat\gamma(z)$.

If $(N,\cB,\pi, F)$ is a partially hyperbolic extension of $f$, it follows that 
$\cB\to M$ is an admissible bundle with $\W^s_{\hbox{\tiny lift}} = \W^s_F$ and
$\W^u_{\hbox{\tiny lift}} = \W^u_F$.
We say that a section $\sigma:M\to \cB$ is {\em bisaturated} if it is bisaturated with respect
to these lifted foliations (see Definition~\ref{d=satdef}).  We have the following theorem.

\begin{main}\label{t=C^rsec} Let $f:M\to M$ be $C^{k}$, partially hyperbolic and accessible, for some integer $k\geq 2$. Let $(N,\cB,\pi, F)$ be a $C^{k}$ partially hyperbolic extension of $f$ that is
$r$-bunched, for some $r<k-1$ or $r=1$.

Let $\sigma: M\to \cB$ be a bisaturated section. Then $\sigma$ is $C^{r}$.
\end{main}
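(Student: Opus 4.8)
The plan is to reduce everything to a local self-similarity ($C^r$-homogeneity) statement about the graph of $\sigma$ over center plaques, and then to run an argument parallel to the proof of Theorem~\ref{t.Crsubmanif} given above, but adapted to plaque families rather than genuine foliations, using the strengthened Journ\'e theorem (Theorem~\ref{t=journe}) in place of the naive ``smooth along two transverse foliations'' input. First I would set up the skew-type structure: since $(N,\cB,\pi,F)$ is a $C^k$ partially hyperbolic extension of $f$ which is $r$-bunched with $r<k-1$ or $r=1$, the leaves of $\W^s_F$ and $\W^u_F$ project diffeomorphically onto leaves of $\W^s$ and $\W^u$, and the $r$-bunching inequalities force the unstable and stable holonomies of $F$ between lifted center plaques to be $C^r$ (this is exactly the role of the second pair of inequalities in the $r$-bunching definition — they are what makes $E^u,E^s$ be ``$C^r$ in the $E^c$-direction''). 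Because $f$ is accessible, and because $\sigma$ is bisaturated so that $\sigma(M)$ is $H_{\cS}$-invariant for every accessible sequence $\cS$, I first invoke Theorem~\ref{t=asv}(1) to conclude $\sigma$ is continuous. This continuity is the base case of an induction on the order of regularity.

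Next I would localize. Fix $x_0\in M$; using normally hyperbolic theory (as in \cite{BWannals}) choose fake invariant plaque families $\hW^c_{x}$, $\hW^u_{x}$, $\hW^s_{x}$ near $x_0$ and their lifts $\hW^c_{F},\hW^u_{F},\hW^s_{F}$ on $\cB$; the key points are (i) the fake center and fake unstable plaque families are jointly integrable, so the fake center-unstable family exists, and likewise center-stable, and (ii) the lifted $\W^s_F,\W^u_F$-holonomies between fake center plaques $\hW^c_F$ are $C^r$, covering the corresponding base holonomies. Using accessibility (in the uniform form of Lemma~\ref{l=unifaccess} and the path families of Lemma~\ref{l=pathfamily}) together with bisaturation, I would show that the graph of $\sigma$ over a fixed fake center plaque $\hW^c_x$ is $C^r$-homogeneous inside $\hW^c_F$: any two points of the graph are joined by a composition of $\W^s_F$- and $\W^u_F$-holonomies between center plaques that preserves the graph. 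The jet-bundle formalism of Section~7 (Lemma~\ref{l=smoothgraph}'s construction, Lemma~\ref{l=graphjets}) then applies verbatim to promote homogeneity on the level of $i$-jets: if $\sigma$ is already $C^{i}$ along center plaques, then $j^i\sigma$ restricted to center plaques is a $C^1$-homogeneous function, and the Baire-category/Lipschitz argument used to prove Theorem~\ref{t.Crsubmanif} upgrades $\sigma$ along center plaques to $C^{i+1}$ (or, at the top of the induction, to $C^{\lfloor r\rfloor,r-\lfloor r\rfloor}$ using the ``approximate jet'' version of that argument combined with Theorem~\ref{t=campanato}). Separately, because $F$ is $C^k$ and the lifted foliations $\W^s_F,\W^u_F$ have $C^k$ leaves projecting to $\W^s,\W^u$, the same argument as in \cite{NT} shows $\sigma$ is uniformly $C^r$ along $\hW^s_x$ and along $\hW^u_x$ leaves, with no accessibility needed.

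Finally I would splice the three directions together with Theorem~\ref{t=journe}. First apply it to the transverse pair $(\hW^c_x,\hW^u_x)$ inside the fake center-unstable plaques: $\sigma$ has $(\lfloor r\rfloor, r-\lfloor r\rfloor, C)$-expansions along both plaque families at every basepoint (this is where the approximate-jet formulation matters — we only have expansions at basepoints of plaques, not genuine leafwise smoothness), so Theorem~\ref{t=journe} plus Theorem~\ref{t=campanato} give that $\sigma$ is $C^r$ along $\hW^{cu}_x$ leaves. Then apply Theorem~\ref{t=journe} a second time to the transverse pair $(\hW^{cu}_x,\hW^s_x)$ to conclude $\sigma$ is $C^r$ on a neighborhood of $x_0$; since $x_0$ was arbitrary and all estimates are uniform, $\sigma$ is $C^r$. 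The main obstacle — the part that genuinely requires the machinery built in the preceding sections rather than a routine adaptation of the Anosov argument — is establishing $C^r$-homogeneity of the graph over center \emph{plaques} in the non-dynamically-coherent case: the fake center plaque families are not canonical, so one cannot naively transport the graph by holonomy and must instead verify that the relevant compositions of lifted holonomies are independent of the choices made, and must control their $C^r$-size uniformly; handling this, and feeding the resulting only-at-basepoints regularity into the strengthened Journ\'e theorem, is where the real work lies.
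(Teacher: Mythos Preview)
Your overall architecture matches the paper's, but there is a genuine gap at the heart of the argument: the claim that ``the graph of $\sigma$ over a fixed fake center plaque $\hW^c_x$ is $C^r$-homogeneous inside $\hW^c_F$'' via $\W^s_F$- and $\W^u_F$-holonomies conflates two different maps. The \emph{true} $\W^s_F,\W^u_F$-holonomies preserve the graph (by bisaturation) but do \emph{not} carry fake center plaques to fake center plaques, so composing them does not give a diffeomorphism of $\hW^c_F(x)$ to itself. The \emph{fake} $\hW^s_F,\hW^u_F$-holonomies are $C^r$ between fake center plaques but do \emph{not} preserve the graph, since $\sigma$ is saturated for the true foliations only. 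So neither family by itself furnishes the $C^r$-homogeneity you want, and the obstacle is not ``independence of choices'' but this fundamental mismatch. The paper resolves it by constructing, for each accessible sequence $\cS$ from $x$ to $x'$ and each $y\in\hW^c(x)$, a \emph{shadowing accessible sequence} $\cS_y$ (a genuine $su$-path from $y$ to a point in $\hW^c(x')$), yielding a homeomorphism $\cH_\cS\colon\hW^c_F(x)\to\hW^c_F(x')$ that does preserve $\sigma$, and then proving (Lemma~\ref{l=keyapprox}, via the $r$-bunching estimates of Lemmas~\ref{l=cspinch} and \ref{l=spinch}) that $\cH_\cS$ and the $C^r$ fake-holonomy $\hcH_\cS$ are tangent to order $r-\ell$ at the basepoint. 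It is this tangency, not homogeneity, that drives the inductive Lipschitz estimate on central $\ell$-jets.

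Two further gaps follow from this. First, your assertion that $\sigma$ is uniformly $C^r$ along $\hW^s_x$-leaves is only correct for the leaf through the basepoint (where $\hW^s_x(x)=\W^s(x)$); for $y\in\hW^c(x)$ with $y\neq x$, the fake leaf $\hW^s_x(y)$ is not $\W^s(y)$, and one needs Lemma~\ref{l=spinch} to show the discrepancy is $o(d(x,y)^r)$ before invoking Theorem~\ref{t=journe}. Second, the induction step ``$j^\ell\sigma$ Lipschitz along center plaques $\Rightarrow$ differentiable a.e.'' does not go through directly: the central $\ell$-jet is only shown to be Lipschitz along $\hW^c(x)$ \emph{at the single point $x$}, which is not enough to apply Rademacher on any fixed plaque. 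The paper introduces $E^c$-curves (Lemma~\ref{l=Eccurves}) --- integral curves of $E^c$ that are $C^r$ and tangent to order $r$ to $\hW^c$-plaques at each of their points --- precisely so that the pointwise Lipschitz estimates assemble into genuine Lipschitz continuity along a curve, where Rademacher applies. This mechanism is absent from your plan.
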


\begin{remark} One might ask whether the same conclusion holds
if $\sigma$ is instead assumed to be a continuous $F$-invariant section.  The answer
is no.  De la Llave has constructed examples of an $r$-bunched extension
of a linear Anosov diffeomorphism with a continuous $F$-invariant section 
that fails to be $C^1$. What is more, this section is $C^{(1/r) - \eps}$, for
all $\eps>0$, but fails to be $C^{1/r}$ (see \cite{NT}, Theorem 4.1).

What is true is the following. Suppose that $(N,\cB,\pi, F)$ is an $r$-bunched partially hyperbolic
extension of $f$.
Then there exists a critical  H{\"o}lder exponent $\alpha_0\geq 0$ such that,
if $\sigma$ is an $F$-invariant section of $N$ that is  H{\"o}lder continuous with exponent $\alpha_0$,
then $\sigma$ is bisaturated, and hence $C^r$. The exponent $\alpha_0$
is determined by $\nu,\hat\nu$ and the norm and conorm of the action of
$TF$ on fibers of $N$.  When $F$ is an isometric extension of $f$ (as with 
abelian cocycles, or cocycles taking values in a compact Lie group), then
$\alpha_0 = 0$, and any continuous invariant section is bisaturated.
In general, if $F$ is an $r$-bunched extension, then $\alpha_0\leq 1/r$,
but it can be smaller, as is the case with isometric extensions.
The proof of these assertions is similar to the proof of Proposition~\ref{p=satprop};
see also (\cite{NT}, Theorem 2.2).
\end{remark}

\subsection{Proof of Theorem~\ref{t=main}, Part IV from Theorem~\ref{t=C^rsec}}
Suppose that $f$ is $C^k$, accessible and strongly $r$-bunched and
that $\phi$ is $C^k$, for some $k\geq 2$ and $r<k-1$ or $r=1$. Then the skew product
$f_\phi\colon M\times \RR/\ZZ\to \RR/\ZZ$ is a $C^k$,  $r$-bunched, partially
hyperbolic extension of $f$.  If $\Phi$ is a continuous solution to 
(\ref{e=livsic2}), then Proposition~\ref{p=satprop} implies
that $\Phi$ is bisaturated.  Then the map $x\mapsto (x,\Phi(x)\, (\mod 1))$ is
a bisaturated section of  $M\times \RR/\ZZ$.  Theorem~\ref{t=C^rsec} implies
that this section is $C^r$.  This implies that $\Phi$ is $C^r$.

\section{Tools for the proof of Theorem~\ref{t=C^rsec}}

We finally delve into the details of the proof of Theorem~\ref{t=C^rsec}, which is the heart of this paper.

\subsection{Fake invariant foliations}\label{ss=fake}

Recall that to prove Theorem~\ref{t=main}, part IV, when $f$ is dynamically coherent,
one can make use of the stable and unstable holonomy maps for $f$ and $F$ between center manifolds;
more generally this strategy can be used to prove Theorem~\ref{t=C^rsec} when $f$ is dynamically coherent.
Since we do not assume that $f$ is dynamically coherent, 
we use in place of the center foliation a locally-invariant family
of center plaques (see \cite{HPS}, Theorem 5.5). The stable holonomy between center-manifolds
is replaced by holonomy along locally-invariant, ``fake'' stable foliations, first introduced as a 
tool in \cite{BWannals}.  These foliations are defined in the next proposition.

\begin{proposition}[cf. \cite{BWannals}, Proposition 3.1]\label{p=localfol} Let $f:M\to M$ be a $C^r$
partially hyperbolic diffeomorphism. For any $\eps > 0$, there exist
constants $\rho$ and $\rho_1$ with  $\rho > \rho_1 >0$ such that, for every $p\in M$, 
the neighborhood $B_M(p,\rho)$ is foliated by
foliations $\hW^u_p$, $\hW^s_p$, $\hW^c_p$, $\hW^{cu}_p$ and $\hW^{cs}_p$ with the
following properties:
\begin{enumerate}
\item {\bf Almost tangency to invariant distributions:}  
For each $q\in B_M(p, r)$ and for each $\ast \in \{u,s, c, cu, cs\}$, the leaf $\hW^\ast_p(q)$ is $C^1$ and
the tangent space $T_q\hW^\ast_p(q)$ lies in a cone of radius
$\eps$ about $E^\ast(q)$.
\item {\bf Local invariance:} for each $q\in B_M(p, \rho_1)$ and $\ast \in \{u,s, c, cu, cs\}$,
$$f(\hW^\ast_p(q,\rho_1)) \subset 
\hW^\ast_{f(p)}(f(q)),\,\hbox{ and }f^{-1}(\hW^\ast_p(q,\rho_1)) \subset 
\hW^\ast_{f^{-1}(p)}(f^{-1}(q)).$$
\item {\bf Exponential growth bounds at local scales:} The following hold for all $n\geq 0$. 
\begin{enumerate}
\item Suppose that $q_j\in B_M(p_j,\rho_1)$ for $0 \leq j \leq n-1$.  

If
$q' \in \hW^{s}_{p}(q,\rho_1)$,
then $q_n' \in \hW^{s}_{p}(q_n,\rho_1)$, and
$$
d(q_n,q_n') \leq \nu_n(p) d(q,q').
$$
If $q_j' \in \hW^{cs}_{p}(q_j,\rho_1)$ for $0 \leq j \leq n-1$, then 
$q_n' \in \hW^{cs}_{p}(q_n)$, and
 $$d(q_n,q_n') \leq \hat\gamma_n(p)^{-1} d(q,q').$$

\item Suppose that $q_{-j}\in B_M(p_{-j},\rho_1)$ for $0 \leq j \leq n-1$. 

If $q' \in \hW^{u}_{p}(q,\rho_1)$,
then $q_{-n}' \in \hW^{u}_{p}(q_{-n},\rho_1)$, and
$$d(q_{-n},q_{-n}') \leq \hat\nu_{-n}(p)^{-1} d(q,q').$$
If $q_{-j}' \in \hW^{cu}_{p}(q_{-j},\rho_1)$ for $0 \leq j \leq n-1$, then
 $q_{-n}' \in \hW^{cu}_{p}(q_{-n})$, and
$$d(q_{-n},q_{-n}') \leq \gamma_{-n}(p) d(q,q').$$
\end{enumerate}

\item {\bf Coherence:}  $\hW^s_p$ and $\hW^c_p$ subfoliate  $\hW^{cs}_p$; 
 $\hW^u_p$ and $\hW^c_p$ subfoliate  $\hW^{cu}_p$.

\item {\bf Uniqueness:}
$\hW^s_p(p)= \W^s(p,\rho)$, and $\hW^u_p(p)= \W^u(p,\rho)$.

\item {\bf Leafwise regularity:} The following regularity statements hold:
\begin{enumerate}

\item The leaves of $\hW^u_p$ and $\hW^s_p$ are uniformly $C^r$, and for $\ast\in\{u,s\}$,
the leaf $\hW^\ast_p(x)$ depends continuously in the $C^r$ topology on the pair $(p,x)\in M\times B_M(p,\rho_1)$.
 
\item If $f$ is $r$-bunched, then the leaves of $\hW^{cu}_p$, $\hW^{cs}_p$ and $\hW^{c}_p$ are uniformly $C^r$, and for $\ast\in\{cu,cs, c\}$,
the leaf $\hW^\ast_p(x)$ depends continuously in the $C^r$ topology on $(p,x)\in M\times B_M(p,\rho_1)$.

\end{enumerate} 

\item{\bf Regularity of the strong foliation inside weak leaves:}
If $f$ is $C^k$ and $r$-bunched, for some $r<k-1$ or $r=1$, and $k\geq 2$,
then each leaf of $\hW^{cs}_p$ is $C^r$ foliated by leaves of the foliation $\hW^s_p$, and each leaf of $\hW^{cu}_p$ is 
$C^r$ foliated by leaves of the foliation $\hW^u_p$.  

Furthermore, the distribution $\hE^s_p(x)= T_x\hW^s_p$ is $C^r$ in $x\in \hW^{cs}_p(p)$,  and the map $x\mapsto \hE^s_p(x)$ on $\hW^{cs}_p(p)$
depends continuously on $p\in M$ in the $C^r$ topology. The distribution $\hE^u_p(x)= T_x\hW^u_p$ is $C^r$ in $x\in \hW^{cu}_p(p)$,  and the map $x\mapsto \hE^u_p(x)$ on $\hW^{cu}_p(p)$
depends continuously on $p\in M$ in the $C^r$ topology.

\end{enumerate}

\end{proposition}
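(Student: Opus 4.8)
The plan is to follow the construction of fake invariant foliations from \cite{BWannals}, Proposition~3.1, for items (1)--(5) and (6)(a), and to obtain the $C^r$ refinements in (6)(b) and (7) by running the graph transform on jet bundles as set up in Section~\ref{s=jets}. First I would \textbf{globalize $f$ near each point}: for $p\in M$, set $F_p=\exp_{f(p)}^{-1}\circ f\circ\exp_p$, a $C^r$ local diffeomorphism near $0$ with $D_0F_p=T_pf$. Splicing $F_p$ with the linear map $T_pf$ by a bump function equal to $1$ on $B(0,\rho_1)$ and supported in $B(0,\rho)$ produces a globally defined $C^r$ diffeomorphism $\tilde f_p\colon T_pM\to T_{f(p)}M$ that agrees with $F_p$ on $B(0,\rho_1)$, with $T_pf$ outside $B(0,\rho)$, and is $C^r$-close (uniformly, as $\rho\to 0$) to the partially hyperbolic linear cocycle $T_pf$. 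Consequently all the domination and bunching inequalities (\ref{e=defn})--(\ref{e=rbunch}) persist for the cocycle $(\tilde f_p)$ with slightly weakened constants.

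Applying the invariant section theorem (\cite{HPS}, Theorem~3.1) to the graph transforms of $(\tilde f_p)$ over the ball yields, through each $q$, $C^1$ manifolds tangent at $q$ to a narrow cone around each of $E^s,E^u,E^c,E^{cs},E^{cu}$; transporting by $\exp_p$ gives the foliations $\hW^\ast_p$ of $B_M(p,\rho)$, and \emph{almost tangency} (1) is immediate from the $C^r$-closeness. \emph{Coherence} (4) holds because $\hW^{cs}_p$ is assembled as the union of $\hW^s_p$-leaves over an $\hW^c_p$-leaf (and symmetrically), and \emph{uniqueness} (5) follows because $\W^s(p,\rho)$ is $\tilde f_p$-invariant near $p$ and so coincides with $\hW^s_p(p)$. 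Since $\tilde f_p=F_p$ on $B(0,\rho_1)$, the leafwise inclusions in (2) and the exponential growth bounds (3) are read off from the $(\tilde f_p)$-cocycle restricted to the $\rho_1$-balls, using the rate functions $\nu,\hat\nu,\gamma,\hat\gamma$, exactly as in \cite{BWannals}, Lemma~1.1 and Proposition~3.1.

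For the \textbf{regularity statements}, the strong leaves $\hW^u_p,\hW^s_p$ are automatically uniformly $C^r$ with continuous $C^r$-dependence on $(p,x)$: the strong stable/unstable graph transform is a fiber contraction beating all derivative cocycles up to order $\lfloor r\rfloor$, so Lemma~\ref{l=graphjets} and the $C^r$ section theorem apply, with the Hölder part handled by Campanato's characterization (Theorem~\ref{t=campanato}); this gives (6)(a). For $\hW^{cu}_p,\hW^{cs}_p,\hW^c_p$ one invokes the first pair of inequalities in (\ref{e=rbunch}), $\nu<\gamma^r$ and $\hat\nu<\hat\gamma^r$: these are exactly the $r$-normal-hyperbolicity conditions making the induced map on the $\lfloor r\rfloor$-jet bundle over a center-(un)stable leaf a fiber contraction of the rate demanded by Lemma~\ref{l=graphjets}, and the $C^r$ section theorem then yields (6)(b). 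Finally (7) — each $\hW^{cs}_p$-leaf is $C^r$-foliated by $\hW^s_p$-leaves, and $x\mapsto\hE^s_p(x)$ is $C^r$ along $\hW^{cs}_p(p)$ — is where the \emph{second} pair of $r$-bunching inequalities, $\nu<\gamma\hat\gamma^r$ and $\hat\nu<\hat\gamma\gamma^r$, is used: one views $\hE^s_p$ as a section of a Grassmann-valued jet bundle over $\hW^{cs}_p(p)$, writes the graph transform induced by $D\tilde f_p$ restricted to that leaf, and checks that the contraction rate in the $E^s$-direction divided by the expansion rate of the transverse $E^c$-direction, raised to the power $r$, dominates the nonconformality (that is, $\nu\gamma^{-1}<\hat\gamma^r$ and the companion inequality), so that Lemma~\ref{l=graphjets} applies once more; continuity in $p$ is continuity of the invariant section in parameters.

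I expect the delicate point to be (7): correctly identifying the jet bundle over the merely $C^r$, locally invariant, a priori non-coherent center-stable plaques, writing the precise induced graph transform and its fiberwise contraction constant, and matching the bunching inequalities (\ref{e=rbunch}) with the hypotheses of Lemma~\ref{l=graphjets}. Items (1)--(6)(a) are the template of \cite{BWannals} with only cosmetic changes; (6)(b) and (7) are the new content and carry the full weight of the $r$-bunching hypothesis.
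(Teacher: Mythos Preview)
Your outline matches the paper's approach: globalize via $\exp_p$ to a fiberwise map $\mathbf f_\rho$ on $TM$ (with $M$ given the discrete topology), build the fake foliations of each $T_pM$ by graph transform, and then use the $C^r$ Section Theorem (stated in the paper as Theorem~\ref{t=crsect}) to upgrade regularity, with the bunching inequalities entering exactly where you say. Two points deserve correction. First, your justification of coherence (4) is circular: in the paper the $cs$ and $cu$ foliations are constructed \emph{first} as invariant graph-transform foliations for $\mathbf f_\rho$, the center foliation is then obtained by intersecting their leaves, and the fact that $\hW^s_p$ subfoliates $\hW^{cs}_p$ follows from the \emph{uniqueness} of the strong stable foliation (cf.\ \cite{HPS}, Theorem 6.1(e)), not from an a priori assembly of $cs$-leaves out of $s$-leaves over $c$-leaves. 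Second, the paper's tool throughout (6) and (7) is the $C^r$ Section Theorem applied inductively on jet order, not Lemma~\ref{l=graphjets} (which only gives a fiber contraction estimate, not the existence and regularity of the invariant section); and the paper makes explicit the point you only gesture at: in (7) the bundle map is the \emph{linear} graph transform determined by $D\mathbf f_\rho$, so it is only $C^{k-1}$ in the base, which is exactly why the hypothesis $r<k-1$ (or $r=1$, $k\geq 2$) is needed there.
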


\begin{proof} The proof of parts (1)--(5) is contained in \cite{BWannals}. We review the proof
there, as we will use the same method to prove parts (6) and (7). Some of the discussion 
below is taken from \cite{BWannals}.

Suppose that $f$ is $C^r$, for some $r\geq 1$. After possibly reducing $\eps$, we can assume that inequalities
     (\ref{e=defn})--(6)  
hold for unit vectors in the $\eps$-cones around the spaces in the partially hyperbolic splitting.

The construction is performed in two steps. The first step is to construct 
foliations of each tangent space $T_pM$. In the second step, we use the
exponential map $\exp_p$ to
project these foliations from a neighborhood of the origin in $T_pM$ to 
a neighborhood of $p$.

\medskip  

\noindent{\bf Step 1.} \
In the first step of the proof, we choose $\rho_0>0$ such that $\exp_p^{-1}$ is defined
on $B_M(p,2\rho_0)$.  For $\rho\in (0,\rho_0]$, we define,
in the standard way, a continuous map ${\mathbf f}_\rho\colon TM\to TM$
covering $f$, which is uniformly $C^r$ on fibers, satisfying:
\begin{enumerate}
\item ${\mathbf f}_\rho(p,v) = \exp_{f(p)}^{-1}\circ f \circ \exp_p(v)$, for $\|v\|\leq \rho$;                                                           
\item ${\mathbf f}_\rho(p,v) = T_pf(v)$, for $\|v\| \geq 2\rho$; 
\item  
$\|{\mathbf f}_\rho(p,\cdot) - T_p f(\cdot)\|_{C^1}\to 0$ as $\rho\to 0$, uniformly in $p$;
\item $p\mapsto {\mathbf f}_\rho(p,\cdot)$ is continuous in the $C^r$ topology.
\end{enumerate}

Endowing $M$ with the discrete topology,
we regard $TM$ as the disjoint union of its fibers. 
if $\rho$ is small enough, then ${\mathbf f}_\rho$
is partially hyperbolic, and each bundle in the partially
hyperbolic splitting for ${\mathbf f}_\rho$ at $v\in T_pM$ lies within
the $\eps/2$-cone about the corresponding subspace of $T_pM$
in the  partially
hyperbolic splitting for $f$
at $p$ (we are making the usual identification of $T_vT_pM$ with $T_pM$).
If $\rho$ is small enough, the equivalents  of inequalities (\ref{e=defn})
will hold with $Tf$ replaced by $T{\mathbf f}_\rho$. Further,
if $f$ is $r$-bunched, then ${\mathbf f}_\rho$ will also be $r$-bunched,
for $\rho$ sufficiently small.

If $\rho$ is sufficiently small, 
standard graph transform arguments give stable, unstable, center-stable,
and center-unstable foliations for
${\mathbf f}_\rho$ inside
each $T_pM$. These foliations are uniquely determined by 
the extension ${\mathbf f}_\rho$.
and the requirement that their leaves be graphs of functions with bounded derivative.
We obtain a center foliation by intersecting the leaves of the
center-unstable and center-stable foliations. 
Since the restriction of ${\mathbf f}_\rho$ to $T_pM$ depends
continuously in the $C^r$ topology on $p$, the foliations
of $T_pM$ depend continuously on $p$.

The uniqueness of the stable and unstable foliations implies, via a standard
argument (see, e.g. \cite{HPS}, Theorem 6.1 (e)), that
the stable foliation subfoliates the center-stable, and the 
unstable subfoliates the center-unstable. 

We now discuss the regularity properties of these foliations
of $TM$.  Recall the standard 
method for determining the regularity of 
invariant bundles and foliations. 
\begin{theorem}[cf. $C^r$ Section Theorem (\cite{HPS}, Theorem 3.2)]\label{t=crsect} 
Let $X$ be a $C^r$ manifold, let $\pi\colon E\to X$ be a $C^r$ Finslered Banach
bundle, and let $g\colon E\to E$ be a $C^r$ bundle map covering the $C^r$ diffeomorphism $h:X\to X$.
Assume that the image of the $0$-section under $g$ is bounded.

Assume that for every $x\in X$ there exists a constant $\kappa_x$  
such that $$\sup_{x\in X}|\kappa_x|<1,$$ and for every $y,y'\in \pi^{-1}(x)$,
$\|g(y) - g(y') \|_{\pi^{-1}(h(x))} \leq \kappa_x \|y-y'\|_{\pi^{-1}(x)}$.
Then there is a unique bounded  section $\sigma\colon X\to M$ such 
that $g(\sigma(X)) = \sigma(X)$, and $\sigma$ is continuous.

Moreover, if 
$$\sup_{x\in X} \frac{\kappa_x}{\lambda_x^r} < 1,\quad\quad\hbox{where}\quad \lambda_x = m(T_xh)$$
then $\sigma$ is $C^r$.
\end{theorem}

This theorem is used to prove the $C^r$ regularity of
the stable and unstable foliations for a $C^r$ partially hyperbolic
diffeomorphism $f$, once the $C^1$ regularity has been established
(via Lipschitz jets, or some similar method).
We review this argument, as it is prototypical.

Assume that the leaves of $\cW^u$ are $C^1$.  Note that since
the leaves of $\cW^u$ are tangent to the continuous distribution
$E^u$, this automatically implies that the map $x\mapsto \cW^u(x)$
is continuous in the $C^1$ topology.

To prove that the leaves of $\cW^u$ are uniformly $C^r$ for $r>1$, one fixes a $C^\infty$ approximation 
$TM=\widetilde E^s\oplus \widetilde E^c \oplus \widetilde E^u$ to the partially
hyperbolic splitting. One then takes the $C^1$ manifold $X$  to be the disjoint 
union of the leaves of the unstable foliation and
the fiber of the bundle $E$ over $x$ to be the space
$L_x(\widetilde E^u, \widetilde E^{cs})$
of linear maps from $\widetilde E^u(x)$ to $\widetilde E^{cs}(x)$.
The linear graph transform on the bundle $E$ covers
the original partially hyperbolic diffeomorphism 
$f\vert_{X}$, contracts the fiber over $x$ by 
$\kappa_x = \|T_xf\vert_{E^{cs}}\|/m(T_xf\vert_{E^u})<1$, and
expands $X$ at $x$ by at least $\lambda_x = m(T_xf\vert_{E^u}) > 1$.

Since the ratio 
$$\frac{\kappa_x}{\lambda_x} =  \frac{\|T_xf\vert_{E^{cs}}\|/m(T_xf\vert_{E^u})}{m(T_xf\vert_{E^u})}$$
is bounded away from $1$, Theorem~\ref{t=crsect} implies that
the unique invariant bounded section of $\sigma\colon X\to E$ is $C^1$.  But at the
point $x\in X$, the graph of the map $\sigma(x) \colon \widetilde E^u(x) \to \widetilde E^{cs}(x) $
is precisely the bundle $E^u(x)$. Since $E^u$ is $C^1$ along $X$,
the manifold $X$ is $C^2$.  

Repeating the argument, using $2$-jets
of maps from $\widetilde{E}^u$ to $\widetilde{E}^{cs}$
instead of $1$-jets, shows that $X$ is $C^3$.
An inductive argument using the $\ell-1$ jet bundle shows that $X$ is $C^\ell$,
for every integer $\ell\leq r$ To obtain that $X$ is $C^r$,
one applies Theorem~\ref{t=crsect} in its H{\"o}lder formulation
to show that the $\lfloor r \rfloor$ jet bundle is $C^{r - \lfloor r \rfloor}$.
The leaves of $\cW^u$ vary continuously in the $C^r$ topology
because the jets of $E^u$ along $\cW^u(x)$ are found as the
fixed point of a fiberwise contraction that depends continuously on $x$.
This fiberwise contraction preserves sections that depend
continuously on $x$, and so the invariant section depends continuously on
$x$ as well.

Returning to the map ${\mathbf f}_\rho$, we see that the
stable and unstable foliations for this map have uniformly
$C^r$ leaves, and for each $p\in M$ the leaves vary continuously inside of
$T_pM$ in the $C^r$ topology.  Moreover, since $p\mapsto {\mathbf f}_\rho(p,\cdot)$
is continuous in the $C^r$ topology the 
leaves of unstable foliation for ${\mathbf f}_\rho$ also depend continuously on $p$ in the $C^r$ topology.

When $f$ is $r$-bunched, a similar argument shows that the center-stable, center-unstable and
center leaves for ${\mathbf f}_\rho$ are uniformly $C^r$.  The condition
$\hat\nu < \hat\gamma^r$ is an $r$-normal hyperbolicity condition for the
center-unstable foliation, which implies that the leaves of this foliation
are uniformly $C^r$ (see Corollary 6.6 in \cite{HPS}).  In this application
of Theorem~\ref{t=crsect}, the base manifold $X$ is the disjoint
union of center-unstable manifolds, and the bundle $E$ consists
of jets of maps between the approximate center-unstable and approximate stable bundles.
The fiber contraction on $\ell-1$-jets is $\kappa = \hat\nu/\hat\gamma^{\ell-1}$
and the base conorm of the bundle map on $X$ is $\lambda = \hat\gamma$.
The condition $\kappa/\lambda = \hat\nu/\hat\gamma^{\ell} < 1$ implies that the invariant
section on $\ell-1$ jets is $C^1$, and so the center unstable
leaves are $C^\ell$, for all $\ell <r$.  As above, one obtains that the center-unstable
leaves are uniformly $C^r$.

Similarly the condition $\nu<\gamma^r$
implies that the leaves of the center-stable foliation are uniformly $C^r$;
intersecting center-unstable with center-stable leaves, one
obtains that the leaves of the center foliation for
${\mathbf f}_\rho$ are uniformly $C^r$.  
The leaves of the center, center-stable and center-unstable foliations for ${\mathbf f}_\rho$ 
along $T_pM$ also depend continuously on $p\in M$ in the $C^r$ topology.

When $k\geq 2$, and $f$ is $r$-bunched, for $r<k-1$ or $r=1$, another argument using Theorem~\ref{t=crsect}
proves the $C^r$ regularity of the unstable bundle along the leaves of
the center-unstable foliation.  The manifold $X$ is the disjoint
union of the leaves of the center-unstable foliation for ${\mathbf f}_\rho$, 
and the bundle $E$ consists of linear maps from the approximate
unstable into the approximate center-stable bundles.
Note that $X$ is uniformly $C^r$ by the previous arguments,
and the first $\lfloor r \rfloor$ derivatives of
${\mathbf f}_\rho$ vary $(r-\lfloor r \rfloor)$-H{\"o}lder continuously
from leaf to leaf. Since $X$ and $E$ are $C^r$,
we may apply the $C^r$ section theorem directly (without inductive arguments).

In this case, the graph transform bundle map has fiber constant 
$\kappa = \hat\nu/\hat\gamma$
and the base conorm $\lambda$ of ${\mathbf f}_\rho$ restricted to center-unstable
leaves is bounded by $\gamma$.
The $r$-bunching hypothesis $\hat\nu < \hat\gamma\gamma^r$ implies that
$\kappa/\lambda^r < 1$, and so 
the unstable bundle for ${\mathbf f}_\rho$ is $C^r$ when restricted to $X$.
Moreover the jets of the unstable bundle along the center-unstable
leaf vary $(r-\lfloor r \rfloor)$-H{\"o}lder continuously. Notice
that we need $k-1\geq r$ to carry out this argument, because the
bundle map we consider is only $C^{k-1}$ (in the fiber it
is a linear graph transform determined by the derivative 
of ${\mathbf f}_\rho$, and we lose a derivative in this argument).

Similarly, this argument shows
that the  bunching hypothesis $\nu < \gamma\hat\gamma^r$ implies that
the stable bundle for ${\mathbf f}_\rho$ is a $C^r$ bundle over the leaves of the
center-stable foliation, and we have (H{\"o}lder) continuous dependence of
the appropriate jets on the basepoint.  The details are described in 
\cite{PSW, PSWc} in the case $r=1$ and $k=2$.  The argument for
general $r,k$ is completely analogous.

\medskip

\noindent{\bf Step 2.} \
We now have foliations of $T_pM$, for
each $p\in M$.  We obtain the foliations 
$\hW_p^u, \hW_p^c, \hW_p^s, \hW_p^{cu}$, and $\hW_p^{cs}$ by applying the 
exponential map $\exp_p$ to the corresponding foliations of $T_pM$ inside the ball around the origin of radius $\rho$.

If $\rho$ is sufficiently small, then
the distribution $E^\ast(q)$ 
lies within the angular $\eps/2$-cone about the parallel translate of
$E^\ast(p)$, for every $\ast \in \{u,s, c, cu, cs\}$ and all $p, q$
with  $d(p,q)\leq \rho$.  Combining this fact with the preceding discussion, we obtain that property 1. holds if $\rho$ is sufficiently small.

Property 2. --- local invariance --- follows from invariance under ${\mathbf f}_\rho$ of the foliations of $TM$ and the fact that $\exp_{f(p)}({\mathbf f}_\rho(p,v)) = f(\exp_p(p,v))$ provided $\|v\| \leq \rho$.

Having chosen $\rho$, we now choose $\rho_1$ small enough so that 
$f(B_M(p,2\rho_1)) \subset B_M(f(p),\rho)$ and 
$f^{-1}(B_M(p,2\rho_1)) \subset B_M(f^{-1}(p),\rho)$, and so that, for all
$q\in B_M(p,\rho_1)$,
\begin{eqnarray*}
q' \in \hW_p^s(q,\rho_1)&\Longrightarrow& d(f(q),f(q')) \leq \nu(p)\, d(q,q'),\\
q' \in \hW_p^u(q,\rho_1)&\Longrightarrow&d(f^{-1}(q),f^{-1}(q')) \leq \hat\nu(f^{-1}(p))\, d(q,q'),\\
q' \in \hW_p^{cs}(q,\rho_1)&\Longrightarrow& d(f(q),f(q')) \leq \hat\gamma(p)^{-1} \,d(q,q'),\qquad and\\
q' \in \hW^{cu}_{p}(q,\rho_1)&\Longrightarrow& d(f^{-1}(q),f^{-1}(q')) \leq \gamma(f^{-1}(p))^{-1}\, d(q,q').
\end{eqnarray*}

Property 3. --- exponential growth bounds at local scales --- is now
proved by an inductive argument.

Properties 4.-- 7. --- coherence, uniqueness, leafwise regularity and regularity of the strong foliation inside weak leaves --- follow immediately from the corresponding properties of the foliations of $TM$ discussed above.
\end{proof}

Since there is no ambiguity in doing so, we write $\hW^{cs}(x), \hW^{cu}(x),$ and $\hW^c(x)$ for the corresponding  manifolds $\hW^{cs}_x(x), \hW^{cu}_x(x),$ and $\hW^c_x(x)$.
If $f$ is $C^k$ and $r$-bunched, for $k\geq 2$ and $r<k-1$ or $r=1$, 
then the collection of all $\hW^{\ast}(x)$-manifolds forms a uniformly continuous $C^r$ 
plaque family in $M$, but not in general a foliation.

Henceforth {\em we shall assume that $\cB$ is the trivial bundle $\cB=M\times N$.}
All of the definitions and arguments that follow can be made for a general bundle $\cB$ by
fixing a connection on $\cB$, at the expense of more cumbersome notation and the
need to localize some of the objects, such as the fake foliations for $F$ in
the following lemma.  Since Theorem~\ref{t=C^rsec} concerns the 
local property of smoothness, this simplifying  
assumption is benign.

\begin{lemma}\label{l=liftedfake} Let $k\geq 2$ and $r=1$ or $r<k-1$.
If $F$ is a $C^k$, $r$-bunched extension of $f$, then we can construct the fake foliations $\hW^s_{F,z}, \hW^u_{F,z}, \hW^{cs}_{F,z}, \hW^{cu}_{F,z}$ and $\hW^c_{F,z}$ for $F$ 
and  $\hW^s_{p}, \hW^u_{p}, \hW^{cs}_{p}, \hW^{cu}_{p}$ and $\hW^c_{p}$ for $f$ 
so that:
\begin{itemize}
\item for each $p\in M$ and $z\in \pi^{-1}(p)$, the fake foliations  $\hW^\ast_{F,z}$ for $F$ are defined in the 
entire neighborhood $\pi^{-1}(B_M(p,\rho))$ of $\pi^{-1}(p)$ and are independent of $z\in \pi^{-1}(p)$;
\item for $\ast\in \{cs,cu,c\} $, we have:
$$\hW^\ast_{F,z}(w) = \pi^{-1}\left(\hW^\ast_{p}(\pi(w))\right),$$
for all $p\in M$, all $z\in \pi^{-1}(p)$, and all $w\in \pi^{-1}(B_M(p,\rho))$; 
\item for $\ast\in \{s,u\}$, we have:
$$\pi\left(\hW^\ast_{F,z}(w)\right) = \hW^\ast_{p}(\pi(w)),$$
for all $p\in M$, all $z\in \pi^{-1}(p)$, and all $w\in \pi^{-1}(B_M(p,\rho))$; and
\item the conclusions of Proposition~\ref{p=localfol} hold for the fake foliations of $F$ and $f$. 
\end{itemize}
\end{lemma}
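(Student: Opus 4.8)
The plan is to run the two-step construction of Proposition~\ref{p=localfol} once more, adapted to the bundle structure, reducing everything to what has already been done for $f$. Writing $z=(p,n)$ and $F(p,n)=(f(p),g(p,n))$, we have $T_zF$ covering $T_pf$ under $T_z\pi$; since $E^c_F=T\pi^{-1}(E^c_f)$ and $E^s_F$, $E^u_F$ project isomorphically onto $E^s_f$, $E^u_f$, it follows that $E^{cs}_F=T\pi^{-1}(E^{cs}_f)$ and $E^{cu}_F=T\pi^{-1}(E^{cu}_f)$. Because $F$ is an $r$-bunched extension, $f$ is $C^r$ and $r$-bunched, so Proposition~\ref{p=localfol} provides fake foliations $\hW^\ast_p$, $\ast\in\{s,u,c,cu,cs\}$, on each $B_M(p,\rho)$. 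I will \emph{define} the weak fake foliations of $F$ as $\pi$-preimages of the corresponding foliations of $f$, \emph{build} the strong fake foliations of $F$ inside them by a fiberwise graph transform, and then verify the seven properties of Proposition~\ref{p=localfol} by transferring them from $f$.

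First, for $\ast\in\{cs,cu,c\}$ I set $\hW^\ast_{F,z}(w):=\pi^{-1}\bigl(\hW^\ast_{\pi(z)}(\pi(w))\bigr)$. This depends only on $\pi(z)=p$, is a foliation of $\pi^{-1}(B_M(p,\rho))$ because $\pi$ is a submersion, and the properties of Proposition~\ref{p=localfol} transfer directly. Almost tangency to $E^\ast_F$ holds because the $T\pi$-preimage of a narrow cone about $E^\ast_f$ is a narrow cone about $T\pi^{-1}(E^\ast_f)=E^\ast_F$ (shrinking the $\eps$ of Proposition~\ref{p=localfol} if necessary). Local invariance follows from $F(\pi^{-1}(A))=\pi^{-1}(f(A))$ and the local invariance of $\hW^\ast_p$. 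The exponential growth bounds~(3) follow from the pointwise bounds for $F$ on $E^{cs}_F$ and $E^{cu}_F$ by the inductive argument of Proposition~\ref{p=localfol}, using the rate functions $\nu,\hat\nu,\gamma,\hat\gamma$ on $\cB$ supplied by the $r$-bunched extension hypothesis. Coherence is inherited from coherence of the $\hW^\ast_p$; and leafwise $C^r$ regularity with continuous dependence on $(p,w)$ is immediate since $\pi$ is $C^\infty$ and the $\hW^\ast_p$ are uniformly $C^r$.

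Next, for the strong foliations, observe that by coherence for $f$ each leaf $\pi^{-1}(\hW^{cs}_{p}(q'))$ is partitioned by the tubes $\pi^{-1}(\hW^s_{p}(q))$, $q\in\hW^{cs}_{p}(q')$, each of which is a trivial $N$-bundle over the $C^r$ base $\hW^s_p(q)$, and $F$ carries a $\rho_1$-plaque of such a tube into another one (local invariance of $\hW^s_p$). On each tube I run the graph transform induced by $F$ (after the standard bump-function localization of Proposition~\ref{p=localfol}, Step~1): the base $\hW^s_p(q)$ is contracted by $f$ at rate $\leq\nu$ while the fiber direction $\ker T\pi\subset E^c_F$ grows at rate $\geq\gamma>\nu$, so this transform is a fiberwise contraction, and the resulting invariant family of bounded-slope graphs foliates the tube by leaves tangent to a narrow cone about $E^s_F$, each projecting diffeomorphically onto $\hW^s_p(q)$. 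Assembling over $q$ and $q'$ gives $\hW^s_{F,z}$, depending only on $p$, defined on $\pi^{-1}(B_M(p,\rho))$, subfoliating $\hW^{cs}_{F,z}$, with $\pi(\hW^s_{F,z}(w))=\hW^s_p(\pi(w))$; $\hW^u_{F,z}$ is constructed symmetrically inside the $\hW^{cu}_{F,z}$-leaves. Uniqueness (Proposition~\ref{p=localfol}(5)) holds since the genuine local stable manifold $\cW^s_F(z,\rho)$ is $F$-invariant and tangent to $E^s_F$, hence equals the invariant leaf through $z$; growth bounds and coherence are checked as before. Finally, the leafwise $C^r$ regularity of the strong leaves and the $C^r$ regularity of $\hE^s_F$ along $\hW^{cs}_F$-leaves are obtained by rerunning the $C^r$-section-theorem (Theorem~\ref{t=crsect}) arguments from the proof of Proposition~\ref{p=localfol}: the fiberwise contractions in play are governed by the ratios $\nu\gamma^{-1}$, $\hat\nu\hat\gamma^{-1}$ and their higher-order analogues, and the inequalities defining an $r$-bunched extension are exactly what makes each of these smaller than the relevant power of the base conorm, the derivative loss being controlled as in the proof of property~(7) for $f$ (using $r<k-1$, or $r=1$ with $k\geq 2$). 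I expect this last verification --- that Theorem~\ref{t=crsect} and its prolongations to jet bundles apply to the fiberwise graph transforms with the $r$-bunched-extension inequalities playing the role of the $r$-bunching inequalities for $f$, while keeping all cone-width estimates uniform under $\pi$ --- to be the main work; the projection identities and the $z$-independence of $\hW^\ast_{F,z}$ are then formal consequences of $E^c_F=T\pi^{-1}(E^c_f)$.
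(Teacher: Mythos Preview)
Your organization differs from the paper's. You first run Proposition~\ref{p=localfol} for $f$, then \emph{define} the weak fake foliations of $F$ as $\pi$-preimages, and finally attempt to build the strong ones inside each tube $\pi^{-1}(\hW^s_p(q))$. The paper instead constructs everything simultaneously: it pulls back $\cB$ along the exponential map to a bundle $\widetilde{\cB}$ over $TM$, builds a single global bundle isomorphism $\mathbf{F}_\rho\colon\widetilde{\cB}\to\widetilde{\cB}$ covering $\mathbf{f}_\rho$, applies the graph-transform machinery of Proposition~\ref{p=localfol} once to $\mathbf{F}_\rho$, and then projects the resulting foliations both to $\cB$ (via $(p,v,z)\mapsto z$, giving $\hW^\ast_{F}$) and to $M$ (via $(p,v,z)\mapsto\exp_p(v)$, giving $\hW^\ast_p$). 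Compatibility with $\pi$ and independence of $z\in\pi^{-1}(p)$ are then automatic, because $\mathbf{F}_\rho$ is a bundle map over $\mathbf{f}_\rho$.

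Your treatment of the weak foliations is fine, and the idea of building the strong leaves tube by tube is reasonable; but there is a gap at the phrase ``after the standard bump-function localization of Proposition~\ref{p=localfol}, Step~1''. That localization lives in $TM$ and produces $\mathbf{f}_\rho$; it does not, by itself, give you a globally defined dynamics on the tubes on which to run a contracting graph transform. The tubes are only \emph{locally} $F$-invariant: $F$ carries a $\rho_1$-plaque of one tube into a different tube based at $f(p)$, so there is no single self-map to iterate. To make your scheme work you must lift $\mathbf{f}_\rho$ to a global bundle map over all of $TM$ --- and this is exactly the construction of $\mathbf{F}_\rho$ that the paper carries out. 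The nontrivial point is that the fiber $N$ is compact, so one cannot bump off linearly in the fiber direction; the paper handles this with an auxiliary lemma (Lemma~\ref{l=extend}) extending a family $\{F_v\}_{v\in B(0,2)}$ of diffeomorphisms of $N$ to all parameters $v\in\RR^m$ while keeping it uniformly $C^r$-close to $F_0$. Once $\mathbf{F}_\rho$ exists, your tube picture and the paper's picture coincide, and the $C^r$ section-theorem verifications proceed just as in Proposition~\ref{p=localfol}. So the ``main work'' is not where you located it: it is in constructing the globally defined bundle extension $\mathbf{F}_\rho$, not in the downstream regularity estimates.
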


\begin{proof}

Let $N$ be the fiber of $\cB$.
Fix $\rho_0>0$ such that the exponential map $\exp_p$ is a diffeomorphism
from $B_{T_pM}(0,\rho_0)$ to $B_M(p,\rho_0)$, for every $p\in M$.  
Note that $\pi^{-1}(B_M(p,\rho_0))$ is a trivial bundle over $B_M(p,\rho_0)$,
for each $p\in M$.
Denote by $B_{TM}(0,\rho_0)$ the $\rho_0$-neighborhood of the $0$-section of $TM$.  
The bundle $\cB$ pulls back via the exponential map $\exp\colon B_{TM}(0,\rho_0) \to M$
to a  $C^r$ bundle $\tilde\pi_0\colon \widetilde{\cB}_0 \to B_{TM}(0,\rho_0)$ with fiber $N$.
The bundle $\widetilde{\cB}_0$ is
trivial over each fiber $\cB_{T_pM}(0,\rho_0)$ of $B_{TM}(0,\rho_0)$ and
pulls back to the original bundle $\cB$ under the inclusion
$M\hookrightarrow B_{TM}(0,\rho_0)$ of $M$ into the $0$-section of $TM$.
Elements of $\widetilde{\cB}_0$ are of the form $(p,v,z) \subset B_{TM}(0,\rho_0) \times \cB$ such that
$\pi(z) = \exp_p(v)$, and the projection $\tilde\pi_0$ sends $(p,v,z)$ to $(p,v)$.
Extend $\widetilde{\cB}_0$ to a $C^r$ bundle $\tilde\pi\colon \widetilde{\cB}\to TM$ over $TM$
in such a way that  $\widetilde{\cB}$ is also a $C^r$ bundle over $M$ (with fiber $\RR^m\times N$), 
and the restriction of $\widetilde{\cB}$ to $T_pM$ is a trivial bundle, for every $p\in M$.

In the proof of Proposition~\ref{p=localfol}, we define  ${\mathbf F}_r$ slightly differently,
using the bundle $\widetilde{\cB}$, rather than $T\cB$. Fix $\rho_1<\rho_0$ such 
that $\overline{f(B_{M}(p,\rho_1))} \subset B_M(f(p),\rho_0)$, for all $p\in M$.
Let ${\mathbf f}\colon B_{TM}(0,\rho_1)\to B_{TM}(0,\rho_0)$ be the map:
$${\mathbf f}(p,v) = \exp_{f(p)}^{-1}\circ f\circ \exp_p(v).$$
The map $F:\cB\to \cB$ induces a map ${\mathbf F}\colon \tilde\pi^{-1}(B_{TM}(0,\rho_1))\to \tilde\pi^{-1}(B_{TM}(0,\rho_0))$,
covering ${\mathbf f}$, defined by:
$${\mathbf F}(p,v,z) = ({\mathbf f}(p,v), F(z)).$$

Since $\widetilde{\cB}\vert_{TM}$ is a trivial bundle,  
we can write elements of $\tilde\pi^{-1}(T_pM)$ as triples
$(p,v,y)$, where $v\in T_pM$ and $y\in \pi^{-1}(p) \cong N$;
we can choose this trivialization to depend smoothly on $p$.
We also metrically trivialize the fibers  $\widetilde{\cB}\vert_{T_pM}$  of this bundle,  
using the product of  the sup metric $<\cdot,\cdot>'_p$ on $T_pM$ defined 
at the beginning of this section
with the induced metric $<\cdot,\cdot>$
on the fiber $\pi^{-1}(p)$.  If $F$ is an $r$-bunched extension
of $f$, then the $r$-bunching inequalities
hold for this family of metrics on $\widetilde{\cB}\vert_{B_{TM}(0,\rho)}$,
if $\rho$ is sufficiently small.

Then for each $\rho>0$ there exists a $C^r$ bundle isomorphism 
$${\mathbf F}_\rho\colon \widetilde{\cB}\to \widetilde{\cB},$$
covering the map ${\mathbf f}_\rho\colon TM\to TM$ 
constructed in the proof of Proposition~\ref{p=localfol}, with the following properties:
\begin{itemize}
\item  ${\mathbf F}_\rho(p,v,y) = {\mathbf F}(p,v,y)$ if $\|v\|\leq \rho$; in particular, we have
 ${\mathbf F}_\rho(p,0,y) = (f(p),0,F(y))$,
\item   ${\mathbf F}_\rho(p,v,y) =    (f(p), T_pf(v), {\mathbf F}(p,\rho v/\|v\|, y))$  if $\|v\|\geq 2\rho$,
\item $\sup_{v\in T_pM} d_{C^r}({\mathbf F}_\rho(p,v,\cdot), {\mathbf F}(p,0,\cdot)) \to 0$ as $\rho\to 0$,
\item the $C^r$ diffeomorphism ${\mathbf F}_\rho(p,\cdot,\cdot)$  depends continuously on $p$ in the $C^r$ topology. 
\end{itemize}

The construction of ${\mathbf F}_\rho$ is straightforward, once one has proven the following lemma,
and we omit the details.
\begin{lemma}\label{l=extend}
Let $N$ be a compact manifold and let $\{F_v\colon N\to N\}_{v\in B_{\RR^n}(0,2)}$ be a family 
of diffeomorphisms of $N$ such that $(v,y)\mapsto F_v(y)$ is $C^r$.

Then for every $\rho\in (0,1)$, there exists a family $\{F_{\rho,v}\colon N\to N\}_{v\in B_{\RR^m}(0,\rho)}$ 
of diffeomorphisms with the following properties:
\begin{itemize}
\item  $(v,y)\mapsto F_{\rho,v}(y)$ is $C^r$;
\item $F_{\rho,v} = F_v$, if $\|v\|\leq \rho$;
\item   $F_{\rho,v}  =    F_{\rho v/\|v\|} $,  if $\|v\|\geq 2\rho$; and
\item $\sup_{v\in \RR^n} d_{C^r}(F_{\rho,v} , F_0 ) \to 0$ as $\rho\to 0$.
\end{itemize}
\end{lemma}
\begin{proof}[Proof of Lemma~\ref{l=extend}]  We construct $F_{\rho,v}$ as follows. 
Consider the family of vector fields $\{X_v\}_{v\in B_{\RR^m}(0,2)}$ on $N$ defined by
$$ X_v(y) = \frac{d}{dt}\vert_{t=0} F_{v+tv}(y),$$
and let $\varphi_{v,t}$ be the flow generated by $X_v$.
For $v\in \RR^n$, let $v_\rho =\rho v/\|v\|$.  

For $\rho\in (0,1)$, let $\beta_\rho\colon\RR^m\to [0,1]$
be a  smooth radial bump function vanishing outside of $B_{\RR^m}(0, 2\rho)$ and
identically $1$ on $\overline{B_{\RR}(0, \rho)}$ with derivative $|D\beta_\rho|$ bounded by
$3\rho$.   We then define:
$$F_{\rho,v} = \begin{cases} 
F_v &\hbox{if} \quad \|v\|\leq \rho\\
\varphi_{v_\rho, \beta(v) (\|v\|-\rho)}\circ F_{v_\rho}&\hbox{if}\quad  \|v\| > \rho.
\end{cases}
$$
Then the family $\{F_{\rho,v}\}_{v\in \RR^m}$ has the desired properties.
\end{proof}

Having constructed ${\mathbf F}_\rho$, the proof 
then proceeds exactly as in Proposition~\ref{p=localfol}, except to construct the fake foliations
for $F$, we consider the bundle $\widetilde{\cB}$ over $M$ (rather than $TM$ over $M$) 
and take the disjoint union of its fibers. For $\rho$ sufficiently small, ${\mathbf F}_\rho$ is
partially hyperbolic and $r$-bunched, if $F$ is an $r$-bunched extension of $f$.
The fake foliations for $F$ are constructed by first finding invariant foliations for 
${\mathbf F}_\rho$ on $\widetilde{\cB}$.  One verifies as in Proposition~\ref{p=localfol}
that these foliations have the required regularity properties.  To construct
the fake foliations for $F$, we first restrict these foliations to the bundle
$\tilde\pi^{-1}(B_{TM}(0,\rho)) \subset \widetilde{\cB}$.  Fix $p\in M$.
On $\tilde\pi^{-1}(B_{T_pM}(0,\rho))$, the projection
$(p,v,z)\mapsto z$ is a diffeomorphism onto $\pi^{-1}(B_{M}(p,\rho))$;
the image of the invariant foliations for ${\mathbf F}_\rho$ under
this projection gives the fake invariant foliations for $F$ on  $\pi^{-1}(B_{M}(p,\rho))$.

To construct the fake invariant foliations for $f$, we take instead the image of
the invariant foliations for ${\mathbf F}_\rho$ in $\tilde\pi^{-1}(B_{T_pM}(0,\rho))$
under the map $(p,v,z)\mapsto \exp_p(v)$. This construction ensures that the desired properties hold.
\end{proof}

Fix $\eps>0$ small and let the fake foliations for $f$ and $F$ be defined by the preceding
lemmas.

Since it does not depend on $z\in\pi^{-1}(p)$ we write $\hW^{\ast}_{F,p}(w)$ for
$\hW^{\ast}_{F,z}(w)$, for $\ast\in\{s,u,cs,cu,c\}$. As
with the fake foliations for $f$, for  $\ast\in\{cs,cu,c\}$ and $p\in M$, we will denote by
$\hW^\ast_F(p)$ the plaque $\hW^\ast_F(p) = \pi^{-1}(\hW^\ast(p))$ in $\cB$; it is the 
$\hW^\ast_{F}$-leaf through any $z\in \pi^{-1}(p)$.
 
By rescaling the Riemannian metric on $M$, we may assume that $\rho_1\gg 1$,
so that all of the objects used in the sequel are well-defined on any ball of
radius $1$ in $M$.

\subsection{Further consequences of $r$-bunching}\label{ss=further}

Here we explore in greater depth the properties of an $r$-bunched partially hyperbolic
diffeomorphism.  The goal is to bound the deviation between the fake foliations $\hW^\ast_p$ and $\hW^\ast_q$ for $q\in \hW^\ast(p)$.  In the dynamically coherent case, $\hW^\ast_p(q)$ and $\hW^\ast_q(q)$
coincide for $q\in \hW^\ast(p)$.  In a sense, the results in this section
tell us that $r$-bunched systems are dynamically coherent ``on the level of $r$-jets.''

Throughout this and the following subsections, 
we continue to assume that $F$ is a $C^k$, $r$-bunched extension of $f$, where
$k\geq 2$ and $r<k-1$ or $r=1$. In the statements of some of the lemmas, we will remind the reader
of these hypotheses. We fix as above a choice of fake foliations and fake lifted foliations (we will
not specify here the choice of $\eps>0$, but will indicate where it is relevant).
Let $m=\dim(M)$, $s=\dim{E^s}$, $u=\dim{E^u}$, and $c=\dim{E^c}$,
so that $m=s+u+c$. 

Fix a point $p\in M$.  We introduce $C^r$ local $\RR^u\times \RR^s\times \RR^c$ - coordinates  $(x^u,x^s,x^c)$ in 
the $\rho$-neighborhood of 
$p$, sending $p$ to $0$, $\hW^{cs}(p)$ into the subspace $x^u=0$,  $\hW^{cu}(p)$ into  $x^s=0$,
$\W^s(p)$ to $x^u=x^c=0$, $\W^s(p)$ to $x^u=x^c=0$, and $\W^u(p)$ to  $x^s=x^c=0$.  This is possible
because all of the submanifolds in question are $C^r$.  Since $\hW^u_p$ is a $C^r$ subfoliation of
$\hW^{cu}(p)$, and $\hW^s_p$ is a $C^r$ subfoliation of $\hW^{cs}(p)$, we may also choose these coordinates
so that each leaf $\hW^u_p(q)$, for $q\in \hW^{cu}(p)$ is sent into an affine space 
$x^s=0, x^c\equiv x^c_0$ and each leaf $\hW^s_p(q')$, for $q'\in \hW^{cs}(p)$ is sent into an affine space 
$x^u=0, x^c\equiv {x^c_0}'$.

\begin{figure}[h]
\psfrag{hW^s}{$\hW^u_p$}
\psfrag{hW^u}{$\hW^s_p$}
\psfrag{hW^c}{$\hW^c(p)$}
\psfrag{W^s}{$\W^u(p)$}
\psfrag{W^u}{$\W^s(p)$}
\psfrag{x^u}{$x^s$}
\psfrag{x^c}{$x^c$}
\psfrag{x^s}{$x^u$}
\begin{center}
\includegraphics[scale=1.0]{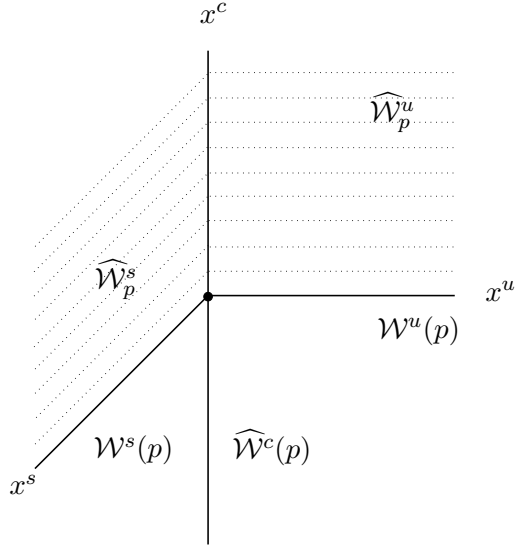}
\caption{Coordinates adapted to the fake foliations at $p$.}
\end{center}
\end{figure}
We can choose these coordinates to depend uniformly on $p$.  We call these coordinates {\em adapted coordinates at $p$}. Whenever we refer to adapted coordinates at a point $p$, we implicitly assume that they are chosen with a uniform
bound on their $C^r$ size.

According to Proposition~\ref{p=localfol} the leaves of the fake center,
center-stable  and center-unstable manifolds  
at each point $z$ can be expressed using parametrized $C^r$ plaque families:
$$\hat\omega^{cs}\colon I^m\times I^{c+s} \to \RR^m,\qquad \hat\omega^{cu}\colon I^m\times I^{c+u} \to \RR^m, $$
and
$$\hat\omega^{c}\colon I^m\times I^{c} \to \RR^m,$$
where $\hW^{cu}(z) = \hat\omega^{cu}_z(I^{c+u})$, $\hW^{cs}(z) = \hat\omega^{cs}_z(I^{c+s})$
and $\hW^{c}(z) = \hat\omega^{c}_z(I^{c})$.
The map $\hat\omega^c$ is obtained from $\hat\omega^{cs}$ and $\hat\omega^{cu}$ using the implicit function theorem.
We may assume these maps take the form:
$$\hat\omega^{cs}_z(x^{c},x^{s}) = z+ (\hat\beta^{cs}_z(x^c,x^s),x^s, x^c)\qquad \hat\omega^{cu}_z(x^{c},x^{u}) = z+ (\hat\beta^{cu}_z(x^u, x^c),x^u, x^c),$$ and  
$$\hat\omega^{c}_z(x^{c}) = z+ (\hat\beta^{c}(x^c),x^c),$$
where $\hat\beta_z^{cu}\in C^r(I^{c+u},\RR^s)$, $\hat\beta_z^{cs}\in C^r(I^{c+s},\RR^u)$,
and $\hat\beta_z^{c}\in C^r(I^{c},\RR^{s+u})$,
and $z\mapsto \hat\beta^{\ast}_z$ is continuous in the $C^r$ topology.  
Moreover, we have $\hat\beta^\ast_z(0) = 0$ and
$\hat\omega^\ast_0 \equiv 0$ for $\ast\in\{cs, cu,c\}$.

We now derive further consequences of the $r$-bunching hypothesis on $f$.
The first concerns the behavior of the plaque families $\hW^{\ast}(y)$
for $y\in \hW^\ast(x)$, for $\ast\in\{cs,cu,c\}$.  

\begin{lemma}\label{l=cspinch} For each $v= (0,v^s,v^c)\in\hW^{cs}(0)$, $w = (w^u,0, w^c)\in\hW^{cu}(0)$, and $z= (0,0,z^c)\in\hW^c(0)$, and for every positive integer $\ell\leq r$,
we have:
$$|j_{0}^\ell \hat\beta^{cs}_v| = o(|v^c|^{r-\ell}),
\quad  |j_{0}^\ell \hat\beta^{cu}_w| = o(|w^c|^{r-\ell}),\quad\hbox{and}\quad
|j_{0}^\ell \hat\beta^{c}_z| = o(|z^c|^{r-\ell}).$$
All of these statements hold uniformly in the coordinate system based at $p$.
\end{lemma}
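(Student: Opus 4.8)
The plan is to derive Lemma~\ref{l=cspinch} from the exponential growth estimates in Proposition~\ref{p=localfol}, part 3, combined with the $C^r$ Section Theorem machinery and a rescaling argument in adapted coordinates. I will concentrate on the first estimate, $|j_0^\ell\hat\beta^{cs}_v| = o(|v^c|^{r-\ell})$ for $v = (0,v^s,v^c)\in\hW^{cs}(0)$; the other two are entirely analogous (with the roles of $E^u$ and $E^s$ swapped for $\hat\beta^{cu}$, and with $\hW^c$ in place of $\hW^{cs}$, intersecting the center-stable and center-unstable estimates for $\hat\beta^c$).

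First I would fix the adapted coordinates at $p$ and set up the comparison between the fake foliation based at $p$ and the fake foliation based at $v$. The key geometric input from Proposition~\ref{p=localfol} is local invariance together with the exponential growth bounds: if $q' \in \hW^{cs}_p(q,\rho_1)$ then $d(q_n,q_n') \leq \hat\gamma_n(p)^{-1} d(q,q')$, and along center-stable plaques of $f$ the map $f$ is conformal up to the factors $\gamma,\hat\gamma$. The idea is that the leaf $\hW^{cs}(v)$ and the leaf $\hW^{cs}(0)$ are both locally invariant under $f$, and the jet $j_0^\ell\hat\beta^{cs}_v$ measures their deviation at $0$ (after translating $v$ back to the origin). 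Pushing forward $n$ times under $f$, the $x^u$-component of this deviation contracts at rate governed by $\nu_n$ (the $\hW^s$-direction inside $\hW^{cs}$ is contracted by $\nu$), while distances along the center plaque contract at rate $\hat\gamma_n$ to leading order and $\gamma_n$ from below; the jet of order $\ell$ scales like $\hat\gamma_n^{-\ell}$ or $\gamma_n^{-\ell}$ under this reparametrization. Combining these, $|j_0^\ell\hat\beta^{cs}_{v_n}|$ is comparable to $\nu_n(p)\,\gamma_n(p)^{-\ell}\,|j_0^\ell\hat\beta^{cs}_v|$ (up to the reparametrization of the center coordinate), and one runs the iteration forward until the relevant scale is $\Theta(1)$, where the jet is bounded by a universal constant.

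The technical heart is the bookkeeping: choosing $n = n(v^c)$ so that $|v^c_n| = \gamma_n(p)|v^c|$ (roughly) is $\Theta(1)$, i.e. $\gamma_n(p) = \Theta(|v^c|^{-1})$, and then showing that the $r$-bunching inequalities $\nu < \gamma^r$ (together with $\nu < \gamma\hat\gamma^r$) force $\nu_n(p)\gamma_n(p)^{-\ell} = o(\gamma_n(p)^{-r})  = o(|v^c|^{r})$... wait — more carefully, one needs $\nu_n\gamma_n^{-\ell} \le \gamma_n^{r-\ell}\cdot(\nu_n\gamma_n^{-r})$, and $\nu_n\gamma_n^{-r}\to 0$ because $\nu<\gamma^r$ pointwise with a uniform gap, so $\nu_n\gamma_n^{-r}$ decays exponentially; hence $|j_0^\ell\hat\beta^{cs}_v| \le O(1)\cdot\nu_n\gamma_n^{-\ell} = o(\gamma_n^{r-\ell}) = o(|v^c|^{r-\ell})$, giving the claimed little-$o$ bound. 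One has to be careful that the nonautonomous cocycles $\nu_n,\gamma_n,\hat\gamma_n$ are used correctly (the estimates in Proposition~\ref{p=localfol} are stated along orbits), and that the comparison of the leaf $\hW^{cs}(v_n)$ with $\hW^{cs}(0_n)$ really does contract as claimed — this uses that both leaves stay in the $\rho_1$-ball and that $\hW^{cs}_{p}$ is the unique locally invariant center-stable plaque family, so $f^n(\hW^{cs}(v,\rho_1))\subset\hW^{cs}(v_n)$ and similarly for $0$.

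The main obstacle I anticipate is making the scaling of the $\ell$-jet under the nonsmooth-in-$p$ reparametrization precise: the coordinate change straightening $\hW^{cs}(0_n)$ back to the standard subspace is only $C^r$ (not smooth), and it varies with $n$, so one cannot naively say "the jet transforms by the derivative cocycle." The clean way around this is to avoid reparametrizing altogether and instead argue directly: estimate $|\hat\beta^{cs}_v(x^c,x^s)|$ pointwise for $|(x^c,x^s)|$ up to scale $\sim|v^c|$ using the contraction of the $x^u$-component under $f^n$ and the fact that at scale $1$ the fake center-stable leaves are uniformly $C^r$ hence have uniformly bounded $\ell$-jets — this converts a pointwise $C^0$ bound at scale $|v^c|$ into the $\ell$-jet bound at $0$ via the uniform $C^r$ control (a Landau-type interpolation, exactly the kind of estimate appearing in Lemma~\ref{l=graphjets} and the proof of Proposition~\ref{p=Holder}). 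I would also invoke Lemma~\ref{l=pointwise}-style pointwise estimates to control how points in the $|v^c|$-neighborhood move under $f^n$. Once the $C^0$ bound $|\hat\beta^{cs}_v| = o(|v^c|^r)$ on a ball of radius $\Theta(|v^c|)$ is in hand, the uniform $C^r$ bound on the leaves upgrades it to $|j_0^\ell\hat\beta^{cs}_v| = o(|v^c|^{r-\ell})$ by a standard scaling inequality for $C^r$ functions, completing the proof.
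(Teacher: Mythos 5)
Your high-level strategy is on target: iterate the dynamics, keep track of contraction rates, and use $r$-bunching to get the exponential decay. You also correctly flag the principal obstacle --- the change of adapted coordinates from $p$ to $p_{\pm n}$ is only $C^r$, not smooth, and not uniformly so in $n$, which is precisely why one cannot naively ``push the $\ell$-jet through a reparametrization.'' Where your proposal runs into trouble is the workaround you propose, and it differs from the paper's in a way that leaves real gaps.

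The paper does \emph{not} prove a $C^0$ bound on a ball and then interpolate. It works directly at the level of $\ell$-jets at the basepoint: it sets up the graph-transform map $\cT^\ell_f$ on $J^\ell_0(\RR^{c+u},\RR^s)_0$ (via Lemma~\ref{l=graphjets}), defined chart-by-chart along the backward orbit $(p_{-n})_{n\ge 0}$. The contraction estimate (\ref{e=cscontract}) is $\kappa = \max\{\nu\gamma^{-\ell},\nu\gamma^{-1}\}$, and a function $\delta$ is chosen with $\kappa < \delta^{r-\ell}$ \emph{strictly} (possible by $r$-bunching and compactness); $n$ is then tuned so $|w^c| = \Theta(\delta_{-n}(p)^{-1})$. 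The little-$o$ in the conclusion is exactly the exponential gap between $\kappa_{-n}$ and $\delta_{-n}^{r-\ell}$. No interpolation is needed, because the object being contracted \emph{is already the $\ell$-jet}; the only thing that must be checked is that the initial jet $j^\ell_0\hat\beta^{cu}_{w_{-n}}$ is $O(1)$, and this is the uniform $C^r$ control from Proposition~\ref{p=localfol}.

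Your route --- first a $C^0$ bound $|\hat\beta^{cs}_v| = o(|v^c|^r)$ on a ball of radius $\Theta(|v^c|)$, then an interpolation to jets --- has two genuine issues.

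\emph{First}, the interpolation step does not give $o$ as stated. If $|g|\le \delta(\eps)\eps^r$ on $B(0,\eps)$ and $\|g\|_{C^r}\le M$, write $g = P + R$ with $P$ the Taylor polynomial of degree $\lfloor r\rfloor$. Then $|R(x)|\le CM|x|^r$ with a \emph{fixed} constant, so on $B(0,\eps)$ one only gets $|P|\le (\delta(\eps)+CM)\eps^r = O(\eps^r)$, hence $|D^\ell g(0)| = O(\eps^{r-\ell})$. This can be repaired for $\ell<r$ by applying the Markov inequality on a still smaller ball $B(0,\eta\eps)$ and optimizing $\eta = \eta(\delta(\eps))\to 0$: one then gets $|D^\ell g(0)|\le C\eps^{r-\ell}\bigl(\delta(\eps)\eta^{-\ell} + M\eta^{r-\ell}\bigr)$, and both terms can be made $o(1)$. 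But this optimization is not ``a standard scaling inequality'' to be invoked without proof, and it \emph{fails} outright when $r$ is an integer and $\ell = r$, a case covered by the Lemma; there $\eta^{r-\ell} = 1$ and you only recover $O(1)$. The paper's direct jet-contraction avoids this entirely.

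\emph{Second}, the $C^0$ bound on a ball is itself not an output of the basic dynamical argument. The exponential bounds in Proposition~\ref{p=localfol}(3) and the graph-transform machinery naturally control the jet of $\hat\beta^{cs}_v$ \emph{at the basepoint} $(0,0)$ (where $\hat\beta^{cs}_v = 0$); extending this to a uniform $C^0$ estimate on the whole ball $B(0,\Theta(|v^c|))$ would require controlling the derivative throughout the ball, which is essentially the Lemma itself at nearby points. So there is a circularity risk in your two-step scheme.

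A smaller point worth fixing: you say you will prove the $\hat\beta^{cs}$ case, but the rates you write down ($\nu$ for the transverse direction, $\gamma,\hat\gamma$ for the tangent, forward iteration of $f$) are those appropriate to $\hat\beta^{cu}$. For $\hat\beta^{cs}$ the transverse direction is $x^u$, the correct contraction rate (under $f^{-1}$) is $\hat\nu$, and the tangent conorm (under $f^{-1}$) is $\hat\gamma$; the bunching inequality that makes it work is $\hat\nu < \hat\gamma^r$. The paper handles $\hat\beta^{cu}$ explicitly and gets $\hat\beta^{cs}$ by replacing $f$ with $f^{-1}$.

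In summary: your instincts about the mechanism are right and you spot the key difficulty, but the $C^0$-plus-interpolation replacement is not a complete substitute for the paper's $\ell$-jet graph transform, and the $o$-versus-$O$ distinction is exactly where it leaks.
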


\begin{proof} We prove the assertion for $\hat\beta^{cu}$; the argument for $\hat\beta^{cs}$ is the same
but with $f$ replaced by $f^{-1}$.  The assertion for $\hat\beta^c$ follows from the first two.

As in the proof of Proposition~\ref{p=Holder}, 
we will  use the  convention that if $q\in M$ and $j\in \ZZ$,
then $q_j$ denotes the point $f^j(q)$, with $q_0=q$.
For a positive function $\alpha\colon M\to \RR_+$ we also use the cocycle notation described there.

Endow the disjoint union $\hat M_p = \bigsqcup_{n\geq 0} B(p_{-n},\rho)$ with
the $C^r$ adapted coordinate system based at $p_{-n}$ in the ball $B(p_{-n},\rho)$.
We thereby identify $\hat M_p$ with the disjoint union
$\bigsqcup_{n\geq 0} (I^m)_{-n}$.  This coordinate system is
not invariant under $f$, but certain aspects of it are;
in particular, the planes $x^u=0$ and $x^s=0$ are invariant,
as are the families $x^u=0, x^c\equiv x_0^c$ and $x^s=0, x^c\equiv x_0^c$.
Moreover, we may assume (having chosen $\eps>0$ small enough in the application
of Proposition~\ref{p=localfol}) that for any point of the form $(0, x^s, x^c)\in  B(p_{i},\rho)$,
writing $f(0, x^s, x^c) = (0, x^s_1, x^c_1)$, we have that
$|x^s_1| \leq \nu(p_i)|x^s|$ and  $\gamma(p_i)|x^c|\leq |x^c_1| \leq \hat\gamma(p_i)^{-1}|x^c|$.
Similarly for any point of the form $(x^u, 0, x^c)\in  B(p_{i+1},\rho)$,
writing $f^{-1}(x^u, 0, x^c) = (x^u_{-1}, 0 , x^c_{-1})$, we have that
$|x^u_{-1}| \leq \hat\nu(p_i)|x^u|$ and  $\hat\gamma(p_i)|x^c|\leq |x^c_{-1}| \leq \gamma(p_i)^{-1}|x^c|$.

Let $\hat M_p(1) = \bigsqcup_{n\geq 1} B(p_{-n},1)$,
and note that $f(\hat M_p(1)) \subset \hat M_p$. 
Let $\varphi$ be the change of coordinate $\varphi(x^u,x^s,x^c) = (x^c, x^u, x^s)$,
and let $\tilde{f} = \varphi\circ f \circ \varphi^{-1}$.
Now write, for $x\in \hat M_p(1)$: 
$$D\tilde{f}(x) = \left(\begin{array}{cc}
A_x & B_x\\
C_x & K_x
\end{array}\right),
$$ 
where $A_x\colon \RR^{c+u}\to \RR^{c+u}$, $B_x\colon \RR^{s}\to \RR^{c+u}$,
$C_x\colon \RR^{c+u}\to \RR^{s}$ and $K_x\colon \RR^{s}\to \RR^{s}$.
We may assume that $\eps>0$ was chosen small enough in the application of Proposition~\ref{p=localfol}
that for every $x\in f^{-1}(B(p_{-n+1},1))\cap B(p_{-n},1)$, we have
that $m(A_x) \geq \gamma(p_{-n})$ and $\|K_x\| \leq \nu(p_{-n})$ ,
and $\|B_x\|$ and $\|C_x\|$ are very small.
The partial hyperbolicity and $r$-bunching hypotheses $\nu < \gamma$ and
$\nu < \gamma^r$ then imply that, for all $\ell\leq r$:
$$\sup_{x\in \hat M_p}\max \left\{ \frac{\|A_{x}\|}{m(K_{x})}, \frac{\|K_{x}\|}{m(A_{x})^\ell}\right\} < 1.$$
Fix $0\leq \ell\leq r$, and let  $\kappa = \max\{\nu\gamma^{-\ell}, \nu\gamma^{-1}\}$.
Also fix a continuous function $\delta < \min\{1,\gamma\}$ such that
$\kappa < \delta^{r-\ell}$; this is possible since $f$ is $r$-bunched.

Consider the $C^{k-\ell}$ induced map 
$$\cT_{f}^\ell\colon \hat M_p(1) \times J_0^\ell(\RR^{c+u},\RR^{s})_0 \to \hat M_p \times J_0^\ell(\RR^{c+u},\RR^{s})_0$$
defined by:
$$\cT_{f}^\ell(x, j_0^\ell\psi) = (f(x), j_0^\ell\psi'), $$
where $\psi'\in \Gamma^\ell_0(\RR^{c+u},\RR^{s})_0$ satisfies:
$$\tilde{f}(x + \hbox{graph}(\psi)) = \tilde{f}(x) + \hbox{graph}(\psi')$$ 

Lemma~\ref{l=graphjets} implies that there is a metric $|\cdot|_L$
on $J_0^\ell(\RR^{c+u},\RR^{s})_0$
such that for all $n\geq 0$, all $x\in B(p_{-n-1},1)\subset \hat M_p(1)$
and all $j_0\psi, j_0\psi' \in  J_0^\ell(I^{c+u},\RR^{s})_0$, 
with $|j_0\psi|_L, |j_0\psi'|_L \leq 1$, we have:
\begin{eqnarray}\label{e=cscontract}
| \cT^\ell_{f}(x,j_0\psi) - \cT^\ell_{f}(x,j_0\psi) |_L \leq \kappa(p_{-n})|j_0\psi - j_0\psi'|_L.
\end{eqnarray}

Given a point $w = (w^u,0,w^c) \in \hW^{cu}(p)$, 
we choose $n\in \ZZ_+$ such that $|w^c| = \Theta(\delta_{-n}(p)^{-1})$. This is possible,
since $\delta<1$ is a continuous function (remember that $\delta_{-n}$ is the product
of reciprocal values of $\delta$, and so $\delta_{-n}(p)^{-1}$ is less than $1$).
The planes $x^s=0, x^c \equiv x^c_0$ lie in an $\eps$-cone about the center-stable
distribution for $f$. Hence under iteration by $f^{-1}$, the part of 
$x^s=0, x^c \equiv x^c_0$ that remains inside of $\hat M_p(1)$ for $n$ iterates 
is a smooth plane that remains in the $\eps$-cone about the center-stable distribution.
Write $w_{-n} = f^{-n}(w) = (w^u_{-n},0, w^c_{-n})$.
Since $|w^c| = \Theta(\delta_{-n}(p)^{-1})$ and $|w^u| = O(1)$, and
$\hat\nu < \delta\gamma < 1$, Proposition~\ref{p=localfol},
parts (1)-(3) imply that $|w^u_{-n}| = O(\hat\nu_{-n}(p)^{-1}) = o(1)$ and 
$|w^c_{-n}| = O(\delta_{-n}(p)^{-1}\gamma_{-n}(p))= o(1)$; in particular,
we have that $w_{-i}\in B(p_{-i},1)$, for $i=1,\ldots, n$.

Now consider the orbit of $(w_{-n}, j^\ell_0\hat\beta^{cu}_{w_{-n}}) \in \hat M_p(1) \times J_0^\ell(\RR^{c+u},\RR^{s})_0$ under $\cT^\ell_{f}$. Local invariance of the $\hW^{cu}_p$ plaque family
implies that $${\left(\cT^\ell_{f}\right)}^n(w_{-n},j^\ell_0\hat\beta^{cu}_{w_{-n}}) = (w, j^\ell_0\hat\beta^{cu}_w).$$
On the other hand, since $f$ leaves invariant the planes $x^s=0$, we have that
${\left(\cT^\ell_{f}\right)}^n(w_{-n},0) = (w, 0)$ .
But now (\ref{e=cscontract}) implies that 
\begin{eqnarray*}
|j^\ell_0\hat\beta^{cu}_w|_L& \leq &\kappa_{-n}(p)^{-1}|j^\ell_0\hat\beta^{cu}_{w_{-n}}|_L\\
& = &  O( \kappa_{-n}(p)^{-1})
\end{eqnarray*}
On the other hand, $\kappa < \delta^{r-\ell}$,
and $|w^c| = \Theta( \delta_{-n}(p)^{-1})$. This implies that $|j^\ell_0\hat\beta^{cu}_w| = o(|w^c|^{r-\ell})$,
completing the proof of Lemma~\ref{l=cspinch}.
\end{proof}

The next consequence of $r$-bunching we derive concerns the discrepancy between
the leaves of the real and fake stable (or unstable) foliation originating at a given point.
To state these results, we introduce a parametrization of the fake stable
and unstable foliations as follows.  We are interested in the restriction
of the fake stable foliation $\hW^{s}_x$ to the center-stable leaf $\hW^{cs}(x)$.

As above, fix an adapted coordinate system at $p$.  Proposition~\ref{p=localfol} implies

that $\hW^s_p$ is a $C^r$ subfoliation when restricted to $\hW^{cs}(p)$. We
are going to give a different parametrization of $\hW^{cs}(p)$ to reflect this fact.
Recall our definition above: 
$\hat\omega^{cs}_z(x^{c},x^{s}) = z+ (\hat\beta^{cs}_z(x^c,x^s),x^s,x^c)$, and  $\hat\omega^{cu}_z(x^{c},x^{u}) = z+ (x^u, \hat\beta^{cu}_z(x^c,x^u), x^c)$. 
Using the implicit function theorem, we can write instead:
$$\hat\omega^{cs}_z(x^{c},x^s) = z+ (\hat\beta^{s,u}_z(x^c,x^s), x^s, \hat\beta^{s,c}_z(x^c,x^s)),$$
and
$$\hat\omega^{cu}_z(x^{u},x^c) = z+ (x^u \hat\beta^{u,s}_z(x^c,x^u), \hat\beta^{u,c}_z(x^c,x^u)),$$ 
with the property that for fixed $x^c\in I^c$:
$$ \hat\omega^{cs}_z(x^c,I^s) = \hW^s_z(\hat\omega^{cs}(x^c,0)),\quad\hbox{and}\quad \hat\omega^{cu}_z(x^c,I^u) = \hW^u_z(\hat\omega^{cu}_z(x^c,0)),
$$
and such that $z\mapsto \hat\beta^s_z = (\hat\beta^{s,u}_z, \hat\beta^{s,c}_z) \in C^r(I^c\times I^s,\RR^{u+c})$ and
$z\mapsto \hat\beta^u_z = (\hat\beta^{u,s}_z, \hat\beta^{u,c}_z)\in C^r(I^c\times I^u,\RR^{s+c})$ 
are all continuous in the $C^r$ topologies.
We may further assume that $\hat\beta^{s,c}_z(x^c,0) = x^c = \hat\beta^{u,c}_z(x^c,0)$.
Our choice of coordinates also implies that
$\hat\beta^{s}_0 \equiv 0$ and $\hat\beta^{u}_0\equiv 0$.
Finally, note that $\hat\omega^{cs}_z(0,I^s) = \hW^s_z(z) = \cW^s(z,\rho)$ and 
$\hat\omega^{cu}_z(0,I^u) = \hW^u_z(z) =\cW^u(z,\rho)$.

\begin{figure}[h]
\psfrag{hW^s}{$\hW^u_p$}
\psfrag{hW^u}{$\hW^s_p$}
\psfrag{hW^c}{$\hW^c(p)$}
\psfrag{W^s}{$\W^u(p)$}
\psfrag{W^u}{$\W^s(p)$}
\psfrag{x^u}{$x^s$}
\psfrag{x^c}{$x^c$}
\psfrag{x^s}{$x^u$}
\psfrag{(0,0,z^c)}{$z=(0,0,z^c)$}
\psfrag{B}{$\hW^{cu}(z)\cap\{x^u=0\}$}
\psfrag{A}{$\hW^c(z)$}
\psfrag{E}{$\{x^u=x^u_0\}$}
\psfrag{D}{$\hW^{cu}(z)\cap\{x^u=x_0^u\}$}
\psfrag{C}{$(x^u_0, \hat\beta^u_{z}(0,x^u_0))$}
\psfrag{F}{$(0,\hat\beta^{u,s}(x^c,0),z^c+x^c)$}
\psfrag{G}{$(x^u_0, \hat\beta^u_{z}(x^c,x^u_0))$}
\psfrag{H}{$\hW^{cu}(z)$}
\begin{center}
\includegraphics[scale=0.8]{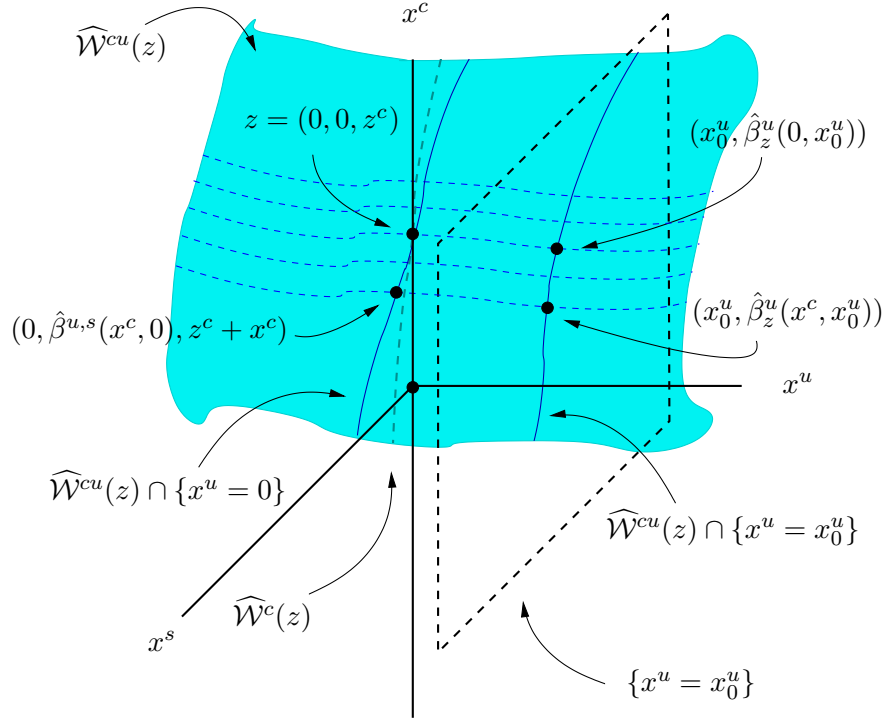}
\caption{Parametrizing the fake unstable foliations at $(0,0,z^c)$.}
\end{center}
\end{figure}

Fix $z^c\in I^c$. We are interested in the deviation between the true stable leaf
$\hat\omega^{cs}_{(0,0,z^c)}(\{0\}\times I^s)$ and the fake stable leaf
$\hat\omega^{cs}_{0}(\{z^c\} \times I^s)$; this is measured by the distance between
the functions
$\hat\beta^{s}_{(0,0,z^c)}(0,\cdot)$ and $\hat\beta^{s}_{0}(z^c,\cdot)$ at a point
$x^s\in I^s$.  We are interested not only in the $C^0$-distance between
these functions, but in the distance between their transverse jets.
By our choice of coordinate system, we have that
$\hat\beta^{s}_{0}$ is identically $0$; hence we will estimate
just the jets of $\hat\beta^{s}_{(0,0,z^c)}$ in the $x^c$ direction at $x^c=0$ and a fixed value of $x^s$.

\begin{lemma}\label{l=spinch} For $z^c\in I^c$, $x^s\in I^s$ and $x^u\in I^u$ we have:
$$ \left|j^\ell_{0}\left(x^c\mapsto \hat\beta^s_{(0,0,z^c)}(x^c,x^s)\right)\right| = |x^s|\cdot o(|z^c|^{r-\ell}),$$
and
$$\left|j^\ell_{0}\left(x^c\mapsto \hat\beta^u_{(0,0,z^c)}(x^c,x^u)\right)\right| = |x^u|\cdot o(|z^c|^{r-\ell}),$$
for every $\ell\leq r$.
\end{lemma}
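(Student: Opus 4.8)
\textbf{Proof strategy for Lemma~\ref{l=spinch}.}

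The plan is to follow the same template used in the proof of Lemma~\ref{l=cspinch}: set up an induced graph-transform cocycle on the relevant jet bundle, use $r$-bunching to make it a fiberwise contraction with the right rate, and then run the orbit of a well-chosen point backward in time so that the fiber contraction, combined with the central growth control from Proposition~\ref{p=localfol}, forces the jet to be $o(|z^c|^{r-\ell})$. I will prove the estimate for $\hat\beta^u$; the estimate for $\hat\beta^s$ is the mirror argument with $f$ replaced by $f^{-1}$.

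First I would fix the adapted coordinate system at $p$ and observe that, for $z=(0,0,z^c)$, the leaf $\hat\omega^{cu}_z(\{0\}\times I^u)$ is the \emph{true} unstable leaf through $z$, which by the uniqueness clause of Proposition~\ref{p=localfol} equals $\W^u(z,\rho)$, while $\hat\omega^{cu}_0(\{z^c\}\times I^u)$ is the \emph{fake} unstable leaf $\hW^u_p$ through the point $\hat\omega^{cu}_0(z^c,0)$. Both are $C^r$ submanifolds of the $C^r$ manifold $\hW^{cu}(p)$, and both are local unstable leaves for the restriction of $f$ to $\hW^{cu}(p)$: the true one for $f$ itself, the fake one for the extended map ${\mathbf f}_\rho$ of Proposition~\ref{p=localfol}. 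The function $x^c\mapsto\hat\beta^u_{(0,0,z^c)}(x^c,x^u)$ records, at transverse height $x^u$, the discrepancy between these two graphs over the $x^c$-direction. I would next work on the disjoint union $\hat M_p = \bigsqcup_{n\ge 0} B(p_{-n},1)$ in adapted coordinates, as in Lemma~\ref{l=cspinch}, and set up the graph-transform cocycle $\cT^\ell_f$ acting on $\hat M_p(1)\times J^\ell(\RR^{c+u},\RR^{?})$ whose invariant section over $\W^u(p)$ is the (jet of the) unstable bundle restricted to $\hW^{cu}(p)$: here the relevant linear data is $\hat E^u_p$ inside $\hW^{cu}(p)$, which is $C^r$ by the last clause of Proposition~\ref{p=localfol}(7). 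The fiber contraction rate on $\ell$-jets is $\kappa = \max\{\hat\nu\hat\gamma^{-\ell},\hat\nu\hat\gamma^{-1}\}$ — using the growth bounds $\gamma(p_i)\le |x^c_1/x^c| \le \hat\gamma(p_i)^{-1}$ and $\|x^u_{-1}\|\le\hat\nu(p_i)\|x^u\|$ recorded in Proposition~\ref{p=localfol} — and Lemma~\ref{l=graphjets} gives an adapted norm $|\cdot|_L$ in which $\cT^\ell_f$ contracts by $\kappa(p_{-n})$ on the unit ball of each fiber, provided the off-diagonal blocks $B_x$ are small (which holds after shrinking $\eps$).

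With that machinery in place, the argument is: pick $n\in\ZZ_+$ with $|z^c| = \Theta(\delta_{-n}(p)^{-1})$ for a continuous $\delta<\min\{1,\gamma\}$ chosen so that $\kappa<\delta^{r-\ell}$ (possible by $r$-bunching, exactly as in Lemma~\ref{l=cspinch}). Pull the point $z$ and the transverse slice back by $f^{-n}$; Proposition~\ref{p=localfol}(1)--(3) shows the backward iterates stay in $B(p_{-i},1)$ and that the $x^u$-size of $f^{-n}(z)$ shrinks like $\hat\nu_{-n}(p)$. The quantity $j^\ell_0(x^c\mapsto \hat\beta^u_{z}(x^c,x^u))$ measures the difference between the true unstable jet section and the fake one (the latter being $0$ in these coordinates), and this difference transforms under $\cT^\ell_f$, so after $n$ steps of contraction $|j^\ell_0(x^c\mapsto\hat\beta^u_z(x^c,x^u))|_L \le \kappa_{-n}(p)^{-1}\cdot(\text{initial discrepancy at }f^{-n}(z))$. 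The factor $|x^u|$ is produced by noting that the discrepancy vanishes to first order in $x^u$ on the true unstable leaf (i.e. $\hat\beta^u_z(x^c,0)$ carries only the central coordinate, so the relevant part is $O(|x^u|)$ uniformly), and then $\kappa_{-n}(p)^{-1} = o(\delta_{-n}(p)^{-(r-\ell)}) = o(|z^c|^{r-\ell})$ closes the estimate; uniformity in $p$ follows because every constant in the construction is uniform. The main obstacle I expect is bookkeeping the precise jet bundle and the role of the $|x^u|$ (resp. $|x^s|$) prefactor — one must phrase the graph transform so that it is literally the \emph{difference} of the true and fake unstable graphs that gets contracted, and verify that this difference is $O(|x^u|)$ at $x^u=0$ to all orders $\le r$, rather than just $O(1)$; this is where the coherence statement Proposition~\ref{p=localfol}(4) (that $\hW^u_p$ subfoliates $\hW^{cu}_p$) and the normalization $\hat\beta^{u,c}_z(x^c,0)=x^c$ are used. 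Everything else is a routine adaptation of the proof of Lemma~\ref{l=cspinch}.
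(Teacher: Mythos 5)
Your high-level template is right: mirror Lemma~\ref{l=cspinch}, set up a fiberwise-contracting graph-transform cocycle on a jet bundle, iterate backward $n = \Theta(\log|z^c|^{-1})$ steps, and use $r$-bunching to beat $|z^c|^{r-\ell}$. But the proposal diverges from the paper's proof in three substantive ways, and two of them are genuine gaps.

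\textbf{Wrong jet bundle.} You propose the cocycle $\cT^\ell_f$ on $\hat M_p(1)\times J^\ell(\RR^{c+u},\RR^?)$, i.e.\ an ordinary jet bundle, as in Lemma~\ref{l=cspinch}. The paper instead builds a \emph{mixed} jet bundle $J^{\ell,1}_{\{0\}\times I^u}(I^c\times I^u,\RR^{s+c})$ — $\ell$-jets in $x^c$ of $1$-jets in $x^u$ — and runs the graph transform there. This is not bookkeeping: it is precisely what decouples the ``$o(|z^c|^{r-\ell})$'' smallness in the $x^c$-direction from the ``$|x^u|$'' linear vanishing in the $x^u$-direction, and it is the structure behind the final equality-of-mixed-partials / fundamental-theorem-of-calculus step that converts $j^{\ell,1}_{0,x^u_0}\hat\beta^u_z$ into a bound on $j^\ell_0(x^c\mapsto\hat\beta^u_z(x^c,x^u_0))$ with the $|x^u_0|$ prefactor.

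\textbf{Wrong fiber contraction rate.} You write $\kappa = \max\{\hat\nu\hat\gamma^{-\ell},\hat\nu\hat\gamma^{-1}\}$, which leans on the normal hyperbolicity bunching $\hat\nu < \hat\gamma^r$. The paper's cocycle on the mixed jet bundle has rate $\kappa = \max\{\nu/(\gamma\hat\gamma^{\ell}), \nu/(\gamma\hat\gamma)\}$, which leans on the transverse-regularity bunching $\nu < \gamma\hat\gamma^r$. Both bunching inequalities enter, but in different roles: the paper picks $\delta < \min\{1,\gamma\}$ with $\kappa < \delta^{r-\ell}$ \emph{and} $\hat\nu\hat\gamma^{-1} < \delta^r$. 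Your $\kappa$ is simply not the fiberwise Lipschitz constant of the transform you need.

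\textbf{Missing the twin-point recursion — this is the real gap.} Your closing estimate is ``$\kappa_{-n}^{-1}\cdot(\text{initial discrepancy at }f^{-n}(z))$'' with the $|x^u|$ prefactor coming from the claim that the discrepancy at $f^{-n}(z)$ is $O(\hat\nu_{-n}(p)^{-1}|x^u_0|)$ because the $x^u$-size shrinks. This does not work: the mixed jet $j^{\ell,1}_{(0,w^u_{-n})}\hat\beta^u_{z_{-n}}$ is evaluated at a base point whose $x^u$-coordinate is small, but the jet itself is merely $O(1)$ (it is a jet of a uniformly $C^r$ function, not a quantity that vanishes with the base point). Multiplying $O(1)$ by $\kappa_{-n}^{-1}$ gives $o(|z^c|^{r-\ell})$ but \emph{without} the $|x^u_0|$ factor. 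The paper's actual mechanism introduces twin points $w' = (x^u_0,0,z^c)\in\hW^u_p(z)$, whose $\hat\beta^u$-data is identically zero, and runs a non-autonomous recursion
$$|j^{\ell,1}_{-i+1}\hat\beta^u|_L \le O(\kappa(p_{-i}))\,|j^{\ell,1}_{-i}\hat\beta^u|_L + O(|w_{-i}-w'_{-i}|)$$
in which the contraction is measured against the twin's zero jet, and an error $O(\hat\nu_{-n}(p)^{-1}\hat\gamma_i(p_{-n})^{-1}|x^u_0|)$ enters at each step from the $C^1$-dependence of the graph transform on the base point. The $|x^u_0|$ factor lives in those accumulating error terms, not in the initial condition, and the telescoping sum is what produces $|x^u_0|\cdot o(|z^c|^{r-\ell})$. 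Without the twin points and the accumulating-error recursion, the factor $|x^u_0|$ does not appear.

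You did correctly anticipate the final obstacle in your last paragraph — that the difference must vanish to first order in $x^u$ ``to all orders $\le r$,'' which is exactly why the proof needs to control $j^\ell_0(j^1_{x^u}\beta)$ rather than $j^\ell_0(\beta(\cdot,x^u))$ directly — but you did not see that this forces the mixed jet bundle as the ambient space of the cocycle and the twin-point bookkeeping to extract the linear factor.
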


\begin{remark} Consider the transversals $x^u=0$ and $x^u = x^u_0$ to the
foliations $\hW^u_0$ and $\hW^u_{(0,0,z^c)}$.  
If we restrict to the space $x^u=x^s = 0$ inside the first transversal (which corresponds
to the center manifold $\hW^c(p)$), then the holonomy map for $\hW^u_p\vert_{\hW^{cu}(p)}$ 
to the second transversal is trivial in these coordinates, sending $(0,0,x^c)$ to
$(x^u_0,0,x^c)$.  If we consider instead the holonomy map for $\hW^u_{(0,0,z^c)}\vert_{\hW^{cu}(0,0,z^c)}$
between these transversals,  then the point $(0,\hat\beta^{u,s}(x^c,0), z^c+x^c)$ is
sent to $(x^u_0,\hat\beta^{u}_{(0,0,z^c)}(x^c,x^u_0))$
The $\ell$-jet of this holonomy at $(0, 0, z^c)$ (measured in the $x^c$ coordinate) is precisely 
the quantity $j^\ell_{0}\left(x^c\mapsto \hat\beta^u_{(0,0,z^c)}(x^c,x^u_0)\right)$ estimated by Lemma~\ref{l=spinch}.
\end{remark}

\begin{proof}[Proof of Lemma~\ref{l=spinch}]
We continue to adopt the conventions and notations in the proof of Lemma~\ref{l=cspinch},
we define $\hat M_p$ and $\hat M_p(1)$ as in that proof, and use the same coordinate
system defined there. We prove the assertion for $\hat\beta^u$; the proof for $\hat\beta^s$
is the same, but with $f$ replaced by $f^{-1}$.

Denote by $f_0$ the restriction of $f$ to
$\bigsqcup_{n\geq 1} \hW^c(p_{-n})  \subset \hat M_p(1)$,
which we regard locally as a map from $I^c$ to $I^c$. We now focus
attention on a single neighborhood $B(p_{-n},1)$, for some fixed $n\geq 1$,
and regard $x^c\in I^c$ as coordinatizing $x^u=0, x^s=0$
and $(x^u,x^{s+c})\subset I^u\times I^{s+c} = I^m$ as coordinatizing
points in this neighborhood.

In local coordinates respecting the decomposition $I^m = I^u\times I^{s+c}$,
write:
$$f(x^u, x^{s+c}) = (f_u(x^u, x^{s+c}), f_{sc} (x^u, x^{s+c})).$$
In a neighborhood of each point, this map acts on graphs of $C^1$ functions from 
$I^u$ to $\RR^{s+c}$ by the
usual graph transform, which is a contraction on the fibers of 
$\pi^{1,0}\colon J^1(I^u,\RR^{s+c}) \to J^0(I^u,\RR^{s+c}) = I^u\times \RR^{s+c}$.
Unstable manifolds for $f$ are sent to unstable manifolds under this graph transform, and, locally, fake
unstable manifolds are sent to fake unstable manifolds.
For each point $(0,0,z^c)\in I^m$, we will consider a $C^\ell$ family of such $1$-jets,
expressed as a function of the coordinate $x^c$ transverse to the fake unstable
foliation in $\hW^{cu}(p_{-n}) = \{x^s = 0\}$; we study 
the variation of such graphs through points $(0,0,z^c+x^c)$ near $x^c=0$.

The space of all such $\ell$-jets of $1$-jets at the point $x^c=0$ is the bundle $J^\ell_0(J^1_{I^u}(I^u,\RR^{s+c}))$.
Elements of this ``mixed jet bundle'' are of the form $j^{\ell}_0(j^1_{x^u}\beta)$,
where $\beta(x^c,x^u)\colon I^c\times I^u\to \RR^{s+c}$ is defined in a neighborhood of
$\{0\}\times I^u$, the map $\beta(x^c,\cdot)$ is $C^1$, and the map
$x^c\mapsto j^1_{x^u}\beta(x^c,\cdot)$ is $C^\ell$.  In particular, if $\beta$ is $C^{\ell+1}$,
then this property is satisfied. We denote this space $\Gamma^\ell_0(I^c, \Gamma^1_{I^u}(I^u, \RR^{s+c}))$
of such local functions by $\Gamma^{\ell,1}_{\{0\}\times I^u}(I^c\times I^u, \RR^{s+c})$.
We also denote $j^{\ell}_0(j^1_{x^u}\beta)$ by
$j^{\ell,1}_{0,x^u}\beta$ and the bundle $J^\ell_0(J^1_{I^u}(I^u,\RR^{s+c}))$ by
$J^{\ell,1}_{\{0\}\times I^u}(I^c\times I^u,\RR^{s+c})$.

Note that in our parametrization $\hat\beta^u \colon I^m \times I^c \times I^u \to I^{s+c}$
of the fake unstable subfoliations, the set $\hat\beta^u_z(x^c,I^u)$ is the leaf
of $\hW^u_z$ through the point $\omega^{cu}_z(x^c,0) = z + (0,\hat\beta^{cu}_z(x^c))$; if
$z = (0,0,z^c)$, then the unique point of $\hW^u_z$ intersecting $x^u=0$ is of the
form $(0, x^s, z^c+x^c)$.  Because the sets $\{x^u=0, x^s = \hbox{const}\}$
are invariant under $f$ in our coordinate system, the image of the
point $(0, x^s, z^c+x^c)$ is of the form $(0, {x^s}', f_0(z^c+x^c))$.
This is the unique point on the leaf of $\hW^{u}_{f(z)}$ intersecting
$x^u=0$, which in turn lies in the set $\hat\beta^u_{f(z)}(\{f_0(z^c+x^c) - f_0(z^c)\}\times I^u)$.
We will thus define the natural action of $f$ on $I^c \times \Gamma^{\ell,1}_{\{0\}\times I^u}(I^c\times I^u, I^{s+c})$
so that it sends  $(z_0, \hat\beta^u_{(0,0,z_0)}(\{x^c\}\times I^u))$ to $(f_0(z_0), \beta_{f(0,0,z^c)}( \{f_0(z^c+x^c) - f_0(z^c)\}\times I^u)) $. 

For $(z^c, \beta) \in I^c \times \Gamma^{\ell,1}_{\{0\}\times I^u}(I^c\times I^u, \RR^{s+c})$, we would like
to define the map $\cT(z^c, \beta)\in \Gamma^{\ell,1}_{\{0\}\times I^u}(I^c\times I^u, \RR^{s+c})$ implicitly
by the equation
\begin{eqnarray}\label{e=betadef}
\cT(z^c, \beta)\left(f_0(z^c+x^c) - f_0(z^c), f_u(x^u, \beta(x^c,x^u) + (0,z^c))\right)\\
= f_{sc}(\beta(x^c,x^u) + (0,z^c)) - (0, f_0(z^c));
\end{eqnarray}
if such a map exists, then we will have:
$$ \cT(z^c, \hat\beta^u_{(0,0,z^c)}(x^c,I^u)) = \hat\beta^u_{(0,0,f_0(z^c))}(f_0(x^c+z^c) - f_0(x^c),I^u). 
$$
To check local invertibility, we must check that the map
$$g_{z^c}(x^c,x^u) = (f_0(z^c+x^c) - f_0(z^c), f_u(x^u, \beta(x^c,x^u) + (0,z^c)))$$
on $I^c\times I^u$ is invertible in a neighborhood of $(0,x^u)$.
The derivative of this map at $(0,x^u)$ is
$$Dg_{z^c}(0,x^u) = \left(\begin{array}{cc}
 Df_{0}(z^c) &  0\\
  C  &  K
\end{array}
\right),
$$
where
$$K = \frac{\partial{f_u}}{\partial x^{u}}(\beta(0,x^u)+(0,z^c)) +  \frac{\partial{f_u}}{\partial x^{s+c}}(x^u,\beta(0,x^u)+(0,z^c)) \circ \frac{\partial{\beta}}{\partial x^{u}}(0, x^u)
$$
and
$$B=  \frac{\partial{f_u}}{\partial x^{s+c}}(x^u,\beta(0,x^u)+(0,z^c))\circ \frac{\partial{\beta}}{\partial x^{c}}(0,x^u).
$$
This map invertible if  $\frac{\partial{\beta}}{\partial x^{u}}(0, x^u)$ is sufficiently small.
Let $\cT(z^c, \beta)$ be defined by (\ref{e=betadef}) on this subset. 

Next, for $0\leq \ell\leq k-1$,
consider the map $$\cT_{f}^{\ell,1}\colon I^c\times J^{\ell,1}_{\{0\}\times I^u}(I^c\times I^u,\RR^{s+c}) \to \RR^c\times J^{\ell,1}_{\{0\}\times I^u}(I^c\times I^u,\RR^{s+c}),$$
defined (in a neighborhood of the $0$-section) by
$$ \cT_{f}^{\ell,1}\left (z^c, j_{0}^{\ell} \left(j^1_{x^u}\beta \right)\right) = \left(f_0(z^c), \,j^{\ell}_0 \left(j^1_{g_{z^c}(x^c,x^u)}\cT(z^c, \beta)\right)\right).
$$

Recall that we have been working in a single coordinate neighborhood $B(p_{-n},1)$.
We combine these definitions of $ \cT_{f^{-1}}^{\ell,1}$ over all neighborhoods to define a global map 
$$\cT_{f}^{\ell,1}\colon \bigsqcup_{n\geq 1}\left(I^c\times J^{\ell,1}_{\{0\}\times I^u}(I^c\times I^u,\RR^{s+c})\right)_{-n}\qquad\qquad\qquad\qquad\qquad\qquad\qquad$$
$$\qquad\qquad\qquad\qquad\qquad\qquad\qquad\longrightarrow \bigsqcup_{n\geq 0} \left(I^c\times J^{\ell,1}_{\{0\}\times I^u}(I^c\times I^u,\RR^{s+c})\right)_{-n}$$
(where the $-n$ subscript denotes the neighborhood $B(p_{-n},\rho)$ in the disjoint union).
This map is fiberwise $C^{k-\ell-1}$ (in particular, it is $C^1$ if $\ell  < k-1$)  and 
has the property that $\cT_{f}^{\ell,1}(z, j^{\ell,1}_{(0,x^u)}\hat\beta^u_z) = (f(z), j^{\ell,1}_{g_{z^c}(0,x^u)}\hat\beta^u_{f(z)})$.

A calculation very similar to the one in the proof of Lemma~\ref{l=graphjets} shows that
there is a norm $|\cdot|_L$ on $J^{\ell,1}_{\{0\}\times I^u}(I^c\times I^u,\RR^{s+c})$ such that,
for all  $n\geq 0$,  $z^c\in I^c_{-n-1}$, $x^s\in I^s_{-n-1}$, 
and all $j^{\ell,1}_{(0,x^u)}\beta, \,j^{\ell,1}_{(0,x^u)}\beta' \in J^{\ell,1}_{\{0\}\times I^u}(I^c\times I^u,\RR^{s+c})_{-n-1}$ 
sufficiently close to the $0$-section, we have:
\begin{eqnarray}\label{e=scontract}
\left|\cT_{f}^{\ell,1}(z^c, j^{\ell,1}_{(0,x^u)}\beta) - \cT_{f}^{\ell,1}(z^c, j^{\ell,1}_{(0,x^u)}\beta')\right|_L\qquad\qquad\qquad\qquad\qquad\\ 
\qquad\qquad\qquad\qquad\qquad\qquad\qquad\leq
{\kappa(p_{-n})}\left|j^{\ell,1}_{(0,x^u)}\beta - j^{\ell,1}_{(0,x^u)}\beta' \right|_L,
\end{eqnarray}
where $\kappa = \max\{\nu/(\gamma\hat\gamma^{\ell}), \nu/(\gamma\hat\gamma)\}$.
The $r$-bunching hypothesis implies that $\kappa < 1$.

Having made these preliminary estimates, we finish the proof of Lemma \ref{l=spinch}.
Fix $0\leq \ell\leq r$ and a continuous function $\delta < \min\{1,\gamma\}$ such that:
$$\kappa < \delta^{r-\ell} \quad\hbox{and}\quad \hat\nu\hat\gamma^{-1} < \delta^r;$$
this is possible since $f$ is partially hyperbolic and $r$-bunched.
Fix a point $z^c\in I^c$ and an integer $n\geq 0$ such that $|z^c| = \Theta(\delta_{-n}(p)^{-1})$. 
Let $z=(0,0,z^c) \in I^m_0$.  By our choice of $n$, we have that for $0\leq i\leq n$, 
$|f_0^{-i}(z^c)| \leq \gamma_{-i}(p)|z^c| \leq  \gamma_{-i}(p)\Theta(\delta_{-n}(p)^{-1}) \ll 1$,
if $|z^c|$ sufficiently small (uniformly in $p$).  Thus we may assume that $z_{-i} = f^{-i}(z)\in \hat M_p(1)$, for 
$0\leq i\leq n$.

Next, fix a point $x^u_0\in I^u$, and consider the point
$w= \hat\omega^{cu}_{z}(0,x^u_0) = (x^u_0,\hat\beta^{u,s}_z(0,x^u_0), z^c+ \hat\beta^{u,c}_z(0,x^u_0))$, which is the point of intersection of the unstable manifold $\cW^u(z)$ with $x^u=x^u_0$. For $0\leq i \leq n$, write
$w_{-i} = (w^s_{-i}, w^u_{-i}, w^c_{-i})$.  
Since $w$ lies on the unstable manifold of $z$, which is uniformly contracted by $f^{-1}$,
and since $z_{-i}\in \hat M_p(1)$ for $0\leq i \leq n$, we have that $w_{-i} \in I^m_{-i}$ for $0\leq i \leq n$.

We also will use a sequence of ``twin points''
in our calculations.  The twin $w'$ is defined $w'  = (x^u_0, 0, z^c)$;
notice that $w' \in \hW^u_p(z)$.
We then set $w_{-i}' = f^{-i}(w')$, and write $w_{-i}' = ({w^u_{-i}}', 0, {w^c_{-i}}')$, for $0\leq i\leq n-1$.
Since $w\in \cW^u(z)$, and  $w' \in \hW^u_p(z)$, it follows that
$$|w_{-n} - w_{-n}'| \leq |w_{-n} - f^{-n}(z)| + |w_{-n}' - f^{-n}(z)| \leq 2 \hat\nu_{-n}(p)^{-1} |x^u_0|.
$$
The vector $w-w'$ lies in a cone about the center-stable distribution for $f$ at $w'$.
Since this cone is mapped into itself by $Tf^{-1}$, acting as a strict contraction, it follows that
$w_{-i} - w'_{-i}$ lies in this cone as well, for $0\leq i \leq n$.  Recall that vectors
in this cone are contracted/expanded under $f$ by at most $\hat\gamma^{-1}$.
Since $|w_{-n} - w_{-n}'| = O(\hat\nu_{-n}(p)^{-1})$, it follows from a simple inductive argument
that $|w_{-i} - w_{-i}'| = O(\hat\nu_{-n}(p)^{-1} \hat\gamma_{i}(p_n)^{-1} |x^u_0|)$, for $i=0\ldots,n$.
In particular, $|w - w'| = O(\hat\nu_{-n}(p)^{-1} \hat\gamma_{n}(p_n)^{-1} |x^u_0|) = O(\hat\nu_{-n}(p)^{-1} \hat\gamma_{-n}(p) |x^u|_0)$.  Since $\hat\nu\hat\gamma^{-1} < \delta^r$, and $|z^c| = \Theta(\delta_{-n}(p)^{-1})$,
we obtain that $|w - w'| \leq |x^u_0| o(|z^c|^r)$.  But $w-w' = (\hat\beta^{u,s}_z(0,x^u),0 , \hat\beta^{u,c}_z(0,x^u))$,
and so we have shown that $|\hat\beta^u_z(0,x^0)| \leq  |x^u_0| o(|z^c|^r)$, proving the lemma
for the case $\ell=0$.

We next turn to the case $\ell>1$.
Consider the points $(z^c_{-n}, j^{\ell,1}_{(0,w^u_{-n})}\hat\beta^u_{z_{-n}} )$ and
$(z^c_{-n} j^{\ell,1}_{(0,{w^u_{-n}}')} 0)$ in 
$(I^c\times J^{\ell,1}_{\{0\}\times I^u}(I^c\times I^u,\RR^{s+c}))_{-n}$.

To simply notation, we write ``$\cT$'' for $\cT^{\ell,1}_f$
and  $j^{\ell,1}_{-i} \hat\beta^u$ for
 $j^{\ell,1}_{(0,w^u_{-i})}\hat\beta^u_{z_{-i}}$. The notation $|\cdot|_L$ is the fiberwise norm
 on  $I^c\times J^{\ell,1}_{\{0\}\times I^u}(I^c\times I^u,\RR^{s+c})$ defined above (hence
 $|(x,j^{\ell,1}_y\beta)|_L = |j^{\ell,1}_y\beta|_L$).
Having fixed this notation, we next estimate, for $0\leq i\leq n$:
\begin{eqnarray*}
|j^{\ell,1}_{-i+1}(\hat\beta^u)|_L& = &
|\cT(z^c_{-i}, j^{\ell,1}_{-i}\hat\beta^u)|_L\\
& \leq&
|\cT(z^c_{-i}, j^{\ell,1}_{-i}\hat\beta^u) -\cT(z^c_{-i}, j^{\ell,1}_{(0,w^u_{-i})} 0) |_L\\
&&\qquad\qquad\qquad + |\cT(z^c_{-i}, j^{\ell,1}_{(0,w^u_{-i})}0) |_L.
\end{eqnarray*}
We estimate the first term in this latter sum using (\ref{e=scontract}):
\begin{eqnarray*}
|(z^c_{-i}, j^{\ell,1}_{-i}\hat\beta^u) -\cT(z^c_{-i}, j^{\ell,1}_{(0,w^u_{-i})} 0)|_L
&\leq & \kappa(p_{-i}) |j^{\ell,1}_{-i}\hat\beta^u|_L.
\end{eqnarray*}
The second term is estimated using two facts. First, we have that the map $\cT$ is fiberwise
$C^1$ (since $\ell\leq r<k-1$), and so
$$|\cT(z^c_{-i}, j^{\ell,1}_{(0,w^u_{-i})}0)  -  \cT(z^c_{-i}, j^{\ell,1}_{(0,{w^u_{-i}}')}0)|_L
= O(|w_{-i} - w_{-i}'|) = O(\hat\nu_{-n}(p)^{-1} \hat\gamma_{i}(p_{-n})^{-1}).$$
Second, we note that
$\cT\left(z^c_{-i}, j^{\ell,1}_{(0, {w^u_{-i}}'))}0 \right)=
(z^c_{-i+1}, j^{\ell,1}_{(0, {w^u_{-i+1}}')}0 )$.  Hence:
\begin{eqnarray*}
| \cT(z^c_{-i}, j^{\ell,1}_{(0,w^u_{-i})} 0)|_L &\leq& |\cT(z^c_{-i}, j^{\ell,1}_{(0,w^u_{-i})} 0)  - \cT(z^c_{-i}, j^{\ell,1}_{(0, {w^u_{-i}}'))}0)|_L\\
&=& O(\hat\nu_{-n}(p)^{-1} \hat\gamma_{i}(p_{-n})^{-1}),
\end{eqnarray*}
for $i=0,\ldots,n$.
Combining these calculations, we have, for $0\leq i\leq n$:
\begin{eqnarray*}
|\,j^{\ell,1}_{-i+1}(\hat\beta^u)\,|_L =O( \kappa(p_{-i}))\,|\,j^{\ell,1}_{-i}\hat\beta^u\,|_L  + O(\hat\nu_{-n}(p)^{-1} \hat\gamma_{i}(p_{-n})^{-1}).
\end{eqnarray*}
By an inductive argument, we obtain:
\begin{eqnarray*}
|\,j^{\ell,1}_{0}(\hat\beta^u)\,| &=& O( \sum_{i=0}^n  \kappa_{i-n}(p)^{-1} \hat\nu_{-n}(p)^{-1} \hat\gamma_{i}(p_{-n})^{-1} )\\
&=& o( \sum_{i=0}^n  \delta_{i-n}(p)^{\ell-r} \hat\nu_{i-n}(p)^{-1} \hat\nu_{i}(p_{-n})\hat\gamma_{i}(p_{-n})^{-1} )\\
&=& o( \sum_{i=0}^n  \delta_{i-n}(p)^{\ell-r} \hat\nu_{i-n}(p)^{-1} \delta_{i}(p_{-n})^r )\\
&=& o(\delta_{-n}(p)^{\ell-r} ),
\end{eqnarray*}
where we have used the facts that $\kappa  < \delta^{r-\ell}$, and $\hat\nu/\hat\gamma < \delta^r$.
Since  $|z^c| = \Theta(\delta_{-n}(p)^{-1})$,
and recalling our notation for $j^{\ell,1}_{0,x_0^u}\hat\beta^s_z$,
we obtain that
\begin{eqnarray}\label{e=sbound1}
|\,j^{\ell,1}_{0}(\hat\beta^u)\,| = |\, j^{\ell,1}_{0,x_0^u}\hat\beta^u_z \,| = o(|z^c|^{r-\ell}),
\end{eqnarray}
for all $x^u_0\in I^u$.

We are not quite done yet, as (\ref{e=sbound1}) is not exactly what is claimed in the statement of Lemma~\ref{l=spinch}.
To finish the proof, we note that if $\beta$ is $C^{\ell+1}$, then by the equality of mixed partials,
we have that $j^{1}_{x^u=x^u_0}(j^\ell_{x^c=0}\beta) = j^{\ell}_0(j^1_{x^u_0}\beta) = j^{\ell,1}_{0,x_0^u}\beta$.
The quantity we want to estimate is 
$$\left|j^\ell_{0}\left(x^c\mapsto \hat\beta^u_{(0,0,z^c)}(x^c,x^u)\right)\right|$$
Consider the function $\zeta\colon I^u \to J^\ell_0(\RR^c,\RR^{c+s})$ given by
$$\zeta(x^u) = j^\ell_{0}(x^c\mapsto \hat\beta^u_{(0,0,z^c)}(x^c,x^u)).$$ 
The value $\zeta(x^u_0)$ can be obtained by integrating
its derivative along a smooth curve $\gamma(x^u)$, tangent to $\W^u_z(z)$, from
$0$ to $x^u_0$ .
But note that, since $\hat\beta^u_z$ is a $C^{\ell+1}$ function,
we must have $j_{x^u}^1\zeta =  j^{\ell,1}_{0,x^u}\beta$; (\ref{e=sbound1}) implies
that $\zeta(x^u_0) \leq |x^u_0|\cdot o(|z^c|^{r-\ell})$, for all $x^u_0\in I^u$. This completes the 
proof of Lemma~\ref{l=spinch}.
\end{proof}

We remark that the same estimates hold for the lifted fake foliations $\hW^\ast_F$ if $F$ is $C^k$ and
$r$-bunched, for $k\geq 2$ and $r=1$ or $r<k-1$.

\subsection{Fake holonomy}

In the discussion that follows, we define holonomy maps for various fake
foliations between fake center manifolds.  Because we
are interested in local properties, we will be deliberately
careless in referring to the sizes of the domains of definition.  For
example, if $x$ and $x'$ lie within distance $1$ on the same stable manifold, 
and $\tau$ and $\tau'$ are any smooth transversals to $\hW^{s}_x$ inside
$\hW^{cs}(x)$, then there is a well-defined
$\hW^s_x$ holonomy map between a $\rho'$-ball $B_\tau(x,\rho')$ in $\tau$ and 
$\tau'$, if $\rho'$ is sufficiently small.  We will suppress this restriction
of domain and just speak of the $\hW^{cs}_x$-holonomy map between $\tau$ and $\tau'$.
This abuse of notation is justified because all of the holonomy maps we consider
will be taken over paths of bounded length, and all foliations and fake
foliations are continuous.  Hence the restriction of domain can always be performed
uniformly over the manifold.  This will simplify greatly the notation in the sections
that follow.

Let $x\in M$ and $x'\in \cW^s(x,1)$. We define a $C^r$ diffeomorphism
$$\hh_{(x,x')}\colon \hW^c(x)\to \hW^c(x')$$
as the composition of two holonomy maps: first, $\hW^s_x$ holonomy
between the $C^r$ manifolds $\hW^c(x)$ and $\hW^{cs}(x)\cap \hW^{cu}(x')$,
and second, the $\hW^u_{x'}$ holonomy between  $\hW^{cs}(x)\cap \hW^{cu}(x')$
and $\hW^c(x')$.

We also define for $x'\in \cW^s(x,1)$ the {\em lifted} fake holonomy map
$$\hH_{(x,x')}\colon \hW^c_F(x)\to \hW^c_F(x')$$
by composing $\hW^s_{F,x}$ holonomy
between $\hW^c_F(x) = \pi^{-1}(\hW^c(x))$ and 
$\hW^{cs}_F(x)\cap \hW^{cu}_F(x') = \pi^{-1}(\hW^{cs}(x)\cap \hW^{cu}(x'))$,
and $\hW^u_{F,x'}$ holonomy between  $\hW^{cs}_F(x)\cap \hW^{cu}_F(x')$
and $\hW^c_F(x') = \pi^{-1}(\hW^c(x'))$.  Lemma~\ref{l=liftedfake} implies that 
$\pi\circ \hH_{(x,x')} =  \hh_{(x,x')}\circ \pi $.

We similarly define, for  $x\in M$ and $x'\in \cW^u(x,1)$ a map
$$\hh_{(x,x')}\colon \hW^c(x)\to \hW^c(x')$$
as the composition of $\hW^u_x$ holonomy
between $\hW^c(x)$ and $\hW^{cu}(x)\cap \hW^{cs}(x')$,
and $\hW^s_{x'}$ holonomy between  $\hW^{cu}(x)\cap \hW^{cs}(x')$
and $\hW^c(x')$. Finally, we define, for
 $x\in M$ and $x'\in \cW^y(x,1)$,
$$\hH_{(x,x')}\colon \hW^c_F(x)\to \hW^c_F(x')$$
to be the natural lift of $\hh_{x,x')}$, as above.

Proposition~\ref{p=localfol}, parts (6) and (7) and Lemma~\ref{l=liftedfake} immediately imply:

\begin{lemma} Suppose $f$ is $C^{k}$ and $r$-bunched, for some $k\geq 2$ and $r<k-1$ or $r=1$. 
Then for every $x\in M$ and $x'\in \cW^\ast(x,1)$, for $\ast\in\{s,u\}$, the map
$\hh_{(x,x')}$ is a $C^r$ diffeomorphism and depends continuously in the $C^r$
topology on $(x,x')$ .

If $F$ is a $C^{k}$, $r$-bunched extension of $f$, then  $\hH_{(x,x')}$  is a  $C^r$ diffeomorphism
for every $x\in M$, $x'\in \cW^\ast(x,1)$, and $\ast\in\{s,u\}$and depends continuously in the $C^r$
topology on $(x,x')$.  
Moreover, $\hH_{(x,x')}$ projects to $\hh_{(x,x')}$ under $\pi$.
\end{lemma}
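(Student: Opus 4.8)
The plan is to show that $\hh_{(x,x')}$ and $\hH_{(x,x')}$ are $C^r$ diffeomorphisms with continuous $C^r$-dependence on $(x,x')$ by decomposing each into the two holonomy maps from which it was constructed, and applying Proposition~\ref{p=localfol}, parts (6) and (7), together with the standard fact that holonomy along a $C^r$ foliation of a $C^r$ manifold, when the foliation is $C^r$ as a \emph{foliation} (i.e.\ the tangent distribution is $C^r$ in the relevant transverse direction), is itself a $C^r$ diffeomorphism between smooth transversals.

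First I would treat the case $x'\in\cW^s(x,1)$ for $\hh_{(x,x')}$. By definition this is the composition $h_2\circ h_1$, where $h_1$ is the $\hW^s_x$-holonomy from the $C^r$ submanifold $\hW^c(x)$ to the $C^r$ submanifold $\hW^{cs}(x)\cap\hW^{cu}(x')$, and $h_2$ is the $\hW^u_{x'}$-holonomy from that intersection manifold to $\hW^c(x')$. For $h_1$: both transversals lie inside the single leaf $\hW^{cs}(x)$, which is uniformly $C^r$ by Proposition~\ref{p=localfol}(6b) (using $r$-bunching), and inside this leaf the foliation $\hW^s_x$ is a $C^r$ subfoliation with $C^r$ tangent distribution $\hE^s_x$ depending continuously in the $C^r$ topology on $x$, by Proposition~\ref{p=localfol}(7). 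Hence $h_1$ is a $C^r$ diffeomorphism. The transversals $\hW^c(x)$ and $\hW^{cs}(x)\cap\hW^{cu}(x')$ are $C^r$ and depend continuously in the $C^r$ topology on $(x,x')$ (the latter as a transverse intersection of the $C^r$ plaques $\hW^{cs}(x)$ and $\hW^{cu}(x')$, using that these plaque families are continuous in the $C^r$ topology and transverse), and the subfoliation $\hW^s_x|_{\hW^{cs}(x)}$ depends continuously in $C^r$ on $x$; so $h_1$ depends continuously in the $C^r$ topology on $(x,x')$. Symmetrically, $h_2$ is the $\hW^u_{x'}$-holonomy inside the uniformly $C^r$ leaf $\hW^{cu}(x')$, whose subfoliation by $\hW^u_{x'}$ has $C^r$ tangent distribution $\hE^u_{x'}$ depending continuously in $C^r$ on $x'$, again by Proposition~\ref{p=localfol}(7); so $h_2$ is a $C^r$ diffeomorphism depending continuously in $C^r$ on $(x,x')$. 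A composition of $C^r$ diffeomorphisms is $C^r$, and composition is continuous in the $C^r$ topology (over paths of uniformly bounded length, as remarked at the start of the subsection, so domains can be chosen uniformly), which gives the claim for $\hh_{(x,x')}$; the case $x'\in\cW^u(x,1)$ is identical with the roles of $s$ and $u$ interchanged.

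Next I would handle the lifted map $\hH_{(x,x')}$. Here I invoke Lemma~\ref{l=liftedfake}: the fake foliations for $F$ satisfy all the conclusions of Proposition~\ref{p=localfol}, so the same two-step argument applies verbatim with $f$ replaced by $F$, $\hW^{cs}(x)$ replaced by $\hW^{cs}_F(x)=\pi^{-1}(\hW^{cs}(x))$, etc. Thus $\hH_{(x,x')}$ is a composition of two $C^r$ holonomy diffeomorphisms between $C^r$ submanifolds of $\cB$, hence a $C^r$ diffeomorphism depending continuously in the $C^r$ topology on $(x,x')$. Finally, the relation $\pi\circ\hH_{(x,x')}=\hh_{(x,x')}\circ\pi$ follows because, by Lemma~\ref{l=liftedfake}, for $\ast\in\{cs,cu,c\}$ we have $\hW^\ast_{F,p}=\pi^{-1}(\hW^\ast_p)$ and for $\ast\in\{s,u\}$ the projection $\pi$ maps leaves of $\hW^\ast_{F,p}$ onto leaves of $\hW^\ast_p$; so each of the two component holonomies defining $\hH_{(x,x')}$ covers the corresponding component holonomy defining $\hh_{(x,x')}$, and composing preserves this.

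I do not expect a serious obstacle here — this lemma is a bookkeeping consequence of the hard analytic work already done in Proposition~\ref{p=localfol} and Lemma~\ref{l=liftedfake}. The only point requiring a little care is the continuous $C^r$-dependence of the \emph{intermediate} transversal $\hW^{cs}(x)\cap\hW^{cu}(x')$ (resp.\ $\hW^{cu}(x)\cap\hW^{cs}(x')$) on the pair $(x,x')$: one must note that this is the transverse intersection of two $C^r$ plaques varying continuously in the $C^r$ topology, and that transverse intersection of such families is continuous in $C^r$, which follows from the implicit function theorem applied uniformly (exactly as $\hat\omega^c$ is obtained from $\hat\omega^{cs}$ and $\hat\omega^{cu}$ in Section~\ref{ss=further}). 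Everything else is the standard fact that $C^r$ holonomy along a $C^r$-regular subfoliation of a $C^r$ leaf is a $C^r$ diffeomorphism, combined with the continuity of composition in the $C^r$ topology.
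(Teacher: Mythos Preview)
Your proposal is correct and is exactly the approach the paper takes: the paper simply states that the lemma follows immediately from Proposition~\ref{p=localfol}, parts (6) and (7), together with Lemma~\ref{l=liftedfake}, and you have correctly unpacked those citations into the two-step holonomy decomposition and the relevant regularity assertions. The only content is precisely what you identified --- $C^r$ subfoliation regularity inside each $\hW^{cs}$/$\hW^{cu}$ leaf and continuous $C^r$-dependence of the plaques --- and your treatment of the intermediate transversal via transverse intersection is the right way to handle that detail.
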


The definitions of $\hh$ and $\hH$ readily extend to $(k,1)$-accessible sequences  by composition
(cf. Section~\ref{s=asv} for the definition of accessible sequence).  
Note that any $su$-path corresponds to an $(k,1)$-accessible sequence
if one uses sufficiently many successive points lying in the same stable or unstable leaf.
Lemma~\ref{l=unifaccess} implies that if $f$ is accessible, then there exists a $K_1\in \ZZ_+$ such
that any two points in $M$ can be connected by a $(K_1,1)$-accessible sequence.
For $\cS = (y_0,\ldots,y_k)$ a $(k,1)$-accessible sequence, we define
$\hh_\cS\colon \hW^c(y_0)\to \hW^c(y_k)$ by  
$\hh_\cS = \hh_{(y_{k-1},y_k)}\circ\cdots\circ\hh_{(y_0,y_1)}$ and
$\hH_\cS\colon \hW^c_F(y_0)\to \hW^c_F(y_k)$ by
$\hH_\cS = \hH_{(y_{k-1},y_k)}\circ\cdots\circ\hh_{(y_0,y_1)}$.

\begin{lemma}\label{l=smoothdepend} If $F$ and $f$ are $C^k$ and $r$-bunched for $k\geq 2$ and $r=1$ or $r<k-1$, 
then $\hh_{\cS}$ and $\hH_{\cS}$ are $C^r$ diffeomorphisms that
depend continuously in the $C^r$ topology on $\cS$.
\end{lemma}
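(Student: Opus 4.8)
\textbf{Proof strategy for Lemma~\ref{l=smoothdepend}.}
The plan is to reduce the statement about composed accessible sequences to the single-step case, which is already handled by the preceding lemma, and then to verify that the $C^r$ topology behaves well under composition of $C^r$ diffeomorphisms that vary continuously with a parameter. First I would recall that a $(k,1)$-accessible sequence $\cS = (y_0,\ldots,y_k)$ is a finite tuple in which consecutive points lie within distance $1$ on a common leaf of $\cW^s$ or $\cW^u$; thus the map $\hh_\cS$ is by definition the $k$-fold composition $\hh_{(y_{k-1},y_k)}\circ\cdots\circ\hh_{(y_0,y_1)}$, and similarly for $\hH_\cS$. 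By the previous lemma, each factor $\hh_{(y_{i-1},y_i)}$ is a $C^r$ diffeomorphism (between the $C^r$ plaques $\hW^c(y_{i-1})$ and $\hW^c(y_i)$, which themselves vary continuously in the $C^r$ topology by Proposition~\ref{p=localfol}(6)), and it depends continuously in the $C^r$ topology on the pair $(y_{i-1},y_i)$. The same holds for $\hH_{(y_{i-1},y_i)}$ when $F$ is a $C^k$, $r$-bunched extension.

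The core of the argument is then the elementary fact that composition is continuous as a map $C^r \times C^r \to C^r$ on appropriate bounded sets, and that a composition of $C^r$ diffeomorphisms is a $C^r$ diffeomorphism. Concretely, I would fix for each point $y\in M$ a uniform $C^r$ chart for $\hW^c(y)$ (such charts exist with uniformly bounded $C^r$ size, using the adapted coordinates constructed in Section~\ref{ss=further}); in these charts each $\hh_{(y_{i-1},y_i)}$ becomes a $C^r$ diffeomorphism between open subsets of $\RR^c$, with $C^r$ norm bounded uniformly in $(y_{i-1},y_i)$ since the pairs range over a compact set (leaf distance $\le 1$) and the dependence is $C^r$-continuous. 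Because $k$ is fixed and there are only finitely many factors, the composition has $C^r$ norm controlled by the chain rule applied $\lfloor r\rfloor$ times together with a H\"older estimate on the top-order term; this gives a uniform $C^r$ bound on $\hh_\cS$. The same bookkeeping applies to $\hH_\cS$, using that $\pi\circ\hH_{(x,x')} = \hh_{(x,x')}\circ\pi$ from Lemma~\ref{l=liftedfake}, so that $\hH_\cS$ is a fiber-bundle morphism covering $\hh_\cS$ and inherits $C^r$ regularity fiberwise and in the base.

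For the continuous dependence on $\cS$, I would argue that if $\cS_n = (y_0^{(n)},\ldots,y_k^{(n)}) \to \cS = (y_0,\ldots,y_k)$ (meaning $y_i^{(n)}\to y_i$ for each $i$, with consecutive pairs on common stable/unstable leaves as required), then each factor $\hh_{(y_{i-1}^{(n)},y_i^{(n)})} \to \hh_{(y_{i-1},y_i)}$ in the $C^r$ topology by the previous lemma, and likewise the ambient plaques $\hW^c(y_i^{(n)})$ converge in $C^r$ to $\hW^c(y_i)$. Since composition is jointly continuous in the $C^r$ topology on the relevant uniformly-bounded families (this is where one invokes Proposition~\ref{p=prolongation}, or simply the explicit chain-rule formulas for derivatives of a composition together with uniform continuity of the $\lfloor r\rfloor$-th derivatives), the $k$-fold composition $\hh_{\cS_n}$ converges to $\hh_\cS$ in $C^r$, and similarly $\hH_{\cS_n}\to\hH_\cS$. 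The main obstacle — though a mild one — is purely a domain-of-definition bookkeeping issue: one must ensure that all the intermediate plaques $\hW^{cs}(y_i)\cap\hW^{cu}(y_{i+1})$ and the associated holonomies are defined on neighborhoods of uniform size so that the composition makes sense and the estimates are uniform; as remarked at the start of the fake-holonomy subsection, this uniformity follows from the continuity of all the (fake) foliations together with the fact that we only compose holonomies over paths of bounded length, so one may shrink all domains uniformly once and for all.
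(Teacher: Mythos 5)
Your proof is correct and takes the same (implicit) route the paper intends: the paper states this lemma without proof because it follows immediately by composing the single-step lemma just above it, using that a finite composition of $C^r$ diffeomorphisms is $C^r$ and that composition is jointly $C^r$-continuous on families with uniformly bounded $C^r$ norms. Your fuller write-up, including the domain-of-definition bookkeeping and the chain-rule estimates on the top-order H\"older term, is precisely the content the paper leaves to the reader.
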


We next define the notion of a {\em shadowing accessible sequence}.  This concept
will be crucial for proving that the $C^r$ diffeomorphisms $\hH_\cS$ can
be well-approximated by homeomorphisms that preserve the image of any 
saturated section $\sigma$.

\begin{figure}[h]\label{f=shadowpath}
\psfrag{x}{$x$}
\psfrag{x'}{$x'$}
\psfrag{y}{$y$}
\psfrag{y'}{$y'$}
\psfrag{y''}{$y''$}
\psfrag{w''}{$w''$}
\psfrag{wcx}{$\hW^c(x)$}
\psfrag{wcx'}{$\hW^c(x')$}
\psfrag{wsx}{$\cW^u(x)$}
\psfrag{wsy}{$\cW^u(y)$}
\psfrag{hwsy}{$\hW^u_{x}(y)$}
\psfrag{wuy''}{$\cW^s(y'')$}
\psfrag{hwuy'}{$\hW^s_{x'}(y')$}
\psfrag{(x,x')y}{$(x,x')_y$}
\begin{center}
\includegraphics[scale=1.0]{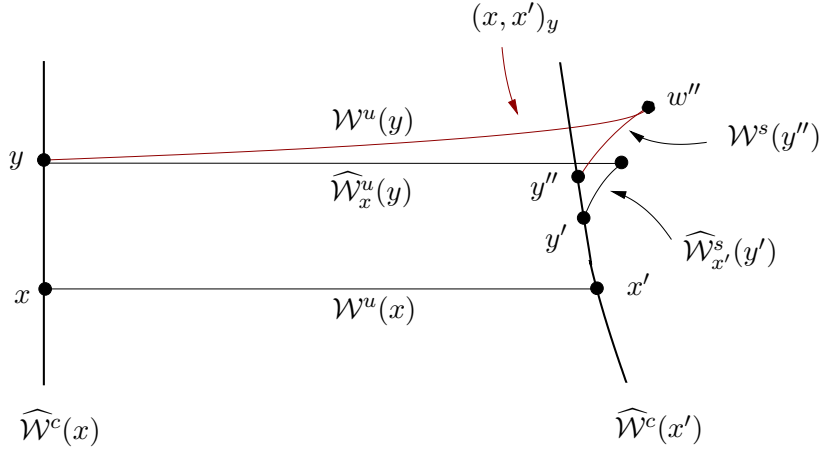}
\caption{The shadowing accessible sequence $(x,x')_y$. The distance between $y'=\hh_{(x,x')}(y)$ and $y''=h_{(x,x')}(y)$ is $O(d(x,y)^r)$; the distance between $x'$ and $y'$ is $O(d(x,y))$ (see Lemma~\ref{l=keyapprox}).}
\end{center}
\end{figure}

Let $x$ be an arbitrary point in $M$, let $x'\in \W^u(x,1)$, and let $y\in \hW^c(x)$.
The {\em shadowing accessible sequence $(x,x')_y$} is defined as follows.
Let  $w''$ be the unique point of intersection of $\W^u(y)$ with
$\bigcup_{z\in \hW^c(x')}\W^s_{loc}(z)$, and let $y''$ be
the unique point of intersection of $\W^u_{loc}(w'')$ and $\hW^c(x')$.
We set $(x,x')_y = (y,w'',y'')$; it is an accessible sequence from $y$ to a point
$y''\in \hW^c(x')$.
See Figure 6.

We have defined $(x,x')_y$ for $x'\in \W^u(x,1)$ and $y\in \hW^c(x)$.  Similarly, 
for $x'\in \W^s(x,1)$, and  $y\in \hW^c(x)$, define
the shadowing accessible sequence $(x,x')_y = (x, w'', y'')$, where
 $w''$ is the unique point of intersection of $\W^s(y)$ with
$\bigcup_{z\in \hW^c(x')}\W^u_{loc}(z)$, and $y''$ is the unique point of intersection of $\W^u_{loc}(w'')$ and $\hW^c(x')$.
It is an accessible sequence from $y$ to a point $y''\in \hW^c(x')$.  
Notice that $(x,x')_y$ is a $(2,1)$ accessible sequence, whereas $(x,x')$
is a $(1,1)$-accessible sequence. We may regard $(x,x')$ as a $(2,1)$
accessible sequence by expressing it as $(x,x',x')$.  Then it is
natural to say that $(x,x')_y\to (x,x')$ as $y\to x$.

We extend the definition of shadowing accessible sequences to all $(k,1)$-accessible
sequences by concatenation.  This defines, for each $(k,1)$-accessible sequence
$\cS$ connecting $x$ and $x'$, and for each $y\in \hW^c(x)$, a $(2k,1)$-accessible
sequence $\cS_y$ connecting $y$ to a point $y'\in \cS^c(x')$. The $(k,1)$ accessible
sequence may be regarded as a $(2k,1)$ accessible sequence by repeating the
appropriate terms in the sequence. With this convention, we have that
$\cS_y\to \cS$ as $y\to x$.
Let $K=2K_1$; henceforth we will restrict our attention to $(K,1)$-accessible sequences.

Now, for $x'\in \W^u(x,1)$ or $x'\in \W^s(x,1)$, we define the map:
$$h_{(x,x')}\colon \hW^c(x) \to \hW^c(x')$$ 
by $h_{(x,x')}(y) = \hh_{(x,x')_y}(y)$; in other words,
$h_{(x,x')}$ sends $y$ to the endpoint of $(x,x')_y$.
Notice that $h_{(x,x)'}$ is a local homeomorphism, but not a diffeomorphism.
However, we will show that $h_{(x,x')}$ has ``an $r$-jet at $x$'' (Lemma~\ref{l=keyapprox});
we will make this notion precise in the following subsections.

Similarly define $\cH_{x,x'}\colon \hW^c_F(x) \to \hW^c_F(x')$ for $x'\in \W^u(x,1)$ or $x'\in \W^s(x,1)$
by $\cH_{(x,x')}(z) = \hcH_{(x,x')_{\pi(z)}}(z)$.
The definitions of $h$ and $\cH$ extend naturally to $(K,1)$-accessible sequences
by composition; for $\cS$ a $(K,1)$-accessible sequence from $x$ to $x'$, we denote by 
$h_{\cS}\colon \hW^c(x)\to \hW^c(x')$ and $\cH_{\cS}\colon \hW^c_F(x) \to \hW^c_F(x')$ 
the corresponding maps.  

Note the simple observation that if $\cS$ is a $(K,1)$-accessible sequence from $x$ to $x'$,
then $\hh_{\cS}(x) = x' = h_\cS(x)$, and for every $z\in \pi^{-1}(x)$,
$\hcH_{\cS}(z) = \cH_\cS(z)$. 

The next lemma is an important consequence of Lemmas~\ref{l=cspinch} and \ref{l=spinch}.  
It tells us that the endpoint of the accessible sequence $(x,x')_y$ is a very
good approximation to $\hh_{(x,x')}(y)$, and this is true even on an infinitesimal level.

\begin{lemma}\label{l=keyapprox} If $f$ is $C^{k}$ and $r$-bunched, for $k\geq 2$ and $r=1$ or $r<k-1$, 
then for every  $(K,1)$ accessible sequence connecting $x$ to $x'$, every  $y\in \hW^c(x)$, 
and every integer $0\leq \ell\leq r$:
$$\| j^\ell_y \hh_{\cS} - j^\ell_y \hh_{\cS_y} \| = o(d(x,y)^{k-\ell}).$$

Moreover, if $F$ is also $C^k$ and $r$-bunched, then for any $z\in \pi^{-1}(x)$ and any $w\in B_\cB(z,1)\cap \pi^{-1}(y)$:
$$\|j^\ell_w \hcH_{\cS} - j^\ell_w \hcH_{\cS_y} \| = o(d(z,w)^{k-\ell}),$$
where the distance is measured in a uniform coordinate system containing the $su$-path
$\gamma_{\cS}$.
\end{lemma}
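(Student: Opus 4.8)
\textbf{Proof strategy for Lemma~\ref{l=keyapprox}.}
The plan is to reduce the estimate for a general $(K,1)$-accessible sequence to the case of a single stable or unstable step, and then to prove the single-step estimate by unpacking the definitions of $\hh_{(x,x')}$ and $h_{(x,x')} = \hh_{(x,x')_y}$ in adapted coordinates and invoking the pinching estimates of Lemmas~\ref{l=cspinch} and \ref{l=spinch}. For the reduction: both $y\mapsto j^\ell_y\hh_{\cS}$ and $y\mapsto j^\ell_y\hh_{\cS_y}$ are compositions of at most $K$ single-step maps on jet bundles, and by Lemma~\ref{l=smoothops} (item (1) for the target composition, item (2) for the diffeomorphism reparametrization) these compositions are $C^{k-\ell}$ maps on jets; in particular each single-step map is Lipschitz on jets with uniform constants (Lemma~\ref{l=smoothdepend}). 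So a telescoping-sum argument, replacing one factor at a time, shows that if each single-step discrepancy is $o(d^{k-\ell})$ in the $j^\ell$-norm, then so is the composite discrepancy, with the distances $d(x_i, y_i)$ along the shadowed sequence all comparable to $d(x,y)$ by the exponential-growth bounds of Proposition~\ref{p=localfol}, parts (1)--(3). (One must track that the base points $y_i$ where the $\ell$-jets are taken also drift by $O(d(x,y))$, which is again handled by Lemma~\ref{l=smoothops} (2).)

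For the single-step estimate, say $x'\in\W^u(x,1)$, I would work in the adapted coordinate system at $x$ introduced in Section~\ref{ss=further}, so that $\W^u(x)$, $\hW^{cu}(x)$, $\hW^{cs}(x)$, and the fake stable/unstable subfoliations are all put in the normal forms $\hat\omega^{cs}$, $\hat\omega^{cu}$ there. Both $\hh_{(x,x')}$ and $h_{(x,x')}=\hh_{(x,x')_y}$ are, by definition, compositions of two fake-foliation holonomies between fake center manifolds; the difference between them is precisely the difference between using the fake foliation $\hW^\ast_{x}$ (resp.\ $\hW^\ast_{x'}$) based at $x$ (resp.\ $x'$) versus the fake foliation based at the intermediate point $y$ (resp.\ at the endpoint of the shadowing step). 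The discrepancy between $\hW^\ast_x(q)$ and $\hW^\ast_q(q)$ for $q\in\hW^c(x)$ at distance $d(x,y)=|q^c|$ from $x$ is exactly what Lemmas~\ref{l=cspinch} and \ref{l=spinch} control: the relevant jets $j^\ell$ of the defining functions $\hat\beta^{cu}_q$, $\hat\beta^u_{q}$ are $o(|q^c|^{r-\ell})$, and in fact by rerunning those contraction arguments on the $(\ell+1)$-jet bundle (using that $F,f$ are $C^k$ with $r<k-1$, so the graph transform on $(\ell+1)$-jets is still a fiberwise contraction with the same $\kappa<\delta^{r-\ell}$-type bound), one gets the sharper $o(|q^c|^{k-\ell})$ once one exploits the full $C^k$ regularity rather than just $r$-regularity. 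Assembling these into the composition formula for the holonomy and using the chain rule on jets (Lemma~\ref{l=smoothops} again) gives $\|j^\ell_y\hh_{(x,x')} - j^\ell_y\hh_{(x,x')_y}\| = o(d(x,y)^{k-\ell})$.

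The lifted statement for $\hcH$ follows by the identical argument, since by Lemma~\ref{l=liftedfake} the fake foliations $\hW^\ast_{F,p}$ for $F$ satisfy the conclusions of Proposition~\ref{p=localfol} and hence the analogues of Lemmas~\ref{l=cspinch} and \ref{l=spinch} (as remarked at the end of Section~\ref{ss=further}), and $\hcH$ projects to $\hh$ under $\pi$ while $\hW^{cs}_{F},\hW^{cu}_F,\hW^c_F$ are pullbacks of the downstairs plaques; one just needs the coordinates on $\cB$ near $\pi^{-1}(x)$ to be chosen uniformly (using the trivialization $\cB=M\times N$), after which $d(z,w)$ and $d(\pi(z),\pi(w))=d(x,y)$ are comparable. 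I expect the main obstacle to be the single-step estimate: carefully decomposing the difference $\hh_{(x,x')} - \hh_{(x,x')_y}$ into pieces each controlled by one of the two pinching lemmas, keeping the bookkeeping of which fake foliation is based where, and in particular upgrading the $o(|z^c|^{r-\ell})$ bounds of Lemmas~\ref{l=cspinch}--\ref{l=spinch} to $o(|z^c|^{k-\ell})$ by using the extra derivatives available when $r<k-1$. The sharper exponent $k-\ell$ (rather than $r-\ell$) in the conclusion is exactly what will later let us feed these holonomies into the approximate-jet version of Journ\'e's theorem (Theorem~\ref{t=journe}), so getting the exponent right is the crux.
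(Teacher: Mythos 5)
Your overall plan — reduce to a single stable or unstable step, work in adapted coordinates at $x$, decompose the discrepancy between $\hh_{(x,x')}$ and $\hh_{(x,x')_y}$ into pieces each controlled by one of the pinching Lemmas~\ref{l=cspinch} and \ref{l=spinch}, and handle the lifted case analogously — is exactly the paper's approach. However, your attempted ``upgrade'' from $o(d^{r-\ell})$ to $o(d^{k-\ell})$ does not work, and this is the crux of the difficulty.

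The exponent in Lemmas~\ref{l=cspinch} and \ref{l=spinch} is dictated by the $r$-bunching inequalities, not by the differentiability class of $f$. Concretely, in the proof of Lemma~\ref{l=cspinch} the contraction factor on $\ell$-jets is $\kappa \le \nu\gamma^{-\ell}$, and $r$-bunching ($\nu < \gamma^r$) gives $\kappa < \gamma^{r-\ell}$; one then chooses $\delta$ with $\kappa < \delta^{r-\ell}$ and picks $n$ with $|z^c| = \Theta(\delta_{-n}^{-1})$, producing the bound $o(|z^c|^{r-\ell})$. To obtain $o(|z^c|^{k-\ell})$ you would need $\kappa < \delta^{k-\ell}$, i.e.\ $\nu < \gamma^{k}$, which is $k$-bunching — not among the hypotheses. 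Passing to the $(\ell+1)$-jet bundle does not help here: having $f\in C^k$ with $r < k-1$ gives you the regularity needed to define the graph transforms on higher jet bundles and guarantees that certain objects are $C^r$, but it does not sharpen the fiberwise contraction rate, which is a dynamical quantity governed only by the bunching ratios. Indeed, your own wording exposes the gap: if ``the graph transform on $(\ell+1)$-jets is still a fiberwise contraction with the same $\kappa<\delta^{r-\ell}$-type bound,'' then iterating it for the same $n$ steps gives $o(|z^c|^{r-\ell})$, not $o(|z^c|^{k-\ell})$.

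The resolution is that the exponent $k-\ell$ in the lemma statement appears to be a typographical error for $r-\ell$. The paper's own proof of Lemma~\ref{l=keyapprox} concludes with $o(d(x,y)^{r-\ell})$ ``as desired,'' and every downstream use of the lemma — in particular Lemma~\ref{l=tangency} and the telescoping estimate (\ref{e=hatvsnohat}) in the proof of Theorem~\ref{t=C^rsec} — only requires and only invokes the $r-\ell$ bound. With the exponent corrected to $r-\ell$, drop the upgrade step and carry out the single-step decomposition exactly as you describe: the paper writes $\hh_{(x,x')} = h^s_{x'}\circ h^u_x$ and $\hh_{(x,x')_y} = h^s_{y''}\circ h^u_{y,\flat}\circ h^u_y\circ h^u_{y,\sharp}$, compares the two term by term in the adapted chart at $x$, and applies Lemma~\ref{l=cspinch} to control the deviation of $\hW^c(y)$ from $\hW^c(x)$, Lemma~\ref{l=spinch} to control the deviation of $\hW^u_y$-holonomy from $\hW^u_x$-holonomy, and (after noting all intermediate points are within $O(d(x,y))$ of $x'$) elementary $C^r$-estimates on holonomies over distances that are $o(d(x,y)^r)$. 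Each ingredient contributes an $\ell$-jet error of $o(d(x,y)^{r-\ell})$, and composing with the jet chain rule (your appeal to Lemma~\ref{l=smoothops}) yields the $r-\ell$ bound. That fixes the proof.
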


\begin{proof}This is almost a direct consequence of Lemmas~\ref{l=cspinch} and \ref{l=spinch} in the previous
subsection. We prove it for accessible sequences of the form $\cS = (x,x')$ with $x'\in \cW^u(x,1)$;
the general case follows easily.

Fix $x$, $x'\in \cW^u(x,1)$ and $y\in\hW^c(x)$.  
Write $(x,x')_y = (y,w'',y'')$, as in the definition. Let $v'$ be the unique point
of intersection of $\hW^u_x(y)$ and $\hW^{cs}(x')$, and let $v''$  be the unique point
of intersection of $\W^u(y)$ and $\hW^{cs}(x')$.  See Figure 7.

\begin{figure}[h]
\psfrag{x}{$x$}
\psfrag{x'}{$x'$}
\psfrag{y}{$y$}
\psfrag{y'}{$y'$}
\psfrag{y''}{$y''$}
\psfrag{y'''}{}
\psfrag{z'}{$v'$}
\psfrag{z''}{$v''$}
\psfrag{w''}{$w''$}
\psfrag{wcx}{$\hW^c(x)$}
\psfrag{wcx'}{$\hW^c(x')$}
\psfrag{wsx}{$\cW^u(x)$}
\psfrag{wsy}{$\cW^u(y)$}
\psfrag{hwsy}{$\hW^u_{x}(y)$}
\psfrag{wuy''}{$\cW^s(y'')$}
\psfrag{hwuy'}{$\hW^s_{x'}(y')$}
\psfrag{hwcux'}{$\hW^{cs}(x')$}
\psfrag{hwux'}{$\W^{s}(x')$}
\begin{center}
\includegraphics[scale=1.0]{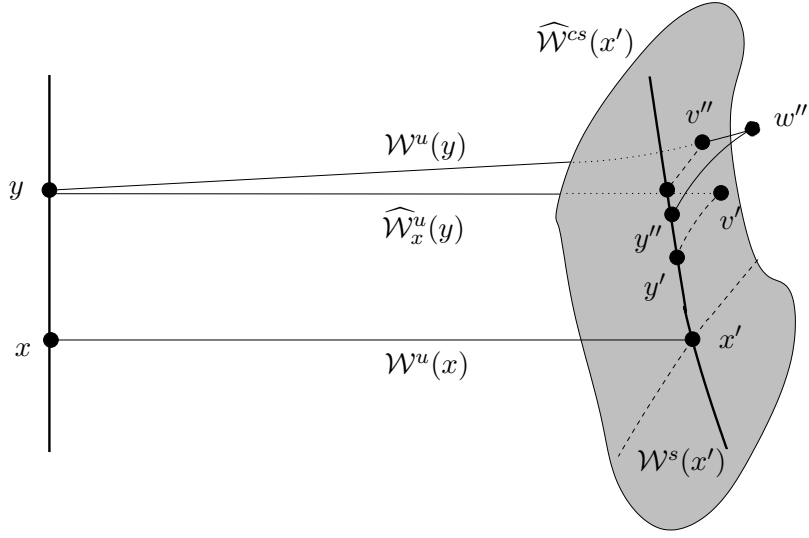}
\caption{Points in the proof of Lemma~\ref{l=keyapprox}}
\end{center}
\end{figure}

Fix a coordinate system adapted at $x$ as in Subsection~\ref{ss=further},
sending $x$ to the origin in $I^m$, $\hW^{cu}(x)$ to $\{x^s=0\}$,  $\hW^{cs}(x)$ to $\{x^u=0\}$,
$\hW^c(x)$ to $\{x^s=0\}$, $\{x^u=0\}$, and sending the fake foliations $\hW^s_x\vert_{\hW^{cs}(x)}$ and $\hW^u_x\vert_{\hW^{cu}(x)}$ to the affine foliations $\{x^u=0, x^s=\hbox{const}\}$ and  $\{x^s=0, x^u=\hbox{const}\}$,
respectively. 
Suppose that $y$ corresponds to the point $z=(0,0,z^c)$ 
and $y''$  corresponds to the point $z''$ in the adapted coordinates at $x$.

In the coordinate system at $x$, we parametrize $\hW^c(x)$ by $x^c\mapsto \hat\omega^c_0(x^c) = (0,0,x^c)$
and $\hW^c(y)$ by $x^c\mapsto \hat\omega_z(x^c)$.
Similarly  we parametrize $\hW^c(x')$ by ${x^c}\mapsto (0,0,{x^c})$
and $\hW^c(y'')$ by ${x^c}\mapsto \hat\omega_{z''}({x^c})$.
We want to compare the $\ell$-jets of $x^c\mapsto \hh_{(x,x')}(0,0,x^c)$
with $x^c \mapsto \hh_{(x,x')_y} \circ \hat\omega_z(x^c)$ at
the point $x^c=z^c$.
We first observe that, by Lemma~\ref{l=cspinch}, we
have that $j^\ell_{z^c}\hat\omega_z(x^c) = o(|z^c|^{r-\ell}) =  o(d(x,y)^{r-\ell})$;
hence we are left to compare the $\ell$-jets of the holonomies 
$\hh_{(x,x')}$ and  $\hh_{(x,x')_y}$ in the coordinates adapted at $x$,
at the point $z$.

We write the maps $\hh_{(x,x')}$ and  $\hh_{(x,x')_y}$ as compositions of several
holonomy maps, and we compare the  distance between the $\ell$-jets of the corresponding terms in 
the compositions. First, we write
$$ \hh_{(x,x')} = h_{x'}^s \circ h_x^u,$$
where $h^u_x\colon \hW^{c}(x)\to \hW^{cu}(x)\cap\hW^{cs}(x')$ is the $\hW^u_x$-holonomy
and  $h^s_{x'}$ is the $\hW^s_{x'}$ holonomy between $\hW^{cu}(x)\cap\hW^{cs}(x')$ 
and $\hW^{c}(x') $.
Next, we write:
$$\hh_{(x,x')_y} =  h^s_{y''} \circ   h^u_{y,\flat}   \circ  h^u_y \circ  h^u_{y,\sharp}
$$ 
where $h^u_{y,\sharp}\colon \hW^{c}(y) \to 
\hW^{cs}(x)\cap\hW^{cu}(y)$, $h^u_y\colon \hW^{cs}(x)\cap\hW^{cu}(y) \to 
\hW^{cu}(y)\cap \hW^{cs}(x')$ and $ h^u_{y,\flat} \colon \hW^{cu}(y)\cap \hW^{cs}(x') \to \hW^{cu}(y)\cap\hW^{cs}(y'') $
are $\hW^u_y$ holonomies, and $h^s_{y''} \colon \hW^{cu}(y)\cap\hW^{cs}(y'') \to \hW^c(y'')$ is $\hW^s_{y''}$-holonomy.

The term $h^u_{y,\sharp}$ in the second composition is expressed in 
the charts at $x$ by the map $(\hat\omega^c_z(x^c),x^c)\mapsto (\overline x^s, 0 ,\overline x^c)$,
where $(\overline x^c, \overline x^s)$ are defined implicitly by the equation 
$\hat\beta^{c,s}_z(\overline x^c, \overline x^s ) = 0 $.
Lemma~\ref{l=cspinch} implies that $|j_{z^c}\hat\omega^{cu}_z - j_{z^c} \hat\omega^{cu}_0|$ 
and $|j_{z^c}\hat\omega^{c}_z - j_{z^c} \hat\omega^{c}_0|$ 
are both $o(|z^c|)^{r-\ell}$, and so in these charts,
$|j^\ell_z h^u_{y,\sharp} -  j^\ell_z id| = o(|z^c|)^{r-\ell}$.

We may choose the coordinate system adapted at $x$ so that  $x'$ is sent to the point $(x^u_0,0,0)$ and
$\hW^{cs}(x')$ is sent to $x^u=x^u_0$, and we may do this in a way that the $C^r$ size of the
chart is bounded independently of $x,x'$; this uses the fact that $p\mapsto \hW^{cs}(p)$
is continuous in the $C^r$ topology.  
Consider the $\hW^u_x$ and $\hW^u_y$ holonomies between $x^u=0$ and $x^c=x^u_0$,
corresponding to the holonomies
$$h^u_x\colon \hW^{cs}(x)\to \hW^{cs}(x'),\quad\hbox{and}\quad h^u_y\colon \hW^{cs}(x)\to \hW^{cs}(x')$$
In the coordinates at $x$, these maps are expressed by the functions
$$(0,x^u,x^c) \mapsto  \omega^{cs}_0(x^c,x^u) ,\quad\hbox{and} \quad (0,x^u,x^c) \mapsto  \omega^{cs}_z(x^c,x^u)$$
Lemma~\ref{l=spinch} implies that  $|j_{z^c}\hat\omega^{cs}_z(\cdot,x^u) - j_{z^c}\hat\omega^{cs}_0(\cdot,x^u) = o(|z^c|)^{r-\ell}$; in the charts at $x$ we
therefore have:
$$|j_z(h^u_x) - j_z(h^u_y)| = o(|z^c|)^{r-\ell} = o(d(x,r)^{r-\ell}).
$$

Consider the image points $v' =  h^u_x(y)$ and $v'' = h^u_y(y)$ of these two holonomy maps in $M$.
Since the distances $d(v',v'')$ and $d(v',y')$ are both $o(|z^c|^r) = o(d(x,y)^{r})$, the transversality
of the bundles in the partially hyperbolic splitting implies that  $d(v'', w'')$ and $d(w'',y'')$
are also $o(d(x,y)^{r})$ (see Figure 7).   Hence the distance from $y''$ to $x$ is $O(d(x',y') + d(y',y'')) = O(d(x,y) + d(x,y)^r) = O(d(x,y))$, and similarly $d(x,v'')$ and $d(x,w'')$ are $O(d(x,y))$. 

We are left to deal with the final terms in the compositions above:
$ h^s_{y''} \circ   h^u_{y,\flat}  $ and $h^s_{x'}$.
All of these are $C^r$ holonomy maps over very short distances, on the order of 
$o(d(x,y)^r)$.  It follows that their $\ell$-jets are close to the identity -- within $o(d(x,y)^{r-\ell})$ --
once we have shown that the transversals on which they are defined have $\ell$-jets
within $o(d(x,y)^{r-\ell})$ of the vertical foliation $\{(x^s,x^u)=\hbox{const}\}$.

Lemma~\ref{l=spinch} implies that the $\ell$-jets of $\hW^{cu}(x')$ and $\hW^{cu}(x)$ coincide along
$\cW^u(x)$.  In particular, in these coordinates, $\hW^c(x')$ and the plane
$\{x^s = 0, x^u = x^u_0\}$ are tangent to order $\ell$ at $x'$. Furthermore,
since $d(x',v'')$, $d(x',w'')$, $d(x',v'')$, $d(x',y')$ and $d(x',y'')$ are all  $O(d(x,y))$,
Lemma~\ref{l=spinch} implies that the manifolds $\hW^{cu}(y)\cap \hW^{cs}(x')$ ,
$\hW^{cu}(x)\cap \hW^{cs}(x')$, $\hW^{cu}(y)\cap\hW^{cs}(y'')$, $\hW^c(y')$ and  $\hW^c(y'')$
can all be expressed in the coordinates adapted at $x$ as graphs of functions from $\{x^u=x^u_0, x^s=0\}$ to $I^{s+u}$  whose $\ell$-jets at $v''$, $v'$, $w''$, $y'$  and $y''$  respectively, are $o(d(x,y)^{r-\ell})$. 
Hence all of the  the transversals for $h^u_{y,\flat}$, $h^s_{x'}$, and  $h^s_{y''}$ have $\ell$-jets
within $o(d(x,y)^\ell)$ of the vertical foliation $\{(x^s,x^u)=\hbox{const}\}$
at their basepoints in the compositions.  It follows that
$|j^\ell_{v''}  (h^s_{y''} \circ   h^u_{y,\flat})  -  j^\ell_{v''} id| = o(d(x,y))^{r-\ell}$
and
$|j^\ell_{v'} h^s_{x'}  -  j^\ell_{v'} id| = o(d(x,y))^{r-\ell}$,
and so
$|j^\ell_{y} \hh_{(x,x')}  -  j^\ell_y\hh_{(x,x')_y}| = o(d(x,y))^{r-\ell}$, as desired.

The proof for the maps $\hcH_{(x,x')}$ and $\hcH_{(x,x')_y}$ are completely analogous.
\end{proof}

\subsection{Central jets}

Let  $(N,\cB,\pi, F)$ be a $C^{k}$, $r$-bunched partially hyperbolic extension of $f$, for some $k\geq 2$,
where $\cB = M\times N$.  We fix Riemannian metrics on $M$ and $N$.
Let $\exp\colon TM\to M$ be
the exponential map for this metric (which we may assume to be $C^\infty$),
and fix $\rho_0>0$ such that $\exp_p$ is a diffeomorphism
from $B_{T_pM}(0,\rho_0)$ to $B_M(p,\rho_0)$, for every $p\in M$.
As in the proof of Lemma~\ref{l=liftedfake}, the bundle $\cB$ pulls back via  $\exp\colon B_{TM}(0,\rho_0) \to M$
to a  $C^r$ bundle $\tilde\pi_0\colon \widetilde{\cB}_0 \to B_{TM}(0,\rho_0)$ with fiber $N$,
where $B_{TM}(0,\rho_0)$ denotes the $\rho_0$-neighborhood of the $0$-section of $TM$.
As in the proof of Lemma~\ref{l=liftedfake}, we fix, for each $p\in M$ a trivialization
of $\widetilde{\cB}_0\vert_{B_{T_pM}(0,\rho_0)}$, depending smoothly on $p\in M$.
Any section $\sigma\colon M\to \cB$  of $\cB$ pulls back
to a section $\tilde\sigma\colon B_{TM}(0,\rho_0)\to \widetilde \cB_0$ via
$\tilde\sigma(v) = (v,\sigma(\exp(v)))$.

Let $TM = \widetilde E^u\oplus \widetilde E^c \oplus \widetilde E^s$ be a $C^\infty$ approximation
to the partially hyperbolic splitting for $f$.  
Observe that $TM$ is a $C^\infty$ bundle over $\widetilde E^c$ under the map $\pi^c\colon
TM \to \widetilde E^c$   that sends 
$v^u+v^c+v^s\in \widetilde E^u(p)\oplus \widetilde E^c(p)\oplus \widetilde E^s(p)$ to 
$v^c\in \widetilde E^c(p)$.   This splitting will give us a global way to parametrize
the fake center manifolds $\hW^c(p)$.


If $f$ is $r$-bunched, for $r=1$ or $r < k-1$,  and the approximation
$TM = \widetilde E^u\oplus \widetilde E^c \oplus \widetilde E^s$ to the hyperbolic splitting is
sufficiently good, then Proposition~\ref{p=localfol} implies 
there exists a map $g^c\colon \widetilde \cB_{\widetilde E^c}(0,\rho) \to B_{TM}(0,\rho_0)$ with the following properties:
\begin{enumerate}
\item $g^c$ is a section of $\pi^c\colon B_{T_pM}(0,\rho)\to \widetilde \cB_{\widetilde E^c}(0,\rho)$,
\item the restriction of $g^c$ to $ B_{\widetilde E^c(p)}(0,\rho)$ is a $C^r$ embedding into $T_pM$,
depending continuously in the $C^r$ topology on $p\in M$;
\item for $p\in M$, the image $g^c( B_{\widetilde E^c(p)}(0,\rho))$ coincides with $\exp_{p}^{-1}(\hW^c(p))$.

\end{enumerate}

Let $\tilde{\pi}^c =\pi^c\circ \tilde\pi\colon \widetilde{\cB}_0\to B_{\widetilde E^c}(0,\rho)$. The bundles
and the relevant maps are summarized in the following commutative diagram.
\[
\begin{xy}
\xymatrixcolsep{4pc}
\xymatrixrowsep{4pc}
\xymatrix{
\widetilde{\cB}_0\ar@/_/[d]_{\tilde{\pi}}\ar@/_3pc/[dd]_{\tilde\pi^c}\ar[r]^{{\tiny\proj}_\cB} & \cB\ar@/_/[d]_{\pi}\\
 B_{T_pM}(0,\rho)\ar@/_/[u]_{\tilde\sigma}\ar@/_/[d]_{\pi^c} \ar[r]^\exp & M\ar@/_/[u]_\sigma\\
 B_{\widetilde E^c}(0,\rho) \ar@/_/[u]_{g^c} & \\
}
\end{xy}
\]
Note that $\tilde{\pi}^c \colon \widetilde{\cB}_0\to B_{\widetilde E^c}(0,\rho)$ 
is a $C^{k}$ bundle.  A different choice of exponential map or approximation
to the partially hyperbolic splitting gives an isomorphic bundle and
a different section ${g^c}'$ related to the first by a uniform graph transform on fibers.

Consider the restriction $\widetilde{\cB}_{0,p}$ of $\widetilde{\cB}_0$ to any
fiber $B_{\widetilde E^c(p)}(0,\rho)$ of $B_{\widetilde E^c}(0,\rho)$ over $p\in M$. 
For every positive integer $\ell \leq r$, we define a 
$C^{k-\ell}$ jet bundle $\cJ^\ell \to M$ whose fiber over $p\in M$ is the space $J^\ell_0(\tilde\pi^c\colon \widetilde{\cB}_{0,p}\to B_{\widetilde E^c(p)}(0,\rho))$.  

Suppose now that $\sigma\colon M\to \cB$ is a section of $\cB$, and that $\ell\leq r$.  We say that
$\sigma$ {\em has a central $\ell$-jet at $p$} if there exists a $C^\ell$ local section
$s = s_{\sigma,p} \in \Gamma^\ell_{0_p}(\tilde\pi^c\colon{\widetilde{\cB}_{0,p}}\to B_{\widetilde E^c(p)}(0,\rho))$
such that, for all $v\in B_{\widetilde E^c(p)}(0,\rho))$:
\begin{eqnarray}\label{e=centerjet}
d_{N}(\hbox{proj}_N\circ \tilde \sigma\, \circ \,g^c(v), \, \hbox{proj}_N\circ s(v)) = o(|v|^\ell).
\end{eqnarray}
It is not hard to see that $\sigma\colon M\to\cB$
has a central $\ell$-jet at $p$ if and only if
the restriction of $\sigma$ to $\hW^c(p)$ is tangent to order $\ell$
at $p$ to a $C^\ell$ local section $\sigma' \colon \hW^c(p)\to \cB$.
If $\sigma$ has a central $\ell$-jet at $p$, for every $p\in M$ then $\sigma$ induces
a well-defined section $j^{\ell}\sigma^c : M \to  \cJ^\ell$ 
that sends $p$ to $j^\ell_0 s_{\sigma,p}$.  We call $j^{\ell}\sigma^c$ the {\em central $\ell$-jet}
of $\sigma$, and we write $j^\ell_p\sigma^c$ for the image of $p$ under $j^\ell\sigma^c$.
It is easy to see that the existence of a central $\ell$-jet for $\sigma$ is independent
of the choice of smooth approximation to the partially hyperbolic splitting and independent
of choice of exponential map.  In general there is no reason to expect the central $\ell$-jet
$j^\ell\sigma^c$ to be a smooth section, even when $\sigma$ itself is smooth, because
$g^c$ is not smooth. 

\begin{remark} If $\sigma$ has a central $\ell$-jet at $p$, then (in a fixed coordinate system
about $p$), $\sigma$ has an $(\ell-l,1,C)$ expansion on $\hW^c(p)$ at $p$.  If $j^\ell\sigma^c$
is continuous, and the error term in (\ref{e=centerjet}) is uniform in $p$,
then $C$ can be chosen uniformly in a neighborhood of $p$.  
\end{remark}

\medskip

In the proof of Theorem~\ref{t=C^rsec}, we will focus attention on the pullbacks
$\cJ^\ell\vert_{\hW^c(x)}$ of $\cJ^\ell$ to various fake center manifolds over $M$.
The central observation we will make use of is that, for each $x\in M$,
there is an
isomorphism $I_x$ between the  bundles $\cJ^\ell\vert_{\hW^c(x)}$ 
and  $J^\ell(\pi\colon \cB_{{\hW^c(x)}} \to \hW^c(x))$.  
To compress notation, we will write $J^\ell({\hW^c(x)}, N)$ for
$J^\ell(\pi\colon \cB_{{\hW^c(x)}} \to \hW^c(x))$. 
For $x\in M$, the isomorphism $I_x\colon \cJ^\ell\vert_{\hW^c(x)}\to J^\ell({\hW^c(x)}, N)$
is defined: 
$$I_x(y,j^\ell_0\psi) = j_y^\ell(id_{\hW^c(x)}, \,\hbox{proj}_{N}\circ\psi\circ \pi^c \circ \exp_y^{-1}).$$

\subsection{Coordinates on the central jet bundle}

Fix $\ell \leq r$.  
We describe here a natural system of $C^{r-\ell}$ coordinate charts on $\cJ^\ell$ based on adapted coordinates
on $M$.  

Let $\widetilde{E}^s\oplus \widetilde{E}^c \oplus \widetilde{E}^u$ be a $C^\infty$  approximation
to the hyperbolic splitting to $M$. Fix a point $p\in M$ and let $(x^u,x^s,x^c)$ be a $C^r$ adapted
coordinate system on $B_M(p,\rho)$ based at $p$.  Next fix $C^r$ local trivializing coordinates $(x^m,v^c)\in \RR^m\times \RR^c$ for $\widetilde{E^c}$ over $B_M(p,\rho)$, covering the adapted charts at $p$ and  sending $B_{\widetilde{E^c}}(0,\rho_1)\vert_{B_M(p,\rho)}$
to $I^m\times I^c$.  Let $(x,v)\in I^m\times I^m$ be the corresponding charts on $B_{TM}(0,\rho_1)\vert_{B_M(p,\rho)}$. In these charts, the projection $\pi^c$ sends
$(x^m,v^u,v^s,v^c)$ to $(x^m, v^c)$.

We choose these charts such that the exponential map on $B_{TM}(0,\rho_1)$ over ${B_M(p,\rho)}$ 
in these coordinates sends $(x^m,v)$ to $x^m+v \in I^m$ (these charts are not
isometric, nor do they preserve the structure of $TM$ as the tangent bundle to $M$,
but they can be chosen to be uniformly $C^r$). 
Also fix $C^r$ coordinates  $(x^m, q)\in \RR^m\times N$ for 
$\cB$ over $B_M(p,\rho)$ sending $\pi^{-1}(B_M(p,\rho))$ to $I^m\times N$,
with $\pi(x^m,q) = x^m$.

The induced coordinates on $\widetilde{\cB}_0$ over $B_{\widetilde{E^c}}(0,\rho_0)\vert_{B_M(p,\rho)}$
take the form $(x^u, x^s,x^c+v^c, v^c, q) \in I^m\times N$. 
We may further choose these coordinates so that, $\tilde\pi$ and $\tilde\pi^c$ 
are the projections onto the $I^m\times I^c$
and $I^m$ coordinates, respectively.
These coordinates give a natural identification of $\cJ^\ell\vert_{B(p,\rho)}$
with  $I^m\times J_0^\ell(I^c,N)$.  

Finally, for each point $q\in N$, we fix $C^r$ coordinates $z^n\in \RR^n$, sending $q$ to 
$0$ and $B_N(q,\rho)$ to  $I^n$. In this way, we define, for each $z\in \widetilde{\cB}_0$, an
adapted system of coordinates  $(x^u, x^s,x^c+v^c, v^c, z^n)\in  \RR^m\times \RR^c\times \RR^n$ sending
$z$ to $0$ and  $B_{\widetilde{\cB}_0}(z,\rho)$ to $I^m\times I^c\times I^n$.

In local coordinates, each element of $\cJ^\ell$ can thus be uniquely represented as a tuple
$(x^m, \wp)$, where $x^m\in I^m$ and $\wp\in P^\ell(c,n)$.
If $\sigma$ has an $\ell$-jet at $p$ for every $p$, we can thus represent
locally the section $j^\ell\sigma^c$ as a function from $I^m$ to $P^\ell(c,n)$, using
the adapted charts in a neighborhood of $\sigma(p)$.

Consider the set $I^c\times J_0^\ell(I^c,N)$.  We may regard this as a natural object associated
to $p\in M$ in either of two ways.  First,  
$I^c\times J_0^\ell(I^c,N)$ embeds as the subset $\{x^u=0, x^s=0\}\times J_0(I^c,N)$
in an adapted coordinate system for $\cJ^\ell\vert_{B(p,\rho)}$,
which gives an identification of  $I^c\times J_0^\ell(I^c,N)$ with $\cJ^\ell\vert_{\hW^c(p)}$.
Second, in the same adapted coordinate system, we have the identification of
$I^c\times J_0^\ell(I^c,N)$ with $J^\ell(\hW^c(p),N)$.
We will use both identifications in what follows. 
We can further put local coordinates on $I^c\times J_0^\ell(I^c,N)$, as follows.
Given a point $z\in \pi^{-1}(x)$, we fix an adapted coordinate system 
$(x^c,z^n)\in I^c\times I^n$ for  
$\hW^c_F(z)$, sending $z$ to $0$.
This gives local coordinates $(x^c,\wp) \in I^c\times P^\ell(c,n)$ on 
$I^c\times J_0^\ell(I^c,N)$ sending $z$ (regarded as an element of $J_0^0(I^c,N)\hookrightarrow J_0^\ell(I^c,N)$)
to $(0,0)$.

Let us give a name to these adapted coordinates and define them more precisely. For $z\in \cB$, 
fix an adapted chart $\hat\varphi_z \colon I^m\times I^c \to B_\cB(z,\rho)$ at $z$,
sending $(0,0)$ to $z$,
sending $\{x^u=0, x^s=0\}$ to $\hW^c_F(z)$, and so on.  We may further assume
that the projection $I^m\times I^c\to  I^m$ is conjugate to $\pi$ under $\hat\varphi$.
The maps $\hat\varphi_z$ induce adapted coordinates $\varphi_z = \pi\circ\hat\varphi_z \circ \iota \colon I^m \to B_M(\pi(z),\rho)$ at $\pi(z)$,
where $\iota$ is the inclusion $x^m \to (x^m,0)$.  We will denote by $\hat\omega^c$  the parametrization
of $\hW^c$ manifolds in the $\varphi_z$ coordinates.
Let $\theta_z\colon I^c \to B_{\widetilde E^c(\pi(z))}(0,\rho)$ be defined by:
$$\theta_z(x^c) = \pi^c \circ \exp_{\pi(z)}^{-1}(\varphi_z(0,0, x^c)).$$

We now define the parametrizations $\eta_z$  and  $\nu_z$ of the bundles  $\cJ^\ell\vert_{\hW^c(\pi(z))}$
and $J^\ell(\hW^c(\pi(z)),N)$ discussed
above.
Let $\eta_z\colon I^c\times P^\ell(c,n) \to \cJ^\ell_{\hW^c(\pi(z))}$
be defined by
\begin{eqnarray*}
\eta_z(x^c, \wp) =
(\varphi_z(0,0,x^c), j^\ell_0\, (id_{\widetilde E^c(\varphi_z(0,0,x^c))}, \hat\varphi_z(0,0,\theta_z^{-1}, \wp(\theta_z^{-1} - x^c)),
\end{eqnarray*}
(recall here that elements of $\widetilde{\cB}_{0,p}$ are of the form $(v,z)\in B_{\widetilde E^c(p)}(0,\rho) \times \cB$ with
$\exp_p(v) = \pi(z)$).
Finally, let $\nu_z\colon I^c\times P^\ell(c,n) \to J^\ell(\hW^c(\pi(z)),N)$
be the map:
$$\nu_z(x^c,\wp) = j^\ell_{\varphi_z(0,0,x^c)} (\hat\varphi_z\circ ( {\varphi_{z}}^{-1} ,\, \wp\left( \hbox{proj}_{I^c} \circ \varphi_{z}^{-1} - x^c )\right) ).  $$

We make all of these choices uniformly in $z$.  Strictly speaking, all of these parametrizations
are defined only on a neighborhood of the zero-section in $P^\ell(c,n)$, but as with the holonomy
maps, we will ignore restriction of domain issues to simplify notation.

Recall the isomorphism $I_x\colon \cJ^\ell\vert_{\hW^c(x)}\to J^\ell(\hW^c(x),N)$
constructed in the previous subsection.
For $w\in \hW^c_F(z,\rho)$, consider the map $I_{z,w}\colon  I^c\times P^\ell(c,n)\to  I^c\times P^\ell(c,n)$
given by $I_{z,w} = \nu_w^{-1} \circ I_{\pi(z)} \circ \eta_z$.  We have constructed these coordinates so that
$I_{z,z} = id_{I^c\times P^\ell(c,n)}$.  The following lemma is a direct consequence
of Lemmas~\ref{l=cspinch} and \ref{l=cspinch}.
\begin{lemma}\label{l=Izw} For every $z\in\cB$ and $w\in \hW^c_F(z,\rho)$, and $\ell\leq r$, we have:
$$|j_0^\ell I_{z,w} - j_0^\ell id_{I^c\times P^\ell(c,n)}| = o(d(z,w)^{r-\ell}).$$
\end{lemma}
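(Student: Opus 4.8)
\textbf{Plan for the proof of Lemma~\ref{l=Izw}.}

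The strategy is to unwind the definitions of the three maps $\eta_z$, $I_{\pi(z)}$, and $\nu_w^{-1}$ that constitute $I_{z,w} = \nu_w^{-1}\circ I_{\pi(z)}\circ \eta_z$, and to track at each stage how far the relevant jet deviates from its value at $w=z$. First I would record that $I_{z,z} = \mathrm{id}$ on $I^c\times P^\ell(c,n)$, which is built into the choice of the parametrizations $\eta_z$ and $\nu_z$; so the content of the lemma is purely an estimate on the $\ell$-jet of the discrepancy as $w$ moves away from $z$ inside the fake center leaf $\hW^c_F(z,\rho)$. The point is that every piece of $I_{z,w}$ differs from its $w=z$ counterpart only through (i) the change of basepoint from $z$ to $w$ along $\hW^c_F(z)$, and (ii) the way the fake center manifold $\hW^c(\pi(w))$, its parametrization $\hat\omega^c$, and the approximating chart $\theta_w$ sit relative to those at $\pi(z)$. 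The estimate in (i) is controlled by $C^r$-continuity of the plaque families (Proposition~\ref{p=localfol}, part (6)); the estimate in (ii), which is where the $o(d(z,w)^{r-\ell})$ gain comes from, is exactly the content of Lemma~\ref{l=cspinch} applied to $\hat\beta^c_z$ along $\hW^c(p)$ (and of Lemma~\ref{l=spinch} if the pinching of the fake stable/unstable subfoliations enters).

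Concretely, I would proceed as follows. Fix $z\in\cB$ and $w\in\hW^c_F(z,\rho)$, work in the adapted charts $\hat\varphi_z$ and $\hat\varphi_w$, and write out $\eta_z$, $\nu_w$ explicitly in these coordinates. Step one: compare $\theta_w$ with $\theta_z$; since both are obtained from the chart $\varphi$ and the smooth projection $\pi^c\circ\exp^{-1}$, and since $\varphi_w$ differs from $\varphi_z$ only by the change of basepoint along $\hW^c$, the $C^r$-continuity of $p\mapsto\hW^c(p)$ gives $|j^\ell(\theta_w^{-1}\circ\theta_z) - j^\ell\mathrm{id}| = O(d(z,w))$; but on the subbundle parametrizing the fake center leaf the sharper Lemma~\ref{l=cspinch} estimate $|j^\ell_0\hat\beta^c_{z^c}| = o(|z^c|^{r-\ell})$ upgrades the relevant part of this to $o(d(z,w)^{r-\ell})$. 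Step two: compose with $I_{\pi(z)}$, which is literally composition with $\mathrm{id}_{\hW^c(\pi(z))}$ and with the smooth map $\hbox{proj}_N\circ\psi\circ\pi^c\circ\exp^{-1}$; applying Lemma~\ref{l=smoothops} shows this is a $C^{r-\ell}$ operation on jets, so it preserves the order of the error. Step three: compose with $\nu_w^{-1}$ and observe that the resulting map is $j^\ell_0 I_{z,w}$ expressed through the $\hat\beta^c$, $\hat\omega^c$ data at $z$ and $w$; assemble the estimates from steps one and two and use that the $o(\cdot)$ terms combine (finitely many of them, each controlled uniformly in $z$). Finally, I would note that all constants and $o(\cdot)$-rates are uniform in $z$ because every object — the charts $\hat\varphi_z$, the plaque families, the approximations $\widetilde E^\ast$ — was chosen with uniform $C^r$ bounds over $M$.

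The main obstacle, I expect, is purely bookkeeping: $I_{z,w}$ is a fourfold composition of maps each of which is itself defined by an implicit-function-theorem construction (the $\hat\omega^c$ from $\hat\omega^{cs}$ and $\hat\omega^{cu}$, the inversion of $h_u\circ(\mathrm{id},\psi)$, etc.), so the danger is losing track of which derivative losses are absorbed where, and making sure that the ``$o(d(z,w)^{r-\ell})$'' genuinely propagates through the jet composition rather than degrading to $O(d(z,w)^{r-\ell})$ or to a lower order. The key conceptual input that prevents this is the same one used throughout Section~\ref{ss=further}: $r$-bunching forces the fake invariant plaque families to be dynamically coherent ``on the level of $r$-jets,'' so that the center plaques $\hW^c(\pi(z))$ and $\hW^c(\pi(w))$ agree to order $r$ along $\hW^c$ whenever $w$ lies on that leaf — and Lemma~\ref{l=cspinch} is precisely the quantitative form of this statement. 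Once that is invoked, the remaining work is to verify, via Lemma~\ref{l=smoothops} and Lemma~\ref{l=graphjets}, that none of the smooth operations in the composition can spoil the estimate, which is routine.
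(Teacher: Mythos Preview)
Your proposal is correct and follows essentially the same approach as the paper: the paper gives no detailed argument at all, stating only that the lemma is ``a direct consequence of Lemmas~\ref{l=cspinch} and \ref{l=spinch},'' and your plan fleshes out exactly how that deduction goes, correctly isolating Lemma~\ref{l=cspinch} (the $o(|z^c|^{r-\ell})$ estimate on $j^\ell_0\hat\beta^c_z$) as the source of the gain and recognizing that the remaining compositions are smooth operations on jets that preserve the order of the error.
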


\subsection{Holonomy on central jets}

Let $\cS$ be a $(K,1)$-accessible sequence from $x$ to $x'$.
In this subsection, we will define, for each $0 \leq \ell\leq r$,
and each $(K,1)$ accessible sequence from $x$ to $x'$, 
two bundle maps 
$$\hcH^\ell_\cS\colon J^\ell(\hW^c(x),N) \to J^\ell(\hW^c(x'),N) $$
and
$$\cH^\ell_\cS\colon \cJ^\ell\vert_{\hW^c(x)} \to \cJ^\ell\vert_{\hW^c(x')};$$
we will make use of the identification $I_x$ between $J^\ell(\hW^c(x),N)$
and $\cJ^\ell\vert_{\hW^c(x)}$ to compare these maps. (Recall that ``$J^\ell({\hW^c(x)}, N)$''
is shorthand notation for the jet bundle
$J^\ell(\pi\colon \cB_{{\hW^c(x)}} \to \hW^c(x))$).

The map $\hcH^\ell_\cS$ is just the action on $\ell$-jets induced by the diffeomorphism $\hcH_\cS$, defined by:
$$ \hcH^\ell_\cS(j^\ell_y\psi) = j_{\hh_\cS(y)}^\ell \hcH_\cS\circ\psi \circ \hh_\cS^{-1};
$$
Then $\hcH^\ell_\cS$ is a $C^{r-\ell}$ bundle map, covering $\hh_\cS$ (see Section~\ref{s=jets}).
Lemma~\ref{l=smoothdepend} implies:
\begin{lemma}\label{l=smoothdepend2} If $F$ and $f$ are $C^k$ and $r$-bunched for $k\geq 2$ and $r=1$ or $r<k-1$, 
then  $\hH_{\cS}^\ell$ is a  $C^{r-\ell}$ diffeomorphism that
depends continuously in the $C^{r-\ell}$ topology on the $(K,1)$-accessible sequence $\cS$.
\end{lemma}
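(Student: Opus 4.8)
The statement to be proved (Lemma~\ref{l=smoothdepend2}) asserts that $\hcH^\ell_\cS$ is a $C^{r-\ell}$ diffeomorphism depending continuously in the $C^{r-\ell}$ topology on the $(K,1)$-accessible sequence $\cS$. Since $\hcH^\ell_\cS$ is by definition the map on $\ell$-jets induced by the diffeomorphism $\hcH_\cS$ via $\hcH^\ell_\cS(j^\ell_y\psi) = j^\ell_{\hh_\cS(y)}(\hcH_\cS\circ\psi\circ\hh_\cS^{-1})$, the heart of the matter is to apply the jet-functoriality results of Section~\ref{s=jets} with parameters. First I would recall from the preceding lemma (the one just before Lemma~\ref{l=smoothdepend2}, i.e. Lemma~\ref{l=smoothdepend}) that $\hh_\cS$ and $\hcH_\cS$ are $C^r$ diffeomorphisms depending continuously in the $C^r$ topology on $\cS$; this is the input. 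Since $\hcH_\cS$ is a $C^r$ bundle isomorphism of $\cB_{\hW^c(x)}\to\hW^c(x)$ onto $\cB_{\hW^c(x')}\to\hW^c(x')$ covering the $C^r$ diffeomorphism $\hh_\cS$, Corollary~\ref{c=bundleiso} gives that for every $\ell\le r$ there is a canonical $C^{r-\ell}$ induced bundle isomorphism on $\ell$-jet bundles, namely $\hcH^\ell_\cS$, covering $\hh_\cS$. This already delivers the first conclusion (that $\hcH^\ell_\cS$ is a $C^{r-\ell}$ diffeomorphism), modulo the routine observation that $\ell\le r < k$, so everything in sight is of the regularity class the jet machinery requires.

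The second conclusion — continuous dependence on $\cS$ in the $C^{r-\ell}$ topology — is the one requiring a little more care, and I expect it to be the main (though still modest) obstacle. The point is that Corollary~\ref{c=bundleiso} and Lemma~\ref{l=smoothops} are stated for a single bundle isomorphism; one needs the parametrized version: a continuous family $\cS\mapsto\hcH_\cS$ of $C^r$ bundle isomorphisms (in the $C^r$ topology) induces a continuous family $\cS\mapsto\hcH^\ell_\cS$ of $C^{r-\ell}$ bundle isomorphisms (in the $C^{r-\ell}$ topology). The way I would handle this is to work in local coordinates: trivialize $\cB$ over a foliation box so that $\hcH_\cS$ becomes a family of maps of the form $(x,y)\mapsto(\hh_\cS(x), g_\cS(x,y))$ on a subset of $\RR^m\times N$ (here $m=\dim\hW^c=c$), with $\cS\mapsto(\hh_\cS,g_\cS)$ continuous in $C^r$; then the induced map $\hcH^\ell_\cS$ on $J^\ell$ is given in the explicit coordinate formulas of Subsection~\ref{s=jets} (the expressions for $H^\ell(x,\wp_0,\dots,\wp_\ell)_\ell$ in terms of $A_v,B_v,C_v,K_v$ and the polynomial $S^\ell$). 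Each such formula is a composition of: (i) evaluation/substitution of $\psi$ and its derivatives up to order $\ell$, (ii) the partial derivatives of $g_\cS$ and $\hh_\cS$ up to order $\ell$, and (iii) algebraic operations (products, sums, and inversion of the matrix $A_v + B_v\wp_1$, which is invertible since $\hh_\cS$ is a diffeomorphism, uniformly over the compact parameter range). Since the $C^r$-convergence of $(\hh_\cS,g_\cS)$ gives $C^{r-\ell}$-convergence of all partials up to order $\ell$ appearing in these formulas, and since all the algebraic operations are continuous (with the inversion continuous because $\det A_v$ is bounded away from $0$ uniformly), we conclude that $\cS\mapsto\hcH^\ell_\cS$ is continuous in the $C^{r-\ell}$ topology.

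To assemble the global statement from the local one, I would cover $\hW^c(x)$ (uniformly in $x$, using that the plaque family is uniformly continuous) by finitely many such coordinate charts, carry out the estimate in each, and note that the transition maps between charts are fixed $C^\infty$ (or at least $C^r$) diffeomorphisms, whose induced maps on jet bundles are $C^{r-\ell}$ by another application of Lemma~\ref{l=smoothops}; continuity in $\cS$ is preserved under these fixed transitions. Finally, for the non-integral case $r-\ell\notin\ZZ$ one uses the Hölder version of the jet-composition estimate — exactly as in the passage in the proof of Proposition~\ref{p=localfol} where the $\lfloor r\rfloor$-jet bundle is shown to be $C^{r-\lfloor r\rfloor}$ — so that ``$C^{r-\ell}$ topology'' is understood as $d_{C^{r-\ell}}$ defined in Section~\ref{s=defs}. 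Since $F$ and $f$ are $C^k$ with $r<k-1$ or $r=1$, we have $r-\ell<k-1$ for all $\ell\ge0$, so the bundle maps we differentiate are of class at least $C^{r-\ell}$ and no loss-of-derivative obstruction arises. This completes the proof.
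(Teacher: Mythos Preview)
Your proposal is correct and follows essentially the same approach as the paper: the paper simply writes ``Lemma~\ref{l=smoothdepend} implies'' after noting (via Section~\ref{s=jets}/Corollary~\ref{c=bundleiso}) that $\hcH^\ell_\cS$ is the $C^{r-\ell}$ bundle map induced on $\ell$-jets by the $C^r$ bundle isomorphism $\hcH_\cS$. Your write-up is a detailed unpacking of exactly this implication, including the parametrized version of the jet-functoriality that the paper leaves implicit.
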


Fix a point  $z\in \pi^{-1}(x)$ and let $z' = \cH_{\cS}(z)$.  In 
coordinates on  $ J^\ell(\hW^c(x),N)$ and   $ J^\ell(\hW^c(x'),N)$ induced
by the adapted coordinates at $z$ and $z'$, we have a map
$$ \hcH_{\cS,z}^\ell = \nu_{z'}^{-1} \circ \hcH_{\cS}^\ell \circ \nu_z \colon I^c\times P^\ell(c,n) \to I^c\times P^\ell(c,n).$$
Similarly, if $\cS$ connects $x$ and $x'$, we set
$\hh_{\cS,x}(x^c)  =  \varphi_{x'}^{-1} \hh_{\cS}\circ \varphi_x \colon I^c\to I^c$. 

Writing $P^\ell(c,n) = \Pi_{i=0}^\ell L^i_{sym}(\RR^c,\RR^n)$, we have coordinates
$$(x^c,\wp) \mapsto (x^c,\wp_0,\ldots, \wp_\ell)$$ on $I^c\times P^\ell(c,n)$, where
$\wp_i = D^i_{x^c}\wp$.  
Denote by $\hcH_{\cS,z}^\ell(x^c,\wp)_i$
the $L^i_{sym}(\RR^c,\RR^n)$-coordinate of $\hcH_{\cS,z}^\ell(x^c,\wp)$, so that
$$\hcH_{\cS,z}^\ell(x^c,\wp) = (\hh_{\cS,z}(x^c), \hcH_{\cS,z}^\ell(x^c,\wp)_0,\ldots, \hcH_{\cS,z}^\ell(x^c,\wp)_\ell),$$
where $\hcH_{\cS,z}^\ell(x^c,\wp)_0 = \hcH_{\cS,z}(x^c,\wp_0)$.

The following is an immediate consequence of the discussion in Section~\ref{s=jets}.

\begin{lemma}\label{l=uppertriangle} For every $\ell \leq r$, there exists a $C^{r-\ell}$ map $$R^\ell\colon \RR^c \times P^{\ell-1}(c,n) \to L^\ell_{sym}(\RR^c,\RR^n)$$ such that, for every $(x^c,\wp)\in \RR^c\times P^\ell(c,n)$, we have:
$$\hcH_{\cS,z}^\ell(x^c,\wp)_\ell = R^\ell(x^c,\wp_0,\ldots,\wp_{\ell-1}) + \frac{\partial \cH_{\cS,z}}{\partial \wp_0}(x^c,\wp_0)\cdot\wp_\ell\circ (D_{x^c} \hh_{\cS,z})^{-1}.$$
\end{lemma}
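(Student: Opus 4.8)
The statement to prove is Lemma~\ref{l=uppertriangle}: writing $\hcH^\ell_{\cS,z}$ in the coordinates $(x^c,\wp_0,\ldots,\wp_\ell)$, the top-order component $\hcH^\ell_{\cS,z}(x^c,\wp)_\ell$ is affine in $\wp_\ell$, with the linear part given by conjugation of $\wp_\ell$ by the derivative of the base map $\hh_{\cS,z}$ and multiplication by the fiber derivative $\partial\cH_{\cS,z}/\partial\wp_0$, and the affine (inhomogeneous) part $R^\ell$ depending only on the lower-order jet data $(x^c,\wp_0,\ldots,\wp_{\ell-1})$.

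The plan is to reduce this to the explicit graph-transform formulae already derived in Section~\ref{s=jets}, specifically the displayed formula for $H^\ell(x,\wp_0,\ldots,\wp_\ell)_\ell$ in terms of $K_{(x,\wp_0)}$, $A_{(x,\wp_0)}$, $B_{(x,\wp_0)}$, and the polynomial $S^\ell$ in the lower-order data. First I would recall that the bundle map $\hcH_\cS$ on $\cB_{\hW^c(x)}\to\hW^c(x)$ is, in the adapted coordinates at $z$ and $z'=\cH_\cS(z)$, a $C^k$ local diffeomorphism $\hat H$ of a neighborhood of $0$ in $\RR^c\times\RR^n$ of the form $\hat H(x^c,q) = (h(x^c,q), g(x^c,q))$, where $h$ projects to $\hh_{\cS,z}$ and covers it in the sense that $h(x^c,\cdot)$ need not be independent of $q$ — but crucially the induced map on jets $\hcH^\ell_{\cS,z}$ is exactly the graph-transform map $\hat H^\ell$ of that section. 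Then the formula from Section~\ref{s=jets},
$$H^{\ell}(x,\wp_0,\ldots,\wp_\ell)_{\ell} = \bigl(K_{(x,\wp_0)}\wp_\ell - H^1(x,\wp_0,\wp_1)_1 B_{(x,\wp_0)}\wp_\ell + S^\ell(x,\wp_0,\ldots,\wp_{\ell-1})\bigr)\circ (A_{(x,\wp_0)} + B_{(x,\wp_0)}\wp_1)^{-1},$$
already exhibits the top component as a sum of a term linear in $\wp_\ell$ and a term $S^\ell(\cdot)$ depending only on lower-order data — except that the coefficient $(A_{(x,\wp_0)}+B_{(x,\wp_0)}\wp_1)^{-1}$ and the factor $H^1(x,\wp_0,\wp_1)_1 B_{(x,\wp_0)}$ both involve $\wp_1$. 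So the affine structure in $\wp_\ell$ holds for each fixed lower-order jet, and the linear coefficient a priori depends on all of $(x^c,\wp_0,\ldots,\wp_{\ell-1})$. To get the cleaner form in the lemma I would absorb the $\wp_1$-dependence: the term $-H^1(x,\wp_0,\wp_1)_1 B_{(x,\wp_0)}\wp_\ell$ is linear in $\wp_\ell$ with coefficient depending on $\wp_1$, which is lower-order; combining it with $K_{(x,\wp_0)}\wp_\ell$ and post-composing with $(A+B\wp_1)^{-1}$ gives a coefficient that is a $C^{r-\ell}$ function of $(x^c,\wp_0,\ldots,\wp_{\ell-1})$ times $\wp_\ell$, plus $S^\ell\circ(A+B\wp_1)^{-1}$ which is again lower-order. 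Relabelling the lower-order piece as $R^\ell$ and the coefficient function as $\partial\cH_{\cS,z}/\partial\wp_0$ composed with $(D_{x^c}\hh_{\cS,z})^{-1}$ — here I should identify the base derivative: $D_{x^c}\hh_{\cS,z}$ is precisely $\partial h/\partial x^c$ evaluated along the graph, which corresponds to $A_{(x,\wp_0)}$ modulo the $B\wp_1$ correction, and $\partial\cH_{\cS,z}/\partial\wp_0$ corresponds to $K_{(x,\wp_0)}$ modulo lower-order corrections — yields the stated identity.

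The main obstacle, and the point requiring genuine care rather than bookkeeping, is matching the \emph{exact} coefficients claimed in the lemma. The formula from Section~\ref{s=jets} gives the linear coefficient as a somewhat messy expression involving $A$, $B$, $K$, and $\wp_1$; the lemma asserts the clean answer $\wp_\ell\mapsto \frac{\partial\cH_{\cS,z}}{\partial\wp_0}(x^c,\wp_0)\cdot\wp_\ell\circ(D_{x^c}\hh_{\cS,z})^{-1}$. I would argue that the $B$-dependent corrections, though present in the coefficient of $\wp_\ell$, are themselves functions of $(x^c,\wp_0,\ldots,\wp_{\ell-1})$ only when one notes that $\wp_1$ is lower-order, so strictly speaking the lemma as a clean identity requires either (a) absorbing those corrections into a redefinition of what $\frac{\partial\cH_{\cS,z}}{\partial\wp_0}$ and $(D_{x^c}\hh_{\cS,z})^{-1}$ mean when evaluated along the jet, or (b) observing that in our setting the relevant maps have additional structure making $B$ small or the corrections absorbable. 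I would take route (a): interpret $\frac{\partial\cH_{\cS,z}}{\partial\wp_0}$ and $D_{x^c}\hh_{\cS,z}$ as the actual fiber- and base-derivatives of the bundle map $\hcH_\cS$ in these coordinates \emph{evaluated at the relevant point of the graph}, in which case the identity is precisely the statement that the graph transform on $\ell$-jets is affine in the top derivative with the asserted linear part — which follows by differentiating the relation $\mathrm{graph}(\psi') = \hat H(\mathrm{graph}(\psi))$ exactly $\ell$ times and isolating the term containing $D^\ell\psi$, all lower-derivative terms of $\psi$ and all derivatives of $\hat H$ being collected into $R^\ell$. The $C^{r-\ell}$ regularity of $R^\ell$ follows because $\hat H$ is $C^k$ with $k \geq \ell+1$ (indeed $k > r+1 \geq \ell+1$, or $k\geq 2$ when $r=\ell=1$), so its derivatives up to order $\ell+1$ — which is all that enters $R^\ell$ and the coefficient — are $C^{k-\ell-1}$, hence at least $C^{r-\ell}$ by the bunching hypothesis $r<k-1$.
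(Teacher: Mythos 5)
Your proposal correctly identifies that the lemma should reduce to the explicit graph-transform formula from Section~\ref{s=jets}, and you correctly write down that formula. But you then go wrong at the decisive point, and the rest of the argument is an attempt to patch a problem that does not actually exist once one makes the right observation.

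The crucial structural fact you miss is that $\hcH_{\cS}$ is a \emph{bundle map covering} $\hh_{\cS}$, i.e.\ $\pi\circ\hcH_{\cS}=\hh_{\cS}\circ\pi$ (this is recorded in the paper immediately before Lemma~\ref{l=smoothdepend}: ``$\hH_{(x,x')}$ projects to $\hh_{(x,x')}$ under $\pi$''). In the $\nu_z$, $\nu_{z'}$ coordinates, the base component of $\hcH_{\cS,z}$ is therefore literally $\hh_{\cS,z}(x^c)$ --- a function of $x^c$ alone, independent of the fiber variable $\wp_0$. You write the opposite: ``$h(x^c,\cdot)$ need not be independent of $q$.'' That is false here, and it is exactly the point of the lemma. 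Because the base component is $q$-independent, the block $B_{(x,\wp_0)}=\partial h/\partial q$ in the derivative decomposition of Section~\ref{s=jets} vanishes identically. Section~\ref{s=jets} explicitly works out the $B=0$ case: the formula collapses to
$$H^{\ell}(x,\wp_0,\ldots,\wp_\ell)_{\ell} = \left(K_{(x,\wp_0)}\wp_\ell + S^\ell(x,\wp_0,\ldots,\wp_{\ell-1})\right)\circ A_{(x,\wp_0)}^{-1},$$
and since $A_{(x,\wp_0)}=D_{x^c}\hh_{\cS,z}$ and $K_{(x,\wp_0)}=\partial\cH_{\cS,z}/\partial\wp_0(x^c,\wp_0)$, this \emph{is} the claimed identity, with $R^\ell := S^\ell\circ A^{-1}$. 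There is nothing to ``absorb.''

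Having missed this, you are forced into your ``route (a)'': reinterpreting $\partial\cH_{\cS,z}/\partial\wp_0$ and $D_{x^c}\hh_{\cS,z}$ as being ``evaluated along the jet.'' This does not repair the argument, because the lemma's statement is a literal identity in which the linear coefficient depends only on $(x^c,\wp_0)$ and on $x^c$, respectively --- not on $\wp_1,\ldots,\wp_{\ell-1}$. If $B\neq 0$ the coefficient of $\wp_\ell$ genuinely involves $\wp_1$ through $(A+B\wp_1)^{-1}$ and $H^1(\cdot)_1 B$, and no renaming of symbols makes that dependence disappear. So what you propose is not a valid proof; it is an acknowledgement that your version of the claim is weaker than what was stated. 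Finally, your regularity count is also off: the relevant regularity is the $C^r$ regularity of $\hcH_{\cS}$ and $\hh_{\cS}$ (Lemma~\ref{l=smoothdepend}), which yields $C^{r-\ell}$ for the induced map on $\ell$-jets --- not the $C^k$ regularity of $F$ and an ensuing $C^{k-\ell-1}$.
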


We have now defined, for each $(K,1)$-accessible sequence $\cS$ connecting $x$ and $x'$,
a natural lift of the $C^r$ diffeomorphism $\hcH_\cS\colon \hW^c_F(x)\to \hW^c_F(x')$
to a $C^{r-\ell}$ diffeomorphism $\hcH_\cS^\ell \colon J^\ell(\hW^c(x),N) \to   J^\ell(\hW^c(x'),N)$
on the corresponding central $\ell$-jet bundles.  We have also derived in Lemma~\ref{l=uppertriangle} the
important fact that $\hcH_\cS^\ell$ has an upper triangular form with respect to 
the natural local adapted coordinate systems on  $J^\ell(\hW^c(x),N)$ and  $J^\ell(\hW^c(x'),N)$.  

Our next task is to define, for each $(K,1)$-accessible sequence $\cS$ from $x$ to $x'$,
a lift of the homeomorphism $\cH_{\cS}\colon \hW^c_F(x)\to \hW^c_F(x')$
to a map $\cH_{\cS}^\ell \colon \cJ^\ell\vert_{\hW^c(x)}\to  \cJ^\ell\vert_{\hW^c(x')}$
with two essential properties:
\begin{itemize}
\item $\cH_\cS^\ell$ and $\hcH_\cS^\ell$ are tangent to order $r-\ell$ at $x$, under the natural
identification of $J^\ell(\hW^c(x),N)$
and $\cJ^\ell\vert_{\hW^c(x)}$;
\item $\cH_\cS^\ell$ preserves central $\ell$-jets of bisaturated sections of $\cB$.
\end{itemize}

Recall that for $x'\in \cW^s(x,1)$ or $x'\in \cW^u(x,1)$,
we defined $h_{(x,x')}(y) = \hh_{(x,x')_y}(y)$ and
$\cH_{(x,x')}(z) = \hcH_{(x,x')_{\pi(z)}}(z)$; we then extended
this definition to $(K,1)$-accessible sequences via composition.
We further extend this definition to central $\ell$-jets.
If $\cS$ is a $(K,1)$-accessible sequence from $x$ to $x'$, we set:
$$\cH_{\cS}^\ell(y, j_0^\ell\psi)  =  I^{-1}_{h_{\cS}(y)}\circ \hcH_{\cS_{y}}^\ell (I_x \circ (y, j_0^\ell\psi)),
$$
where $I_x\colon \cJ^\ell\vert_{\hW^c(x)}\to J^\ell(\hW^c(x),N)$
is the previously constructed isomorphism. Clearly we have
that $\cH^\ell_{\cS}\colon \cJ^\ell\vert_{\hW^c(x)}\to  \cJ^\ell\vert_{\hW^c(x')}$ is a map 
covering $\cH_{\cS}$, under the projection $\cJ^\ell\vert_{\hW^c(x)} \to \pi^{-1}(\hW^c(x)) = \hW^c_F(x)$.

We now address the first important property of $\cH^\ell_{\cS}$: order $r-\ell$ tangency
to $\hcH^\ell_{\cS}$.  For $\cS$ connecting $x$ and $x'$, we set
$h_{\cS,x}(x^c)  =  \varphi_{x'}^{-1}\circ h_{\cS}\circ \varphi_x\colon I^c\to I^c$,
and for $z\in \pi^{-1}(x)$, we define
$$\cH_{\cS,z}^\ell =  \eta_{z'}^{-1} \circ \cH_{\cS}^\ell \circ \eta_z\colon I^c\times P^\ell(c,n)\to  I^c\times P^\ell(c,n),$$
where $z' = \hcH_\cS(z) = \cH_\cS(z)$. 
Chasing down the definitions, we see that in
$I^c\times P^\ell(c,n)$-coordinates, the map $\cH_{\cS,z}^\ell$ takes the form
$$\cH_{\cS,z}^\ell(x^c,\wp) = I_{\cH_\cS(z(x^c,\wp_0)),z' }^{-1}\circ \hcH^\ell_{\cS_{y(x^c,\wp_0)}} \circ 
I_{z(x^c,\wp_0),z}(x^c,\wp)$$
where $y(x^c) = \varphi_z(0,0,x^c)$, $z(x^c,\wp_0) = \hat\varphi_z(0,0,x^c,\wp_0)$,
and the maps $I_{z,w}$ are defined in the previous subsection.

Hence, by the definition of $\hcH^\ell$,
the difference $|\hcH_{\cS,z}^\ell(x^c,\wp) - \cH_{\cS,z}^\ell(x^c,\wp)|$
can by estimated by bounding:
\begin{itemize}
\item $|j^\ell_{z} \hcH_{\cS} - j^\ell_{y(x^c,\wp_0)} \hcH_{\cS_{z(x^c,\wp_0)}}|$
and  $|j^\ell_{z} \hh_{\cS}^{-1} - j^\ell_{y(x^c,\wp_0)} \hh_{\cS_{y(x^c,\wp_0)}}^{-1}|$
which are both $o(|(x^c,\wp_0)|^{r-\ell})$, by Lemmas~\ref{l=cspinch} and \ref{l=keyapprox}; and
\item $|j_0^\ell I_{\cH_\cS(z(x^c,\wp_0)),z' }^{-1} - j_0^\ell id_{I^c\times P^\ell(c,n)}|$ and  $|j_0^\ell( I_{z(x^c,\wp_0),z}(x^c,\wp)) -  j_0^\ell id_{I^c\times P^\ell(c,n)}|$,
which are both $o|(x^c,\wp_0)|$, by Lemma~\ref{l=Izw}.
\end{itemize}
We thereby obtain:
\begin{lemma}\label{l=tangency} Let $\cS$ be a $(K,1)$-accessible sequence from $x$
to $x'$, and let $z\in \pi^{-1}(x)$.

For each $x^c\in I^c$, $\wp\in P^\ell(c,n)$ with $|\wp|$ bounded,  and for every $0\leq \ell \leq r$ we have:
$$|\hcH_{\cS,z}^\ell(x^c,\wp) - \cH_{\cS,z}^\ell(x^c,\wp)| = o(|(x^c,\wp_0)|^{r-\ell}).$$
\end{lemma}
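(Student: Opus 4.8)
The plan is to prove Lemma~\ref{l=tangency} by unwinding the definition of $\cH^\ell_{\cS,z}$ in terms of the maps $\hcH^\ell$, $I_{z,w}$ and the shadowing accessible sequences, and then applying the three estimates already established: Lemma~\ref{l=cspinch} (pinching of the fake center plaques), Lemma~\ref{l=keyapprox} (the shadowing sequence approximates the fake holonomy to high order), and Lemma~\ref{l=Izw} (the identification maps $I_{z,w}$ are close to the identity to order $r-\ell$). The key point is that all the discrepancies between $\hcH^\ell_{\cS}$ and $\cH^\ell_{\cS}$ come from (i) replacing the sequence $\cS$ by the shadowing sequence $\cS_{y}$ based at a nearby center point $y(x^c,\wp_0)$, and (ii) conjugating by the identification isomorphisms $I_{z,w}$, and both of these errors are $o(|(x^c,\wp_0)|^{r-\ell})$ by the cited lemmas.

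First I would recall the coordinate formula for $\cH^\ell_{\cS,z}$ derived just before the statement,
$$\cH_{\cS,z}^\ell(x^c,\wp) = I_{\cH_\cS(z(x^c,\wp_0)),z'}^{-1}\circ \hcH^\ell_{\cS_{y(x^c,\wp_0)}} \circ I_{z(x^c,\wp_0),z}(x^c,\wp),$$
with $y(x^c) = \varphi_z(0,0,x^c)$, $z(x^c,\wp_0) = \hat\varphi_z(0,0,x^c,\wp_0)$ and $z' = \cH_\cS(z)$. Since $\cH^\ell_{\cS,z}$ is, by definition, the composite of these three maps while $\hcH^\ell_{\cS,z}$ is simply $\nu_{z'}^{-1}\circ \hcH^\ell_{\cS}\circ\nu_z$ (i.e. the same but with $\cS$ in place of $\cS_{y(x^c,\wp_0)}$ and with the identity in place of the $I_{\cdot,\cdot}$ factors, since $I_{z,z} = id$), the difference telescopes: I would write $\cH^\ell_{\cS,z} - \hcH^\ell_{\cS,z}$ as a sum of three terms, each isolating one of these substitutions while freezing the others. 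Each term is then bounded: the term coming from replacing $I_{z(x^c,\wp_0),z}$ by the identity is controlled by $|j_0^\ell I_{z(x^c,\wp_0),z} - j_0^\ell id| = o(d(z,z(x^c,\wp_0))^{r-\ell}) = o(|(x^c,\wp_0)|^{r-\ell})$ via Lemma~\ref{l=Izw}, using $d(z, z(x^c,\wp_0)) = \Theta(|(x^c,\wp_0)|)$; the term from replacing $I_{\cH_\cS(z(x^c,\wp_0)),z'}^{-1}$ by the identity is bounded the same way (noting that $\cH_\cS$ moves $z(x^c,\wp_0)$ a comparable distance because $\hh_\cS$ and the shadowing maps are uniformly Lipschitz); and the term from replacing $\cS_{y(x^c,\wp_0)}$ by $\cS$ is $o(|(x^c,\wp_0)|^{r-\ell})$ by Lemma~\ref{l=keyapprox}, after first using Lemma~\ref{l=cspinch} to pass between the jets taken in the $\hW^c(x)$-parametrization and those taken at the base point. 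The boundedness of $|\wp|$ is used to ensure all the composed maps have uniformly bounded $C^{r-\ell}$ norm so that errors compose additively up to a constant.

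The main obstacle I anticipate is purely bookkeeping: one has to be careful that the jets in Lemmas~\ref{l=cspinch}, \ref{l=keyapprox} and \ref{l=Izw} are all computed in compatible coordinate systems (the adapted charts $\varphi_z$, $\hat\varphi_z$, $\eta_z$, $\nu_z$), and that composing a uniformly $C^{r-\ell}$ bundle map with a perturbation that is $o(\delta^{r-\ell})$ in the $C^{r-\ell}$-metric yields a perturbation that is again $o(\delta^{r-\ell})$ — this is the standard fact that $j^\ell(g\circ h)$ depends smoothly (hence Lipschitz-ly, on bounded sets) on $j^\ell g$ and $j^\ell h$, which follows from Lemma~\ref{l=smoothops}. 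No genuinely new estimate is needed; the content is entirely in reorganizing the composition and invoking the three pinching lemmas, so the proof should be short.
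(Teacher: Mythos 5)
Your proposal is correct and matches the paper's own argument essentially verbatim: the paper derives the lemma exactly by starting from the coordinate formula $\cH_{\cS,z}^\ell = I_{\cH_\cS(z(\cdot)),z'}^{-1}\circ \hcH^\ell_{\cS_{y(\cdot)}} \circ I_{z(\cdot),z}$, comparing it with $\hcH^\ell_{\cS,z}$, and bounding the two identification-map discrepancies via Lemma~\ref{l=Izw} and the $\cS \leadsto \cS_y$ substitution via Lemmas~\ref{l=cspinch} and \ref{l=keyapprox}. Your telescoping reorganization and the remark about composing uniformly $C^{r-\ell}$-bounded maps are just slightly more explicit versions of the same steps.
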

In this sense, the maps $\cH_\cS^\ell$ and $\hcH_\cS^\ell$ are tangent to order $r-\ell$ at $x$.

As mentioned above, another important property of $\cH^\ell$ is that it
preserves central $\ell$-jets of saturated
sections. 
\begin{lemma}\label{l=preservesjets} Let $\sigma\colon M\to \cB$ be
a bisaturated section.  Then for every $(K,1)$-accessible sequence from
$x$ to $x'$, and any $y\in \hW^c(x)$,
we have $\cH_\cS(\sigma(y)) = \sigma(h_\cS(y))$.

If, in addition $\sigma:M\to \cB$ is Lipschitz and  
has a central $\ell$-jet $j_y^\ell\sigma^c$ at $y$ for some $1\leq \ell < r$, 
then $\sigma$ has a central $\ell$-jet $j_{h_\cS(y)}^\ell\sigma^c$ at $h_\cS(y)$,
and: 
$$j^\ell_{h_\cS(y)}\sigma^c = \cH^\ell_{\cS} (j_y^\ell\sigma^c).$$

\end{lemma}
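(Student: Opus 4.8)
\textbf{Proof plan for Lemma~\ref{l=preservesjets}.}

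The plan is to prove the two assertions in sequence, the first being essentially a bookkeeping consequence of bisaturation together with the definition of $\cH_\cS$, and the second being the ``infinitesimal'' upgrade obtained by differentiating the first, using the tangency estimate of Lemma~\ref{l=tangency}. First I would handle the $C^0$ statement. By definition, for $x'\in\cW^s(x,1)$ or $x'\in\cW^u(x,1)$ and $y\in\hW^c(x)$, the map $\cH_{(x,x')}$ sends $z\in\pi^{-1}(y)$ to $\hcH_{(x,x')_{\pi(z)}}(z)$, where $(x,x')_y$ is the shadowing accessible sequence from $y$ to $h_{(x,x')}(y)\in\hW^c(x')$. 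Since $(x,x')_y$ is an honest accessible sequence, and since $\sigma$ is bisaturated --- so that its image is a union of lifted fake stable and unstable leaves, and hence is preserved by every lifted fake holonomy $\hcH_{\cS'}$ over any accessible sequence $\cS'$ --- we get $\hcH_{(x,x')_y}(\sigma(y)) = \sigma(\text{endpoint of }(x,x')_y) = \sigma(h_{(x,x')}(y))$. Composing along the $(K,1)$-accessible sequence $\cS$ gives $\cH_\cS(\sigma(y)) = \sigma(h_\cS(y))$ for all $y\in\hW^c(x)$. (Here I am using the observation, recorded just before Lemma~\ref{l=keyapprox}, that bisaturation means precisely that $H_{\cS'}$ maps $\sigma(x_0)$ to $\sigma(x_k)$ for every accessible sequence $\cS'$; the fake holonomies $\hcH$ restrict the genuine ones $H$, since $\hW^s_{F,p}(p)=\W^s(p,\rho)$ and $\hW^u_{F,p}(p)=\W^u(p,\rho)$ by Proposition~\ref{p=localfol}(5), so the shadowing-sequence holonomy genuinely sends a point of $\sigma(M)$ to another point of $\sigma(M)$.)

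Next, for the jet statement, suppose $\sigma$ is Lipschitz and has a central $\ell$-jet $j^\ell_y\sigma^c$ at $y$, for $1\le\ell<r$. I would argue that the $C^0$ identity $\cH_\cS\circ\sigma = \sigma\circ h_\cS$ on $\hW^c(x)$, combined with the fact that $\cH^\ell_\cS$ is by construction the ``graph transform on central $\ell$-jets'' induced by $\cH_\cS$ on $\hW^c$-leaves, forces $\sigma\circ h_\cS$ to have the central $\ell$-jet $\cH^\ell_\cS(j^\ell_y\sigma^c)$ at $h_\cS(y)$. More precisely: work in the $I^c\times P^\ell(c,n)$ coordinates at $z$ and $z'=\cH_\cS(z)$ introduced in the previous subsection. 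In these coordinates the central-$\ell$-jet condition at $y$ says that $\operatorname{proj}_N\tilde\sigma\circ g^c$ restricted to $\hW^c$, expressed through $\varphi_z$ and $\theta_z$, agrees to order $\ell$ with the polynomial $\wp=\wp_{\sigma,y}$ read off from $j^\ell_y\sigma^c$. Applying $\cH_\cS$ and using the $C^0$ identity, the image curve $\sigma\circ h_\cS$ over $\hW^c(x')$ is, up to errors controlled by the Lipschitz constant of $\sigma$ times the $C^0$-discrepancy between $\cH_\cS$ and its jet $\cH^\ell_{\cS,z}$ on the relevant domain, the graph of the transformed polynomial $\cH^\ell_{\cS,z}(x^c,\wp)$. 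The point is that by Lemma~\ref{l=tangency} we have $|\hcH^\ell_{\cS,z}(x^c,\wp)-\cH^\ell_{\cS,z}(x^c,\wp)| = o(|(x^c,\wp_0)|^{r-\ell})$, and $\hcH^\ell_{\cS,z}$ is a genuine $C^{r-\ell}$ diffeomorphism (Lemma~\ref{l=smoothdepend2}) which acts on jets exactly as the honest graph transform does; so the transported data $\cH^\ell_\cS(j^\ell_y\sigma^c)$ is indeed a central $\ell$-jet for $\sigma$ at $h_\cS(y)$, i.e. it satisfies the defining $o(|v|^\ell)$ estimate \eqref{e=centerjet}. Finally, since $\ell<r$, the ``$r-\ell>0$'' slack in Lemma~\ref{l=tangency} ensures the correction term is genuinely $o$ of the leading order and does not spoil the $\ell$-jet; and the Lipschitz hypothesis on $\sigma$ is exactly what is needed to push a $C^0$-closeness of base maps to a $C^0$-closeness of the composed sections at the rate $o(|v|^\ell)$.

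The main obstacle I anticipate is the second step: carefully tracking, in the adapted coordinate charts, that ``$\cH_\cS$ conjugates the graph of $\sigma|_{\hW^c(x)}$ to the graph of $\sigma|_{\hW^c(x')}$'' really does imply ``$\cH^\ell_\cS$ sends the central $\ell$-jet to the central $\ell$-jet,'' given that $\cH_\cS$ is only a homeomorphism (not a diffeomorphism) and $\cH^\ell_\cS$ is defined via the shadowing sequences and the isomorphisms $I_x$ rather than as a literal jet prolongation of $\cH_\cS$. The resolution is that $\cH^\ell_\cS$ was \emph{built} (via $I^{-1}_{h_\cS(y)}\circ\hcH^\ell_{\cS_y}\circ I_x$) precisely so that on the nose it agrees to order $r-\ell$ with the diffeomorphism-induced map $\hcH^\ell_\cS$ (Lemma~\ref{l=tangency}), and the latter \emph{does} act on central $\ell$-jets by honest prolongation; so one reduces to the diffeomorphism case and then absorbs the $o(|(x^c,\wp_0)|^{r-\ell})$ discrepancy, which is of higher order than $\ell$ because $\ell<r$. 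One should also double-check the edge case $\ell$ near $r$ and the uniformity of the $o(\cdot)$ bounds in $\cS$ (which follows from the continuous dependence in Lemmas~\ref{l=smoothdepend},~\ref{l=smoothdepend2}), but these are routine once the main identification is in place.
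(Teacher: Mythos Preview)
Your treatment of the first assertion is essentially the paper's: the shadowing sequence $(x,x')_y$ is a genuine accessible sequence, the fake holonomy $\hcH_{(x,x')_y}$ evaluated at its basepoint $\sigma(y)$ runs along real $\cW^u_F,\cW^s_F$ leaves (by Proposition~\ref{p=localfol}(5)), and bisaturation forces the endpoint to be $\sigma(h_{(x,x')}(y))$. No issue there.

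For the second assertion, however, your argument has a genuine gap. You reduce to the diffeomorphism case via Lemma~\ref{l=tangency}, arguing that $\hcH^\ell_\cS$ ``acts on jets exactly as the honest graph transform does,'' and that since $\cH^\ell_\cS$ agrees with $\hcH^\ell_\cS$ to order $r-\ell$, the transported jet $\cH^\ell_\cS(j^\ell_y\sigma^c)$ must be the central $\ell$-jet of $\sigma$ at $h_\cS(y)$. But prolongation by the diffeomorphism $\hcH_\cS$ gives you the $\ell$-jet of $\hcH_\cS\circ\sigma\circ\hh_\cS^{-1}$ at $\hh_\cS(y)$, \emph{not} the $\ell$-jet of $\sigma$. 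The map $\hcH_\cS$ does not carry $\sigma$ to $\sigma$; only $\cH_\cS$ does, and $\cH_\cS$ is merely a homeomorphism. Lemma~\ref{l=tangency} compares the two jet-level maps $\hcH^\ell_\cS$ and $\cH^\ell_\cS$, but it says nothing about how close $\hcH_\cS\circ\sigma\circ\hh_\cS^{-1}$ is to $\sigma$ as sections over $\hW^c(x')$.

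The paper closes this gap differently, and more directly. It first observes that, by definition, $\cH^\ell_\cS$ evaluated at $y$ is literally $\hcH^\ell_{\cS_y}$ (up to the isomorphisms $I$), so no Lemma~\ref{l=tangency} is needed at all. The relevant diffeomorphism is thus $\hcH_{\cS_y}$, not $\hcH_\cS$. It then applies Lemma~\ref{l=keyapprox} at the $\ell=0$ level: for $z\in\hW^c(x)$,
\[
d_\cB\bigl(\cH_{(x,x')}(\sigma(z)),\,\hcH_{(x,x')_y}(\sigma(z))\bigr)=o\bigl(d(\sigma(y),\sigma(z))^r\bigr),
\]
which combined with part~1 and the Lipschitz hypothesis on $\sigma$ gives $d_\cB(\sigma(h_{(x,x')}(z)),\hcH_{(x,x')_y}(\sigma(z)))=o(d(y,z)^r)$. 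This is exactly the statement that $\hcH_{(x,x')_y}\circ\sigma\circ\hh_{(x,x')_y}^{-1}$ and $\sigma\vert_{\hW^c(y')}$ are tangent to order $r>\ell$ at $y'=h_\cS(y)$. Since $\hcH_{(x,x')_y}$ is a $C^r$ diffeomorphism and $\sigma$ is tangent to order $\ell$ at $y$ to a $C^\ell$ local section $\sigma'$, the pushforward $\hcH_{(x,x')_y}\circ\sigma'\circ\hh_{(x,x')_y}^{-1}$ is the required $C^\ell$ section realizing $\cH^\ell_\cS(j^\ell_y\sigma^c)$ as the central $\ell$-jet of $\sigma$ at $h_\cS(y)$.

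In short: the missing ingredient in your argument is the $C^0$ estimate from Lemma~\ref{l=keyapprox}, not the jet-level estimate from Lemma~\ref{l=tangency}; and the correct diffeomorphism to prolong is $\hcH_{\cS_y}$, which equals $\cH^\ell_\cS$ at $y$ by definition, rather than $\hcH_\cS$.
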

\begin{proof} Fix $x\in M$ and $\cS$ connecting $x$ to $x'$.  Let $\sigma\colon M\to \cB$
be a bisaturated section.  It suffices to prove the lemma in the case where $x'\in \cW^u(x,1)$
and $\cS=(x,x')$.

Let $y\in \hW^c(x)$.
By definition of $\hcH_\cS$, the value $\hcH_\cS(\sigma(y))$ is the endpoint
of an $su$-lift path for the foliations $\cW^s_F$ and $\cW^u_F$, covering the path $(x,x')_y$.
The endpoint of $(x,x')_y$ is $h_\cS(y)$.  It follows immediately from saturation of $\sigma$
that $\cH_\cS(\sigma(y)) = \sigma(h_\cS(y))$.

Next assume that $\sigma$ is Lipschitz and has a central $\ell$-jet $j^\ell_y\sigma^c$
at $y$, for some $1\leq \ell < r$.  This means that the restriction of $\sigma$ to $\hW^c(y)$ is tangent to order $\ell$
at $y$ to a $C^\ell$ local section $\sigma' \colon \hW^c(y)\to \cB$.  
Let $y' = \hh_{(x,x')_y}(y) = h_{(x,x')}(y)$.
Consider the images
of $\sigma$ and $\sigma'$ under $\hcH_{(x,x')_y}$.  Since $\hcH_{(x,x')_y}$ is a $C^\ell$ diffeomorphism
and covers the $C^\ell$ diffeomorphism $\hh_{(x,x')_y}$, the local sections
 $\hcH_{(x,x')_y}\circ\sigma\circ \hh_{(x,x')_y}^{-1}$
and $\hcH_{(x,x')_y}\circ\sigma'\circ \hh_{(x,x')_y}^{-1}$ 
over $\hW^c(y')$ are tangent to order $\ell$ at $y'$.

Since $\cH^\ell_{(x,y)}$ is defined by the induced action of $\cH^\ell_{(x,y)_y}$ on $\hW^c(y)$,
it suffices to show that  the local sections
 $\hcH_{(x,x')_y}\circ\sigma\circ \hh_{(x,x')_y}^{-1}$ and $\sigma\vert_{\hW^c(y')}$
are tangent to order $\ell$ at $y'$. If this is the case, then $\sigma\vert_{\hW^c(h_{(x,x')}(y))}$
and $\hcH_{(x,x')_y}\circ\sigma'\circ \hh_{(x,x')_y}^{-1}$ are also tangent to order $\ell$ at $y'$;
since the latter section is $C^\ell$, this implies that $\sigma$ has a central $\ell$-jet
at $y'$, and moreover that $j^\ell_{y'}\sigma^c = \cH^\ell_{(x,x')} (j_y^\ell\sigma^c)$.

Lemma~\ref{l=keyapprox} implies that for all $z\in\hW^c(x)$, 
$$d_\cB(\cH_{(x,x')}(\sigma(z)), \hcH_{(x,x')_y}(\sigma(z))) = o(d(\sigma(y),\sigma(z))^r);$$
since $\sigma$ is Lipschitz,  we obtain that
$$d_\cB(\cH_{(x,x')}(\sigma(z)), \hcH_{(x,x')_y}(\sigma(z))) = o(d(y,z)^r).$$
We have already shown that for all $z\in\hW^c(x)$,  $\cH_{(x,x')}(\sigma(z)) = \sigma(h_{(x,x')}(z))$.
Hence $d_\cB(\sigma(h_{(x,x')}(z)), \hcH_{(x,x')_y}(\sigma(z))) = o(d(y,z)^r)$,
and so  $\hcH_{(x,x')_y}\circ\sigma\circ \hh_{(x,x')_y}^{-1}$ and $\sigma\vert_{\hW^c(y')}$
are tangent to order $r$ at $y'$.  Since $\ell < r$, this completes the proof.
\end{proof}

\subsection{$E^c$ curves}

The final tool that we will need in our proof of Theorem~\ref{t=C^rsec} is the concept
of an $E^c$-curve.  As in the proof of Theorem~\ref{t.Crsubmanif}, we will use an inductive
argument to prove that a bisaturated section has central $\ell$-jets.  In the inductive step of the  proof of 
 Theorem~\ref{t.Crsubmanif}, we prove that the $\ell$-jets are Lipschitz continuous, and using
Rademacher's theorem, we obtain $\ell+1$ jets. The analogue of that argument in this context
would be to show that $j^\ell\sigma^c$ is Lipschitz  and then
apply Rademacher's theorem. As mentioned before, this is not possible, since the function
$g^c$ is not Lipschitz, even along $\hW^c$-manifolds.  What we have shown in Lemma~\ref{l=cspinch} is that
$g^c$ and its jets are Lipschitz along $\hW^c(x)$ {\em at $x$}, and what we will show in our
inductive step here is that $j^\ell\sigma^c$ is Lipschitz along $\hW^c(x)$ at $x$, for every $x\in M$.  
This leaves the question of how to apply Rademacher's theorem to obtain anything at all,
let alone $\ell+1$ central jets.  The answer is $E^c$ curves.

An $E^c$ curve is simply a  curve in $M$ that is everywhere tangent to $E^c$.  Such
$C^1$ curves always exist by Peano's existence theorem, but we ask a little more: 
that they be $C^r$.  Rather gratifyingly, there is a simple way to construct such
curves, and when $f$ is $r$-bunched,  Campanato's theorem (Theorem~\ref{t=campanato}) implies that they $C^r$.  If
a function $s$ is Lipschitz along $\hW^c(x)$ at $x$, for every $x\in M$,
then for any $E^c$ curve $\zeta$, it is not hard to see that $s$ must be Lipschitz along $\zeta$, and so
differentiable almost everywhere.  What is more, if a section $\sigma$ has
a central $\ell$-jet $j^\ell\sigma^c$, then restricting  $j^\ell\sigma^c$ to an $E^c$
curve $\zeta$  gives the {\em actual $\ell$-jet for $\sigma$ restricted to $\zeta$}
if $\sigma\vert_\zeta$ is $C^\ell$.
We will use both of these properties of $E^c$ curves in our proof of Theorem~\ref{t=C^rsec}.

\begin{lemma}\label{l=Eccurves} Let $f$ be $C^{k}$ and $r$-bunched, where $k\geq 2$ and $r=1$ or $r < k-1$.
Let $V$ be a coordinate neighborhood of $p$, and let $p^{su}_p:V\to \hW^c(p)$
be a $C^{r}$ submersion. For any $C^{r}$  curve 
$\hat\zeta\colon(-1,1) \to \hW^c(p)$ with $\hat\zeta(0) = p$,
there exists a $C^{r}$ (or $C^{r-1,1}$ if $r>1$ is an integer)
curve $\zeta\colon(-1,1)\to M$ such that, 
for all $t\in (-1,1)$:
\begin{enumerate}
\item $\hat\zeta(t) = p^{su}(\zeta(t))$,
\item $\zeta'(0) = \hat\zeta'(0)$,
\item $\zeta'(t) \in E^c(\zeta(t))$, 
\item $d(\zeta(t), \hat\zeta(t)) \leq O(|t|^r)$, and
\item $|\zeta^{(\ell)}(t) - \hat\zeta^{(\ell)}(t)) | \leq o(|t|^{r-\ell})$, for all $1\leq \ell\leq r$;
what is more, the distance between the $\ell$-jets of $\hW^c(\hat\zeta(t))$ at $\hat\zeta(t)$
and the $\ell$-jets of  $\hW^c(\zeta(t))$ at $\zeta(t)$ is $o(|t|^{r-\ell})$, for all  $1\leq \ell\leq r$.
\end{enumerate}

Moreover, for each $y\in V$
there is a $C^r$ submersion $p^{su}_y:V\to \hW^c(y)$ with the following property.
For each  $s, t\in (-1,1)$,  there exists a point $x_s\in \hW^c(\zeta(t))$ 
such that $x_s$ is connected to  $p^{su}_{\zeta(t)}(\zeta(t+s))$ by an $su$-path whose length
is $o(|s|^r)$,  and such that:
\begin{enumerate}
\item[(6)]  properties (1)-(5) hold for the curves $\zeta_t(s) = \zeta(t+s)$ 
and $\hat\zeta_t(s) = p^{su}_{\zeta(t)}(\zeta(t+s))$, and
\item[(7)] $d(x_s,\zeta_t(s))  = o(|s|^{r})$.
\end{enumerate}
All of these statements hold uniformly in $x\in M$.
\end{lemma}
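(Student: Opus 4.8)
\textbf{Proof proposal for Lemma~\ref{l=Eccurves}.}

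The plan is to construct $\zeta$ by a fixed-point/iteration scheme inside the fake center plaques, mimicking the way invariant curves and foliations are produced in the normally hyperbolic theory. First I would set up a well-defined ``lift'' operation: given the $C^r$ submersion $p^{su}_p\colon V\to \hW^c(p)$ and a $C^r$ curve $\eta$ in $M$ close to $\hat\zeta$ with $p^{su}_p\circ\eta=\hat\zeta$, one projects the velocity $\eta'(t)$ onto $E^c(\eta(t))$ along the kernel of $Dp^{su}_p$ (which is transverse to $E^c$ because $p^{su}_p$ factors through the $su$-holonomy and $E^c$ is complementary to $E^s\oplus E^u$), obtaining a vector field along the curve; integrating this ODE produces a new curve $\eta$ tangent to $E^c$ and still satisfying $p^{su}_p\circ\eta=\hat\zeta$, since the flow of a vector field in $\ker Dp^{su}_p$ preserves the fibers of $p^{su}_p$. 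Existence and uniqueness of $\zeta$ with properties (1)--(3) then follow from Peano/Picard applied to this ODE; the only subtlety is regularity, since $E^c$ is merely H\"older in general. Here is where $r$-bunching enters: by Proposition~\ref{p=localfol}(6b) and (7), along a fixed fake center-stable or center-unstable leaf the relevant distributions are $C^r$, and more importantly the plaque families $\hat\omega^c$ are $C^r$ with $C^r$-continuous dependence on the basepoint. So I would instead build $\zeta$ as a curve in the single $C^r$ manifold-with-coordinates obtained from the adapted charts at $p$, where $E^c$ restricted to $\hW^c(p)$ is $C^r$; the vector field to be integrated is then $C^{r-1}$ (or $C^{r-1,1}$ when $r$ is an integer, since we lose one derivative passing to the velocity field), giving $\zeta\in C^r$ (resp.\ $C^{r-1,1}$). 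This is exactly the regularity claimed.

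The quantitative estimates (4) and (5) are the technical heart, and I expect them to be the main obstacle. The idea is that $\zeta$ and $\hat\zeta$ share the same image under $p^{su}_p$, differ only in how they are parametrized transverse to $\hW^c(p)$, and at $t=0$ they agree to first order by (2). The discrepancy $d(\zeta(t),\hat\zeta(t))$ is governed by how far the fake center plaque $\hW^c(\zeta(t))$ deviates from $\hW^c(p)$ near the point $\hat\zeta(t)\in\hW^c(p)$ — and this is precisely controlled by Lemma~\ref{l=cspinch}, which says $|j^\ell_0\hat\beta^c_z|=o(|z^c|^{r-\ell})$ for $z\in\hW^c(0)$, i.e.\ the fake center plaque through a nearby center point osculates $\hW^c(p)$ to order $r$ at the origin. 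Unwinding the definitions in the adapted coordinates at $p$, $\zeta(t)-\hat\zeta(t)$ is essentially $\hat\beta^c_{\hat\zeta(t)}$ evaluated at a point of size $O(|t|)$, which gives (4) directly, and differentiating gives (5); the statement about $\ell$-jets of $\hW^c(\hat\zeta(t))$ versus $\hW^c(\zeta(t))$ is again a restatement of Lemma~\ref{l=cspinch} combined with the $C^r$-continuous dependence of the plaque family on its basepoint. Uniformity in $p$ follows because all the constants in Proposition~\ref{p=localfol} and Lemma~\ref{l=cspinch}, and the $C^r$ size of the adapted charts, are uniform over $M$.

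For the ``moreover'' clause I would run the same construction with basepoint $\zeta(t)$ in place of $p$: define $p^{su}_y$ for $y\in V$ using the $su$-holonomy from $\hW^c(p)$ to $\hW^c(y)$ (which is available and $C^r$-continuous because $f$ is accessible and the fake holonomies $\hh$ are $C^r$, cf.\ Lemma~\ref{l=smoothdepend}), so that the whole family $\{p^{su}_y\}_{y\in V}$ varies continuously. Applying the first part of the lemma to the curve $s\mapsto p^{su}_{\zeta(t)}(\zeta(t+s))$ in $\hW^c(\zeta(t))$ and its $E^c$-lift based at $\zeta(t)$ gives a curve $s\mapsto\zeta_t(s)$ satisfying (1)--(5); by uniqueness of the $E^c$-lift it must coincide with $s\mapsto\zeta(t+s)$ up to the reparametrization encoded in $p^{su}_{\zeta(t)}$, which is what (6) asserts. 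Finally, the point $x_s\in\hW^c(\zeta(t))$ is obtained by sliding $p^{su}_{\zeta(t)}(\zeta(t+s))$ along an $su$-path that shadows the one defining $p^{su}_{\zeta(t)}$, and the estimate $d(x_s,\zeta_t(s))=o(|s|^r)$ together with the bound $o(|s|^r)$ on the length of that $su$-path is exactly the ``shadowing'' estimate of Lemma~\ref{l=keyapprox} transported through Lemma~\ref{l=cspinch}: the true $su$-path and the fake-holonomy path have endpoints agreeing to order $r$. I would cite Lemmas~\ref{l=cspinch}, \ref{l=keyapprox}, \ref{l=smoothdepend} and Theorem~\ref{t=campanato} at the appropriate places rather than reprove these estimates. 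The one place to be careful is bookkeeping the loss of one derivative (hence the $C^{r-1,1}$ alternative when $r$ is an integer) and making sure every estimate is stated with the $o(\cdot)$ uniform in the basepoint, which follows from the uniformity already built into all the cited lemmas.
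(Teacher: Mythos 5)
Your construction of $\zeta$ matches the paper's: integrate the line field $E^c\cap (Dp^{su}_p)^{-1}(\dot{\hat\zeta})$ inside the fiber $(p^{su}_p)^{-1}(\hat\zeta)$, which preserves the projection to $\hat\zeta$. And you correctly identify Campanato's theorem and Lemma~\ref{l=cspinch} as relevant ingredients. But there are two real gaps in the way you propose to get the estimates.

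First, your regularity claim ``the vector field to be integrated is then $C^{r-1}$'' is not correct. The curve $\zeta$ leaves $\hW^c(p)$ immediately, so the vector field you integrate sees $E^c$ at points of the fiber $(p^{su}_p)^{-1}(\hat\zeta)$, which is transverse to $\hW^c(p)$. Proposition~\ref{p=localfol} gives $C^r$ regularity of the \emph{plaques} $\hW^c(x)$ and of $\hE^s,\hE^u$ along $\hW^{cs},\hW^{cu}$ leaves; it says nothing about $E^c$ being $C^{r-1}$ transverse to $\hW^c(p)$, and indeed $E^c$ is only H{\"o}lder in that direction. So the $C^r$ regularity of $\zeta$ cannot be read off the ODE directly; it genuinely must come through Campanato applied to the estimate in item (4), exactly as the paper does.

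Second, and more seriously, you assert that item (4) follows ``directly'' from Lemma~\ref{l=cspinch} because $\zeta(t)-\hat\zeta(t)$ is ``essentially $\hat\beta^c_{\hat\zeta(t)}$ evaluated at a point of size $O(|t|)$.'' This identification doesn't hold: $\hat\beta^c_{\hat\zeta(t)}$ parametrizes the plaque $\hW^c(\hat\zeta(t))$, which meets the fiber $(p^{su}_p)^{-1}(\hat\zeta(t))$ transversely at the single point $\hat\zeta(t)$, while $\zeta(t)$ is a different point of that fiber produced by integrating an ODE. There is no a priori relation between the drift $\zeta(t)-\hat\zeta(t)$ and the shape of the plaque $\hW^c(\hat\zeta(t))$; if you try to estimate the drift by integrating $|E^c(\zeta(s))-T_{\hat\zeta(s)}\hW^c(p)|$, the cross term $|E^c(\zeta(s))-E^c(\hat\zeta(s))|$ is only H{\"o}lder in $d(\zeta(s),\hat\zeta(s))$, and the bootstrap fails for small H{\"o}lder exponent. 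The paper instead proves (4) by the same \emph{kind} of dynamical argument as Lemma~\ref{l=cspinch}, but carried out fresh: set $w=\zeta(t)$, $x=\zeta(t+s)$, build auxiliary points $y,y',z,z'$ via short $su$-paths to $\hW^c(w)$, iterate forward $n$ times where $|s|=\Theta(\delta_n(w))$ using that $\zeta$ is tangent to $E^c$ (so $d(x_i,w_i)=O(\delta_n(w)\hat\gamma_i(w)^{-1})$), apply the contraction/bunching inequalities to conclude $d(x,y)=o(|s|^r)$ and $d(x,z)=o(|s|^r)$, then iterate back. This iteration is the essential step that your sketch skips, and it is the mechanism by which (4), (5), and then the Campanato-based $C^r$ regularity, as well as the shadowing estimates (6)--(7), are obtained.
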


\begin{proof} 
Let $\hat\zeta$ be given and assume without loss of generality that $\hat\zeta$ is unit speed.
We may also assume that we are working in $C^r$ local coordinates and
that $p^{su}_p$ is projection along an affine plane field $E^{su}$ transverse to $E^c$. 
This planefield then defines for each $y\in M$ a smooth projection $p^{su}_y\colon V\to \hW^c(y)$.

The curve $\hat\zeta$ induces a  vector field on $(p^{su})^{-1}(\hat\zeta)$ by intersecting
$E^c$ with $(Dp^{su})^{-1}(\dot\hat\zeta)$, (note that the two distributions meet transversely
in a linefield).  Integrating this vector field, we get the $E^c$-curve $\zeta$.  
Clearly $\zeta$ satisfies properties (1)-(3).  

To prove (4), we show first that for every $s$ and $t$,
the distance between $\zeta(t+s)$ and the $p^{su}_{\zeta(t)}$-projection of 
$\zeta(t+s)$ onto $\hW^{c}(\zeta(t))$ is $o(|s|^{r})$.  
The proof of this fact is very similar to the proof of 
Lemma~\ref{l=cspinch}.
\begin{figure}[h]
\psfrag{x}{$x$}
\psfrag{x'}{$x'$}
\psfrag{y}{$y$}
\psfrag{y'}{$y'$}
\psfrag{z}{$z$}
\psfrag{x0}{$w$}
\psfrag{z'}{$v'$}
\psfrag{z'}{$z'$}
\psfrag{w''}{$w''$}
\psfrag{zeta}{$\zeta$}
\psfrag{hatzeta}{$\hat\zeta$}
\psfrag{O(delta)}{$\Theta(|s|)$}
\begin{center}
\includegraphics[scale=1.0]{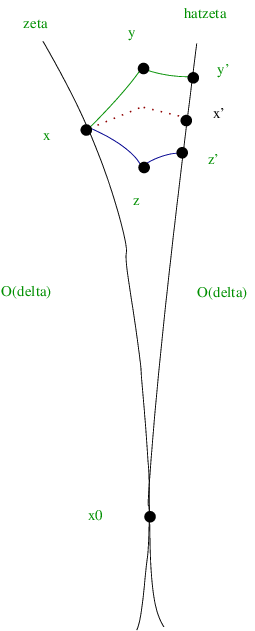}
\caption{An $E^c$-curve $\zeta$ and its shadow $\hat\zeta$}
\end{center}
\end{figure}

Let $w = \zeta(t)$, let $x = \zeta(s+t)$, and let
$x' = p^{su}_{w}(x)$.  Let $y$ be the unique point of intersection of $\W^u(x)$ with
$\bigcup_{z\in \hW^c(x)}\W^s_{loc}(z)$, and let $y'\in \hW^c(x)$ be
the unique point of intersection of $\W^u_{loc}(y)$ and $\hW^c(x)$
Similarly, let $z$ be the unique point of intersection of $\W^s(x)$ with
$\bigcup_{z\in \hW^c(x)}\W^u_{loc}(z)$, and let $z' \in \hW^c(x)$ be
the unique point of intersection of $\W^s_{loc}(z)$ and $\hW^c(x)$ 
(note that $y'$ and $z'$ do not necessarily lie on $\hat\zeta$, but this is not important).  
Note that, because $p^{su}_{w}$ is smooth,
the distance between $x'$ and $x_0$ is $O(|s|)$.
Continuity of the partially hyperbolic splitting and transversality of $E^{su}$ to $E^c$
then imply that $d(y',w)$ and $d(z',w)$ are also $O(|s|)$.
We are going to show that $d(x,y)$ and $d(x,z)$ are both $o(|s|^{r})$;
continuity of the partially hyperbolic splitting and transversality of $E^{su}$ to $E^c$ then
imply that $d(x,x') = o(|s|^{r+\eps})$.

Assume that we have fixed a continuous function $\delta < \{\hat\gamma, 1\}$ satisfying $\delta\hat\nu\hat\gamma^{-1} < \gamma^{r}$; this is possible because $f$ is $r$-bunched.
Choose $n\geq 1$ such that $|s| = \Theta(\delta_n(w))$.  Apply $f^i$ to the picture, for
$i=1,\ldots,n$.  Since $x$ is connected to $x_0$ by a curve everywhere tangent
to $E^c$,  the distance between $x_i$  and $w_i$ is $O(\delta_n(w) \hat\gamma_i(w)^{-1})$.
Since $y'$ lies on $\hW^c(w)$, the distance between  
$x_i$  and $y_i'$ is also $O(\delta_n(w) \hat\gamma_i(w)^{-1})$;
these numbers are less than $1$ for all $i=1,\ldots,n$.
So the distance between $x_n$ and $y_n'$ is less than 
$d(x_n,w) + d(y_n',w) = O(\delta_n(w) \hat\gamma_n(w)^{-1})$.

Since $y\in \cW^s(y')$, the distance between $y_n$ and $y_n'$ is $O(\nu_n(w))$.
But $1$-bunching implies that $\nu_n(w) = o(\delta_n(w) \hat\gamma_n(w)^{-1})$,
and so the distance between $y_n$ and $x_n$ is $O(\delta_n(w) \hat\gamma_n(w)^{-1})$.
Now apply $f^{-n}$ to this picture.  Since $x_n$ and $y_n$ lie on the same unstable
manifold, the distance between their inverse iterates is contracted by $\hat\nu$
at each step. Thus $d(x,y) = O(\hat\nu_n(w) \delta_n(w) \hat\gamma_n(w)^{-1})$.
But we chose $\delta$ so that $\delta\hat\nu\hat\gamma^{-1} < \gamma^{r}$.
Hence $d(x,y) = o(\hat\gamma_n(w)^{-r}) = o(|s|^{r})$.  A similar argument
replacing $f$ by $f^{-1}$ shows that $d(x,z) = o(|s|^{r})$.
Setting $t=0$ we obtain conclusion (4).

To show that $\zeta$ is $C^r$ we use Theorem~\ref{t=campanato}.  Note that
for each $t\in (-1,1)$, the projection  $p^{su}_{\zeta(t)}\zeta$ onto
$\hW^c(\zeta(t))$ is the same as  $p^{su}_{\zeta(t)}\hat\zeta$;
in particular,  $p^{su}_{\zeta(t)}\zeta$ is uniformly $C^r$,
since $\hat\zeta$ and $p^{su}$ are $C^r$, and $\hW^c(\zeta(t))$
is uniformly $C^r$, by $r$-bunching of $f$.
But the previous calculation now implies that there exists a constant
$C>0$, and for every $t\in (-1,1)$, 
a $C^r$ function $p^{su}_{\zeta(t)}\zeta\colon (-1,1) \to M$
such that: 
$$d(p^{su}_{\zeta(t)}\zeta(t+s), \zeta(t+s)) \leq C|s|^r,$$
for every $s\in (-1,1)$.  Theorem ~\ref{t=campanato} implies that
$\zeta$ is $C^r$ (or $C^{r-1,1}$, if $r>1$ and $r$ is an integer).

The proof of item (5) is very similar to the proof of Lemma~\ref{l=cspinch}
and is left as an exercise.

Conclusion (6)  of the lemma is immediate from the previous calculations.
The proof of conclusion (7) is very similar to the calculation above, and is
also left to the reader. \end{proof}

\begin{remark}
In fact $E^{cs}$, $E^{cu}$ and $E^{c}$ are all  $C^r$ along $E^c$-curves.
The proof uses Campanato's theorem again.  This time the
smooth approximating functions are parametrizations of the manifolds $\hW^{cs}$
and  $\hW^{cu}$.  
\end{remark}

\section{Proof of Theorem~\ref{t=C^rsec}}

Suppose $F$ is a $C^{k}$ and $r$-bunched extension of $f$ where  $k\geq 2$ and $r=1$ or $r < k-1$,
and let $\sigma\colon M\to \cB$ be a bisaturated section.  
The first step of the proof is to show:

\begin{lemma}\label{l=hasajet} 
$\sigma$ has a central $\lfloor r \rfloor$-jet at every point in $M$, and $j^{\lfloor r \rfloor}\sigma^c$
is continuous.
\end{lemma}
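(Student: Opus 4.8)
The strategy mirrors the proof of Theorem~\ref{t.Crsubmanif}, replacing ``graph of $\Phi$'' by ``image of the section $\sigma$'', ``$C^k$ homogeneity'' by ``invariance under the holonomy maps $\cH_\cS$'', and the ambient jet bundle $J^\ell(\RR^n,\RR^m)$ by the central jet bundle $\cJ^\ell$. We argue by induction on $\ell\le \lfloor r\rfloor$: the statement is trivial for $\ell=0$ (any section has a continuous central $0$-jet, namely $\sigma$ itself), and we assume $\sigma$ has a continuous central $\ell$-jet $j^\ell\sigma^c$ for some $\ell$ with $0\le \ell<\lfloor r\rfloor$, so in particular $\ell<r$. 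We must produce a continuous central $(\ell+1)$-jet. Before that, note that since $j^0\sigma^c=\sigma$ is continuous, Theorem~\ref{t=asv}, part (1) already gives continuity of $\sigma$; and once we know $\sigma$ has a central $\ell$-jet with $\ell\ge 1$ that is continuous, $\sigma$ is in particular Lipschitz along $\hW^c(x)$ at $x$ uniformly, and hence (using the $E^c$-curves of Lemma~\ref{l=Eccurves}, along which $j^\ell\sigma^c$ gives the genuine $\ell$-jet) Lipschitz in the ambient sense — this Lipschitz property is needed to apply Lemma~\ref{l=preservesjets}.

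\textbf{The inductive step.} Fix $x_0\in M$. Using the adapted parametrizations $\nu_z$, $\eta_z$ of the bundles $J^\ell(\hW^c(x),N)$ and $\cJ^\ell\vert_{\hW^c(x)}$, and the coordinate form of $\hcH^\ell_\cS$ from Lemma~\ref{l=uppertriangle} (upper-triangular, with top-order block a conjugation by the fiber derivative over the base contraction $D\hh_{\cS,z}$), we consider, for each $(K,1)$-accessible sequence $\cS$, the bundle map $\cH^\ell_\cS$ on central jets and its top-order coordinate expression. The key point is that Lemma~\ref{l=preservesjets} says $\cH^\ell_\cS$ preserves the central $\ell$-jet of the (Lipschitz) bisaturated section $\sigma$: $j^\ell_{h_\cS(y)}\sigma^c=\cH^\ell_\cS(j^\ell_y\sigma^c)$, while Lemma~\ref{l=tangency} says $\cH^\ell_\cS$ agrees to order $r-\ell$ with the genuine $C^{r-\ell}$ diffeomorphism $\hcH^\ell_\cS$ at the basepoint. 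Since $f$ is accessible, Lemma~\ref{l=unifaccess} provides $(K,1)$-accessible sequences connecting any two nearby points, and — exactly as in Lemma~\ref{l=pathfamily} / the proof of Theorem~\ref{t=asv}(1) — families $\{\cS_{x,x'}\}$ depending continuously on $x'$ with $\cS_{x,x}$ palindromic; the induced maps $\cH^\ell_{\cS_{x,x'}}$ on central $\ell$-jets then depend continuously on $x'$, with $\cH^\ell_{\cS_{x,x}}$ equal to (a map tangent to order $r-\ell$ at $x$ to) the identity. This, together with $\cH^\ell_{\cS_{x,x'}}(j^\ell_x\sigma^c)=j^\ell_{x'}\sigma^c$, re-proves continuity of $j^\ell\sigma^c$ along $su$-paths but, more importantly, furnishes for each $x$ a family of ``approximately affine'' self-maps of the fibers of $\cJ^\ell$ over $\hW^c(x)$ near $x$ that preserve the image of $j^\ell\sigma^c$.

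\textbf{Upgrading to $\ell+1$ jets.} Now the Baire-category / Rademacher argument of Theorem~\ref{t.Crsubmanif} runs as follows. Work along a single fake center manifold $\hW^c(x)$, using $I_x$ to identify $\cJ^\ell\vert_{\hW^c(x)}$ with $J^\ell(\hW^c(x),N)$. The maps $\cH^\ell_\cS$ (and, via the shadowing sequences, the auxiliary maps $h_\cS$) restricted to $\hW^c(x)$ behave, up to $o$-errors controlled by Lemmas~\ref{l=cspinch}, \ref{l=spinch}, \ref{l=keyapprox}, \ref{l=Izw}, like a $C^1$-homogeneity structure on the image of $j^\ell\sigma^c$; decomposing $GL$ of the top-order fiber into countably many neighborhoods as in Theorem~\ref{t.Crsubmanif}, one finds (by Baire) an open set $U\subset\hW^c(x)$ on which the relevant linearizations are uniformly Lipschitz-close to a fixed linear map, and then a telescoping-sum estimate (the same computation as after Lemma~\ref{l=smoothgraph}, using $\mathrm{Lip}(H^\ell_{z_i}-\mathrm{Id})<\tfrac12$) shows that $j^\ell\sigma^c$ is locally Lipschitz on $U$ along $\hW^c(x)$ at each point. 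By $su$-homogeneity (moving $U$ around by holonomies $\cH^\ell_\cS$, which are $C^{r-\ell}$ hence bilipschitz) this holds on all of $\hW^c(x)$, uniformly in $x$: $j^\ell\sigma^c$ is Lipschitz along $\hW^c(x)$ at $x$, for every $x$. Finally invoke the $E^c$-curves: for any $C^r$ curve $\hat\zeta$ in $\hW^c(x)$ through $x$, Lemma~\ref{l=Eccurves} gives a $C^r$ (or $C^{r-1,1}$) $E^c$-curve $\zeta$ shadowing it to order $r$; along $\zeta$, $j^\ell\sigma^c$ agrees to order $r-\ell$ with the actual $\ell$-jet of $\sigma\vert_\zeta$, the shadowing error being $o(|s|^{r-\ell})=o(|s|)$ since $\ell<r$, so $\sigma\vert_\zeta$ has a Lipschitz $\ell$-jet, hence by Rademacher is $C^{\ell,1}\supset C^{\ell+1}$ a.e.\ along $\zeta$; Campanato's theorem (Theorem~\ref{t=campanato}) combined with the $o(|s|^{r-\ell})$ estimates then upgrades this to a genuine continuous central $(\ell+1)$-jet for $\sigma$, first at a dense set and then everywhere by $su$-homogeneity one more time. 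This closes the induction.

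\textbf{Main obstacle.} The delicate point is not the Baire argument itself but verifying that the maps $\cH^\ell_\cS$ on central $\ell$-jets genuinely play the role that $C^k$-homogeneity played in Theorem~\ref{t.Crsubmanif} — i.e.\ that they preserve $j^\ell\sigma^c$ (requiring $\sigma$ Lipschitz, hence the need to propagate the Lipschitz property inductively) \emph{and} are close enough to honest diffeomorphisms that the telescoping estimate survives. This is precisely what Lemmas~\ref{l=tangency}, \ref{l=preservesjets}, \ref{l=keyapprox} and \ref{l=Izw} were built to supply, so the proof is mostly a careful bookkeeping of $o$-errors; the one genuinely new ingredient over Theorem~\ref{t.Crsubmanif} is the replacement of Rademacher-along-a-manifold by Rademacher-along-$E^c$-curves, for which Lemma~\ref{l=Eccurves} and Campanato's theorem are the essential tools.
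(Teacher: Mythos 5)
Your proposal hits all the essential ingredients — the three-part inductive scheme ($\sigma$ has a central $\ell$-jet, the $(\ell-1)$-jets are Lipschitz at $x$ along $\hW^c(x)$, $\sigma$ is $C^\ell$ along $E^c$ curves), the upper-triangular form of $\hcH^\ell_\cS$, the use of Lemmas~\ref{l=tangency} and \ref{l=preservesjets} to compare $\cH^\ell_\cS$ against the smooth $\hcH^\ell_\cS$, and the $E^c$-curve/Rademacher/Campanato mechanism to upgrade to $(\ell+1)$-jets. But there is one genuine deviation worth flagging: you import the Baire-category decomposition of $GL$ from the proof of Theorem~\ref{t.Crsubmanif}, and this step is unnecessary and, in fact, not what the paper does. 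In Theorem~\ref{t.Crsubmanif} the homeomorphisms $H^\ell_u$ come from an abstract homogeneity hypothesis, so one has no a priori uniformity in $u$ and must extract a good open set by Baire. Here, by contrast, Lemma~\ref{l=pathfamily} hands you a family $\{\cS_{x,x'}\}_{x'\in B(x,\delta)}$ of $(K,1)$-accessible sequences with $\cS_{x,x}$ \emph{palindromic} and $\cS_{x,x'}\to\cS_{x,x}$ uniformly in $x$; since $\hcH^\ell_{\cS_{x,x}}=\mathrm{id}$ and $\hcH^\ell_\cS$ depends continuously on $\cS$ in $C^{r-\ell}$, the linear parts $A_i,B_i,C_i$ appearing in the telescoping estimate are automatically within $1/4$ of the identity (respectively, of $0$) for $|x_i-x_0|=O(1)$, uniformly over $x_0$. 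So the "good open set" is a full neighborhood of every point with no category argument at all, and pushing the Baire machinery into this setting just adds unneeded bookkeeping.

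Two smaller points. First, you note that $\sigma$ becomes Lipschitz once a continuous central $1$-jet is available, but you under-emphasize the role of the \emph{upper-triangular} structure (Lemma~\ref{l=uppertriangle}) in letting the argument start: unlike Theorem~\ref{t.Crsubmanif}, where the $C^1$ regularity of the submanifold is an explicit hypothesis, here nothing a priori controls $|\wp^0_\sigma(x_1)-\wp^0_\sigma(x_0)|$. At $\ell=0$ the lower-order correction term $|\wp^{\ell-1}_\sigma(x_1)-\wp^{\ell-1}_\sigma(x_0)|$ in the expansion of $\hH^\ell_i$ is simply absent, which is what lets the Lipschitz estimate bootstrap from nothing; the paper flags this explicitly as the reason the Theorem~\ref{t.Crsubmanif} argument cannot be copied verbatim. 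Second, your final step ("first at a dense set, then everywhere by $su$-homogeneity") is a little looser than what is actually needed: the paper works with a single point of Rademacher differentiability along one $E^c$ curve, propagates it transversally through $c=\dim E^c$ quasi-transverse $E^c$ curves (Lemma~\ref{l=goodpath}) to get full differentiability of $j^\ell\sigma^c$ along $\hW^c(x_c)$ at one point $x_c$, and only then spreads by holonomy — the "dense set" phrasing skips the transversality bookkeeping that makes the partial derivatives assemble into a genuine derivative. None of this is a fatal gap; it is the same proof, with a Baire detour that can be deleted and with the two subtleties above spelled out more carefully.
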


\begin{proof}

We prove the following inductive statements, for $\ell\in [0,\lfloor r \rfloor]$:
\begin{itemize} 
\item[I$_\ell$.] $\sigma$ has a central $\ell$-jet at every point.
\item[II$_\ell$.] The central $\ell-1$-jets of $\sigma$ along $\hW^c(x)$ are Lipschitz at $x$, uniformly in $x\in M$,
for $\ell \geq 1$.
\item[III$_\ell$.] The restriction of $\sigma$ to $E^c$ curves is uniformly $C^\ell$.
\end{itemize}

We first verify I$_0$--III$_0$.   Statement II$_0$ is empty. 
Since $\sigma$ is bisaturated, Theorem~\ref{t=asv} implies that $\sigma$ is continuous.  This implies
I$_0$--III$_0$. Now assume that statements I$_\ell$--III$_\ell$ hold, for some $\ell\in \{0,\ldots, \lfloor r \rfloor-1\}$.

\bigskip

\noindent{\bf The central $\ell$-jets are continuous.}
We note that $\cJ^\ell$ is an admissible bundle; the holonomy
map for the accessible sequence $\cS$ for $x$ to $x'$
is just the restriction of the map $\cH^\ell_\cS$ to the fibers $\cJ^\ell\vert_{\{x\}}$
and  $\cJ^\ell\vert_{\{x'\}}$. Lemma~\ref{l=preservesjets} implies that
if $\sigma$ has a  central $\ell$-jet $j^\ell\sigma^c$, then $j^\ell\sigma^c$ is
a bisaturated section of $\cJ^\ell$. Continuity follows from Theorem~\ref{t=asv}.

\bigskip 

\noindent{\bf The central $\ell$-jets of $\sigma$ along $\hW^c(x)$ are Lipschitz at $x$.}
We first show that for every $x$, the restriction of $j^\ell\sigma^c$ to $\hW^c(x)$ is Lipschitz at $x$
(where the Lipschitz constant is uniform in $x$).

By Lemma~\ref{l=pathfamily} each point $x\in M$ has a uniformly large
neighborhood $U_x$ and a family of $(K,1)$-accessible sequences
$\{\cS_{x,y}\}_{y\in U_x}$ such that
$\cS_{x,y}$ connects $x$ to $y$, $\cS_{x,x}$ is a palindromic accessible cycle and
$\lim_{y\to x}\cS_{x,y} = \cS_{x,x}$, uniformly in $x$.
We may assume that $\hW^c(x)$ is contained in the neighborhood $U_x$.

We fix $x=x_0$ and
$x_1\in \hW^c(x_0)$ and choose a sequence of points $x_i\in U_{x_0}$ as follows.
Let $U_{x_0}$ and $\{\cS_{x,y}\}_{y\in U_{x_0}}$ be given by Lemma~\ref{l=pathfamily}.
For each $i\geq 1$, given $x_i\in U_{x_0}$, the accessible sequence 
$\cS_i = \cS_{x_0,x_i}$ determines a map  $h_i:=h_{\cS_i}\colon\hW^c(x_0)\to \hW^c(x_i)$, satisfying
$x_{i} = h_i(x_0)$.  We set $x_{i+1} = h_i(x_1) \in \hW^c(x_i)$.
\medskip

We now write things in adapted coordinates.  Let $\wp_{\sigma}^\ell\colon U_{x_0} \to P^\ell(c,n)$
be the function satisfying $j_y^\ell \sigma^c = \nu_{\sigma(y)}(\wp^\ell_\sigma(y))$.
Then $\wp_\sigma^\ell$ assigns in adapted  coordinates the appropriate central $\ell$-jet of $\sigma$ to each
point in $U_{x_0}$.
We are going to show that the restriction
$\wp_\sigma^\ell\colon \hW^c(x) \to P^\ell(c,n)$ is Lipschitz at $x$.

Let $\cH^\ell_{\cS_i}\colon  \cJ^\ell_{\hW^c_{x_0}} \to \cJ^\ell_{\hW^c_{x_i}}$
be the lifted ``true holonomy on jets,'' which covers $h_{\cS_i}$ and let 
$\hcH^\ell_{\cS_i} \colon J^\ell(\hW^c(x_0),N) \to J^\ell(\hW^c(x_i),N)$ be the lifted ``fake holonomy on jets,''
which covers $\hh_{\cS_i}$.
This defines maps $\cH^\ell_i = \cH^\ell_{\cS_i,\sigma(x_0)}$
and  $\hcH^\ell_i =\hcH^\ell_{\cS_i,\sigma(x_0)}$ on
$I^c\times P^\ell(c,n)$.
Write $\cH^\ell_{i}(v,\wp) = (h_{i}(v), H^\ell_{i}(v,\wp))$ and 
$\hcH^\ell_{i}(v,\wp) = (\hh_{i}(v), \hH^\ell_{i}(v,\wp))$.
Observe that $\varphi_{\sigma(x_i)}(0,0,0) = 0$ for all $i\geq 0$; let
$v_{i+1}\in I^c$ be the point satisfying $ \varphi_{\sigma(x_i)}(0,0,v_{i+1}) = x_{i+1}$.
Note that $|v_1| = O(|x_1-x_0|)$, $|v_{i+1}| = O(|x_{i+1} - x_i|)$, and
$v_{i+1} = \hh_{i}(v_1)$, for all $i\geq 0$.

Then, since $j^\ell\sigma^c$ is bisaturated and continuous (and hence bounded) Lemma~\ref{l=preservesjets}
implies:
$$\cH_i^\ell (0,\wp_\sigma^\ell(x_0)) =(0, \wp_\sigma^\ell(x_i)),\quad\hbox{and}\quad \cH_{i}^\ell (v_1,\wp_\sigma^\ell(x_1)) = (v_{i+1},\wp_\sigma^\ell(x_{i+1})).$$
By definition of $\cH_i^\ell$ and $\cH_i^\ell $, we have
$\hcH_i^\ell (0,\wp_\sigma^\ell(x_0))=  \cH_i^\ell (0,\wp_\sigma^\ell(x_0))$; furthermore, Lemma~\ref{l=tangency} implies
\begin{eqnarray}\label{e=hatvsnohat}
|\cH_i^\ell(v_1,\wp_\sigma^\ell(x_1)) - \hcH_i^\ell(v_1,\wp_\sigma^\ell(x_1))|\qquad\qquad\qquad\qquad\qquad \\
\qquad\qquad \leq o(|x_1-x_0|^{r-\ell} + |\wp_\sigma^{\ell-1}(x_1)-\wp_\sigma^{\ell-1}(x_0)|^{r-\ell} ).
\end{eqnarray}
Now Lemma~\ref{l=smoothdepend2} implies that $\hcH^\ell_i$ is $C^{r-\ell}$, and uniformly close to the identity map,
since $\cS_{x_0,x_0}$ is palindromic and $\cS_{x_0,y}\to \cS_{x_0,x_0}$ as $y\to x_0$, uniformly
in $x_0$.

Lemma~\ref{l=uppertriangle} then implies that for every $i$ with $|x_i - x_0| = O(1)$,
there exist linear maps ,
$A_i = D\hh_i (0)\colon \RR^c \to \RR^c$, $B_i = D_{v}\hH_i^\ell(0,\wp_\sigma^\ell(x_0))\colon \RR^c \to P^\ell(c,n)$
and $C_i = D_{\wp_\ell}\hH_i^\ell(0,\wp_\sigma^\ell(x_0))\colon P^\ell(c,n) \to P^\ell(c,n),$ such that
\begin{eqnarray}\label{e=linearize1}
v_{i+1} = \hh_i(v_1) = A_i(v_1) + o(|v_1|),  
\end{eqnarray}
and
\begin{eqnarray*}
\hH_i(v_1,\wp_\sigma^\ell(x_1)) - \hH_i(0,\wp_\sigma^\ell(x_0)) 
&= &B_i(v_1) +  C_i(\wp_\sigma^{\ell}(x_1)-\wp_\sigma^{\ell}(x_0))\\
&& + o(|v_1| + |\wp_\sigma^{\ell-1}(x_1)-\wp_\sigma^{\ell-1}(x_0)|)
\end{eqnarray*}
Moreover, we may assume that, for all $i$ with $|x_i-x_0| = O(1)$:
\begin{eqnarray}\label{e=linearbounds}
\|A_i - Id_{\RR^c}\| < \frac{1}{4}, \quad\|C_i-Id_{P^\ell(c,n)}\| < \frac{1}{4},\quad\hbox{and}\quad
 \|B_i\| < \frac{1}{4}.
\end{eqnarray}
By the inductive hypothesis $II_{\ell}$,  the central $(\ell-1)$-jets of $\sigma$ along $\hW^c(x)$ are Lipschitz at $x$.
Hence $ |\wp_\sigma^{\ell-1}(x_1)-\wp_\sigma^{\ell-1}(x_0)|=O (|x_1-x_0|)$, and so combining
(\ref{e=hatvsnohat}) and (\ref{e=linearize1}) we obtain
\begin{eqnarray}\label{e=linearize2}
\hH_i(v_1,\wp_\sigma^\ell(x_1)) - \hH_i(0,\wp_\sigma^\ell(x_0))\qquad\qquad\qquad\qquad\qquad\qquad\qquad \\
\qquad\qquad\qquad = B_i(v_1) +  C_i(\wp_\sigma^{\ell}(x_1)-\wp_\sigma^{\ell}(x_0))+ o(|x_1-x_0|).
\end{eqnarray}
(Notice that when $\ell=0$ the $ |\wp_\sigma^{\ell-1}(x_1)-\wp_\sigma^{\ell-1}(x_0)|$ terms do not appear
in these expressions, and so Lipschitz
regularity of $\sigma$ is not an issue.  This is due to upper triangularity of $\hcH$.)

The proof now proceeds as the proof of Theorem~\ref{t.Crsubmanif}. Notice
here that we do not need to assume a priori that $\sigma$ is $C^1$;
the reason is that the derivatives of $\hcH^\ell_i$ are upper triangular,
(unlike the maps $H_u^\ell$ in the Proof of Theorem~\ref{t.Crsubmanif}) which
allows for more precise estimates.
We choose $N = \Theta(|x_1 - x_0|^{-1})$.  By (\ref{e=linearize1}) and (\ref{e=linearbounds}),
this choice of $N$ ensures that
$|x_N - x_0| = O(1)$.
Summing (\ref{e=linearize2}) from $i=0$ to $N-1$, we obtain:
\begin{eqnarray*}
\sum_{i=0}^{N-1} \hH_i(v_1,\wp_\sigma^\ell(x_1)) - \hH_i(0,\wp_\sigma^\ell(x_0)) &= & (\sum_{i=0}^{N-1} B_i)(v_1)\\
&& +(\sum_{i=1}^N C_i)(\wp_\sigma^\ell(x_1)-\wp_\sigma^\ell(x_0)) \\
&& + N o(|x_1-x_0|).\\
\end{eqnarray*}
Equation (\ref{e=hatvsnohat}) implies that $\sum_{i=0}^{N-1} \hH_i(v_1,\wp_\sigma^\ell(x_1)) - \hH_i(0,\wp_\sigma^\ell(x_0))=$
\begin{eqnarray*}
\qquad\qquad\qquad\qquad\qquad\qquad&=& \sum_{i=0}^{N-1} \left(H_i(v_1,\wp_\sigma^\ell(x_1)) - H_i(0,\wp_\sigma^\ell(x_0))\right) \\
&& \qquad\qquad\qquad\qquad + N o(|x_1-x_0|^{r-\ell})\\
&=& \sum_{i=0}^{N-1} \wp_\sigma^\ell(x_{i+1}) - \wp_\sigma^\ell(x_i)+ N o(|x_1-x_0|^{r-\ell})\\
&=&   \wp_\sigma^\ell(x_{N}) - \wp_\sigma^\ell(x_1) + N o(|x_1-x_0|^{r-\ell}).
\end{eqnarray*}
Hence, since $r-\ell \geq 1$:
\begin{eqnarray*}
\frac{1}{N}(\wp_\sigma^\ell(x_{N}) - \wp_\sigma^\ell(x_1)) & = &    \left(\frac{1}{N}\sum_{i=0}^{N-1} B_i\right)(v_1) \\
&&+  \left(\frac{1}{N}\sum_{i=1}^N C_i\right)(\wp_\sigma^\ell(x_1)-\wp_\sigma^\ell(x_0)) + o(|x_1-x_0|).\\
\end{eqnarray*}
Rearranging terms and taking norms, we get
\begin{eqnarray*}
 |\frac{1}{N}(\sum_{i=1}^N C_i)(\wp_\sigma^\ell(x_1)-\wp_\sigma^\ell(x_0))| &\leq &
|\frac{1}{N}(\wp_\sigma^\ell(x_{N}) - \wp_\sigma^\ell(x_1))|\\
&& + |\frac{1}{N}(\sum_{i=0}^{N-1} B_i)(v_1)|+  o(|x_1-x_0|)\\
&\leq& O(\frac{1}{N}) + \frac{1}{4}|(x_1-x_0)| + o(|x_1-x_0|),
\end{eqnarray*}
using (\ref{e=linearbounds}) and the fact that $\wp^\ell_\sigma$ is continuous, and
hence bounded.  Again using (\ref{e=linearbounds}) we have that
\begin{eqnarray*}
 \left|\left(\frac{1}{N}\sum_{i=1}^N C_i\right)(\wp_\sigma^\ell(x_1)-\wp_\sigma^\ell(x_0))\right| &\geq& \frac{3}{4} |\wp_\sigma^\ell(x_1)-\wp_\sigma^\ell(x_0)|.
\end{eqnarray*}
Combining the previous two estimates, we get:
\begin{eqnarray*}
|\wp_\sigma^\ell(x_1)-\wp_\sigma^\ell(x_0)| &\leq& \frac{4}{3} \left(O(\frac{1}{N}) + \frac{1}{4}|(x_1-x_0)| + o(|x_1-x_0|) \right).
\end{eqnarray*}
\bigskip
Finally, since $\frac{1}{N} = \Theta(|x_1 - x_0|)$,
we obtain that
\begin{eqnarray*}
|\wp_\sigma^\ell(x_1)-\wp_\sigma^\ell(x_0)| &=& O(|x_1-x_0|),
\end{eqnarray*}
which is the desired estimate.  This verifies II$_{\ell+1}$.

\bigskip

\noindent{\bf $\sigma$ is Lipschitz.}
If $\ell=0$, we know that $\sigma$ is Lipschitz at $x$ along $\hW^c(x)$ leaves, for every $x$,
and differentiable along $\W^u$ leaves, and $\W^s$ leaves, with the partial derivatives continuous.  This
readily implies that $\sigma$ is Lipschitz.  

\bigskip

\noindent{\bf $\sigma$ has a central $(\ell+1)$-jet at every point.}
We fix a uniform system of $C^r$ submersions $p^{su}_x\colon V_x\to \hW^c(x)$
defined in coordinate neighborhoods in $M$.  We define $E^c$ curves using these
submersions.

\begin{lemma} $j^\ell\sigma^c$ is uniformly Lipschitz along $E^c$ curves.
\end{lemma}
\begin{proof} This is a straighforward consequence of Lemma~\ref{l=Eccurves} and the fact that
$j^\ell\sigma^c$ is Lipschitz along $\hW^c(x)$ at $x$,  for every $x\in M$.

\end{proof}

Fix an $E^c$ curve $\zeta^1$ inside of a coordinate neighborhood $V$.
Since $j^\ell\sigma^c$ is Lipschitz along
$\zeta^1$, it is differentiable almost everywhere.  Fix a point $x_1 = \zeta^1(t)$ of differentiability.
Then $j^\ell\sigma^c$ has a partial derivative along $\zeta^1$ {\em at $x_1$}.
Let $\{p^{su}_y\colon V\to \hW^c(y)\}_{y\in V}$ be the system
of submersions in the neighborhood $V$ given by Lemma~\ref{l=Eccurves}.
Consider the $C^r$ curve $\hat\zeta_{x_1}^1(s) := p_{x_1}^{su}\circ \zeta^1(t+s)$ in $\hW^c(x_1)$.
Lemma~\ref{l=Eccurves} implies that for each $s$,
there is a point $x_s\in \hW^c(\zeta(t+s))$ that is
connected to $\hat\zeta^1_{x_1}(s)$ by a $su$-path $\cS$ whose length is $o(|s|^r)$.
Since $j^\ell\sigma^c$ is bisaturated, we have that $j^\ell_{x_s}\sigma^c = \cH^\ell_\cS(j^\ell_{\hat\zeta^1_{x_1}(s)}\sigma^c)$.  Lemma~\ref{l=spinch}
implies that
$$d(j^\ell_{x_s}\sigma^c, j^\ell_{\hat\zeta^1_{x_1}(s)}\sigma^c)
= O(\hbox{length}(\cS)) + O(d(j_{x_s}^\ell\hW^c(x_s), j_{\hat\zeta^1_{x_1}(s) }^\ell\hW^c(\hat\zeta^1_{x_1}(s)))).
$$
Lemmas~\ref{l=Eccurves} (5), implies that $d(j_{x_s}^\ell\hW^c(x_s), j_{\hat\zeta^1_{x_1}(s) }^\ell\hW^c(\hat\zeta^1_{x_1}(s))) = o(|s|^{r-\ell})$.
Hence:
$$d(j^\ell_{\hat\zeta_{x_1}^1(s)}\sigma^c, j^\ell_{x_s}\sigma^c) = o(|s|^r) + o(|s|^{r-\ell}) = o(|s|^{r-\ell}).$$
Since $j^\ell\sigma^c$ is Lipschitz along $\hW^c(\zeta(t+s))$ at $\zeta(t+s)$,
we also obtain that $d(j^\ell_{x_s}\sigma^c, j^\ell_{\zeta(t+s)}\sigma^c) = O(d(x_s, \zeta(t+s))) = o(|s|^r)$.
Thus, in local coordinates, we have:
$$j^\ell_{\hat\zeta_{x_1}^1(s)}\sigma^c - j^\ell_{x_1}\sigma^c = j^\ell_{\zeta(t+s)}\sigma^c - j^\ell_{x_1}\sigma^c + o(|s|^{r-\ell});$$
since $\ell\leq r-1$ and $j^\ell\sigma^c\circ\zeta$ is differentiable at $x_1=\zeta(t)$,
this implies that $j^\ell\sigma^c$ is differentiable at $x_1$ along
the $C^r$ curve $\hat\zeta_{x_1}^1$ in $\hW^c(x_1)$. 

Let $U_{x_1}$ and $\{\cS_y^1\}_{y\in U_{x_1}}$ be the family of
accessible sequences given by Lemma~\ref{l=pathfamily}.
Since $j^\ell\sigma^c$ is bisaturated, Lemmas~\ref{l=tangency} and \ref{l=preservesjets}
imply that the image of $\hat\zeta_{x_1}^1$ under $\hcH_{\cS_y^1}$ is a $C^r$ path $\hat\zeta_y^1$ 
in $\hW^c(y)$ along which $j^\ell\sigma^c$
is differentiable at $y$.  Furthermore, $y\mapsto \hat\zeta_y^1$ is
 continuous at $x_1$ in the $C^r$ topology, and and the derivative
of $j^\ell\sigma^c$ along $\zeta_y^1$ at $y$ is continuous at $x_1$.

Now choose another $E^c$ curve $\zeta^2$ through $x_1$, quasi-transverse to $\zeta^1$ (that is,
such that the tangent spaces to $\zeta^1$ and $\zeta^2$ at $x_1$ are linearly independent). 
Again $j^\ell\sigma^c$ is Lipschitz along $\zeta^2$, and we choose a point of differentiability $x_2$. Since $x_1$ is 
a point of continuity of the curves $\{\hat\zeta^1_y \}_{y\in U_{x_1}}$, we may assume
(by choosing $x_2$ close to $x_1$)
that $\zeta^2$ and $\hat\zeta^1_{x_2}$ are quasi-transverse at $x_2$; hence
$\hat\zeta^1_{x_2}$ and $\hat\zeta^2_{x_2} = p^{su}_{x_2}\zeta^2$ are quasi-transverse
curves in $\hW^c(x_2)$ along which $j^\ell\sigma^c$ has partial derivatives at $x_2$.  

Let $U_{x_2}$ and $\{\cS_{y}^2\}_{y\in U_{x_2}}$ be given by Lemma~\ref{l=pathfamily}
for the point $x_2$.
Applying the fake holonomy $\hcH_{\gamma_y^2}$ to the transverse pair of curves $\hat\zeta^1_{x_2}$ and
$\hat\zeta^2_{x_2}$, and reusing the label $\hat\zeta^1_y$ now to denote the  curve $\hcH_{\gamma_y^2}\circ \hat\zeta^1_{x_2}$, 
we obtain a family of pairs $\{(\hat\zeta^1_y, \hat\zeta^2_y)\}_{y\in U_{x_2}}$ of  quasi-transverse curves
along which $j^\ell\sigma^c$ is differentiable at their intersection and such that 
$y\mapsto (\hat\zeta_y^1,\hat\zeta_y^2)$ is continuous at $x_2$ in the $C^r$ topology.

Repeating this procedure $c=\hbox{dim}(E^c)$ times, we obtain a point $x_c$, a neighborhood $U_{x_c}$ of $x_c$,
and a family of $c$-tuples of curves $\{(\hat\zeta^1_y,\ldots, \hat\zeta^c_y)\}_{y\in U_{x_c}}$
such that, for each $y\in U_{x_c}$:
\begin{enumerate}
\item the curves $(\hat\zeta^1_y,\ldots, \hat\zeta^c_y)$ contain
$y$ and lie in $\hW^c(y)$;
\item the tangent lines to  $(\hat\zeta^1_y,\ldots, \hat\zeta^c_y)$
at $y$ span $E^c_y$;
\item $j^\ell\sigma^c$ is differentiable at $y$ along $\hat\zeta^c_y$, 
\item the map $z\mapsto (\hat\zeta^1_z,\ldots, \hat\zeta^c_z)$ is continuous at $x_c$ in the
$C^r$ topology; and
\item for each $i$, the partial derivative of $j^\ell\sigma^c$ along $\zeta^i_z$ at $z$ is continuous at $z=x_c$.
\end{enumerate}

We claim that this implies that $j^\ell\sigma^c$ is differentiable along $\hW^c(x_c)$ at $x_c$.

\begin{lemma}\label{l=goodpath}
Let $x_c$ be given as above. Then for every $z\in \hW^c(x_c)$, there exists a path $\eta$
from $x^c$ to a point $w$ in $M$ with the following properties.  The path $\eta$
is a concatenation of $\hat\zeta^i$ paths $\eta = \hat\zeta^1_{1}\hat\zeta^2_{2}\cdots\hat\zeta^c_{c}$,
with $d(w, p^{su}_{x_c}(w)) = o(d(z,x_c)^r)$ and $d(p^{su}_{x_c}(w),z) = o(d(z,x_c))$. \end{lemma}

\begin{proof} Denote by $\zeta^i_y$ the $\zeta^i$ curve anchored at $y$ (so that $\zeta^i_y(0)=y$).
Starting with $x_c$, we take the union $\cP_1: = \bigcup_{q\in  \hat\zeta^1_{x_c}} \hat\zeta^2_q$.
Similarly, for $i\geq 1$, we define $\cP_{i+1}: = \bigcup_{q\in  \cP_i} \hat\zeta^{i+1}_q$.
The quasi transversality of the curves $\zeta^1,\ldots,\zeta^c$ at every point 
and continuity of $\zeta^i_y$ at $y=x^c$ implies that there exists a point
$w'\in p^{su}_{x_c}(\cP_c)$ with $d(w',z) = o(d(x_c,z))$.
Fix a point $w\in   (p^{su}_{x_c})^{-1}(w')\cap \cP_c$.
Tracing the $\hat\zeta^i$-curves in $\cP_c$ back from $w$ to $x_c$ 
produces the desired path $\eta$ from $x_c$ to $w$.  An inductive argument using
Lemma~\ref{l=Eccurves} shows that $d(w',w) = o(d(x_c,z)^r)$.
\end{proof}

Let us see how this implies that $j^\ell\sigma^c$ is differentiable along $\hW^c(x_c)$
at $x_c$.  This is essentially the same as the proof that a function with continuous partial
derivatives is $C^1$.  We will use:
\begin{lemma}\label{l=kindalip} For every $y\in V$ and every pair of points $z_1,z_2\in \hW^c(y)$:
 $$d(j^\ell_{z_1}\sigma^c ,j^\ell_{z_2}\sigma^c) = O(d(z_1,z_2) + d(z_1,y)^{r-\ell} + d(z_2,y)^{r-\ell}).$$ 
\end{lemma}
\begin{proof} 
This follows from the facts that $j^\ell\sigma^c$ is saturated and Lipschitz along $E^c$ curves,
and that $p^{su}_{y}$  has the properties given in Lemma~\ref{l=Eccurves}.
\end{proof}

Working in local charts on $\hW^c(x^c)$ sending $x^c$ to $0$, we may assume that
the curves $\hat\zeta^i_{x^c}$ are unit speed and correspond to the axes $\cap_{i\ne j}\{x^j=0\}$.  
Define constants
$a^i = a^i(x_c)\in P_0^\ell(c,n)$, for $i=1\ldots, c$  by
$$a_i = \lim_{y\to x^c}  (j^\ell \sigma^c\circ\hat\zeta^i_y)'(0).$$
We now define a linear map 
$A\colon \RR^c\to P_0^\ell(c,n)$ by
$$A(t_1,\ldots,t_c) = \sum_{i=1}^c a_i t_i.
$$
We claim that this map is the derivative of $j^\ell\sigma^c$ along $\hW^c(x_c)$ at $x_c$. Let
$z\in \hW^c(x_c)$ be given, and consider the path $\eta$
from $x_c$ to $w$ given by Lemma~\ref{l=goodpath}.  Let $v_1 = 0$, and write
$\eta = \hat\zeta^1_{v_1}\cdot \hat\zeta^2_{v_2} \cdots \hat\zeta^c_{v_c}$;
for $i=1,\ldots c-1$, let $t_i$ satisfy $\hat\zeta^i_{v_i}(t_i) = v_{i+1} = \hat\zeta^{i+1}_{v_{i+1}}(0)$,
and let $t_c$ satisfy $\hat\zeta^c_{v^c}(t_c)=w$.
The length of the curve $\eta$ is $\Theta(\sum_{i=1}^c|t_i|) = \Theta(d(x_c,z))$.  
Lemma~\ref{l=cspinch} readily implies that the distance between the $\ell$-jets of $\hW^c(w)$ at $w$
and $\hW^c(p^{su}_{x_c}(w))$ at $p^{su}_{x_c}(w)$ is $o(\hbox{length}(\eta)^{r-\ell})
= o(d(x_c,z)^{r-\ell})$.
Since $j^\ell\sigma^c$ is bisaturated and Lipschitz, we obtain from Lemma~\ref{l=spinch} that
\begin{eqnarray*}
d(j^\ell_{w}\sigma^c, j^\ell_{p^{su}_{x_c}(w)}\sigma^c) &=&  O(d(w,p^{su}_{x_c}(w))) + o(d(x_c,z))^{r-\ell})\\
&=&  O(d(x_c,z)^r) + o(d(x_c,z))^{r-\ell})\\
&=& o(d(x_c,z)),
\end{eqnarray*}
where we have used the facts that $d(w, p^{su}_{x_c}(w)) = o(d(z,x_c)^{r})$
and $\ell\leq r-1$. Also,  since $d(z,p^{su}(w)) = o(d(z,x_c))$, Lemma~\ref{l=kindalip} implies that
$$d(j^\ell_{z}\sigma^c ,j^\ell_{p^{su}_{x_c}(w)}\sigma^c) = o(d(z,x_c)),$$
and so 
$$d(j^\ell_{z}\sigma^c ,j^\ell_{w}\sigma^c) = o(d(z,x_c)).$$

Using the fact that
$j^\ell\sigma^c$ has a directional derivative along each $\hat\zeta^i$ subpath of $\eta$ at its anchor point $v_i = \hat\zeta^i_{v_i}(0)$,
and writing things in local coordinates sending $x^c$ to $0$,
we obtain that:
\begin{eqnarray*}j^\ell_z\sigma^c - j^\ell_0\sigma^c &=& \sum_{i=1}^c (j^\ell_{\hat\zeta^i_i(t_i)}\sigma^c - j^\ell_{\hat\zeta^i_i(0)}\sigma^c) + (j^\ell_{z}\sigma^c  - j^\ell_{w}\sigma^c)\\
&=& \sum_{i=1}^c (j^\ell \sigma^c\circ\hat\zeta^i_i)'(0)\cdot t_i  + o(|z|) \\
&=& A(z) + o(|z|).
\end{eqnarray*}
Hence $j^\ell\sigma^c$ is differentiable along $\hW^c(x_c)$ at $x_c$, with derivative $A$.

Now we have that $j^\ell\sigma^c$ is differentiable at $x_c$ along $\hW^c(x_c)$,
we can spread this derivative around using $\hcH^\ell$, and we get that 
the derivative of $j^\ell\sigma^c$ along $\hW^c(x)$ at $x$ exists for every
$x$ and is a continuous function on $M$.
We still need to show that $\sigma$ has central $\ell+1$ jets, with uniform error term.

The derivative of $j^\ell\sigma^c$ at $x$ gives a candidate $j^{\ell+1}_x\sigma^c$
for a central $\ell+1$ jet at $x$;
the $\ell+1$st coordinate in $j^{\ell+1}_x\sigma^c$ is just the derivative at $x$ along
$\hW^c_x$ of the $\ell$th coordinate of $j^\ell\sigma^c$. To show that
$\sigma$ has a central $\ell+1$-jet at $x$, we must show that for every $v\in B_{\widetilde E^c(x)}(0,\rho)$:
\begin{eqnarray}\label{e=jetform0} d_{N}(\hbox{proj}_N \circ \tilde\sigma\circ g^c(v), \hbox{proj}_N\circ j^{\ell+1}_x\sigma^c(v)) = o(|v|^{\ell+1}).
\end{eqnarray}

We first note that $j^\ell\sigma^c$ is differentiable along $E^c$ curves.
To see this, let $\zeta$ be an $E^c$ curve in $M$. For each $t\in I$, Lemma~\ref{l=Eccurves}
implies there exists a $C^r$ curve $\hat\zeta_t$ in $\hW^c(\zeta(t))$ with $\hat\zeta_t(0) = \zeta(t)$
and such that $\hat\zeta_t$ and $\zeta(s+t)$ are tangent to order $r$ at $0$. Furthermore,
the previous arguments using saturation of $j^\ell\sigma$ 
show that the distance between $j^\ell_{\zeta(s+t)}\sigma^c$ and $j^\ell_{\hat\zeta_t(s)}\sigma^c$
is $o(|s|^{r-\ell})$.  Since $j^\ell\sigma^c$ is differentiable along $\hat\zeta_t$ at $s=0$,
this implies that  $j^\ell\sigma^c$ is differentiable along $\zeta(s+t)$ at $s=0$. Since
$t$ was arbitrary, we see that $j^\ell\sigma^c$ is differentiable, and in fact $C^1$,
along $\zeta$.

Our induction hypothesis implies that $\sigma$ is $C^\ell$ along $E^c$ curves.
We next observe that, for any $E^c$ curve $\zeta$, the $\ell$-jet of $\sigma\circ \zeta$
at $t\in I$ satisfies: 
\begin{eqnarray}\label{e=jetform}\hbox{proj}_N\circ j^\ell_t(\sigma\circ\zeta) =  \hbox{\proj}_N \circ j^\ell_{\zeta(t)}\sigma^c\circ j^\ell_{\zeta(t)}(\pi^c\circ\exp_{\zeta(t)}^{-1}) \circ j^\ell_t\zeta.
\end{eqnarray} 
To see this, let $\hat\zeta_t$ be given by Lemma~\ref{l=Eccurves}.  Since $\zeta(t+s)$ and
$\hat\zeta_t(s)$ have the same $\lfloor r\rfloor$ jets at $s=0$, and $\sigma$ is Lipschitz,
the functions $\sigma\circ\zeta_t(s)$ and $\sigma\circ\zeta(s+t)$ have the same $\ell$-jets
at $s=0$.  But the definition of central $\ell$-jets implies that:
$$d_{N}(\hbox{proj}_N\circ \sigma\circ \hat\zeta_t(s), \,\hbox{proj}_N\circ j^\ell_{\hat\zeta_t(0)}\sigma^c\,\circ\, \pi^c\,\circ\,\exp_{\hat\zeta_t(0)}^{-1}\circ\,\hat\zeta_t(s)) = o(|s|^\ell);
$$
from the naturality of jets under composition, (\ref{e=jetform}) follows immediately.

Now, since both $j^\ell\sigma^c$  and $j^\ell(\pi^c\,\circ\,\exp^{-1})$ are differentiable
along $E^c$ curves, it follows that $\sigma$ is $C^{\ell+1}$ along every $E^c$ curve $\zeta$,
and by Taylor's theorem, the $\ell+1$ jets of $\sigma\circ\zeta$ are given by the formula
\begin{eqnarray}\label{e=jetform2}j^{\ell+1}_t(\sigma\circ\zeta) = j^{\ell+1}_{\zeta(t)}\sigma^c\,\circ\, j^{\ell+1}_{\zeta(t)}(\pi^c\,\circ\,\exp_{\zeta(t)}^{-1})\, \circ \,j^{\ell+1}_t\zeta.
\end{eqnarray}

Finally, let $v\in B_{\widetilde E^c(x)}(0,\rho)$ be given, and let $y=\exp_x g^c(v) \in \hW^c(x)$.
Fix a geodesic arc $\hat\zeta$ in  $\hW^c(x)$ from $x$ to $y$, with $\hat\zeta(0)=x$
and $\hat\zeta(1)=y$.  Let $\zeta$ be the $E^c$ curve
given by Lemma~\ref{l=Eccurves}, tangent to order $r$ to $\hat\zeta$ at $\hat\zeta(0)=x$.
Equation (\ref{e=jetform2}) now implies that 
$$d_{N}(\hbox{proj}_N\circ \sigma\circ \hat\zeta(t),\,\hbox{proj}_N \circ j^{\ell+1}_x\sigma^c(tv)) = o(|tv|^{\ell+1}).
$$
Since $d(\hat\zeta(t),\zeta(t)) = o(|tv|^r)$, and $\sigma$ is Lipschitz, we obtain (\ref{e=jetform0}).
Hence $\sigma$ has a central $\ell+1$ jet at $x$, and it is given by $j^{\ell+1}_x\sigma^c$.
We have verified both I$_{\ell+1}$ and III$_{\ell+1}$. 
\bigskip

\begin{proposition} $\sigma$ is $C^r$.
\end{proposition}

\begin{proof}
If $r=1$, then we have already shown that the $0$-jet of $\sigma$ is differentiable along
$\cW^c(x)$ at $x$, for every $x$, and this derivative varies continuously at $M$.
Since $\sigma$ is $C^1$ along the leaves of $\W^s$ and $\W^u$, this readily
implies that $\sigma$ is $C^1$.

Assume, then that $1<r<k-1$. 
Let $\overline\ell = \lfloor r \rfloor$, and let $\overline\alpha = r-\overline\ell$.
We first show:

\bigskip

\noindent{\bf $j^{\overline\ell}\sigma^c$ is $C^{\overline\alpha}$ at $x$ along $\hW^c(x)$, for every $x\in M$.}
The proof is a slight adaptation of the proof that $j^{\overline\ell}\sigma^c$
is Lipschitz at $x$ along $\hW^c(x)$, for every $x\in M$, for $\ell<r$;
the central observation that allows one to modify this proof is that
$H_\cS^{\overline\ell}(x,\wp)$  still covers the diffeomorphism $H_\cS(x,\wp)$, 
and for $i\geq 1$,
$H_\cS^{\overline\ell}(x,\wp)_i$ is $\overline\alpha$-H{\"o}lder continuous in the $(x,\wp_0)$-variable,
and $C^\infty$ in the $(\wp_1,\cdots,\wp_{\overline\ell})$-variables.  (See the proof
of part II of Theorem~\ref{t=main} as well).  We omit the details.

\bigskip

\noindent{\bf $\sigma$ has an $(\overline\ell, \overline\alpha, C)$ expansion at $x$ along $\hW^c(x)$, uniformly in $x\in M$.}  This is essentially the same as the proof that
$\sigma$ has a central $\ell$-jet at every point for $\ell<r$, except one sharpens the
estimates on the remainder of the Taylor expansions along $E^c$ curves,
using the $\overline\alpha$-H{\"o}lder continuity of the central $\overline\ell$-jets.

\bigskip

\noindent{\bf The section $\sigma$ is $C^r$.}
Since $r$-bunching is an open condition, as is the condition $r<k-1$, by increasing $r$ slightly,
we may assume that $r$ is not an integer.

We have shown that $\sigma$ has central $\overline\ell$-jets, and that $j^{\overline{\ell}}\sigma^c$ is 
$\overline\alpha$-H{\"o}lder continuous.
Fix a point $p\in M$. 
The fake center-stable manifolds $\hW^{cs}(x)$, for $x$ in a neighborhood $U$ of $p$, form a 
continuous family of $C^{r} = C^{\overline\ell,\overline\alpha}$ embedded disks.

Fix $x$ in this neighborhood $U$, and 
consider the foliation $\{\hW^s_x(y)\}_{y\in \hW^{cs}(x)}$ of the plaque $\hW^{cs}(x)$
by fake stable manifolds.  Since $\sigma$ is $\cW^s$ saturated, it is $C^{k}$ along $\cW^s(y)$, for any
$y\in M$.  In particular, it has a $(\overline\ell,\overline\alpha,C)$-expansion along $\cW^s(y)$, for
any $y$. For $y\in\hW^{c}(x)$ corresponding to $(0,0,x^c)$ in adapted coordinates at $x$,  Lemma~\ref{l=cspinch}
implies that the distance between $\hat\omega^{cs}_{(0,0,x^c)}(0,x^s)$ and $\hat\omega^{cs}_{0}(x^c,x^s)$
is $o(d(x,y)^r)$.  Since $\sigma$ is Lipschitz, and $\sigma$  has a $(\overline\ell,\overline\alpha,C)$-expansion along  $\hat\omega^{cs}_{(0,0,x^c)}(0,x^s)$ (which corresponds to $\W^s(y)$), this implies that
$\sigma$ has a $(\overline\ell,\overline\alpha,C)$-expansion along $\hW^s(y)$ (corresponding to
$\hat\omega^{cs}_{0}(x^c,x^s)$) with an error term that is 
on the order of $d(x,y)^r$.  

Next consider the family of plaques $\{\widetilde \W^c(y)\}_{y\in\hW^{cs}(x)}$
defined by $\widetilde \W^c(y) = \hW^{cs}(x)\cap \hW^{cu}(y)$. This forms a continuous
family of $C^r$-embedded disks. 
Paired with the the $\hW^s_x$ foliation, the family of $\widetilde \W^c$ plaques gives a $C^r$ transverse  pair of
plaque families in $\hW^{cs}(x)$. 
Lemma~\ref{l=cspinch} implies that for 
each $y\in \hW^{cs}(x)$, the distance between the
$\overline\ell$-jets of $\hW^{cs}(x)$ at $x$ and $\hW^{cs}(y)$ at $y$ is $o(d(x,y)^{\overline\alpha})$.  Since 
$\hW^c(y) = \hW^{cs}(y)\cap \hW^{cu}(y)$, it follows that the the distance between the
$\overline\ell$-jets at $y$ of 
$\widetilde \W^c(y)$ and $\hW^c(y)$ is also $o(d(x,y)^{\overline\alpha})$.  But $\sigma$ is Lipschitz,
and $\sigma$ has an $(\overline\ell,\overline\alpha,C)$ expansion at $y$ along $\hW^c(y)$, for every $y$. 
This implies that
in an adapted coordinate system at $x$ , we can write the plaques $\widetilde W^c(y)$
as a parametrized family along which $\sigma$ 
has an $(\overline\ell,\overline\alpha,C)$ expansion at $y$ along $\widetilde W^c(y)$, for every $y\in \hW^{cs}(x)$, with an error term that is on the order of $d(x,y)^r$.
Hence we can apply Theorem~\ref{t=journe} to conclude that
$\sigma$ has an $(\overline\ell,\overline\alpha,C)$-expansion  along $\hW^{cs}(x)$ at $x$,
for every $x$ in $U$, where $C$ is uniform in $x$.

Now the family $\{\hW^{cs}(x)\}_{x\in U}$ is a uniformly continuous family of $C^r$ plaques in $U$. 
Paired with the local $\cW^u$ foliation, it gives a transverse  $C^{\overline\ell,\overline\alpha}$ pair of
plaque families in $U$. Since $\sigma$ is $u$-saturated, it is $C^k$ along
$\W^u$-leaves and in particular has an $(\overline\ell,\overline\alpha,C)$-expansion  
along $\cW^u(x)$ at every $x\in U$.
Applying Journ\'e's theorem again, we obtain that 
$\sigma$ has a $(\overline\ell,\overline\alpha,C')$-expansion expansion at every $x\in U$,
where $C'$ is uniform in $x\in U$.  Theorem~\ref{t=campanato}
implies that $\sigma$ is $C^{r}$ in $U$.  As $p$ was arbitrary, we obtain that  $\sigma$ is $C^{r}$.
\end{proof}

This completes the proof of Theorem~\ref{t=C^rsec}. \end{proof}

\section{Final remarks and further questions}

The proofs here could admit several improvements and generalizations. Some are not difficult:
for example, the compactness of the manifold $M$ was not essential.  The definition
of partial hyperbolicity in the noncompact cases merely needs to be modified to
ensure that the functions $\nu,\hat\nu, \nu/\gamma, \hat\nu/\hat\gamma$ are uniformly bounded away from $1$,
and the definition of $r$-bunching must be similarly adjusted.  Other improvements on
Theorem~\ref{t=main} are more challenging.  For example, there is no counterpart in Theorem~\ref{t=main}
to the analyticity conclusions in Theorem~\ref{t=livsicold}, part IV.
Another question is whether the H{\"o}lder exponent in  Theorem~\ref{t=main}, part II can be improved.
Finally, we ask whether the loss of one derivative in Theorem~\ref{t=main} part IV (and Theorem C)
is really necessary: is it true that if $\phi$ is $C^r$, $f$ is $C^r$, accessible and $r$-bunched, where $r\geq 1$,
then any continuous solution to (\ref{e=livsic2}) is $C^r$ (or perhaps $C^{r-\eps}$, for all $\eps>0$)?

\section*{Acknowledgments}

It is a pleasure to thank Danijiela Damjanovi\'c for initial conversations that led to this paper,
Artur Avila and Marcelo Viana for discussions about \cite{ASV}, Shmuel Weinberger for telling me about \cite{rss}, Danny Calegari, Benson Farb, and Charles Pugh for useful conversations, and Dusan Repov\v s  and Arkadiy Skopenkov  
for helpful communications. Thanks also to Rafael de la Llave and Andrew T{\"o}r{\"o}k for comments on an early version
of this manuscript and pointing out some important references.  The author was supported by an NSF grant.

\end{document}